\numberwithin{equation}{section}
\newcommand{\N}{\mathbb{N}}
\newcommand{\R}{\mathbb{R}}
\newcommand{\Z}{\mathbb{Z}}
\newcommand{\mm}{{\mbox{\boldmath$m$}}}
\newcommand{\smm}{{\mbox{\scriptsize\boldmath$m$}}}
\newcommand{\ggamma}{{\mbox{\boldmath$\gamma$}}}
\newcommand{\ppi}{{\mbox{\boldmath$\pi$}}}
\newcommand{\sggamma}{{\mbox{\scriptsize\boldmath$\gamma$}}}
\newcommand{\sppi}{{\mbox{\scriptsize\boldmath$\pi$}}}
\newcommand{\sfd}{{\sf d}}
\newcommand{\Kliminf}{K\kern-3pt-\kern-2pt\mathop{\rm lim\,inf}\limits}  
\newcommand{\supp}{\mathop{\rm supp}\nolimits}   
\newcommand{\Lip}{\mathop{\rm Lip}\nolimits}          
\renewcommand{\d}{{\mathrm d}}
\newcommand{\dt}{{\d t}}
\newcommand{\restr}[1]{\lower3pt\hbox{$|_{#1}$}}
\newcommand{\la}{{\langle}}                  
\newcommand{\ra}{{\rangle}}
\newcommand{\eps}{\varepsilon}  
\newcommand{\nchi}{{\raise.3ex\hbox{$\chi$}}}
\newcommand{\weakto}{\rightharpoonup}
\newcommand{\limi}{\varliminf}
\newcommand{\lims}{\varlimsup}
\newcommand{\fr}{\penalty-20\null\hfill$\blacksquare$}                      
\newcommand{\gopt}{{\rm{OptGeo}}}                   
\newcommand{\prob}[1]{\mathscr P(#1)}                   
\newcommand{\probp}[2]{\mathscr P_{#2}(#1)}                   
\newcommand{\probt}[1]{\mathscr P_2(#1)}                   
\newcommand{\e}{{\rm{e}}}                           
\newcommand{\geo}{{\rm Geo}}                      
\renewcommand{\mm}{\mathfrak m}                                
\renewcommand{\smm}{{\mbox{\scriptsize$\mm$}}}
\newcommand{\weakgrad}[1]{|\nabla #1|_w} 
\newenvironment{proof}{\removelastskip\par\medskip   
\noindent{\em proof} \rm}{\penalty-20\null\hfill$\square$\par\medbreak}
\newtheorem{theorem}{Theorem}[section]
\newtheorem{corollary}[theorem]{Corollary}
\newtheorem{lemma}[theorem]{Lemma}
\newtheorem{proposition}[theorem]{Proposition}
\newtheorem{definition}[theorem]{Definition}
\newtheorem{remark}[theorem]{Remark}
\newcommand{\bd}{{\mathbf\Delta}}
\newcommand{\rp}{{\rm p}}
\newcommand{\s}{{\rm S}} 
\renewcommand{\c}{{\rm Ch}} 
\newcommand{\mmz}{\tilde\mm} 
\newcommand{\ctang}[1]{{\rm CoTan}^{#1}}
\newcommand{\tang}[1]{{\rm Tan}^{#1}}
\newcommand{\test}[1]{{\rm Test}(#1)}
\newcommand{\Int}[1]{{\rm Int}(#1)}
\newcommand{\heatl}{{\sf H}}
\newcommand{\heatw}{{\mathscr H}}
\newcommand{\CD}{{\sf CD}}
\newcommand{\RCD}{{\sf RCD}}
\newcommand{\lip}{{\rm lip}}
\renewcommand{\u}{\mathcal U}
\renewcommand{\weakgrad}[1]{|D #1|_w}
\title{On the differential structure of metric measure spaces and applications}
\begin{document}
\author{
   Nicola Gigli\
   \thanks{Universit\'e de Nice. email: \textsf{nicola.gigli@unice.fr}}
   }

\maketitle

\begin{abstract}
The main goals of this paper are:
\begin{itemize}
\item[i)] To develop an abstract differential calculus on metric measure spaces by investigating the duality relations between differentials and gradients of  Sobolev functions. This will be achieved without calling into play any sort of analysis in charts, our assumptions being: the metric space is complete and separable and the measure is Radon and non-negative.
\item[ii)] To  employ these notions of calculus to provide, via integration by parts, a general definition of distributional Laplacian, thus giving a meaning to an expression like $\Delta g=\mu$, where $g$ is a function and $\mu$ is a measure.
\item[iii)] To  show that on spaces with Ricci curvature bounded from below and dimension bounded from above, the Laplacian of the distance function is always a measure and that this measure has the standard sharp comparison properties. This result requires an additional assumption on the space, which reduces to strict convexity of the norm in the case of smooth Finsler structures and is always satisfied on spaces with linear Laplacian, a situation which is analyzed in detail.

\end{itemize}

\end{abstract}

\tableofcontents

\section{Introduction}
The original motivation behind the project which lead to this paper, was to understand if and up to what extent the standard Laplacian comparison estimates for the distance function - valid on smooth Riemannian manifolds with Ricci curvature bounded from below - hold on a non-smooth setting. Recall that if $M$ is a Riemannian manifold with Ricci curvature not less than, say, 0, then for every $x_0\in M$ the function $g(x):=\sfd(x,x_0)$ satisfies
\begin{equation}
\label{eq:lapbase}
\Delta g\leq \frac{N-1}{g},\qquad\textrm{ on }M\setminus\{x_0\},
\end{equation}
where $N$ is (a bound from above on) the dimension of $M$. Similar bounds from above hold if the Ricci is bounded from below by $K\in\R$. Even in a smooth context, the distance function is not smooth, in particular is not $C^2$, therefore the inequality \eqref{eq:lapbase} must be properly interpreted. There are various ways of doing this, a possibility is to think \eqref{eq:lapbase} as an inequality between distributions, so that  its meaning is: the distributional Laplacian of $g$ is a measure whose  singular part w.r.t. the volume measure is non-positive on $M\setminus\{x_0\}$ and such that the density of the absolutely continuous part is bounded from above by $\frac{N-1}{g}$.

In the seminal papers \cite{Lott-Villani09} and \cite{Sturm06I}, \cite{Sturm06II} Lott-Villani on one side and Sturm on the other proposed a general definition of metric measure space with Ricci curvature bounded from below by $K\in \R$ and dimension bounded from above by $N\in(1,\infty]$  (for finite $N$'s, in \cite{Lott-Villani09} only the case $K=0$ was considered), in short, these are called $\CD(K,N)$ spaces. The study of the properties of these spaces is still a very active research area. Broadly speaking, a central question is `which of the properties valid on a manifold with Ricci curvature bounded below and dimension bounded above are true in the non-smooth context?'. 

As said, this paper addresses the problem of the Laplacian comparison for the distance function. Given that we expect such Laplacian to be a measure, part of the work that will be carried out here is to give a meaning, in the setting of metric measure spaces, to expressions like $\Delta g=\mu$, where $g$ is a function and $\mu$ a measure. This is a non-trivial task because the standard analytic tools available in this setting are only sufficient to speak about `the norm of the distributional gradient' of a Sobolev function (sometimes called minimal generalized upper gradient \cite{Cheeger00}, or minimal weak upper gradient \cite{KoskelaMacManus98}, \cite{Shanmugalingam00}), which is not enough to give a meaning to an integration by parts formula. We refer to \cite{Heinonen07}  for an overview on the subject and detailed bibliography (see also \cite{Ambrosio-Gigli-Savare-compact} for recent progresses). In terms of defining the differential of a function, in the seminal paper \cite{Cheeger00}, Cheeger, assuming a doubling condition on the measure and the validity of a local weak Poincar\'e inequality, proved the existence of an abstract notion of differential for which the Leibniz rule holds (for a comparison of his approach with ours, see Remark \ref{re:cheeger}). A different approach has been proposed by Weaver \cite{Weaver01} using the notion of $L^\infty$-module. Yet, these approaches are not sufficient to define integration by parts because the duality relations between differentials and gradients has been not investigated. This is the focus of the current  work.

The distinction between differentials and gradients is particularly important in this framework, because $\CD(K,N)$ spaces include Finsler structures (as proved by Ohta in \cite{OhtaF} by generalizing the analogous result valid for flat normed spaces proved by Cordero-Erasquin, Sturm and Villani - see the last theorem in \cite{Villani09}), so we must be ready to deal with situations where tangent and cotangent vectors - whatever they are - can't be identified. Recall also that on a Finsler setting  the Laplacian is \emph{not} a linear operator (see e.g. \cite{Shen}). In particular, since the construction we propose reduces to the standard one on Finsler spaces, the Laplacian we define is different from the linear, chart dependent Laplacian introduced in \cite{Cheeger00}.

\bigskip

Shortly said, what we will do is to define the $\mm$-a.e. value of $Df(\nabla g)$, i.e. the differential of $f$ applied to the gradient of $g$, for Sobolev functions $f,g$. Then the definition of distributional Laplacian $\Delta g=\mu$ will be given by
\[
\int f\,\d \mu=-\int Df(\nabla g)\,\d\mm,
\]
for any $f$ in an appropriate class of Lipschitz test functions. In order to illustrate the construction we will use, we open a parenthesis and show how it works in a flat normed situation. This will serve as guideline - intended for non-specialists - for the rest of the introduction and provide a quick and simple reference for the procedures that will be used in the paper. Also, it will hopefully make clear why the distributional Laplacian, in general, is not only not linear but also multivalued.

\subsection{The simple case of normed spaces}\label{se:normato}
Consider the space  $(\R^d,\|\cdot\|,\mathcal L^d)$, where $\|\cdot\|$ is any norm, possibly not coming from a scalar product. Assume for the moment that $\|\cdot\|$ is smooth and strictly convex. Given a smooth function $f:\R^d\to\R$, its differential $Df$ is intrinsically well defined as a cotangent vector field, so that we may think at it as a map from $\R^d$ to its dual $(\R^d)^*$.  To define the gradient $\nabla f$, which is a tangent vector field, we need some duality map from the cotangent to the tangent space. This is produced by using the norm $\|\cdot\|$: the differential of the map $\frac{\|\cdot\|^2}{2}:\R^d\to\R$ at some point $v\in\R^d$ is a linear map from $T_v\R^d\sim\R^d$ to $\R$, i.e. a cotangent vector. In other words, the differential of $\frac{\|\cdot\|^2}{2}$ is a map ${\rm Dual}:\R^d\to(\R^d)^*$. If $\|\cdot\|$ is strictly convex, as we are assuming for the moment, ${\rm Dual}$ is injective and can therefore be inverted to get a map ${\rm Dual}^{-1}:(\R^d)^*\to\R^d$. Then the gradient $\nabla g:\R^d\to\R^d$ of the smooth function $g$ is defined by
\[
\nabla g:={\rm Dual}^{-1}(Dg).
\]
The divergence operator $\nabla\cdot$, which takes a smooth tangent vector field $\xi$ and returns a function, is defined as (the opposite of) the adjoint of the differential operator, i.e.:
\[
\int f\,\nabla\cdot\xi\,\d\mathcal L^d:=-\int Df(\xi)\,\d\mathcal L^d,\qquad\forall f\in C^\infty_c(\R^d).
\]
In particular, in order to define the divergence, we need to use the reference measure.

Finally, the Laplacian of the smooth function $g$ is defined as
\[
\Delta g:=\nabla\cdot(\nabla g)=\nabla\cdot({\rm Dual}^{-1}(Dg)),
\]
(in particular,  in order to define $\Delta$ we had to use  both the norm and the reference measure. This is an heuristic explanation of why the correct abstract setting where a Laplacian can be defined is that of metric measure spaces). Notice that the differential and the divergence operators are always linear functionals, but the duality map ${\rm Dual}^{-1}$ is linear if and only if the norm comes from a scalar product. Hence, in 
the general normed situation, the Laplacian is non-linear.

Now suppose that we want to  give a meaning to a formula like $\Delta g=\mu$, where $\mu$ is some measure on $\R^d$, i.e., suppose that we want to provide a notion of distributional Laplacian.  An effect of the non-linearity of $\Delta$, is that there is no hope to `drop all the derivatives on the test function', that is, we can't hope that a formula like
\[
\int f\,\d\Delta g:=\int \Delta^{\rm tr} f\, g\,\d\mathcal L^d,\qquad\forall f\in C^\infty_c(\R^d),
\]
holds, whatever the operator $\Delta^{\rm tr}$ is. Simply because the right hand side would be linear in $g$, while the left one is not. 

Hence, the best we can do is to drop only one derivative on the test function and define
\begin{equation}
\label{eq:defsmooth}
\int f\,\d\Delta g:=-\int Df(\nabla g)\,\d\mathcal L^d,\qquad\forall f\in C^\infty_c(\R^d).
\end{equation}
Observe that  an effect of this approach, is that in order to define the distributional Laplacian, we need to first impose some Sobolev regularity on $g$ in order to be sure that $\nabla g$ is well defined. This is of course different from the standard approach in the Euclidean setting, where distributional derivatives make sense as soon as the function is in $L^1_{\rm loc}$. Yet, a posteriori this is not a big issue, see Remark \ref{re:apriori} for comments in this direction.

\bigskip

In proposing definition \eqref{eq:defsmooth}, which is the standard one in Finsler geometries, we worked under the assumption that $\|\cdot\|$ was smooth and strictly convex, so that the gradient of a Sobolev function was a well defined vector field. Actually, we didn't really use the smoothness assumption: if this fails but the norm is still strictly convex, then the duality map ${\rm Dual}$ is possibly multivalued, but still injective, so that its inverse ${\rm Dual}^{-1}$ is still uniquely defined. A new behavior comes if we drop the assumption of strict convexity:  in this case the map ${\rm Dual}$ is not injective anymore, which means that its inverse ${\rm Dual}^{-1}$ is multivalued. Hence the gradient $\nabla g$ of a function is typically a multivalued vector field. A simple example which shows this behavior is given by the space $(\R^2,\|\cdot\|_{\infty})$ and the linear map $g(x_1,x_2):=x_1$: all the vectors of the kind $(1,x_2)$, with $x_2\in[-1,1]$, have the right to be called gradient of $g$. 

Thus in general, the object $Df(\nabla g)$ is not single valued, not even for smooth $f,g$, so that the best we can do is  to define the two single valued maps $D^+f(\nabla g)$, $D^-f(\nabla g)$ by
\begin{equation}
\label{eq:def1}
D^+f(\nabla g)(x):=\max_{v\in \nabla g(x)}Df(v),\qquad\qquad D^-f(\nabla g)(x):=\min_{v\in \nabla g(x)}Df(v),
\end{equation}
and then the distributional Laplacian of $g$ by the chain of inequalities
\begin{equation}
\label{eq:lapfin}
-\int D^+f(\nabla g)\,\d\mathcal L^d\leq \int f\,\d\Delta g\leq -\int D^-f(\nabla g)\,\d\mathcal L^d,\qquad\forall f\in C^\infty_c(\R^d).
\end{equation}
A clear drawback of this approach is that in general the Laplacian can be multivalued (see Proposition \ref{prop:comp} for explicit examples).  Yet, even for this potentially multivalued notion, a reasonable calculus can be developed, see Section \ref{se:lapcalc}.

\bigskip

Both for theoretical and practical reasons, it is good to have an alternative description of the objects $D^\pm f(\nabla g)$.  Here it comes a key point: it holds
\begin{align}
\label{eq:def2}
D^+f(\nabla g)&=\inf_{\eps >0}\frac{\| D(g+\eps f)\|^2_*-\|D g\|_*^2}{2\eps},\\
\label{eq:def3}
D^-f(\nabla g)&=\sup_{\eps <0}\frac{\|D(g+\eps f)\|^2_*-\|D g\|_*^2}{2\eps}.
\end{align}
To see why, observe that for any $g\in C^\infty_c(\R^d)$, any $x\in\R^d$ and any tangent vector $v\in T_x\R^d\sim\R^d$ it holds
\[
Dg(v)\leq\frac{\|Dg(x)\|_*^2}{2}+\frac{\|v\|^2}2,
\]
and that $v\in\nabla g(x)$ if and only if
\[
Dg(v)\geq\frac{\|Dg(x)\|_*^2}{2}+\frac{\|v\|^2}2.
\]
Write the first inequality with $g$ replaced by $g+\eps f$, subtract the second and divide by $\eps>0$ (resp. $\eps<0$) to get that $\leq$ holds in \eqref{eq:def2} (resp. $\geq$ in \eqref{eq:def3}). The fact that equality holds can be deduced with a compactness argument from the convexity of $\eps\mapsto \|D(g+\eps f)\|_*^2(x)$,  by considering $v_\eps\in \nabla (g+\eps f)$ and letting $\eps\downarrow 0$ (resp. $\eps\uparrow 0$).
\subsection{The general situation}
Let us now analyze how to deal with the case of metric measure spaces $(X,\sfd,\mm)$. Recall that our assumptions are: $(X,\sfd)$ is complete and separable and $\mm$ is a Radon non-negative measure. There is a standard notion concerning the meaning of `$f:X\to\R$ has a $p$-integrable $p$-weak upper gradient', $p\in(1,\infty)$, one of the various equivalent ways (see \cite{Ambrosio-Gigli-Savare-pq} and Appendix \ref{app:sob})  of formulating it is: there exists $G\in L^p(X,\mm)$ such that
\begin{equation}
\label{eq:introsob}
\int|f(\gamma_1)-f(\gamma_0)|\,\d\ppi(\gamma)\leq\iint_0^1G(\gamma_t)|\dot\gamma_t|\,\d t\,\d\ppi(\gamma),
\end{equation}
for any $\ppi\in\prob{C([0,1],X)}$ such that:
\begin{itemize}
\item $\displaystyle{\iint_0^1|\dot\gamma_t|^q\,\d t\,\d\ppi(\gamma)<\infty}$, where $q$ is the conjugate exponent of $p$,
\item  $(\e_t)_\sharp\ppi\leq C\mm$ for any $t\in[0,1]$ and some $C\in\R$, where $\e_t:C([0,1],X)\to X$ is the evaluation map defined by $\e_t(\gamma):=\gamma_t$ for any $\gamma\in C([0,1],X)$.
\end{itemize} 
The class of such $f$'s will be called $p$-Sobolev class and denoted by $\s^p(X,\sfd,\mm)$. Notice that we are not asking for any integrability of the functions themselves.  For $f\in\s^p(X,\sfd,\mm)$ there exists a minimal $G\geq 0$ satisfying \eqref{eq:introsob}, which we will denote by $\weakgrad f$ ($\weakgrad f$ is typically called minimal $p$-weak/generalized upper gradient, but from the perspective we are trying to propose, this object is closer to a differential than to a gradient, given that it acts in duality with derivative of curves, whence the notation used). For the potential dependence of $\weakgrad f$ on $p$, see  Remark \ref{re:notazione}.

\bigskip

\noindent{\bf Chapter \ref{se:diff}} The real valued map $\weakgrad f$ is what plays the role of the norm of the distributional differential for Sobolev functions $f\in\s^p(X,\sfd,\mm)$. Hence by analogy with \eqref{eq:def2}, \eqref{eq:def3}, for $f,g\in\s^p(X,\sfd,\mm)$ we define $\mm$-a.e. the maps $D^\pm f(\nabla g):X\to\R$ by:
\begin{equation}
\label{eq:def5}
\begin{split}
D^+f(\nabla g)&:=\inf_{\eps>0}\frac{\weakgrad{(g+\eps f)}^2-\weakgrad g^2}{2\eps}=\inf_{\eps>0}\frac{\weakgrad{(g+\eps f)}^p-\weakgrad g^p}{p\eps\weakgrad g^{p-2}},\\
D^-f(\nabla g)&:=\sup_{\eps<0}\frac{\weakgrad{(g+\eps f)}^2-\weakgrad g^2}{2\eps}=\sup_{\eps<0}\frac{\weakgrad{(g+\eps f)}^p-\weakgrad g^p}{p\eps\weakgrad g^{p-2}}.
\end{split}
\end{equation}
Since in general $D^+f(\nabla g)\neq D^-f(\nabla g)$, for given $g$ the maps  $f\mapsto D^\pm f(\nabla g)$ are typically not linear, yet some structural properties are retained even in this generality, because $f\mapsto D^+ f(\nabla g)$ is pointwise convex, and  $f\mapsto D^- f(\nabla g)$ pointwise concave.  For given $f$, the maps $g\mapsto D^\pm f(\nabla g)$ have some very general semicontinuity properties, see Proposition \ref{prop:convconc}. We will call spaces such that $D^+f(\nabla g)=D^-f(\nabla g)$ $\mm$-a.e. for any $f,g\in\s^p(X,\sfd,\mm)$, $q$-infinitesimally strictly convex spaces: in normed spaces, this notion reduces to the strict convexity of the norm. On such spaces, the common value of $D^+f(\nabla g)$ and $D^- f(\nabla g)$ will be denoted by $Df(\nabla g)$.

From the definition \eqref{eq:def5} it is not hard to prove that $D^\pm f(\nabla g)$ satisfies natural chain rules. The Leibniz rule could then be proved as a consequence of them, see Remark \ref{re:horver}. Yet we will proceed in a different way, as this will give us the opportunity to introduce important concepts which play a key role in the proof of the Laplacian comparison: we will define the notion of gradient vector field.  The idea is the following. On general metric measure spaces we certainly do not have tangent vectors. But we have curves. And we can think at a tangent vector as the derivation at time 0 along a sufficiently smooth curve. Even on $\R^d$, in order to get this interpretation it is not sufficient to deal with Lipschitz curves, because nothing ensures that their derivatives at 0 exist. In order to somehow enforce this derivative to exist in a non-smooth setting, we will take advantage of ideas coming from the gradient flow theory: notice that from \eqref{eq:introsob} it is not difficult to see that for any $g\in\s^p(X,\sfd,\mm)$ and $\ppi$ as in \eqref{eq:introsob} it holds
\begin{equation}
\label{eq:def6}
\lims_{t\downarrow0}\int\frac{g(\gamma_t)-g(\gamma_0)}{t}\,\d\ppi(\gamma)\leq \frac1p\int\weakgrad g^p(\gamma_0)\,\d\ppi(\gamma)+\frac1q\lims_{t\downarrow0}\frac1t\iint_0^t|\dot\gamma_t|^q\,\d t\d\ppi(\gamma).
\end{equation}
Therefore if for some $g,\ppi$ it happens that 
\begin{equation}
\label{eq:def7}
\limi_{t\downarrow0}\int\frac{g(\gamma_t)-g(\gamma_0)}{t}\,\d\ppi(\gamma)\geq \frac1p\int\weakgrad g^p(\gamma_0)\,\d\ppi(\gamma)+\frac1q\lims_{t\downarrow0}\frac1t\iint_0^t|\dot\gamma_t|^q\,\d t\d\ppi(\gamma),
\end{equation}
it means that the curves where $\ppi$ is concentrated `indicate the direction of maximal increase of $g$ at time $t=0$'. In a smooth setting, this would mean $\gamma'_0=\nabla g(\gamma_0)$ for $\ppi$-a.e. $\gamma$. We then use \eqref{eq:def7} as definition and say that if it holds, then $\ppi$ is a plan which $q$-represents $\nabla g$. Arguing exactly as at the end of  the Section \eqref{se:normato}, we get the following crucial result: for $f,g\in \s^p(X,\sfd,\mm)$ and $\ppi$ which $q$-represents $\nabla g$ it holds
\begin{equation}
\label{eq:introhv}
\begin{split}
\int D^+f(\nabla g)\weakgrad g^{p-2}\,\d\ppi(\gamma_0)&\geq \lims_{t\downarrow0}\int\frac{f(\gamma_t)-f(\gamma_0)}t\,\d\ppi(\gamma),\\
\int D^-f(\nabla g)\weakgrad g^{p-2}\,\d\ppi(\gamma_0)&\leq \limi_{t\downarrow0}\int\frac{f(\gamma_t)-f(\gamma_0)}t\,\d\ppi(\gamma).
\end{split}
\end{equation}
Notice that if the space is $q$-infinitesimally strictly convex, then \eqref{eq:introhv} tells that the limit $\lim_{t\downarrow0}\int\frac{f(\gamma_t)-f(\gamma_0)}t\,\d\ppi(\gamma)$ exists and in particular the map
\[
\qquad\s^p(X,\sfd,\mm)\ni f\qquad\mapsto\qquad  \lim_{t\downarrow0}\int\frac{f(\gamma_t)-f(\gamma_0)}t\,\d\ppi(\gamma),
\]
is well defined and linear, so that we can really think at plans $\ppi$ representing gradients as differentiation operators (see also Appendix \ref{app:cottan}).

The non-trivial task here is to prove that plans $q$-representing gradients exist. This is achieved by calling into play optimal transport as key tool: we will combine  gradient flow theory, a powerful duality principle due to Kuwada \cite{Kuwada10} and a general superposition principle due to Lisini \cite{Lisini07}, see the proof of Lemma \ref{le:planf}. Such existence result as well as part of the computations done in this chapter were already present, although in an embryonal form and for some special class of spaces, in \cite{Ambrosio-Gigli-Savare11bis}.

Coming back to the Leibniz rule, it will be easy to show - essentially as a consequence of its validity on the real line - that for $\ppi$ which $q$-represents $\nabla g$, $g\in \s^p(X,\sfd,\mm)$ and non-negative $f_1,f_2\in \s^p(X,\sfd,\mm)\cap L^\infty(X,\mm)$ it holds
\[
\begin{split}
\lims_{t\downarrow 0}\int\frac{f_1(\gamma_t)f_2(\gamma_t)-f_1(\gamma_0)f_2(\gamma_0)}{t}\,\d\ppi(\gamma)\leq&\lims_{t\downarrow 0}\int f_1(\gamma_0)\frac{f_2(\gamma_t)-f_2(\gamma_0)}{t}\,\d\ppi(\gamma)\\
&+\lims_{t\downarrow 0}\int f_2(\gamma_0)\frac{f_1(\gamma_t)-f_1(\gamma_0)}{t}\,\d\ppi(\gamma).
\end{split}
\]
From this inequality and a little bit of work,  taking advantage of \eqref{eq:introhv} it will then be possible to show that the Leibniz rule holds in the following general form:
\[
\begin{split}
 D^+(f_1f_2)(\nabla g)&\leq f_1D^{s_1}f_2(\nabla g)+f_2D^{s_2}f_1(\nabla g),\\
 D^-(f_1f_2)(\nabla g)&\geq f_1D^{-s_1}f_2(\nabla g)+f_2D^{-s_2}f_1(\nabla g),
\end{split}
\]
$\mm$-a.e., where $s_i:={\rm sign}f_i$, $i=1,2$.

\bigskip

\noindent{\bf Chapter \ref{se:lap}} Once a rigorous meaning to $D^\pm f(\nabla g)$ is given,  the definition of Laplacian can be proposed along the same lines of \eqref{eq:lapfin}. We say that $g:X\to\R$ is in the domain of the Laplacian, and write $g \in D(\bd )$, provided it belongs to $\s^p(X,\sfd,\mm)$ for some $p\in(1,\infty)$ and there exists a Radon  measure $\mu$ such that
\begin{equation}
\label{eq:introlap}
-\int D^+f(\nabla g)\,\d\mm\leq \int f\,\d\mu\leq-\int D^-f(\nabla g)\,\d\mm,
\end{equation}
holds for any $f:X\to\R$ Lipschitz, in $L^1(X,|\mu|)$ and with support bounded and of finite $\mm$-measure - see Definition \ref{def:lap}.  In this case we write $\mu\in \bd g$ (we use the bold character to emphasize  that we will always think of the Laplacian as a measure). The calculus tools developed in Chapter \ref{se:diff} allow to prove that also for this general potentially non-linear and multivalued notion of Laplacian, a reasonable calculus can be developed. For instance, the chain rule 
\[
\bd(\varphi\circ g)\supset \varphi'\circ g\,\bd g+\varphi''\circ g\weakgrad g^2\,\mm,
\]
holds under very general assumptions on $g,\varphi$  (see Propositions \ref{prop:chainlapl} and \ref{prop:chainlinear}).

A particularly important class of spaces is that of `infinitesimally Hilbertian spaces', i.e. those such that $W^{1,2}(X,\sfd,\mm)$ is an Hilbert space. In these spaces, the Laplacian is unique and linearly depends on $g$, at least for $g\in \s^2(X,\sfd,\mm)$. The key consequence of this assumption is that, as already noticed in \cite{Ambrosio-Gigli-Savare11bis}, not only the space is 2 infinitesimally strictly convex, so that $Df(\nabla g)$ is well defined for $f,g\in\s^2(X,\sfd,\mm)$, but also
\[
D f(\nabla g)=Dg(\nabla f),\qquad\mm-a.e.\qquad\qquad\forall f,g\in\s^2(X,\sfd,\mm).
\]
This identity is the non-smooth analogous of the fact that on Riemannian framework we can identify differentials and gradients via Riesz theorem.
This common value will be denoted by $\nabla f\cdot\nabla g$ to emphasize its symmetry. An important consequence of such symmetry  is that the Leibniz rule for the Laplacian
\[
\bd (g_1g_2)=g_1\bd g_2+g_2\bd g_1+2\nabla g_1\cdot\nabla g_2\mm,
\]
holds, see Theorem \ref{thm:leiblap}.

\bigskip

On a smooth setting, a completely different approach to a definition of distributional Laplacian is to look at the short time behavior of the heat flow. It is therefore natural to question whether such approach  is doable on abstract spaces and how it compares with the one we just proposed. Certainly, in order to do this, one has to deal with spaces where  a `well-behaved' heat flow exists. We will analyze this approach, and show that under pretty general assumptions it produces the same notion as the one coming from integration by parts, on spaces with Riemannian Ricci curvature bounded from below. This class of spaces, introduced in \cite{Ambrosio-Gigli-Savare11bis}, is the class of infinitesimally Hilbertian $\CD(K,\infty)$ spaces, see Definition \ref{def:rcd}.

\bigskip

\noindent{\bf Chapter \ref{se:comp}} Our task is now to prove that the sharp Laplacian comparison estimates for the distance function are true in abstract $\CD(K,N)$ spaces. The problem here is that the typical proof of these comparison results relies either on Jacobi fields calculus, or on the Bochner inequality. None of the two tools are available, at least as of today, in the non-smooth setting. 

The only information that we have is the one coming from the $\CD(K,N)$ condition. In order to illustrate how to get the Laplacian comparison out of this, let us assume for a moment that we are dealing with a smooth Finsler manifold $(F,\sfd,\mm)$ that satisfies the $\CD(0,N)$ condition and such that the norms in the tangent spaces are all strictly convex. The $\CD(0,N)$ condition tells, in this case, that for any $\mu_0,\mu_1\in\prob F$ with bounded support the unique   (constant speed and minimizing) geodesic $(\mu_t)$ on the Wasserstein space $(\probt F,W_2)$ connecting $\mu_0$ to $\mu_1$ satisfies
\begin{equation}
\label{eq:cd0n}
-\int\rho_t^{1-\frac1N}\,\d\mm\leq-(1-t)\int\rho_0^{1-\frac1N}\,\d\mm-t\int\rho_1^{1-\frac1N}\,\d\mm,\qquad\forall t\in[0,1],
\end{equation}
where $\rho_t$ is the density of the absolutely continuous part of $\mu_t$ w.r.t. $\mm$. Fix $x_0\in F$, let $\varphi(x):=\frac{\sfd^2(x,x_0)}{2}$ and let $\rho_0$ be an arbitrary smooth probability density with compact support. Put $\mu:=\rho_0\mm$ and notice that the only geodesic connecting $\mu_0:=\mu$ to $\mu_1:=\delta_{x_0}$ is given by $\mu_t:=(\exp(-t\nabla\varphi))_\sharp\mu$. Hence, letting $\rho_t$ be the density of $\mu_t$, by explicit computation we get
\begin{equation}
\label{eq:poi}
\lim_{t\downarrow0}\frac{\displaystyle{-\int\rho_t^{1-\frac1N}\,\d\mm +\int\rho_0^{1-\frac1N}\,\d\mm }}t=-\frac1N\int D\big(\rho^{1-\frac1N}\big)(\nabla\varphi)\,\d\mm.
\end{equation}
On the other hand, \eqref{eq:cd0n} gives
\begin{equation}
\label{eq:poi2}
\lims_{t\downarrow0}\frac{\displaystyle{-\int\rho_t^{1-\frac1N}\,\d\mm +\int\rho_0^{1-\frac1N}\,\d\mm }}t\leq \int \rho_0^{1-\frac1N}\,\d\mm,
\end{equation}
so that it holds
\[
\int \rho_0^{1-\frac1N}\,\d\mm\geq -\frac1N\int  D\big(\rho_0^{1-\frac1N}\big)(\nabla\varphi)\,\d\mm.
\]
Given  that $\rho_0$  is arbitrary, non-negative and was chosen independently on $\varphi$, we get $\Delta\varphi\leq N$, which - up to a chain rule - is \eqref{eq:lapbase}. For bounds on the Ricci curvature different than 0, \eqref{eq:cd0n} is modified into a distorted geodesic convexity of $\rho\mapsto -\int\rho^{1-\frac1N}\,\d\mm$, and the very same computations give the general sharp  Laplacian comparison estimate (we also remark that on a smooth setting, taking two derivatives of the internal energy instead than one leads, via the $\CD(K,N)$ condition, to the Bochner inequality - see \cite{Bochner-CD}).

The difficulty in proceeding this way  in a non-smooth setting lies in proving   that \eqref{eq:poi} holds, at least with $\geq $ replacing $=$. Notice indeed that we don't have any sort of change of variable formula. The idea to overcome this issue, which was also used in \cite{Ambrosio-Gigli-Savare11bis}, is to first notice - as in the proof of Proposition 3.36 of \cite{Lott-Villani09} - that the convexity of $z\mapsto u_N(z):=-z^{1-\frac1N}$ yields
\[
\begin{split}
\frac{\displaystyle{-\int\rho_t^{1-\frac1N}\,\d\mm +\int\rho_0^{1-\frac1N}\,\d\mm }}t&\geq\frac1t\int u_N'(\rho_0)(\rho_t-\rho_0)\,\d\mm=\frac1t\int u_N'(\rho_0)\circ \e_t-u_N'(\rho_0) \circ \e_0\,\d\ppi,
\end{split}
\]
where $\ppi\in\prob{C([0,1],X}$ is such that $(\e_t)_\sharp\ppi=\mu_t$ for any $t\in[0,1]$ and $\iint_0^1|\dot\gamma_t|^2\,\d t\,\d\ppi(\gamma)=W_2^2(\mu_0,\mu_1)$. Then, taking advantage of Cheeger's results in \cite{Cheeger00}, one can see that $\ppi$ 2-represents $\nabla(-\varphi)$ (see Corollary \ref{cor:bm} for the rigorous statement), hence the limit as $t\downarrow0$ of the last term in the previous expression can be estimated using \eqref{eq:introhv} to get
\[
\begin{split}
\limi_{t\downarrow0}\frac1t\int u_N'(\rho_0)\circ \e_t-u_N'(\rho_0) \circ \e_0\,\d\ppi&\geq \int D^-(u_N'(\rho))(\nabla(-\varphi))\rho_0\,\d\mm\\
&=-\frac1N\int D^+\big(\rho_0^{1-\frac1N}\big)(\nabla\varphi)\,\d\mm.
\end{split}
\]
Using \eqref{eq:poi2}, which remains valid in the abstract setting, we then get
\begin{equation}
\label{eq:poi3}
-\int D^+\big(\rho_0^{1-\frac1N}\big)(\nabla\varphi)\,\d\mm\leq N\int \rho_0^{1-\frac1N}\,\d\mm,
\end{equation}
this inequality being true for any Lipschitz probability density $\rho_0$ with compact support (see the proof of Theorem \ref{thm:controllo} for the rigorous justification). Notice that from \eqref{eq:poi3} and the definition \eqref{eq:introlap} we could deduce that $\bd \varphi\leq N\mm$ if there was $D^-\big(\rho_0^{1-\frac1N}\big)$ in place of $D^+\big(\rho_0^{1-\frac1N}\big)$ in the left hand side. This means that at least for spaces which are infinitesimally strictly convex for some $q\in(1,\infty)$, where  $ D^+\big(\rho_0^{1-\frac1N}\big)(\nabla\varphi)= D^-\big(\rho_0^{1-\frac1N}\big)(\nabla\varphi)$,  we can conclude that the Laplacian comparison holds, in line with what is known in the smooth Finsler setting (see \cite{Sturm-Ohta-CPAM}).

\bigskip

\emph{I wish to warmly thank L. Ambrosio, C. De Lellis, G. Savar\'e and K.-T. Sturm for valuable conversations I had with them while writing this work.}

\emph{In preparing this paper, I was partially supported by ERC AdG GeMeThNES.}

\section{Preliminaries}
\subsection{Metric spaces and Wasserstein distance}
All the metric spaces $(X,\sfd)$ considered will be complete and separable.  The open ball of center $x\in X$ and radius $r>0$ is denoted by $B_r(x)$. $C([0,1],X)$ is the space of continuous curves from $[0,1]$ to $X$ equipped with the $\sup$ norm, which is complete and separable. For $t\in [0,1]$, the evaluation map $\e_t:C([0,1],X)\to X$ is given by
\[
\e_t(\gamma):=\gamma_t,\qquad\forall\gamma\in C([0,1],X).
\]
For $t,s\in[0,1]$, the restriction operator ${\rm restr}_t^s:C([0,1],X)\to C([0,1],X)$ is defined by
\begin{equation}
\label{eq:restr}
({\rm restr}_t^s(\gamma))_r:=\gamma_{t+r(s-t)},\qquad\forall\gamma\in C([0,1],X).
\end{equation}
So that for $t<s$, ${\rm restr}_t^s$ restricts the curve $\gamma$ to the interval $[t,s]$ and then `stretches' it on the whole $[0,1]$, for $t>s$ also a change in the orientation occurs.

A curve $\gamma\in C([0,1],X)$ is said absolutely continuous provided there exists $f\in L^1([0,1])$ such that
\begin{equation}
\label{eq:acc}
\sfd(\gamma_t,\gamma_s)\leq\int_t^s  f(r)\,\d r,\qquad\forall t,s\in [0,1],\ t<s. 
\end{equation}
The set of absolutely continuous curves from $[0,1]$ to $X$ will be denoted by $AC([0,1],X)$. More generally, if the function $f$ in \eqref{eq:acc} belongs to $L^q([0,1])$, $q\in[1,\infty]$, $\gamma$ is said $q$-absolutely continuous, and $AC^q([0,1],X)$ is  the corresponding set of $q$-absolutely continuous curves. It turns out (see for instance Theorem 1.1.2 of \cite{Ambrosio-Gigli-Savare05}) that for $\gamma\in AC^q([0,1],X)$ the limit
\[
\lim_{h\to 0}\frac{\sfd(\gamma_{t+h},\gamma_t)}{|h|},
\]
exists for a.e. $t\in [0,1]$, and defines an $L^q$ function. Such function is called metric speed or metric derivative, is denoted by $|\dot\gamma_t|$ and is the minimal (in the a.e. sense) $L^q$ function which can be chosen as $f$ in the right hand side of \eqref{eq:acc}.

$(X,\sfd)$ is said to be a length space provided for any $x,y\in X$ it holds
\[
\sfd(x,y)=\inf\int_0^1 |\dot\gamma_t|\,\d s,
\]
the infimum being taken among all $\gamma\in AC([0,1],X)$ such that $\gamma_0=x$, $\gamma_1=y$.  If the infimum is always a minimum, then the space is said geodesic, and a geodesic from $x$ to $y$ is any minimizer which is parametrized by constant speed (they are sometime referred as constant speed and minimizing geodesics). Equivalently, $\gamma$ is a geodesic from $x$ to $y$ provided
\[
\sfd(\gamma_s,\gamma_t)=|s-t|\sfd(\gamma_0,\gamma_1),\qquad\forall t,s\in[0,1],\qquad\qquad\gamma_0=x,\qquad\gamma_1=y.
\]
The space of all geodesics on $X$ will be denoted by $\geo (X)$. It  is a closed subset of $C([0,1],X)$.

\bigskip

Given Borel functions $f:X\to\R$, $G:X\to[0,\infty]$ we say that $G$ is an upper gradient of $f$ provided
\[
|f(\gamma_1)-f(\gamma_0)|\leq \int_0^1 G(\gamma_t)|\dot\gamma_t|\,\d t,\qquad\forall \gamma\in AC([0,1],X).
\]
For $f:X\to\R$  the local Lipschitz constant $\lip (f):X\to[0,\infty]$  is defined by
\begin{equation}
\label{eq:loclip}
\lip(f)(x):=\lims_{y\to x}\frac{|f(y)-f(x)|}{\sfd(y,x)},\quad\textrm{ if }x\textrm{ is not isolated, }0\textrm{ otherwise}.
\end{equation}
The one sided analogous $\lip^+(f),\lip^-(f)$, defined by
\begin{equation}
\label{eq:slopes}
\lip^+(f)(x):=\lims_{y\to x}\frac{(f(y)-f(x))^+}{\sfd(y,x)},\qquad\qquad\lip^-(f)(x):=\lims_{y\to x}\frac{(f(y)-f(x))^-}{\sfd(y,x)},
\end{equation}
if $x$ is not isolated and 0 otherwise, are called ascending and descending slope respectively, where $z^+:=\max\{z,0\}$, $z^-:=\max\{-z,0\}$. It is easy to check that if $f$ is locally Lipschitz, then $\lip^\pm(f)$, $\lip(f)$ are all upper gradients of $f$.

The space of all Lipschitz real valued functions on $X$ will be denoted by ${\rm LIP}(X)$.

\bigskip

Given a Borel measure $\sigma$ on $X$, by $\supp(\sigma)$ we intend the smallest closed set where $\sigma$ is concentrated.

The set of Borel probability measures on $X$ will be denoted by $\prob X$. For $q\in[1,\infty)$, $\probp Xq\subset\prob X$ is the set of measures with finite $q$-moment, i.e. $\mu\in\probp Xq$ if $\mu\in\prob X$ and $\int\sfd^q(x,x_0)\,\d\mu(x)<\infty$ for some (and thus every) $x_0\in X$. $\probp Xq$ is endowed with the $q$-Wasserstein distance $W_q$, defined by
\[
W_q^q(\mu,\nu):=\inf_{\sggamma}\int \sfd^q(x,y)\,\d\ggamma(x,y),
\]
where the $\inf$ is taken among all transport plans $\ggamma$, i.e. among all $\ggamma\in\prob{X\times X}$ such that $\pi^1_\sharp\ggamma=\mu$, $\pi^2_\sharp\ggamma=\nu$.

We recall the following superposition result, proved by Lisini in \cite{Lisini07} as a generalization of an analogous result proved in \cite{Ambrosio-Gigli-Savare05} in the Euclidean context.
\begin{theorem}[Lisini]\label{thm:lisini} Let $(X,\sfd)$ be a complete and separable metric space, $q\in(1,\infty)$, and  $[0,1]\ni t\mapsto \mu_t\in \probp Xq$ a $q$-absolutely continuous curve w.r.t. $W_q$.  Then there exists a measure $\ppi\in\prob{C([0,1],X)}$ concentrated on $AC^q([0,1],X)$ such that 
\begin{equation}
\label{eq:lisini}
\begin{split}
(\e_t)_\sharp\ppi&=\mu_t,\qquad\forall t\in[0,1],\\
\int|\dot\gamma_t|^q\,\d\ppi(\gamma)&=|\dot\mu_t|^q,\qquad a.e.\ t.
\end{split}
\end{equation}
\end{theorem}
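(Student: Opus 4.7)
My plan is to follow a discretization-and-limit scheme: approximate $(\mu_t)$ by piecewise ``geodesic-like'' interpolations between a growing number of time-snapshots, lift these to probability measures on curves, and extract a weak limit. The absence of geodesics in the general complete separable space $X$ forces a detour through a Banach-space embedding, which I expect to be the only delicate step.

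I would first fix, for each $n \geq 1$, the uniform partition $t_k^n := k/n$, pick an optimal $W_q$-coupling $\ggamma_k^n$ of $\mu_{t_k^n}$ and $\mu_{t_{k+1}^n}$, and glue these consecutively to produce $\sigma^n \in \prob{X^{n+1}}$ with the prescribed pairwise marginals. Since $X$ may carry no curves between prescribed endpoints, I would isometrically embed $\iota : X \hookrightarrow B$ into a separable Banach space $B$ (for instance the closed linear span in $\ell^\infty$ of a Kuratowski-type embedding) and, on each subinterval $[t_k^n, t_{k+1}^n]$, interpolate linearly in $B$ between $\iota(x_k)$ and $\iota(x_{k+1})$. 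This defines a measurable map $\Phi^n : X^{n+1} \to C([0,1], B)$, and I set $\ppi^n := (\Phi^n)_\sharp \sigma^n$. Three estimates are then immediate: the marginals at partition nodes are exact, i.e.\ $(\e_{t_k^n})_\sharp \ppi^n = \iota_\sharp \mu_{t_k^n}$; at intermediate times they differ from $\iota_\sharp \mu_t$ by $W_q$-distance at most $W_q(\mu_{t_k^n}, \mu_{t_{k+1}^n}) \to 0$; and the $q$-action bound
\[
\int\!\!\int_0^1 |\dot\gamma_t|_B^q \, \d t\, \d\ppi^n(\gamma) \;=\; \sum_{k=0}^{n-1} n^{q-1} W_q^q(\mu_{t_k^n}, \mu_{t_{k+1}^n}) \;\leq\; \int_0^1 |\dot\mu_t|^q \, \d t
\]
follows because each interpolated curve has constant speed $n\,\sfd(x_k, x_{k+1})$ on $[t_k^n, t_{k+1}^n]$ and from H\"older applied to the $q$-absolute continuity of $(\mu_t)$.

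With these in hand, uniform $q$-action control plus tightness of the pointwise marginals yields tightness of $(\ppi^n)$ in $\prob{C([0,1], B)}$ by a standard Ascoli-type criterion, so a subsequential weak limit $\ppi$ exists. Passing to the limit in the marginal identity gives $(\e_t)_\sharp \ppi = \iota_\sharp \mu_t$ at every $t$, and since $\iota(X)$ is closed in $B$ and each curve is continuous, $\ppi$-a.e.\ curve takes values in $\iota(X)$; identifying $\iota(X) \cong X$ then yields $\ppi \in \prob{C([0,1], X)}$ with the desired marginals. Lower semicontinuity of the $q$-action under weak convergence combined with the displayed bound gives $\int\!\!\int_0^1 |\dot\gamma_t|^q\, \d t\, \d\ppi \leq \int_0^1 |\dot\mu_t|^q\, \d t$, while the converse pointwise inequality $|\dot\mu_t|^q \leq \int |\dot\gamma_t|^q\, \d\ppi$ for a.e.\ $t$ follows by testing $W_q$ against the coupling $(\e_s, \e_t)_\sharp \ppi$ and using Lebesgue differentiation, since
\[
W_q^q(\mu_s, \mu_t) \;\leq\; \int \sfd(\gamma_s, \gamma_t)^q\, \d\ppi(\gamma) \;\leq\; |t-s|^{q-1} \int_s^t\!\!\int |\dot\gamma_r|^q\, \d\ppi(\gamma)\, \d r.
\]
Integrating this pointwise lower bound then forces equality almost everywhere, which is \eqref{eq:lisini}. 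The step I expect to be the most delicate is the identification at the end: the interpolated curves intentionally leave $X$ to exploit the linear structure of $B$, and one must verify pointwise-in-$t$ that the weak limit pulls them back into $X$.
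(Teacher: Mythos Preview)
The paper does not supply its own proof of this statement: Theorem~\ref{thm:lisini} is quoted as a known result, attributed to Lisini \cite{Lisini07} (generalizing the Euclidean case in \cite{Ambrosio-Gigli-Savare05}), and is used as a black box later on. So there is no ``paper's proof'' to compare against.

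That said, your sketch is essentially a faithful outline of Lisini's original argument: discretize in time, glue optimal $W_q$-couplings between consecutive snapshots, bypass the lack of geodesics by isometrically embedding into a separable Banach space and interpolating linearly there, establish tightness from the uniform $q$-action bound plus tightness of marginals, pass to a weak limit, and close with the two-sided energy comparison. The steps you flag as delicate are the right ones, and your treatment of them is sound: completeness of $X$ makes $\iota(X)$ closed in $B$, so the marginal identity $(\e_t)_\sharp\ppi=\iota_\sharp\mu_t$ for every $t$ together with continuity of paths forces $\ppi$-a.e.\ curve to stay in $\iota(X)$; and the pointwise lower bound $|\dot\mu_t|^q\leq\int|\dot\gamma_t|^q\,\d\ppi$ combined with the integrated upper bound forces a.e.\ equality. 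One minor remark: to get the tightness criterion to apply cleanly you should take $B$ to be the closed linear span of $\iota(X)$ in $\ell^\infty$ (so that $B$ is itself separable), which you already hint at.
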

We remark that for a plan $\ppi\in\prob{C([0,1],X)}$ such that $\iint_0^1|\dot\gamma_t|^q\,\d t\,\d\ppi(\gamma)<\infty$ and satisfying $(\e_t)_\sharp\ppi=\mu_t$ for any $t\in[0,1]$, it always holds $\int|\dot\gamma_t|^q\,\d\ppi(\gamma)\geq |\dot\mu_t|^q$ for a.e. $t\in[0,1]$, so that Lisini's result is of variational nature, as it selects plans with minimal energy.

\bigskip

Although we will have to work with general $q$-Wasserstein distances, we will reserve to the special case $q=2$ the wording of `$c$-concavity', `Kantorovich potential' and `optimal geodesic plan'. Thus the $c$-transform $\varphi^c:X\to\R\cup\{-\infty\}$ of a map $\varphi:X\to\R\cup\{-\infty\}$ is defined by
\[
\varphi^c(y)=\inf_{x\in X}\frac{\sfd^2(x,y)}{2}-\varphi(x).
\] 
A function $\varphi:X\to\R\cup\{-\infty\}$ is called $c$-concave provided it is not identically $-\infty$ and it holds $\varphi=\psi^c$ for some $\psi$. Notice that for a generic function $\varphi:X\to\R\cup\{-\infty\}$ it always holds $\varphi^{ccc}=\varphi^c$ and
\begin{equation}
\label{eq:cc}
\varphi^{cc}\geq\varphi,
\end{equation}
thus $\varphi$ is $c$-concave if and only if $\varphi^{cc}\leq \varphi$. The $c$-superdifferential $\partial^c\varphi$ of the $c$-concave map $\varphi$ is the subset of $X^2$ defined by
\[
(x,y)\in\partial^c\varphi\qquad\Leftrightarrow\qquad\varphi(x)+\varphi^c(y)=\frac{\sfd^2(x,y)}{2}.
\]
The set $\partial^c\varphi(x)\subset X$ is the set of those $y$'s such that $(x,y)\in\partial^c\varphi$.

Given $\mu,\nu\in\probt X$, a Kantorovich potential from $\mu$ to $\nu$ is any $c$-concave map $\varphi$ such that
\[
\frac12W_2^2(\mu,\nu)=\int\varphi\,\d\mu+\int\varphi^c\,\d\nu.
\]
The set $\gopt(\mu,\nu)\subset\prob{\geo(X)}$ is the set of those $\ppi$'s such that
\[
W_2\big((\e_s)_\sharp\ppi,(\e_t)_\sharp\ppi\big)=|s-t|W_2\big((\e_1)_\sharp\ppi,(\e_0)_\sharp\ppi\big),\qquad\forall t,s\in[0,1],\qquad(\e_0)_\sharp\ppi=\mu,\quad(\e_1)_\sharp\ppi=\nu.
\]
Recall that $(\mu_t)\subset\probt X$ is a constant speed geodesic if and only if for some $\ppi\in\gopt(\mu_0,\mu_1)$ it holds
\[
(e_t)_\sharp\ppi=\mu_t,\qquad\forall t\in[0,1].
\]
We conclude with the following simple result, whose proof is extracted from \cite{Figalli-Gigli11}. We remind that $(X,\sfd)$ is said proper, provided bounded closed sets are compact.
\begin{proposition}\label{prop:fg}
Let $(X,\sfd)$ be a proper geodesic space and $\varphi:X\to\R$ a locally Lipschitz $c$-concave function. Then for every $x\in X$ the set $\partial^c\varphi(x)$ is non-empty, and $\cup_{x\in K}\partial^c\varphi(x)$ is compact for any compact set $K\subset X$.
\end{proposition}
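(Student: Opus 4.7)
\emph{Proof plan.} The plan is to prove both statements by means of a single a priori bound: whenever $y$ is close to minimizing $y'\mapsto \tfrac{\sfd^2(x,y')}{2}-\varphi^c(y')$, then $\sfd(x,y)$ is controlled in terms of the local Lipschitz constant of $\varphi$ near $x$. This estimate, together with properness and the upper semicontinuity of $\varphi^c$, gives both nonemptiness and compactness.

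\textbf{Step 1 (a priori bound).} Fix $x_0\in X$ and $r,L>0$ so that $\varphi$ is $L$-Lipschitz on $\bar B_r(x_0)$. I claim that whenever $y\in X$ satisfies $\tfrac{\sfd^2(x_0,y)}{2}-\varphi^c(y)\leq\varphi(x_0)+\eps$ together with $\sfd(x_0,y)\geq r$, then $\sfd(x_0,y)\leq L+r/2+\eps/r$. Indeed, take a geodesic $\gamma$ from $x_0$ to $y$ and set $\bar z:=\gamma_{r/\sfd(x_0,y)}$, so that $\sfd(x_0,\bar z)=r$ and $\sfd(\bar z,y)=\sfd(x_0,y)-r$. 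Using $\varphi\leq\varphi^{cc}$ at $\bar z$ and testing the infimum against $y$,
\begin{equation*}
\varphi(\bar z)\ \leq\ \tfrac{\sfd^2(\bar z,y)}{2}-\varphi^c(y)\ =\ \Bigl[\tfrac{\sfd^2(x_0,y)}{2}-\varphi^c(y)\Bigr]-r\,\sfd(x_0,y)+\tfrac{r^2}{2}\ \leq\ \varphi(x_0)+\eps-r\,\sfd(x_0,y)+\tfrac{r^2}{2},
\end{equation*}
and the Lipschitz bound $\varphi(\bar z)-\varphi(x_0)\geq -Lr$ gives the claim.

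\textbf{Step 2 (nonemptiness).} Since $\varphi=\varphi^{cc}$, take a minimizing sequence $(y_n)$ for $\inf_y\{\tfrac{\sfd^2(x,y)}{2}-\varphi^c(y)\}=\varphi(x)$, and set $\eps_n:=\tfrac{\sfd^2(x,y_n)}{2}-\varphi^c(y_n)-\varphi(x)\downarrow 0$. By Step 1, $(y_n)$ is bounded; by properness, a subsequence converges to some $y\in X$. Since $\varphi^c$ is upper semicontinuous as an infimum of continuous functions, $\limsup_n\varphi^c(y_n)\leq\varphi^c(y)$; combined with the explicit limit $\varphi^c(y_n)\to\tfrac{\sfd^2(x,y)}{2}-\varphi(x)$, this yields $\tfrac{\sfd^2(x,y)}{2}-\varphi^c(y)\leq\varphi(x)$. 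The reverse inequality is the defining inequality of $\varphi^c$, so $y\in\partial^c\varphi(x)$.

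\textbf{Step 3 (compactness).} By compactness of $K$ and local Lipschitzianity, a Lebesgue-number argument produces uniform constants $r,L>0$ such that $\varphi$ is $L$-Lipschitz on $\bar B_r(x)$ for every $x\in K$. Applying Step 1 with $\eps=0$ at each $x\in K$ shows that $\bigcup_{x\in K}\partial^c\varphi(x)\subset\bigcup_{x\in K}\bar B_{L+r/2}(x)$, which is bounded, hence relatively compact by properness. Closedness follows by the same limit procedure as in Step 2: for any sequence $(x_n,y_n)$ with $x_n\in K$ and $y_n\in\partial^c\varphi(x_n)$, pass to a subsequence with $x_n\to x\in K$ and $y_n\to y$, and take the limit in the identity $\tfrac{\sfd^2(x_n,y_n)}{2}=\varphi(x_n)+\varphi^c(y_n)$ using continuity of $\varphi$ (locally Lipschitz), continuity of $\sfd$, and upper semicontinuity of $\varphi^c$.

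\textbf{Main obstacle.} Everything rests on Step 1. In a smooth Riemannian setting the corresponding bound is immediate from $y=\exp_x(-\nabla\varphi(x))$ and $|\nabla\varphi|\leq L$; here neither exponential map nor gradient are at our disposal. The replacement is to test the $c$-concavity identity at an intermediate point of a geodesic joining $x_0$ to $y$, which is where the geodesic assumption is used in an essential way, with the Lipschitz bound playing the role of the missing gradient bound.
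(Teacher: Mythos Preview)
Your proof is correct and follows essentially the same approach as the paper's: both arguments test the $c$-concavity inequality $\varphi(\bar z)\leq \tfrac{\sfd^2(\bar z,y)}{2}-\varphi^c(y)$ at an intermediate point $\bar z$ along a geodesic from $x$ to a near-minimizer $y$, and combine this with the local Lipschitz bound on $\varphi$ to control $\sfd(x,y)$. The only differences are cosmetic: the paper takes $\bar z$ at distance $1$ from $x$ and leaves the passage to the limit and the closedness check implicit, whereas you work at distance $r$, track the $\eps$-dependence explicitly, spell out the upper semicontinuity of $\varphi^c$, and use a Lebesgue-number argument for uniformity over $K$.
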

\begin{proof}
Fix $x\in X$ and notice that since $\varphi$ is $c$-concave, there exists a sequence $(y^n)\subset X$ such that
\[
\begin{split}
\varphi(x)&=\lim_{n\to\infty}\frac{\sfd^2(x,y^n)}{2}-\varphi^c(y^n),\\
\varphi(z)&\leq \frac{\sfd^2(z,y^n)}2-\varphi^c(y^n),\qquad\forall n\in\N,\ z\in X.
\end{split}
\]
Now let $\gamma^n:[0,\sfd(x,y^n)]\to X$ be a unit speed geodesic connecting $x$ to $y^n$, choose $z=\gamma^n_1$ in the inequality above and subtract $\varphi(x)$ to get
\[
\limi_{n\to\infty}\varphi(\gamma^n_1)-\varphi(x)\leq\limi_{n\to\infty} \frac{\sfd^2(\gamma^n_1,y^n)}{2}-\frac{\sfd^2(x,y^n)}{2}=\limi_{n\to\infty}-\sfd(x,y^n)+\frac12.
\]
The set $\{\gamma^n_1\}_n$ is bounded by construction, hence, since $\varphi$ is Lipschitz on bounded sets, the left hand side of the inequality is finite, which forces $\lims_{n\to\infty}\sfd(x,y_n)<\infty$. Thus, since 
the space is proper, the sequence $(y^n)$ converges, up to pass to subsequences, to some $y\in X$, which can be easily seen to belong to $\partial^c\varphi(x)$. 

This argument also gives the quantitative estimate $\sfd(x,y)\leq \Lip(\varphi\restr{B_1(x)})+\frac12$ for $y\in\partial^c\varphi(x)$. Therefore for a compact $K\subset X$, the set $\cup_{x\in K}\partial^c\varphi(x)$ is bounded. Since it can be easily checked to be closed, the proof is finished.
\end{proof}
\subsection{Metric measure spaces}
The main object of investigation in this paper are metric measure spaces $(X,\sfd,\mm)$, which will always be of the following kind 
\begin{equation}
\label{eq:mms}
\begin{split}
(X,\sfd)&\textrm{ is a complete separable metric space},\\
\mm\ \ &\textrm{ is  a Radon  non-negative measure on }X,
\end{split}
\end{equation}
where by local finiteness we mean that for every $x\in X$ there exists a neighborhood $U_x$ of $x$ such that $\mm(U_x)<\infty$.
\bigskip

From the fact that $\mm$ is locally finite and $(X,\sfd)$ separable, using the Lindel\"of property it follows that there exists a Borel probability measure $\mmz\in\prob X$ such that 
\begin{equation}
\label{eq:mmz}
\begin{split}
\mm\ll&\mmz\leq C\mm,\qquad\textrm{for some constant }C,\\
\textrm{with }& \frac{\d\mmz}{\d\mm}\textrm{ locally bounded from below by a positive constant},
\end{split}
\end{equation}
where again by locally bounded from below we intend that for any $x\in X$ there exists a neighborhood $U_x$ and a constant $c_x>0$ such that $\mm$-a.e. on $U_x$ it holds $\frac{\d\mmz}{\d\mm}\geq c_x$. It is also not difficult to see that $\mmz$ can be chosen so that
\begin{equation}
\label{eq:momp}
\int \sfd^q(x,x_0)\,\d\mmz<\infty,\qquad \forall q\geq 1,\ \textrm{ for some, and thus any, }x_0\in X.
\end{equation}
We fix once and for all such measure $\mmz$: its use will be useful  to get some compactness in the proof of Theorem \ref{thm:extest} (in particular, in Lemma \ref{le:planf}). Yet, we remark that $\mmz$ is not really part of our data, in the sense that none of our results depend on the particular measure $\mmz$ satisfying \eqref{eq:mmz} and \eqref{eq:momp}.

\subsection{Sobolev classes}\label{se:sobcla}
In this section we recall the definition of  Sobolev classes $\s^p(X,\sfd,\mm)$, which are the metric-measure analogous of the space of functions having distributional gradient in $L^p$ when our space is the Euclidean one, regardless of any integrability assumption on the functions themselves. Definitions are borrowed  from  \cite{Ambrosio-Gigli-Savare11}, \cite{Ambrosio-Gigli-Savare-pq}, but the presentation proposed here is slightly different: see Appendix \ref{app:sob} for the simple proof that the two approaches are actually the same. The basic properties of the Sobolev classes are recalled mainly without proofs, we refer to \cite{Ambrosio-Gigli-Savare-pq} for a detailed discussion (see also \cite{Ambrosio-DiMarino} for the case $p=1$).

\begin{definition}[$q$-test plan]\label{def:testplan}
Let $(X,\sfd,\mm)$ be as in \eqref{eq:mms} and $\ppi\in \prob{C([0,1],X)}$. We say that $\ppi$ has bounded compression provided there exists $C>0$ such that
\[
(\e_t)_\sharp\ppi\leq C\mm,\qquad\forall t\in[0,1].
\]
For  $q\in(1,\infty)$ we say that $\ppi$ is a $q$-test plan if it has bounded compression, is concentrated on $AC^q([0,1],X)$ and 
\[
\iint_0^1|\dot\gamma_t|^q\,\d t\,\d\ppi(\gamma)<\infty.
\]
\end{definition}
\begin{definition}[Sobolev classes]\label{def:sobcl}
Let $(X,\sfd,\mm)$ be as in \eqref{eq:mms}, $p\in(1,\infty)$ and $q$ the conjugate exponent. A Borel function $f:X\to\R$ belongs to the Sobolev class $\s^p(X,\sfd,\mm)$ (resp. $\s^p_{\rm loc}(X,\sfd,\mm)$) provided there exists a function $G\in L^p(X,\mm)$ (resp. in $L^p_{\rm loc}(X,\sfd,\mm)$) such that 
\begin{equation}
\label{eq:defsob}
\int\big| f(\gamma_1)-f(\gamma_0)\big|\,\d\ppi(\gamma)\leq \iint_0^1G(\gamma_s)|\dot\gamma_s|\,\d s\,\d\ppi(\gamma),\qquad\forall q\textrm{-test plan }\ppi.
\end{equation}
In this case, $G$ is called a $p$-weak upper gradient of $f$. 
\end{definition}
Since the class of $q$-test plans contains the one of $q'$-test plans for $q\leq q'$, we have that  $\s^p_{\rm loc}(X,\sfd,\mm)\subset \s^{p'}_{\rm loc}(X,\sfd,\mm)$ for $p\geq p'$, and that if $f\in \s^p_{\rm loc}(X,\sfd,\mm) $ and $G$ is a $p$-weak upper gradient, then $G$ is also a $p'$-weal upper gradient.

Arguing as in Section 4.5 of \cite{Ambrosio-Gigli-Savare-pq}, we get that for $f\in \s^p(X,\sfd,\mm)$ (resp. $\s^p_{\rm loc}(X,\sfd,\mm)$) there exists a minimal function $G\geq 0$, in the $\mm$-a.e. sense, in $L^p(X,\mm)$ (resp. $L^p_{\rm loc}(X,\mm)$) such that \eqref{eq:defsob} holds. We will denote such minimal function by $\weakgrad f$. In line with the terminology used in this context, we will refer to $\weakgrad f$ as the $p$-minimal upper gradient of $f$. Yet, given that such quantity is defined in duality with speed of curves, it is closer in spirit to the dual norm of the differential rather than to the norm of the gradient, whence the notation with a `$D$' in place of a `$\nabla$'.

As for the local Lipschitz constant and the slopes, we switched from the notation $|\nabla f|_w$ used in   \cite{Ambrosio-Gigli-Savare11} and \cite{Ambrosio-Gigli-Savare-pq} to $\weakgrad f$ to underline that this is a cotangent notion, a fact which plays a key role in our analysis.

\begin{remark}\label{re:notazione}{\rm Observe that in denoting the minimal $p$-weak upper `gradient' by $\weakgrad f$, we are losing the reference to the exponent $p$, which plays a role in the definition as it affects the class of test plans. A more appropriate notation would be $|D f|_{w,p}$. With this notation we have that for $f\in \s^p(X,\sfd,\mm)$ and $p'\leq p$ it holds $|Df|_{w,p'}\leq |Df|_{w,p}$ $\mm$-a.e.. It is a longstanding open problem to understand whether the other inequality holds. 

To  drop  the dependence on $p$ on the notation is certainly risky, but in all our statements we always recall at first to which Sobolev class our functions are belonging, thus indicating which `$p$' is chosen in the definition of $p$-minimal upper gradient and hopefully minimizing the risk of confusion. 

We also recall that as a consequence of the results of Cheeger \cite{Cheeger00}, we have that if the measure is doubling and the space supports a weak-local 1-$p'$ Poincar\'e inequality, then for $f\in \s^p_{\rm loc}(X,\sfd,\mm)$, $p\geq p'$, we have $|Df|_{w,p'}= |Df|_{w,p}$ $\mm$-a.e., thus at least in this case there is no dependence on the Sobolev exponent.
}\fr\end{remark}
Notice that if $\ppi$ is a $q$-test plan and $\Gamma\subset C([0,1],X)$ is a Borel set such that $\ppi(\Gamma)>0$, then also the plan $\ppi(\Gamma)^{-1}\ppi\restr\Gamma$ is a $q$-test plan. Also, for any $t,  s\in[0,1]$ the plan $({\rm restr}_t^s)_\sharp\ppi$ is a $q$-test plan as well (recall definition \eqref{eq:restr}). Then a simple localization argument yields that from \eqref{eq:defsob} it follows that
\begin{equation}
\label{eq:defsobpunt}
\forall t< s\in[0,1]\quad\textrm{it holds}\qquad |f(\gamma_s)-f(\gamma_t)|\leq \int_t^s\weakgrad f(\gamma_r)|\dot\gamma_r|\,\d r,\qquad\ppi-a.e.\ \gamma,
\end{equation}
whenever $\ppi$ is a $q$-test plan and $f\in\s^p_{\rm loc}(X,\sfd,\mm)$.

It is obvious that $\s^p(X,\sfd,\mm)$ and $\s^p_{\rm loc}(X,\sfd,\mm)$ are vector spaces and that it holds
\begin{equation}
\label{eq:convweak}
\weakgrad{(\alpha f+\beta g)}\leq |\alpha|\weakgrad f+|\beta|\weakgrad g,\qquad\mm-a.e., \quad\forall \alpha,\beta\in\R,
\end{equation}
and that the spaces $\s^p(X,\sfd,\mm)\cap L^\infty(X,\mm)$ and $\s^p_{\rm loc}(X,\sfd,\mm)\cap L^\infty_{\rm loc}(X,\mm)$ are algebras, for which it holds
\begin{equation}
\label{eq:leibweak}
\weakgrad{(fg)}\leq |f|\weakgrad g+|g|\weakgrad f,\qquad\mm-a.e..
\end{equation}
It is also possible to check that the object $\weakgrad f$ is local in the sense that  
\begin{equation}
\label{eq:nullset}
\forall f\in \s^p_{\rm loc}(X,\sfd,\mm)\textrm{ it holds }\weakgrad f=0,\quad \mm-a.e.\ \textrm{ on }f^{-1}(\mathcal N),\quad\forall \mathcal N\subset\R,\ \textrm{ s.t.  }\mathcal L^1(\mathcal N)=0,
\end{equation}
and
\begin{equation}
\label{eq:localgrad}
\weakgrad f=\weakgrad g,\qquad\mm-a.e.\ \textrm{ on }\{f=g\},\qquad\forall f,g\in\s^p_{\rm loc}(X,\sfd,\mm).
\end{equation}
Also, for $f\in\s^p(X,\sfd,\mm)$ (resp. $\s^p_{\rm loc}(X,\sfd,\mm)$) and $\varphi:\R \to\R$ Lipschitz, the function $\varphi\circ f$ belongs to $\s^p(X,\sfd,\mm)$ (resp. $\s^p_{\rm loc}(X,\sfd,\mm)$) as well and it holds
\begin{equation}
\label{eq:chaineasy}
\weakgrad{(\varphi\circ f)}=|\varphi'\circ f|\weakgrad f\qquad\mm-a.e..
\end{equation}
The object $\weakgrad f$ is local also in the following sense (see Theorem 4.19 of \cite{Ambrosio-Gigli-Savare11bis} for the case $p=2$ and Section 8.2 of \cite{Ambrosio-Gigli-Savare-pq} for the general one).
\begin{proposition}\label{prop:srestr}
Let $(X,\sfd,\mm)$ be as in \eqref{eq:mms}, $p\in(1,\infty)$ and $\Omega\subset X$ an open set. Then the following holds.
\begin{itemize}
\item[i)] For $f\in \s^p(X,\sfd,\mm)$ (resp. $f\in \s^p_{\rm loc}(X,\sfd,\mm)$), the restriction of $f$ to $\overline\Omega$ belongs to $f\in \s^p(\overline\Omega,\sfd,\mm)$ (resp. $f\in \s^p_{\rm loc}(\overline\Omega,\sfd,\mm)$) and it holds
\begin{equation}
\label{eq:ugualiristretto}
(\weakgrad f)_X=(\weakgrad f)_{\overline\Omega},\qquad\mm-a.e.\ in\   \Omega,
\end{equation}
where by $(\weakgrad f)_X$ (resp. $(\weakgrad f)_{\overline\Omega}$) we are denoting the minimal $p$-weak upper gradient of $f$ in the space $(X,\sfd,\mm)$ (resp. of the restriction of $f$ to $\overline\Omega$ in the space $(\overline \Omega,\sfd,\mm\restr{\overline\Omega})$).
\item[ii)] Viceversa, if $f\in  \s^p(\overline\Omega,\sfd,\mm)$ (resp. $ \s^p_{\rm loc}(\overline\Omega,\sfd,\mm)$) and $\supp(f)\subset\Omega$ with $\sfd(\supp(f),X\setminus\Omega)>0$, then extending $f$ to the whole $X$ by putting $f\equiv 0$ on $X\setminus\Omega$ we have $f\in \s^p(X,\sfd,\mm)$ (resp. $f\in \s^p_{\rm loc}(X,\sfd,\mm)$) and \eqref{eq:ugualiristretto} holds.
\end{itemize}
\end{proposition}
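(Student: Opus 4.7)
For part (i), I would prove the two inequalities between $(\weakgrad f)_X$ and $(\weakgrad f)_{\overline\Omega}$ separately. The easy direction $(\weakgrad f)_{\overline\Omega} \leq (\weakgrad f)_X$ a.e.\ on $\overline\Omega$ follows by observing that any $q$-test plan $\ppi$ on $\overline\Omega$ is automatically a $q$-test plan on $X$: its curves take values in $\overline\Omega \subset X$, the bounded compression $(\e_t)_\sharp \ppi \leq C\, \mm\restr{\overline\Omega} \leq C\mm$ holds on $X$, and the isometric inclusion preserves the metric derivative. Applying the Sobolev inequality \eqref{eq:defsob} for $f \in \s^p(X,\sfd,\mm)$ to such a $\ppi$ shows that $(\weakgrad f)_X\restr{\overline\Omega}$ is a $p$-weak upper gradient of $f\restr{\overline\Omega}$, and minimality gives the inequality.

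For the reverse inequality, the plan is to introduce $G := (\weakgrad f)_{\overline\Omega}$ on $\overline\Omega$, extended by $(\weakgrad f)_X$ on $X\setminus\overline\Omega$, and to show that $G$ is a $p$-weak upper gradient of $f$ on $X$; minimality then forces $(\weakgrad f)_X \leq G$ a.e., which on the open set $\Omega$ is precisely the sought bound. I would fix a $q$-test plan $\ppi$ on $X$ and work with the pointwise inequality \eqref{eq:defsobpunt}: for $\ppi$-a.e.\ $\gamma$, the preimage $\gamma^{-1}(\Omega)$ is an open subset of $[0,1]$, hence a countable union of disjoint intervals $(a_i^\gamma, b_i^\gamma)$ on which $\gamma$ stays in $\overline\Omega$. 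A Borel selection of these intervals, together with the restriction operators ${\rm restr}_{a}^{b}$ from \eqref{eq:restr}, produces a countable family of $q$-test plans on $\overline\Omega$ with compression and energy constants inherited from $\ppi$; the Sobolev inequality on $\overline\Omega$ then bounds the $f$-increment on each piece by $\int_{a_i^\gamma}^{b_i^\gamma}(\weakgrad f)_{\overline\Omega}(\gamma_s)|\dot\gamma_s|\,\d s$. On the closed complement $[0,1]\setminus\gamma^{-1}(\Omega)$, \eqref{eq:defsobpunt} applied on $X$ gives the analogous bound via $(\weakgrad f)_X$. Telescoping over the decomposition yields the Sobolev inequality \eqref{eq:defsob} for $G$ along $\ppi$. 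For part (ii), setting $\delta := \sfd(\supp(f), X\setminus\Omega) > 0$, I would take as candidate upper gradient on $X$ the extension of $(\weakgrad f)_{\overline\Omega}$ by zero outside $\overline\Omega$, and again split $[0,1]$ for a $\ppi$-a.e.\ $\gamma$ according to whether $\gamma_t$ lies in the $\delta/2$-neighborhood of $\supp(f)$ (where part (i) applies on $\overline\Omega$) or in the open set where $f\equiv 0$ (where the increment of $f$ is zero by locality \eqref{eq:localgrad}); the transitions happen at points where $f$ vanishes, so the telescoping is harmless. Combined with part (i) applied to the extension, this yields \eqref{eq:ugualiristretto}.

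The principal technical obstacle is the measurable bookkeeping in the hard direction of part (i): the intervals $(a_i^\gamma, b_i^\gamma)$ depend on $\gamma$, so to obtain genuine $q$-test plans on $\overline\Omega$ one must Borel-parametrize the components and verify that bounded compression and the $q$-energy bound are preserved after restriction and reparametrization. Once this measure-theoretic step is secured, the remainder reduces to the telescoping argument above together with the locality properties \eqref{eq:nullset}--\eqref{eq:localgrad} already established. The local versions follow by the same argument applied on compact subsets, since all the test plan operations respect the local finiteness of $\mm$.
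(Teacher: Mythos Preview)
The paper does not give a self-contained argument here: it cites Theorem 4.19 of \cite{Ambrosio-Gigli-Savare11bis} and Section 8.2 of \cite{Ambrosio-Gigli-Savare-pq} for the full statement, and the displayed proof only explains how to drop the extra hypothesis $\mm(\partial\Omega)=0$ in part (ii) by shrinking $\Omega$ to an $\Omega'$ with $\supp(f)\subset\Omega'$, $\sfd(\Omega',X\setminus\Omega)>0$ and $\mm(\partial\Omega')=0$. So your proposal is not competing with the paper's argument but rather attempting to supply what the paper outsources.

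Your strategy is sound, but the implementation you describe for the hard direction of (i) has a genuine gap, and it is precisely the point you flag as the ``principal technical obstacle''. Restricting to the \emph{curve-dependent} intervals $(a_i^\gamma,b_i^\gamma)$ and pushing forward does \emph{not} produce a $q$-test plan on $\overline\Omega$, because bounded compression is lost. Concretely: take $X=\R$, $\mm$ a normalized Lebesgue measure on $[-2,2]$, $\Omega=(-1,1)$, and $\ppi$ the law of $t\mapsto x+t$ with $x$ uniform on $[-2,-1]$. Each curve enters $\Omega$ at time $a(\gamma_x)=-1-x$, and after your restriction the reparametrized curve at time $0$ sits at $-1\in\partial\Omega$ for \emph{every} $x$. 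Hence $(\e_0)_\sharp$ of the restricted plan is $\delta_{-1}$, which is not $\leq C\mm$. The Borel selection of $(a_i^\gamma,b_i^\gamma)$ is therefore not the issue; the construction itself fails.

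The fix is to avoid curve-dependent endpoints altogether. For rational $a<b$ in $[0,1]$ set $\Gamma_{a,b}:=\{\gamma:\gamma([a,b])\subset\Omega\}$; this is Borel, and $({\rm restr}_a^b)_\sharp\big(\ppi(\Gamma_{a,b})^{-1}\ppi\restr{\Gamma_{a,b}}\big)$ is a genuine $q$-test plan on $\overline\Omega$ because the restriction uses \emph{fixed} times, so $(\e_t)_\sharp$ of the new plan equals $(\e_{a+t(b-a)})_\sharp$ of the old one restricted to $\Gamma_{a,b}$, hence $\leq C\mm\restr{\overline\Omega}$. Applying \eqref{eq:defsobpunt} on $\overline\Omega$ gives $|(f\circ\gamma)'(t)|\leq(\weakgrad f)_{\overline\Omega}(\gamma_t)|\dot\gamma_t|$ for a.e.\ $t\in[a,b]$ and $\ppi\restr{\Gamma_{a,b}}$-a.e.\ $\gamma$. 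Since $\{(t,\gamma):\gamma_t\in\Omega\}=\bigcup_{a<b\in\mathbb Q}[a,b]\times\Gamma_{a,b}$ by continuity of $\gamma$ and openness of $\Omega$, a countable union gives the pointwise derivative bound on all of $\gamma^{-1}(\Omega)$; combined with the bound via $(\weakgrad f)_X$ on the complement (where your use of \eqref{eq:defsobpunt} on $X$ is correct, read as a bound on $|(f\circ\gamma)'|$ a.e.), this shows your $G$ is a $p$-weak upper gradient on $X$. The same repair works in your argument for (ii).
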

\begin{proof}
In \cite{Ambrosio-Gigli-Savare11bis}, $(ii)$ was stated under the additional assumption that $\mm(\partial\Omega)=0$ and then the conclusion \eqref{eq:ugualiristretto} was given $\mm$-a.e. on $\overline\Omega$. To check that the statement is true also in the current version, just notice that from the local finiteness of $\mm$ we can easily build an open set $\Omega'\subset\Omega$ such that $\supp(f)\subset \Omega'$, $\sfd(\Omega',X\setminus\Omega)>0$ and $\mm(\partial\Omega')=0$.
\end{proof}
We endow $\s^p(X,\sfd,\mm)$ with the seminorm
\begin{equation}
\label{eq:semisp}
\|f\|_{\s^p(X,\sfd,\smm)}:=\|\weakgrad f\|_{L^p(X,\smm)}.
\end{equation}
We don't know if (the quotient of) $\s^p(X,\sfd,\mm)$ is complete w.r.t. $\|\cdot\|_{\s^p}$.

The Sobolev space $W^{1,p}(X,\sfd,\mm)$ is defined as  $W^{1,p}(X,\sfd,\mm):=\s^p(X,\sfd,\mm)\cap L^p(X,\mm)$ endowed with the norm 
\[
\|f\|^p_{W^{1,p}({X,\sfd,\smm})}:=\|f\|^p_{L^p(X,\smm)}+\|f\|_{\s^p(X,\sfd,\smm)}^p.
\]
$W^{1,p}(X,\sfd,\mm)$ is always a Banach space (but we remark that in general $W^{1,2}(X,\sfd,\mm)$ is \emph{not} an Hilbert space). The completeness of $W^{1,p}(X,\sfd,\mm)$ is a consequence of the following result, whose proof follows from  the definitions and Mazur's lemma.
\begin{proposition}
Let $(X,\sfd,\mm)$ be as in \eqref{eq:mms}, $p\in(1,\infty)$, $(f_n)\subset\s^p(X,\sfd,\mm)$ and $(G_n)\subset L^p(X,\mm)$. Assume that $G_n$ is a $p$-weak upper gradient of $f_n$ for every $n\in\N$, that $f_n\to f$ pointwise $\mm$-a.e. and $G_n\weakto G$ in $L^p(X,\mm)$ for some functions $f,G$.

Then $f\in\s^p(X,\sfd,\mm)$ and $G$ is a $p$-weak upper gradient of $f$.
\end{proposition}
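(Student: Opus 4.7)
The plan is to use Mazur's lemma to reduce the weak convergence in the hypothesis to a strong convergence, and then pass to the limit in the defining Sobolev inequality \eqref{eq:defsob}. Since $G_n \weakto G$ in $L^p(X,\mm)$, Mazur's lemma yields, for each $n$, a finite convex combination $\tilde G_n := \sum_{k=n}^{N_n} \alpha_{n,k} G_k$ (with $\alpha_{n,k} \geq 0$, $\sum_k \alpha_{n,k} = 1$) such that $\tilde G_n \to G$ strongly in $L^p(X,\mm)$. Set $\tilde f_n := \sum_{k=n}^{N_n} \alpha_{n,k} f_k$. The triangle inequality inside $\int \ldots \d\ppi$ combined with linearity of the right-hand side of \eqref{eq:defsob} immediately shows that $\tilde G_n$ is a $p$-weak upper gradient of $\tilde f_n$. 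Moreover, since $f_n \to f$ pointwise $\mm$-a.e., the same holds for any convex combination, so $\tilde f_n \to f$ pointwise $\mm$-a.e.

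Now fix a $q$-test plan $\ppi$. Because $(\e_0)_\sharp\ppi, (\e_1)_\sharp\ppi \ll \mm$ (with bounded density), the a.e.\ convergence of $\tilde f_n$ to $f$ transfers to $\ppi$-a.e.\ convergence of $\tilde f_n(\gamma_i) \to f(\gamma_i)$ for $i=0,1$. Fatou's lemma then gives
\[
\int |f(\gamma_1) - f(\gamma_0)|\,\d\ppi(\gamma) \leq \liminf_{n\to\infty}\int |\tilde f_n(\gamma_1) - \tilde f_n(\gamma_0)|\,\d\ppi(\gamma) \leq \liminf_{n\to\infty}\iint_0^1 \tilde G_n(\gamma_s)|\dot\gamma_s|\,\d s\,\d\ppi(\gamma).
\]
For the right-hand side, the functional $h \mapsto \iint_0^1 h(\gamma_s)|\dot\gamma_s|\,\d s\,\d\ppi$ is continuous on $L^p(X,\mm)$: by Hölder's inequality, the bounded compression of $\ppi$, and $\iint |\dot\gamma_s|^q\,\d s\,\d\ppi < \infty$, it is dominated by $C^{1/p}\|h\|_{L^p(\mm)}\bigl(\iint_0^1 |\dot\gamma_s|^q\,\d s\,\d\ppi\bigr)^{1/q}$. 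Strong $L^p$-convergence of $\tilde G_n$ to $G$ therefore yields
\[
\iint_0^1 \tilde G_n(\gamma_s)|\dot\gamma_s|\,\d s\,\d\ppi(\gamma) \;\longrightarrow\; \iint_0^1 G(\gamma_s)|\dot\gamma_s|\,\d s\,\d\ppi(\gamma),
\]
so combining the two displays shows that $f$ satisfies \eqref{eq:defsob} with $G$ as a $p$-weak upper gradient. Since $G \in L^p(X,\mm)$ by lower semicontinuity of the norm under weak convergence, we conclude $f \in \s^p(X,\sfd,\mm)$.

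There is no real obstacle here beyond bookkeeping: Mazur converts weak to strong at the cost of passing to convex combinations, and the Sobolev defining inequality is stable under convex combinations (by the triangle inequality on the left) and under strong $L^p$ convergence of the upper gradient side (by the bounded linear functional argument above). The one point that deserves a line of care is ensuring the pointwise $\mm$-a.e.\ convergence transfers to $\ppi$-a.e.\ convergence at the endpoints $t=0,1$, which is automatic from bounded compression. Modulo redefining $f$ on an $\mm$-negligible set to ensure Borel measurability, the proof is complete.
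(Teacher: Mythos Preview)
Your proof is correct and follows exactly the approach the paper indicates (the paper does not give a full proof, only the remark that it ``follows from the definitions and Mazur's lemma''). You have filled in the details cleanly: Mazur to upgrade weak to strong convergence of the gradients via convex combinations, stability of \eqref{eq:defsob} under convex combinations on both sides, Fatou on the left using bounded compression at the endpoints, and the H\"older/bounded-compression estimate for continuity of the right-hand side in $L^p$.
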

In \cite{Ambrosio-Gigli-Savare11}, \cite{Ambrosio-Gigli-Savare-pq} the following non-trivial result has been proved.
\begin{theorem}[Density in energy of Lipschitz functions]\label{thm:lipdense}
Let $(X,\sfd,\mm)$ be as in \eqref{eq:mms}, $p\in(1,\infty)$ and assume that $\mm$ is finite on bounded sets. Then for every $f\in W^{1,p}(X,\sfd,\mm)$ there exists a sequence $(f_n)\subset W^{1,p}(X,\sfd,\mm)$ of Lipschitz functions such that
\[
\begin{split}
\lim_{n\to\infty}\|f_n-f\|_{L^p(X,\smm)}&=0,\\
\lim_{n\to\infty}\|f_n\|_{\s^p(X,\sfd,\smm)}&=\lim_{n\to\infty}\||D f_n|\|_{L^p(X,\smm)}=\lim_{n\to\infty}\|\overline{|D f_n|}\|_{L^p(X,\smm)}=\|f\|_{\s^p(X,\sfd,\smm)},
\end{split}
\]
where given $g:X\to\R$, the function $\overline{|Dg|}:X\to[0,\infty]$ is 0 by definition on isolated points and 
\[
\overline{|Dg|}(x):=\inf_{r>0}\sup_{y_1\neq y_2\in B_r(x)}\frac{|g(y_1)-g(y_2)|}{\sfd(y_1,y_2)}.
\]
on the others.
\end{theorem}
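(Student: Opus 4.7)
The plan is to identify $\int\weakgrad f^p\,\d\mm$ with the relaxation of $\int|Df|^p\,\d\mm$ along Lipschitz approximations in $L^p$, from which the existence of a minimizing sequence is automatic. Specifically, for $f\in W^{1,p}(X,\sfd,\mm)$ define
\[
\c_p(f):=\inf\Big\{\liminf_{n\to\infty}\int |Df_n|^p\,\d\mm\ :\ (f_n)\subset{\rm LIP}(X)\cap L^p(X,\mm),\ f_n\to f\ \text{in}\ L^p(X,\mm)\Big\}.
\]
It suffices to prove the identity $\c_p(f)=\int\weakgrad f^p\,\d\mm$: any almost optimal sequence $(f_n)$ then satisfies $\||Df_n|\|_{L^p}\to\|f\|_{\s^p}$ by construction, and the pointwise inequalities $\weakgrad{f_n}\le|Df_n|\le\overline{|Df_n|}$ together with the lower semicontinuity of the seminorm $\|\cdot\|_{\s^p}$ under $L^p$-convergence (the proposition stated immediately before the theorem) sandwich the three quantities appearing in the statement into the common limit $\|f\|_{\s^p}$.

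The inequality $\c_p(f)\ge\int\weakgrad f^p\,\d\mm$ is the easy direction: for Lipschitz $f_n$ the function $|Df_n|$ is an upper gradient, hence a $p$-weak upper gradient, and passing to a weak $L^p$-limit $G$ of $|Df_n|$ along an optimizing sequence, the same lower-semicontinuity proposition gives $G\in L^p(X,\mm)$ as a $p$-weak upper gradient of $f$ with $\|\weakgrad f\|_{L^p}\le\|G\|_{L^p}\le\liminf_n\||Df_n|\|_{L^p}$. The opposite inequality is the substantive step. I would attack it by running the $L^p$-gradient flow $(h_t f)_{t\ge 0}$ of the convex lower-semicontinuous functional $\c_p$: general convex analysis produces existence, $L^p$-continuity $h_t f\to f$ as $t\downarrow 0$, and an energy-dissipation identity that controls $\c_p(f)-\c_p(h_T f)$ in terms of the $L^p$ norm of the velocity $\tfrac{\d}{\d t}h_t f$, together with a regularizing estimate forcing $h_t f$ to be more summable (and ultimately Lipschitz) for positive times.

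The crucial move is to transfer this energy estimate into a test-plan inequality. After truncating and renormalizing to reduce to a bounded nonnegative $f$ with finite mass, and after replacing $\mm$ by the auxiliary probability measure $\mmz$ of \eqref{eq:mmz} (which leaves the Sobolev class and $\weakgrad f$ unchanged by the local comparability of the two measures), a Kuwada-type duality shows that $t\mapsto (h_t f)\mmz$ is a $q$-absolutely continuous curve in $(\probp{X}{q},W_q)$ whose metric speed is bounded by the $L^p$-velocity of the gradient flow. Theorem \ref{thm:lisini} then lifts this curve to a plan in $\prob{C([0,1],X)}$ whose bounded compression follows from the regularizing estimate for $h_t f$, yielding a $q$-test plan. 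Feeding it into \eqref{eq:defsob} and letting $T\downarrow 0$ produces $\c_p(f)\le\int\weakgrad f^p\,\d\mm$ and closes the circle. The main obstacle is precisely this last identification: it imports genuinely transcendental tools from optimal transport (Kuwada's duality and Lisini's superposition) into the otherwise purely metric definition of the Sobolev class, and the reduction from $\mm$ to $\mmz$ must be handled so as not to alter the minimal $p$-weak upper gradient. Once the identity is in hand, the three norm convergences in the conclusion follow by the sandwich argument described above.
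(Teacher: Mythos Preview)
Your proposal essentially reproves from scratch the identification of the relaxed functional with the weak-upper-gradient energy, following the strategy of \cite{Ambrosio-Gigli-Savare11} and \cite{Ambrosio-Gigli-Savare-pq}: gradient flow of $\c_p$, Kuwada's duality, Lisini's superposition, and a comparison via the test-plan inequality. The broad outline is right, but several steps are mis-stated. First, the flow $h_t f$ is \emph{not} Lipschitz for $t>0$ in general metric measure spaces; it only lands in the domain of $\c_p$, and one still needs to approximate $h_t f$ by genuine Lipschitz functions and diagonalize. Second, the sentence ``feeding it into \eqref{eq:defsob} and letting $T\downarrow 0$ produces $\c_p(f)\le\int\weakgrad f^p\,\d\mm$'' hides the actual mechanism: one compares two expressions for the dissipation of an entropy functional along the flow, one coming from the relaxed slope (via the energy identity) and one from the weak upper gradient (via the lifted plan), and this comparison is where the inequality emerges --- it is not a direct substitution into \eqref{eq:defsob}. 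Third, your sandwich at the end does not close: from $\weakgrad{f_n}\le|Df_n|\le\overline{|Df_n|}$ and $\||Df_n|\|_{L^p}\to\|f\|_{\s^p}$ you get the first two limits, but nothing bounds $\|\overline{|Df_n|}\|_{L^p}$ from above. In \cite{Ambrosio-Gigli-Savare-pq} the relaxation is set up with $\overline{|D\cdot|}$ (or the asymptotic Lipschitz constant) rather than $|D\cdot|$, so that the optimal sequence controls the largest of the three directly and the other two follow by the sandwich.

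The paper's proof is entirely different and much shorter: it takes the result of \cite{Ambrosio-Gigli-Savare-pq} for \emph{finite} $\mm$ as a black box and merely reduces the present hypothesis ($\mm$ finite on bounded sets) to that case by cutoffs. One multiplies $f$ by $1$-Lipschitz bumps $\nchi_n$ with bounded support and $\nchi_n\equiv 1$ on $B_n(\bar x)$; locality \eqref{eq:localgrad} and the Leibniz bound \eqref{eq:leibweak} give $f\nchi_n\to f$ in energy in $W^{1,p}$; Proposition \ref{prop:srestr} then allows one to apply the finite-measure theorem on a large ball to each $f\nchi_n$, and a diagonal argument concludes. What your route buys is self-containment; what the paper's route buys is brevity, since the hard analysis is already packaged in the cited reference.
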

\begin{proof}
The proof given in \cite{Ambrosio-Gigli-Savare-pq} for $p\in(1,\infty)$ was based also on the assumption that $\mm$ was finite. If $\mm$ is only finite on bounded sets we can argue as follows. We pick a sequence $(\nchi_n)$ of 1-Lipschitz functions with bounded support and values in $[0,1]$ such that $\nchi_n\equiv 1$ on $B_n(\overline x)$, for some fixed $\overline x\in X$. Then we fix $f\in W^{1,p}(X,\sfd,\mm)$ and notice that by the dominate convergence theorem we have $f\nchi_n\to f$ in $L^p(X,\mm)$ as $n\to\infty$, by \eqref{eq:localgrad} that $\weakgrad{(f\nchi_n)}=\weakgrad f$ $\mm$-a.e. on $B_n(\overline x)$ and from  \eqref{eq:leibweak} that $\weakgrad{(f\nchi_n)}\leq \weakgrad f+|f|$ so that $\|f\nchi_n\|_{\s^p(X,\sfd,\smm)}\to\|f\|_{\s^p(X,\sfd,\smm)}$ as $n\to\infty$ (\eqref{eq:leibweak} was stated for bounded functions, but it is true also with $f\in L^p(X,\mm)$ if, as in our case, the other function is both bounded and Lipschitz). Thus $(f\nchi_n)$ converges in energy in $W^{1,p}(X,\sfd,\mm)$ to $f$ as $n\to\infty$. Since $f\nchi_n$ is in $W^{1,p}(X,\sfd,\mm)$ and has bounded support, taking into account Proposition \ref{prop:srestr}, the fact that $\mm$ is finite on bounded sets and the result in  \cite{Ambrosio-Gigli-Savare-pq}, a diagonalization argument gives the result. 
\end{proof}
In particular, the following holds:
\begin{corollary}\label{cor:lipdense}
Let $(X,\sfd,\mm)$ be as in \eqref{eq:mms} be such that $\mm$ is finite on bounded sets and $p\in(1,\infty)$. Assume that $W^{1,p}(X,\sfd,\mm)$ is uniformly convex. Then ${\rm LIP}(X)\cap W^{1,p}(X,\sfd,\mm)$ is dense in $W^{1,p}(X,\sfd,\mm)$ w.r.t. the $W^{1,p}$ norm.
\end{corollary}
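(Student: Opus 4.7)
The plan is to upgrade the \emph{energy} convergence provided by Theorem \ref{thm:lipdense} to full norm convergence in $W^{1,p}$, exploiting the Radon--Riesz (a.k.a. Kadec--Klee) property enjoyed by uniformly convex Banach spaces.

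First, I would fix $f \in W^{1,p}(X,\sfd,\mm)$ and apply Theorem \ref{thm:lipdense} to obtain a sequence $(f_n) \subset {\rm LIP}(X) \cap W^{1,p}(X,\sfd,\mm)$ with $\|f_n - f\|_{L^p(X,\smm)} \to 0$ and $\|f_n\|_{\s^p(X,\sfd,\smm)} \to \|f\|_{\s^p(X,\sfd,\smm)}$. Combining these via the definition $\|g\|_{W^{1,p}}^p = \|g\|_{L^p}^p + \|g\|_{\s^p}^p$ gives
\[
\lim_{n\to\infty} \|f_n\|_{W^{1,p}(X,\sfd,\smm)} = \|f\|_{W^{1,p}(X,\sfd,\smm)}.
\]

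Second, I would show that $f_n \weakto f$ weakly in $W^{1,p}$. By the Milman--Pettis theorem, the uniformly convex space $W^{1,p}(X,\sfd,\mm)$ is reflexive, so the bounded sequence $(f_n)$ admits weakly convergent subsequences. If $f_{n_k} \weakto g$ in $W^{1,p}$, then since the evaluation of $L^{p'}$ duality pairings against $L^p$ functions is a continuous linear functional on $W^{1,p}$, we have $f_{n_k} \weakto g$ in $L^p$; together with the strong $L^p$ convergence $f_n \to f$ this forces $g = f$. As every subsequence of $(f_n)$ has a sub-subsequence converging weakly to $f$, the whole sequence converges weakly to $f$ in $W^{1,p}$.

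Finally, I would invoke the Radon--Riesz property of the uniformly convex space $W^{1,p}(X,\sfd,\mm)$: weak convergence $f_n \weakto f$ together with convergence of norms $\|f_n\|_{W^{1,p}} \to \|f\|_{W^{1,p}}$ implies strong convergence $f_n \to f$ in $W^{1,p}$, proving the corollary. The only real content here is already packaged in Theorem \ref{thm:lipdense}; the rest is Banach-space soft analysis, and the main ``obstacle'' is simply to notice that uniform convexity is exactly what is needed to pass from energy convergence to norm convergence, since without it one genuinely cannot in general control the $L^p$-norm of $\weakgrad{(f_n - f)}$ by $\|\weakgrad{f_n}\|_{L^p} - \|\weakgrad f\|_{L^p}$.
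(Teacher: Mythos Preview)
Your proof is correct and is precisely the argument the paper has in mind: the corollary is stated without proof, introduced by ``In particular, the following holds'', signaling that it follows from Theorem~\ref{thm:lipdense} via the standard Radon--Riesz property of uniformly convex spaces, exactly as you spell out.
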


For later use, we collect here some basic duality properties between  $\s^p(X,\sfd,\mm)$ and the set of $q$-test plans. It will be useful to introduce the $q$-energies of a curve $\gamma\in C([0,1],X)$: for $q\in(1,\infty)$, and $t\in(0,1]$, the $q$-energy  $E_{q,t}:C([0,1],X)\to[0,\infty]$ is defined by
\begin{equation}
\label{eq:qenergy}
E_{q,t}(\gamma):=\left\{
\begin{array}{ll}
t\displaystyle{\sqrt[q]{\frac1t\int_0^t|\dot\gamma_s|^q\,\d s}},&\qquad\textrm{ if }{\rm restr}_0^t(\gamma)\in AC^q([0,1],X),\\
+\infty,&\qquad\textrm{ otherwise}.
\end{array}\right.
\end{equation}
\begin{proposition}\label{prop:cambiata}
Let  $(X,\sfd,\mm)$ be as in \eqref{eq:mms}, $p\in(1,\infty)$ and $q$ the conjugate exponent, $f\in\s^p(X,\sfd,\mm)$ and $\ppi$ a $q$-test plan. Then the following are true.
\begin{align}
\label{eq:curve}
\left|\frac{f(\gamma_t)-f(\gamma_0)}{E_{q,t}}\right|^p&\leq \frac1t\int_0^t\weakgrad f^p(\gamma_s)\,\d s,\qquad\ppi-a.e.\ \gamma,\ \forall t\in[0,1],\\
\label{eq:limsup}
\lims_{t\downarrow 0}\int\left|\frac{f(\gamma_t)-f(\gamma_0)}{E_{q,t}(\gamma)}\right|^p\,\d\ppi(\gamma)&\leq \int\weakgrad f^p(\gamma_0)\,\d\ppi(\gamma),\\
\label{eq:altra}
\lims_{t\downarrow 0}\int\frac{f(\gamma_t)-f(\gamma_0)}{t}\,\d\ppi(\gamma)&\leq \frac1p{\int\weakgrad f^p(\gamma_0)\,\d\ppi(\gamma)}+\lims_{t\downarrow0}\frac1{qt}\iint_0^t|\dot\gamma_s|^q\,\d s\,\d\ppi(\gamma).
\end{align}
\end{proposition}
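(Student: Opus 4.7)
All three inequalities will be derived from the pointwise version \eqref{eq:defsobpunt} of the weak upper gradient inequality, which is available because $\ppi$ is a $q$-test plan and $f\in\s^p(X,\sfd,\mm)$. The only ingredient beyond elementary inequalities is a continuous approximation step to handle passage to the limit at $t=0$.

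\smallskip

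\emph{Step 1: inequality \eqref{eq:curve}.} Fix $t\in(0,1]$ and a curve $\gamma$ for which \eqref{eq:defsobpunt} holds (a full-measure set). Apply H\"older's inequality with exponents $(p,q)$ to the right-hand side of \eqref{eq:defsobpunt}:
\[
|f(\gamma_t)-f(\gamma_0)| \;\leq\; \int_0^t \weakgrad f(\gamma_s)|\dot\gamma_s|\,\d s \;\leq\; \Big(\int_0^t \weakgrad f^p(\gamma_s)\,\d s\Big)^{1/p}\Big(\int_0^t |\dot\gamma_s|^q\,\d s\Big)^{1/q}.
\]
Now use the identity $E_{q,t}(\gamma) = t^{1/p}\bigl(\int_0^t|\dot\gamma_s|^q\,\d s\bigr)^{1/q}$, raise both sides to the $p$-th power and divide by $E_{q,t}(\gamma)^p$; this produces \eqref{eq:curve} (on the null set where $E_{q,t}=0$ the curve is constant on $[0,t]$, so $f(\gamma_t)=f(\gamma_0)$ and the inequality holds trivially with the usual convention).

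\smallskip

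\emph{Step 2: inequality \eqref{eq:limsup}.} Integrate \eqref{eq:curve} against $\ppi$ and use Fubini:
\[
\int\Big|\tfrac{f(\gamma_t)-f(\gamma_0)}{E_{q,t}(\gamma)}\Big|^p \d\ppi(\gamma) \;\leq\; \frac1t\int_0^t h(s)\,\d s, \qquad h(s):=\int \weakgrad f^p\,\d(\e_s)_\sharp\ppi.
\]
Since $(\e_s)_\sharp\ppi \leq C\mm$ uniformly in $s$, the function $h$ is bounded by $C\|\weakgrad f\|_{L^p(\smm)}^p$. The key point is to show $\limsup_{t\downarrow 0}\frac1t\int_0^t h(s)\,\d s\leq h(0)$. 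For $\phi\in C_b(X)$, the continuity of $\gamma\mapsto \phi(\gamma_s)$ and bounded convergence yield continuity of $s\mapsto\int \phi\,\d(\e_s)_\sharp\ppi$. Given $\eps>0$, use density of $C_b(X)$ in $L^1(X,\mm)$ (valid since $\mm$ is $\sigma$-finite Borel on a Polish space and $\weakgrad f^p\in L^1(\mm)$) to choose $\phi_\eps\in C_b(X)$ with $\phi_\eps\geq 0$ and $\int|\weakgrad f^p-\phi_\eps|\,\d\mm < \eps/C$. Then $|h(s)-\int\phi_\eps\,\d(\e_s)_\sharp\ppi|\leq \eps$ for every $s$, and since $s\mapsto\int\phi_\eps\,\d(\e_s)_\sharp\ppi$ is continuous, Ces\`aro convergence gives
\[
\lims_{t\downarrow 0}\tfrac1t\int_0^t h(s)\,\d s \;\leq\; \int\phi_\eps\,\d(\e_0)_\sharp\ppi + \eps \;\leq\; h(0) + 2\eps.
\]
Letting $\eps\downarrow 0$ yields \eqref{eq:limsup}.

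\smallskip

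\emph{Step 3: inequality \eqref{eq:altra}.} Start again from \eqref{eq:defsobpunt} and apply Young's inequality $ab\leq a^p/p + b^q/q$ to the integrand:
\[
\frac{f(\gamma_t)-f(\gamma_0)}{t} \;\leq\; \frac{1}{t}\int_0^t \weakgrad f(\gamma_s)|\dot\gamma_s|\,\d s \;\leq\; \frac{1}{pt}\int_0^t \weakgrad f^p(\gamma_s)\,\d s + \frac{1}{qt}\int_0^t|\dot\gamma_s|^q\,\d s.
\]
Integrate against $\ppi$, take $\limsup_{t\downarrow 0}$, and apply Step 2's argument to bound the first average by $\frac1p\int\weakgrad f^p(\gamma_0)\,\d\ppi(\gamma)$; the second term is left as the $\limsup$ on the right of \eqref{eq:altra}. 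This delivers \eqref{eq:altra}.

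\smallskip

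The only non-routine ingredient is the upper-semicontinuity-at-zero argument in Step 2, which is handled by approximating $\weakgrad f^p$ in $L^1(\mm)$ by bounded continuous functions and exploiting bounded compression $(\e_s)_\sharp\ppi\leq C\mm$; the rest is H\"older plus Young applied to \eqref{eq:defsobpunt}.
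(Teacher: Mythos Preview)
Your proof is correct and follows essentially the same approach as the paper: H\"older on \eqref{eq:defsobpunt} for \eqref{eq:curve}, integration plus a density/weak-convergence argument for \eqref{eq:limsup}, and Young's inequality for \eqref{eq:altra}. Two minor cosmetic differences: in Step~2 the paper phrases the limit argument dually (the densities $\rho_s$ of $(\e_s)_\sharp\ppi$ converge weakly to $\rho_0$ in duality with $L^1(X,\mm)$, by $C_b$-weak convergence plus the uniform $L^\infty$ bound), whereas you approximate $\weakgrad f^p$ in $L^1$ by $C_b$ functions --- same content; and in Step~3 the paper first factors $\tfrac{|f(\gamma_t)-f(\gamma_0)|}{t}=\tfrac{|f(\gamma_t)-f(\gamma_0)|}{E_{q,t}}\cdot\tfrac{E_{q,t}}{t}$ and applies Young to this product (so that \eqref{eq:altra} is deduced from \eqref{eq:limsup}), while you apply Young pointwise inside the time integral, which is arguably a touch more direct.
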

\begin{proof} From \eqref{eq:defsobpunt} we obtain
\[
\big|f(\gamma_t)-f(\gamma_0)\big|\leq\int_0^t\weakgrad f(\gamma_s)|\dot\gamma_s|\,\d s\leq \sqrt[p]{\int_0^t\weakgrad f^p(\gamma_s)\,\d s} \sqrt[q]{\int_0^t|\dot\gamma_s|^q\,\d s},
\]
which is a restatement of \eqref{eq:curve}. Integrate \eqref{eq:curve} w.r.t. $\ppi$ to get
\[
\int\left|\frac{f(\gamma_t)-f(\gamma_0)}{E_{q,t}(\gamma)}\right|^p\,\d\ppi(\gamma)\leq\frac1t \int_0^t\int\weakgrad f^p\,\d(\e_s)_\sharp\ppi\,\d s=\int\weakgrad f^p\left(\frac1t\int_0^t\rho_s\,\d s\right)\,\d\mm,
\]
where $\rho_s$ is the density of $(\e_s)_\sharp\ppi$ w.r.t. $\mm$. Conclude observing that $\weakgrad f^p\in L^1(X,\mm)$, that $\rho_s$ weakly converges to $\rho_0$ as $s\downarrow 0$ in duality with $C_b(X)$ and that $\|\rho_s\|_\infty$ is uniformly bounded to get weak convergence of $\rho_s$ to $\rho_0$ in duality with $L^1(X,\mm)$, thus it holds
\[
\lim_{t\downarrow 0}\int\weakgrad f^p\left(\frac1t\int_0^t\rho_s\,\d s\right)\,\d\mm=\int\weakgrad f^p\,\d\mm,
\]
and \eqref{eq:limsup} is proved. For \eqref{eq:altra} just notice that for $\ppi$-a.e. $\gamma$ it holds
\begin{equation}
\label{eq:bridge}
\begin{split}
\frac{|f(\gamma_t)-f(\gamma_0)|}{t}&=\frac{|f(\gamma_t)-f(\gamma_0)|}{E_{q,t}(\gamma)}\frac{E_{q,t}(\gamma)}{t}\leq\frac{1}{p}\left|\frac{f(\gamma_t)-f(\gamma_0)}{E_{q,t}(\gamma)}\right|^p+\frac1q\left|\frac{E_{q,t}(\gamma)}{t}\right|^q,
\end{split}
\end{equation}
then integrate w.r.t. $\ppi$ and use \eqref{eq:limsup} in passing to the limit as $t\downarrow 0$.
\end{proof}
\begin{proposition}\label{prop:bridge}
Let $(X,\sfd,\mm)$ be as in \eqref{eq:mms}, $p\in(1,\infty)$ and $q$ the conjugate exponent, $f\in\s^p(X,\sfd,\mm)$ and $\ppi$ a $q$-test plan. Then
\begin{equation}
\label{eq:convlp}
\lim_{t\downarrow0}\sqrt[p]{\frac1t\int_0^t\weakgrad f^p\circ\e_s\,\d s}= \weakgrad f\circ\e_0,\qquad\textrm{ in }L^p(\ppi).
\end{equation}
Furthermore,  if the family of functions $\frac{E_{q,t}}t$ is dominated in $L^q(\ppi)$, then the family of functions $\frac{f(\gamma_t)-f(\gamma_0)}{t}$ is dominated in $L^1(\ppi)$.
\end{proposition}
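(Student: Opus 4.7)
The plan is to establish \eqref{eq:convlp} via a triangle-inequality trick in $L^p([0,t],\d s/t)$ combined with an $L^p(\ppi)$-continuity of $s\mapsto \weakgrad f\circ\e_s$ at $s=0$; then the second assertion will follow by combining this convergence with the bound \eqref{eq:bridge} derived in the proof of Proposition \ref{prop:cambiata}.

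Set $A_t(\gamma):=\bigl(\tfrac1t\int_0^t\weakgrad f^p\circ\e_s\,\d s\bigr)^{1/p}$ and $g_0(\gamma):=\weakgrad f(\gamma_0)$. The key observation is that $A_t(\gamma)$ is the norm of $s\mapsto \weakgrad f(\gamma_s)$ in $L^p([0,t],\d s/t)$, while $g_0(\gamma)$ is the norm of the constant function equal to $\weakgrad f(\gamma_0)$ in the same space. The reverse triangle inequality then yields, for $\ppi$-a.e. $\gamma$,
\[
|A_t(\gamma)-g_0(\gamma)|^p\leq \frac{1}{t}\int_0^t\bigl|\weakgrad f(\gamma_s)-\weakgrad f(\gamma_0)\bigr|^p\,\d s.
\]
Integrating against $\ppi$ and applying Fubini, the claim \eqref{eq:convlp} reduces to showing that $\|\weakgrad f\circ\e_s-\weakgrad f\circ\e_0\|_{L^p(\ppi)}\to 0$ as $s\downarrow 0$, because the right-hand side is the time-average on $[0,t]$ of this quantity.

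This continuity step is where I expect the main obstacle, since $\weakgrad f$ is only known to be a Borel $L^p$ function and no regularity is available. I would approximate $\weakgrad f$ in $L^p(X,\mm)$ by a continuous bounded function $h$. Using bounded compression $(\e_r)_\sharp\ppi\leq C\mm$, one has $\|\weakgrad f\circ\e_r-h\circ\e_r\|_{L^p(\ppi)}\leq C^{1/p}\|\weakgrad f-h\|_{L^p(X,\mm)}$ uniformly in $r\in[0,1]$, while $\|h\circ\e_s-h\circ\e_0\|_{L^p(\ppi)}\to 0$ as $s\downarrow 0$ follows from $\gamma_s\to\gamma_0$ for every $\gamma$ and dominated convergence, since $h$ is bounded and continuous. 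A triangle inequality and the arbitrariness of $h$ then give the required continuity, and hence Part 1.

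For the second assertion I would combine \eqref{eq:bridge} with \eqref{eq:curve} to obtain the $\ppi$-a.e. pointwise estimate
\[
\frac{|f(\gamma_t)-f(\gamma_0)|}{t}\leq \frac{A_t(\gamma)^p}{p}+\frac{1}{q}\left|\frac{E_{q,t}(\gamma)}{t}\right|^q.
\]
Under the standing assumption, the second summand is dominated in $L^1(\ppi)$; by Part 1, $A_t\to g_0$ in $L^p(\ppi)$, which standardly implies $A_t^p\to g_0^p$ in $L^1(\ppi)$, so that the family $\{A_t^p\}_{t\in(0,1]}$ is itself dominated (in particular, equi-integrable) in $L^1(\ppi)$. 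The sum thus shares this property, giving the desired domination of $\{(f\circ\e_t-f\circ\e_0)/t\}_{t}$ in $L^1(\ppi)$.
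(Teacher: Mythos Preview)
Your argument is correct and follows essentially the same route as the paper: approximate $\weakgrad f$ (or $\weakgrad f^p$) by a regular function, use bounded compression to get uniform $L^p$ (or $L^1$) control of the error, and use continuity of curves plus dominated convergence for the regular approximant. The only cosmetic difference is that the paper works directly with $h:=\weakgrad f^p\in L^1(X,\mm)$ and shows $\tfrac1t\int_0^t h\circ\e_s\,\d s\to h\circ\e_0$ in $L^1(\ppi)$ (approximating $h$ by Lipschitz functions), whereas you insert a reverse triangle inequality to reduce to the $L^p(\ppi)$-continuity of $s\mapsto\weakgrad f\circ\e_s$ at $s=0$; the paper's route saves the extra step of passing from $A_t\to g_0$ in $L^p$ to $A_t^p\to g_0^p$ in $L^1$, but the substance is identical.
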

\begin{proof}
To prove \eqref{eq:convlp}  it is sufficient to show that 
\begin{equation}
\label{eq:convl1}
\lim_{t\downarrow0}{\frac1t\int_0^t\weakgrad f^p\circ\e_s\,\d s}= \weakgrad f^p\circ\e_0,\qquad\textrm{ in }L^1(\ppi).
\end{equation}
To this aim, associate to a generic $h\in L^1(X,\mm)$ the functions $H_t\in L^1(\ppi)$, $t\in[0,1]$, defined by
\[
H_t:=\frac1t\int_0^t h\circ \e_s\,\d s,\qquad\qquad H_0:=h\circ\e_0,
\]
and notice that if $h$ is Lipschitz, the inequality $|H_t(\gamma)-H_0(\gamma)|\leq \Lip(h)\frac1t\int_0^t\sfd(\gamma_s,\gamma_0)\,\d s\leq \Lip(h)\int_0^t|\dot\gamma_s|\,\d s$ easily yields that $\|H_t-H_0\|_{L^1(\sppi)}\to 0$ as $t\downarrow 0$. Now recall that Lipschitz functions are dense in $L^1(X,\mm)$ and use the uniform continuity estimates
\[
\begin{split}
\int\frac1t\left|\int_0^th(\gamma_s)\,\d s\right|\,\d\ppi(\gamma)&\leq\frac1t\iint_0^t|h(\gamma_s)|\,\d s\,\d\ppi(\gamma)\leq C\|h\|_{L^1(X,\smm)},\\
 \int|h(\gamma_0)|\,\d\ppi(\gamma)&\leq C\|h\|_{L^1(X,\smm)},
\end{split}
\]
where $C>0$ is such that $(\e_t)_\sharp\ppi\leq C\mm$ for any $t\in[0,1]$, to conclude that \eqref{eq:convl1} is true by choosing $h:=\weakgrad f^p$.

For the second part of the statement, observe that \eqref{eq:bridge} and \eqref{eq:curve} give
\[
\frac{|f(\gamma_t)-f(\gamma_0)|}{t}\leq  \frac1{pt}\int_0^t\weakgrad f^p(\gamma_s)\,\d s+\frac1q\left|\frac{E_{q,t}(\gamma)}{t}\right|^q,
\]
that \eqref{eq:convl1} yields that the family of functions $\frac1{pt}\int_0^t\weakgrad f^p(\gamma_s)\,\d s$ is dominated in $L^1(\ppi)$, and use the assumption to conclude.
\end{proof}
\subsection{The Cheeger energy and its gradient flow}
In Section \ref{se:horver}, when discussing the existence of `plans representing a gradient', we will need to call into play some non-trivial link between gradient flows in  $L^2$ and Wasserstein geometry. In this short section we recall those results which we will need later on.

We start with the following statement concerning the dependence of the Sobolev classes w.r.t. the reference measure, proven in Section 8.2 of \cite{Ambrosio-Gigli-Savare-pq}.
\begin{proposition}\label{prop:inv2}
Let $(X,\sfd)$ be a complete and separable metric space and $\mm,\mm'$ two Radon  non-negative measures on it. Assume that $\mm\ll\mm'\ll\mm$ with $\frac{\d\mm'}{\d\mm}$ locally bounded from above and from below by positive constants. Then for every $p\in(1,\infty)$ the sets $\s^p_{\rm loc}(X,\sfd,\mm)$ and $\s^p_{\rm loc}(X,\sfd,\mm')$ coincide and for $f\in \s^p_{\rm loc}(X,\sfd,\mm)=\s^p_{\rm loc}(X,\sfd,\mm')$ a function $G$ is a $p$-weak upper gradient of $f$ in $\s^p_{\rm loc}(X,\sfd,\mm)$ if and only if it is so on $\s^p_{\rm loc}(X,\sfd,\mm')$.
\end{proposition}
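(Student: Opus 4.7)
The plan is to reduce the statement, by a countable decomposition of test plans, to an application of the Sobolev inequality with respect to the other measure. Since the hypothesis on $\mm,\mm'$ is symmetric, one implication suffices.

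First, I would observe that the local two-sided boundedness of $\frac{\d\mm'}{\d\mm}$ implies that $\mm$ and $\mm'$ share the same null sets and that $L^p_{loc}(X,\mm)=L^p_{loc}(X,\mm')$ as sets. In particular $G\in L^p_{loc}(X,\mm)$ iff $G\in L^p_{loc}(X,\mm')$, and $\mm$-a.e.\ and $\mm'$-a.e.\ statements are interchangeable. So the integrability part of the definition is automatic, and what remains is to compare the families of test plans.

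Suppose $G$ is a $p$-weak upper gradient of $f$ in $\s^p_{loc}(X,\sfd,\mm')$; I want to prove the Sobolev inequality \eqref{eq:defsob} with respect to any $q$-test plan $\ppi$ for $(X,\sfd,\mm)$, say with $(\e_t)_\sharp\ppi\leq C\mm$. By Lindel\"of I would choose a countable open cover $\{U_n\}_{n\in\N}$ of $X$ such that for every $n$ there is $c_n>0$ with $\frac{\d\mm'}{\d\mm}\geq c_n$ $\mm$-a.e.\ on $U_n$, hence $\mm\restr{U_n}\leq c_n^{-1}\mm'$. For every finite $S\subset\N$, setting $U_S:=\bigcup_{n\in S}U_n$ and
\[
\Gamma_S:=\{\gamma\in C([0,1],X):\gamma([0,1])\subset U_S\},
\]
the complement of $\Gamma_S$ is $\{\gamma:\exists\,t,\ \gamma_t\in X\setminus U_S\}$ and is closed in the uniform topology, so $\Gamma_S$ is Borel (in fact open). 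Compactness of $\gamma([0,1])$ yields $C([0,1],X)=\bigcup_S\Gamma_S$, a countable union. Enumerating the finite $S$'s as $(S_k)_{k\in\N}$, the Borel sets $A_k:=\Gamma_{S_k}\setminus\bigcup_{j<k}\Gamma_{S_j}$ form a countable disjoint partition of $C([0,1],X)$.

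Now each $\ppi_k:=\ppi\restr{A_k}$ is concentrated on curves with image in $U_{S_k}$, so $(\e_t)_\sharp\ppi_k$ is concentrated on $U_{S_k}$; combining $(\e_t)_\sharp\ppi_k\leq C\mm$ with $\mm\restr{U_{S_k}}\leq (c^*_k)^{-1}\mm'$ where $c^*_k:=\min_{n\in S_k}c_n>0$ gives $(\e_t)_\sharp\ppi_k\leq \frac{C}{c^*_k}\mm'$. The $q$-energy bound $\iint_0^1|\dot\gamma_t|^q\,\d t\,\d\ppi_k<\infty$ is inherited from $\ppi$, so $\ppi_k$ is a $q$-test plan for $(X,\sfd,\mm')$. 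Applying the Sobolev inequality for $G$ with respect to each $\ppi_k$ and summing via monotone convergence yields
\[
\int|f(\gamma_1)-f(\gamma_0)|\,\d\ppi=\sum_k\int|f(\gamma_1)-f(\gamma_0)|\,\d\ppi_k\leq\sum_k\iint_0^1 G(\gamma_s)|\dot\gamma_s|\,\d s\,\d\ppi_k=\iint_0^1 G(\gamma_s)|\dot\gamma_s|\,\d s\,\d\ppi,
\]
which is \eqref{eq:defsob} for $\ppi$. The reverse implication follows by interchanging the roles of $\mm$ and $\mm'$, now exploiting the upper bound on $\frac{\d\mm'}{\d\mm}$. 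The main point where care is needed is the measurability of the sets $\Gamma_S$ (where the compactness of curve images in $[0,1]$ is essential) and the verification that each $\ppi_k$ is a genuine test plan for the other measure, which is precisely where the one-sided local bound of the Radon--Nikodym derivative enters.
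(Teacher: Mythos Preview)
The paper does not actually prove this proposition: it only states it and refers to Section~8.2 of \cite{Ambrosio-Gigli-Savare-pq} for the proof. Your argument is a correct, self-contained proof working directly from Definition~\ref{def:sobcl}. The idea of partitioning the path space according to which finite subcover contains the image of the curve, and observing that on each piece the restricted plan has bounded compression with respect to the other measure, is exactly the right one.

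Two minor points worth tightening. First, in Definition~\ref{def:testplan} a $q$-test plan is required to be a \emph{probability} measure, so strictly speaking each $\ppi_k=\ppi\restr{A_k}$ must be normalized to $\ppi(A_k)^{-1}\ppi_k$ (when $\ppi(A_k)>0$) before invoking \eqref{eq:defsob}; since the inequality is $1$-homogeneous in the plan, this is harmless and your summation step goes through unchanged. Second, your last sentence says the reverse implication uses the \emph{upper} bound on $\frac{\d\mm'}{\d\mm}$; in fact it uses the lower bound on $\frac{\d\mm}{\d\mm'}$, which is the same thing, so the symmetry argument is fine --- just be aware that both the upper and lower local bounds are needed, one for each direction.
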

In summary, the notion of being a Sobolev function is unchanged if we replace the reference measure $\mm$ with an equivalent  one $\mm'$ such that  $\log(\frac{\d\mm'}{\d\mm})\in L^\infty_{\rm loc}(X,\mm)$.

Now  let $(X,\sfd,\mm)$ be a space as in \eqref{eq:mms} and $\mmz\in\prob X$ our fixed measure satisfying \eqref{eq:mmz} and \eqref{eq:momp}. As a consequence of Proposition \ref{prop:inv2} above and the inequality $\mmz\leq C\mm$ for some $C>0$, we also get  that $\s^p(X,\sfd,\mm)\subset\s^p(X,\sfd,\mmz)$.

For $p\in(1,\infty)$ define the Cheeger energy functional $\c_p:L^2(X,\mm)\to[0,\infty]$   by
\begin{equation}
\label{eq:cheegerbase}
\c_p(f):=\left\{
\begin{array}{ll}
\displaystyle{\frac1p\int\weakgrad f^p\,\d\mm,}&\qquad\textrm{ if }f\in\s^p(X,\sfd,\mm)\cap L^2(X,\mm),\\
&\\
+\infty&\qquad\textrm{  otherwise}.
\end{array}
\right.
\end{equation}
And similarly $\widetilde\c_p:L^2(X,\mmz)\to[0,\infty]$ by
\begin{equation}
\label{eq:cheegertilde}
\widetilde\c_p(f):=\left\{
\begin{array}{ll}
\displaystyle{\frac1p\int\weakgrad f^p\,\d\mmz,}&\qquad\textrm{ if }f\in\s^p(X,\sfd,\mmz)\cap L^2(X,\mmz),\\
&\\
+\infty&\qquad\textrm{  otherwise}.
\end{array}
\right.
\end{equation}
The name `Cheeger energy', introduced in \cite{Ambrosio-Gigli-Savare11} for the case $p=2$, has been preferred over `Dirichlet energy', because in general it is not a Dirichlet form - not even for $p=2$, and because its first definition, which is equivalent to the one we gave, was based on a relaxation procedure similar to the one done by Cheeger in \cite{Cheeger00}. 

The properties of $\c_p$, in particular of $\c_2$, are linked to some special case of the definition of Laplacian - see Proposition \ref{prop:comp} -, while those of $\widetilde\c_p$ will be of key importance in order to get some sort of compactness in the proof of the crucial Lemma \ref{le:planf}.

Notice that $\widetilde\c_p$ is convex, lower semicontinuous and with dense domain, thus for any $f\in L^2(X,\mmz)$ there exists a unique gradient flow of $\widetilde\c_p$ starting from $f$. 

The following theorem collects the main properties of this gradient flow. In an abstract setting it has been proved firstly in \cite{GigliKuwadaOhta10} (in Alexandrov spaces and for $p=2$), generalized in \cite{Ambrosio-Gigli-Savare11} (for a class of spaces covering in particular those as in \eqref{eq:mms}, and for $p=2$) and in its current form in  \cite{Ambrosio-Gigli-Savare-pq}. The difficult part of the statement is the fifth, where the Wasserstein geometry is linked to the dissipation of $\widetilde\c_p$: the key tool which allows to get the sharp bound on $|\dot\mu_t|$ is Kuwada's lemma, appeared at  first in \cite{Kuwada10}.
\begin{theorem}\label{thm:keyex}
Let $(X,\sfd,\mm)$ be as in \eqref{eq:mms} and $\mmz\in\prob X$ as in \eqref{eq:mmz}, \eqref{eq:momp}. Also, let $p\in(1,\infty)$, $f\in L^2(X,\mmz)$ and  $(f_t)\subset L^2(X,\mmz)$ be the gradient flow of $\widetilde\c_p$ starting from $f$. Then the following hold.
\begin{itemize}
\item[i)]\underline{\rm Mass preservation} $\displaystyle{\int f_t\,\d\mmz=\int f\,\d\mmz}$ for any $t\geq 0$.
\item[ii)]\underline{\rm Maximum principle} If $f\leq C$ (resp. $f\geq c$)  $\mmz$-a.e.,  then $f_t\leq C$ (resp. $f_t\geq c$) $\mmz$-a.e. for any $t\geq 0$.
\item[iii)]\underline{\rm Entropy dissipation} If $c\leq f\leq C$ $\mmz$-a.e. and $u:[c,C]\to\R$ is a $C^2$ map, then $t\mapsto\int u(f_t)\,\d\mm$ belongs to $C^1(0,\infty)$  and it holds
\begin{equation}
\label{eq:dissipazione}
\frac{\d}{\dt}\int u(f_t)\,\d\mmz=-\int u''(f_t)\weakgrad{f_t}^p\,\d\mmz,\qquad\textrm{ for any }t>0, 
\end{equation}
\item[iv)]\underline{\rm Dissipation at $t=0$} With the same assumptions as in $(iii)$, if furthermore $f\in\s^p(X,\sfd,\mmz)$, then \eqref{eq:dissipazione} is true also for $t=0$ and it holds
\begin{equation}
\label{eq:dissipazione0}
\lim_{t\downarrow 0}\int u''(f_t)\weakgrad{f_t}^p\,\d\mmz=\int u''(f)\weakgrad{f}^p\,\d\mmz.
\end{equation}
\item[v)]\underline{\rm Control on the Wasserstein speed} Assume that $c\leq f\leq C$ for some $c,C>0$ and that $\int f\,\d\mmz=1$. Then the curve $t\mapsto\mu_t:=f_t\mmz$ (which has values in $\probp Xq$ thanks to $(i)$, $(ii)$  and \eqref{eq:momp}) is $q$-absolutely continuous w.r.t. $W_q$, where $q\in(1,\infty)$ is the conjugate exponent of $p$, and for its metric speed $|\dot\mu_t|$  it holds
\begin{equation}
\label{eq:boundsharp}
|\dot\mu_t|^q\leq\int\frac{\weakgrad{f_t}^p}{f_t^{q-1}}\,\d\mmz,\qquad a.e.\ t. 
\end{equation}
\end{itemize}
\end{theorem}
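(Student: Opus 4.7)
The plan is to address (i)--(v) in order, using Hilbert-space gradient-flow theory for existence and basic structure, the calculus of weak upper gradients for the chain-rule identities in (iii)--(iv), and Kuwada's Hamilton--Jacobi/Kantorovich duality argument for the delicate Wasserstein bound (v). Existence and uniqueness of $(f_t)$ are automatic from convexity, lower semicontinuity and dense domain of $\widetilde\c_p$ on the Hilbert space $L^2(X,\mmz)$, and the evolution takes the form $\partial_t f_t=-\ell_t$ for a.e.\ $t>0$, with $\ell_t\in L^2(X,\mmz)$ the element of minimal norm in $\partial\widetilde\c_p(f_t)$.

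For (i) I would use the invariance $\widetilde\c_p(f+c)=\widetilde\c_p(f)$ under constants: this forces every element of $\partial\widetilde\c_p(f_t)$ to be $L^2(X,\mmz)$-orthogonal to the constant $1$, which lies in $L^2(X,\mmz)$ because $\mmz\in\prob X$, so pairing $\partial_t f_t=-\ell_t$ with $1$ yields mass preservation. For (ii), the chain rule \eqref{eq:chaineasy} applied to the $1$-Lipschitz truncation $T_C(x):=x\wedge C$ gives $\weakgrad{(T_C\circ g)}\le\weakgrad g$ and thus $\widetilde\c_p(T_C\circ g)\le\widetilde\c_p(g)$; when $f\le C$ one has $T_C\circ f=f$, and applying $T_C$ after each step of the minimizing-movement (JKO) approximation produces iterates that stay $\le C$, so passing to the limit gives $f_t\le C$ $\mmz$-a.e. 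The lower bound is symmetric.

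For (iii), the maximum principle from (ii) keeps $c\le f_t\le C$, so $u(f_t)$ is bounded. General Hilbert gradient-flow theory ensures $t\mapsto\widetilde\c_p(f_t)$ is locally absolutely continuous on $(0,\infty)$. To establish the identity \eqref{eq:dissipazione} I would test the subdifferential inequality
\[
\int\ell_t(g-f_t)\,\d\mmz\le\widetilde\c_p(g)-\widetilde\c_p(f_t)
\]
with $g=f_t+\varepsilon u'(f_t)$ for both signs of $\varepsilon$, and pass to the limit $\varepsilon\to 0^{\pm}$ using the one-sided incremental ratios in \eqref{eq:def5}: the right-hand side divided by $\varepsilon$ converges monotonically to $\int D^{\pm}u'(f_t)(\nabla f_t)\,\weakgrad{f_t}^{p-2}\,\d\mmz$, which by the chain rule (of the form developed later in Chapter \ref{se:diff}) equals $\int u''(f_t)\weakgrad{f_t}^p\,\d\mmz$. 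Combining with $\frac{\d}{\dt}\int u(f_t)\,\d\mmz=-\int u'(f_t)\ell_t\,\d\mmz$ yields \eqref{eq:dissipazione}; the $C^1$-regularity follows from $L^p(X,\mmz)$-right-continuity of $\weakgrad{f_t}$, itself a consequence of the energy identity $\int_0^t\|\ell_s\|^2_{L^2}\,\d s=\widetilde\c_p(f)-\widetilde\c_p(f_t)$ and weak lower semicontinuity. For (iv), when $f\in\s^p(X,\sfd,\mmz)$ one has in addition right-continuity of $\widetilde\c_p(f_t)$ at $t=0$, upgrading the identity to $t=0$ and producing \eqref{eq:dissipazione0}.

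The real obstacle is (v). I would follow Kuwada's duality argument. Fix a bounded Lipschitz $\varphi$ and consider its Hopf--Lax evolution $(Q_s\varphi)_{s\in[0,1]}$ with cost $\sfd^q/q$; in the generality of \eqref{eq:mms} this satisfies the Hamilton--Jacobi subsolution inequality $\partial_s Q_s\varphi+\tfrac{1}{p}|DQ_s\varphi|^p\le 0$ (available from \cite{Ambrosio-Gigli-Savare-pq}). For $0\le t<r$, rescale $s\mapsto t+s(r-t)$ and differentiate $s\mapsto\int Q_s\varphi\,f_{t+s(r-t)}\,\d\mmz$: the $Q$-partial is controlled by the HJ inequality, while the $f$-partial is bounded by $(r-t)\int|DQ_s\varphi|\weakgrad{f_\tau}^{p-1}\,\d\mmz$ via the subdifferential estimate derived as in (iii). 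Young's inequality in the form $(r-t)ab\le\tfrac{1}{p}a^p f_\tau+\tfrac{(r-t)^q}{q}b^q/f_\tau^{q-1}$ with $a=|DQ_s\varphi|$, $b=\weakgrad{f_\tau}^{p-1}$ (so that $b^q=\weakgrad{f_\tau}^{q(p-1)}=\weakgrad{f_\tau}^p$) absorbs the cross term into the $Q$-partial and leaves precisely $\weakgrad{f_\tau}^p/f_\tau^{q-1}$. Integrating in $s$, supremizing over $\varphi$ and invoking the Kantorovich duality
\[
\tfrac{1}{q}W_q^q(\mu_t,\mu_r)=\sup_{\varphi}\Big\{\int Q_1\varphi\,\d\mu_r-\int\varphi\,\d\mu_t\Big\}
\]
yields $W_q^q(\mu_t,\mu_r)\le(r-t)^{q-1}\int_t^r\int\weakgrad{f_\tau}^p/f_\tau^{q-1}\,\d\mmz\,\d\tau$, whence the $q$-absolute continuity and \eqref{eq:boundsharp} by Lebesgue differentiation.
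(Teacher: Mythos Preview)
The paper does not give its own proof of Theorem \ref{thm:keyex}: it states the result and refers to \cite{GigliKuwadaOhta10}, \cite{Ambrosio-Gigli-Savare11} and, for the present generality, \cite{Ambrosio-Gigli-Savare-pq}, explicitly identifying Kuwada's lemma \cite{Kuwada10} as the key ingredient for (v). Your sketch is faithful to the strategy of those references: (i)--(ii) are the standard invariance/truncation arguments, (iii)--(iv) amount to the subdifferential computation with the chain rule, and (v) is precisely Kuwada's Hopf--Lax duality argument. So your proposal matches what the paper invokes.

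Two minor remarks. In (iii) you appeal to the $D^\pm$ calculus of Chapter \ref{se:diff}; this is not circular (Proposition \ref{prop:chain} does not rely on Theorem \ref{thm:keyex}), but it is simpler, and closer to how the cited references actually proceed, to notice that $f_t+\eps u'(f_t)=\phi_\eps\circ f_t$ with $\phi_\eps(z):=z+\eps u'(z)$, so by the elementary chain rule \eqref{eq:chaineasy} one has $\weakgrad{(f_t+\eps u'(f_t))}=|1+\eps u''(f_t)|\,\weakgrad{f_t}$ and the incremental quotient of $\widetilde\c_p$ converges to $\int u''(f_t)\weakgrad{f_t}^p\,\d\mmz$ without any forward reference. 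In (v), the Hamilton--Jacobi subsolution property you quote is established in \cite{Ambrosio-Gigli-Savare11}, \cite{Ambrosio-Gigli-Savare-pq} with the slope of $Q_s\varphi$; since $\weakgrad{Q_s\varphi}$ is bounded above by that slope, your chain of inequalities closes and yields \eqref{eq:boundsharp} exactly as in Kuwada's argument.
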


\section{Differentials and gradients}\label{se:diff}
\subsection{Definition and basic properties}
Following the approach described in Section \ref{se:normato}, our first task toward giving the definition of distributional Laplacian on a metric measure space $(X,\sfd,\mm)$ is to give a meaning to $D^\pm f(\nabla g)$ for Sobolev functions $f,g$. We will imitate the definition given in the right hand sides of  \eqref{eq:def2}, \eqref{eq:def3}. 

Start noticing that  if $\varphi:\R\to\R^+$ is a convex function, the values of 
\[
\inf_{\eps>0 0}\frac{\varphi(\eps)^p-\varphi(0)^p}{p\eps\varphi(0)^{p-2}},\qquad\qquad \sup_{\eps<0}\frac{\varphi(\eps)^p-\varphi(0)^p}{p\eps\varphi(0)^{p-2}}
\] 
are independent on $p\in(1,\infty)$, and equal to $\varphi(0)\varphi'(0^+)$, $\varphi(0)\varphi'(0^-)$ respectively, as soon as $\varphi(0)\neq 0$. The convexity also grants that the $\inf$ and $\sup$  can be substituted by  $\lim_{\eps\downarrow0}$ and $\lim_{\eps\uparrow0}$, respectively.

Fix $p\in(1,\infty)$, let $f,g\in\s^p_{\rm loc}(X,\sfd,\mm)$ and observe that \eqref{eq:convweak} ensures that the map $\eps\mapsto \weakgrad{(g+\eps f)}$ is convex in the sense that
\[
\weakgrad{(g+((1-\lambda)\eps_0+\lambda\eps_1) f)}\leq (1-\lambda)\weakgrad{(g+\eps_0f)}+\lambda\weakgrad{(g+\eps_1f)},\qquad\mm-a.e.,
\]
for any $\lambda\in[0,1]$, $\eps_0,\eps_1\in\R$.
\begin{definition}[$D^\pm f(\nabla g)$]\label{def:dfgg}
Let $(X,\sfd,\mm)$ be as in \eqref{eq:mms}, $p\in(1,\infty)$  and $f,g\in\s^p_{\rm loc}(X,\sfd,\mm)$. The functions  $D^\pm f(\nabla g):X\to\R$ are $\mm$-a.e. well defined by   
\begin{equation}
\label{eq:defdpm}
\begin{split}
D^+f(\nabla g)&:=\inf_{\eps>0}\frac{\weakgrad{(g+\eps f)}^{p}-\weakgrad g^{p}}{p\eps\weakgrad{g}^{p-2}},\\
D^-f(\nabla g)&:=\sup_{\eps<0}\frac{\weakgrad{(g+\eps f)}^{p}-\weakgrad g^{p}}{p\eps\weakgrad{g}^{p-2}},
\end{split}
\end{equation}
on  $\{x:\weakgrad g(x)\neq 0\}$, and are taken 0 by definition on $\{x:\weakgrad g(x)=0\}$, where the $\inf/\sup$ should be intended as essential-$\inf/\sup$.
\end{definition}
As for the notation $\weakgrad f$, we are omitting the explicit dependence on the Sobolev exponent $p$ in writing $D^\pm f(\nabla g)$, which affects the definition by the potential dependence on $p$ of the objects $|D(g+\eps f)|_{w,p}$. Yet, the initial discussion and Remark \ref{re:notazione} grant that at least if the space is doubling and supports a 1-$p'$ Poincar\'e inequality, then $D^\pm f(\nabla g)$ is unambiguously defined for $f,g\in\s^p_{\rm loc}(X,\sfd,\mm)$, where $p\geq p'$.

The initial discussion also ensures that  that $\inf/\sup$ can be replaced with essential limits as $\eps\downarrow0/\eps\uparrow0$  respectively.

\medskip

Throughout this paper, the expressions $D^\pm f(\nabla g)\weakgrad g^{p-2}$ on the set $\{x:\weakgrad g(x)=0\}$ will always be taken 0 by definition. In this way it always holds
\[
\begin{split}
D^+f(\nabla g)\weakgrad g^{p-2}&=\inf_{\eps>0}\frac{\weakgrad{(g+\eps f)}^p-\weakgrad g^p}{p\eps},\\
D^-f(\nabla g)\weakgrad g^{p-2}&=\sup_{\eps<0}\frac{\weakgrad{(g+\eps f)}^p-\weakgrad g^p}{p\eps},
\end{split}
\]
$\mm$-a.e. on $X$.

Notice that the inequality $\weakgrad{(g+\eps f)}\leq\weakgrad g+|\eps|\weakgrad f$ yields
\begin{equation}
\label{eq:1}
|D^\pm f(\nabla g)|\leq\weakgrad f\weakgrad g,\qquad\mm-a.e.,
\end{equation}
and in particular $D^\pm f(\nabla g)\weakgrad g^{p-2}\in L^1(X,\mm)$ for any $f,g\in\s^p(X,\sfd,\mm)$ (resp. in $L^1_{\rm loc}(X,\mm)$ for  $f,g\in\s^p_{\rm loc}(X,\sfd,\mm)$).

The convexity of $f\mapsto\weakgrad{(g+f)}^p$ gives
\begin{equation}
\label{eq:3}
D^-f(\nabla g)\leq D^+f(\nabla g),\qquad\mm-a.e.,
\end{equation}
and
\begin{equation}
\label{eq:6}
\begin{split}
D^+f(\nabla g)\weakgrad g^{p-2}&=\inf_{\eps >0}D^-f(\nabla g_\eps)\weakgrad{g_\eps}^{p-2}=\inf_{\eps >0}D^+f(\nabla g_\eps)\weakgrad{g_\eps}^{p-2},\qquad\mm-a.e.,\\
D^-f(\nabla g)\weakgrad g^{p-2}&=\sup_{\eps <0}D^+f(\nabla g_\eps)\weakgrad{g_\eps}^{p-2}=\sup_{\eps <0}D^-f(\nabla g_\eps)\weakgrad{g_\eps}^{p-2},\qquad\mm-a.e.,
\end{split}
\end{equation}
where $g_\eps:=g+\eps f$.

Also, from the definition it directly follows that
\begin{equation}
\label{eq:4}
D^+(-f)(\nabla g)=-D^-f(\nabla g)=D^+f(\nabla(-g)),\qquad\mm-a.e.,
\end{equation}
and that
\begin{equation}
\label{eq:5}
D^\pm g(\nabla g)=\weakgrad g^2,\qquad\mm-a.e..
\end{equation}
As a consequence of the locality properties \eqref{eq:nullset} and \eqref{eq:localgrad} we also have
\begin{equation}
\label{eq:nullset2}
D^\pm f(\nabla g)=0,\quad\mm-a.e.\quad\textrm{on }f^{-1}(\mathcal N)\cup g^{-1}(\mathcal N),\qquad\forall\mathcal N\subset\R\quad\textrm{ such that }\mathcal L^1(\mathcal N)=0
\end{equation}
and
\begin{equation}
\label{eq:localgrad2}
D^\pm f(\nabla g)=D^\pm \tilde f(\nabla \tilde g),\qquad\mm-a.e.\qquad\textrm{on } \{f=\tilde f\}\cap\{g=\tilde g\}. 
\end{equation}

Some further basic properties of $D^\pm f(\nabla g)$ are collected in the following proposition.
\begin{proposition}\label{prop:convconc}
Let $(X,\sfd,\mm)$ be as in \eqref{eq:mms}, $p\in(1,\infty)$ and $g\in\s^p_{\rm loc}(X,\sfd,\mm)$. Then
\[
\s^p_{\rm loc}(X,\sfd,\mm)\ni f\qquad\mapsto\qquad D^+f(\nabla g),
\]
is positively 1-homogeneous, convex in the $\mm$-a.e. sense, i.e.
\[
D^+((1-\lambda)f_0+\lambda f_1)(\nabla g)\leq (1-\lambda)D^+f_1(\nabla g)+\lambda D^+f_2(\nabla g),\qquad\mm-a.e.,
\]
for any  $f_0,f_1\in\s^p_{\rm loc}(X,\sfd,\mm)$ and $\lambda\in[0,1]$, and 1-Lipschitz in the following sense:
\begin{equation}
\label{eq:1lip}
\big|D^+f_1(\nabla g)-D^+f_2(\nabla g)\big|\leq \weakgrad{(f_1-f_2)}\weakgrad g,\qquad \mm-a.e.\ \ \forall f_1,f_2\in\s^p_{\rm loc}(X,\sfd,\mm).
\end{equation}
Similarly,
\[
\s^p_{\rm loc}(X,\sfd,\mm)\ni f\qquad\mapsto\qquad D^-f(\nabla g)
\]
is positively 1-homogeneous, concave and 1-Lipschitz.

Conversely, for any $f\in\s^p_{\rm loc}(X,\sfd,\mm)$ it holds
\[
\begin{split}
\s^p_{\rm loc}(X,\sfd,\mm)\ni g&\quad\mapsto\quad D^+f(\nabla g)\quad\textrm{ is positively 1-homogeneous and upper semicontinuous},\\
\s^p_{\rm loc}(X,\sfd,\mm)\ni g&\quad\mapsto\quad D^-f(\nabla g)\quad\textrm{ is positively 1-homogeneous and lower semicontinuous},
\end{split}
\]
where upper semicontinuity is intended as follows: if $g_n,g\in\s^p_{\rm loc}(X,\sfd,\mm)$, $n\in\N$, and for some Borel set $E\subset X$ it holds $\int_E\weakgrad f^p\,\d\mm<\infty$,  $\sup_n\int_E\weakgrad{g_n}^p\,\d\mm<\infty$ and $\int_E\weakgrad{(g_n-g)}^p\,\d\mm\to 0$, then it holds
\[
\lims_{n\to\infty}\int_{E'} D^+f(\nabla g_n)\weakgrad{ g_n}^{p-2}\,\d\mm\leq \int_{E'}D^+f(\nabla g)\weakgrad g^{p-2}\,\d\mm,\qquad \forall \textrm{ Borel sets }E'\subset E.
\]
Similarly for lower semicontinuity.
\end{proposition}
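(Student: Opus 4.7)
My approach is to establish the properties in $f$ first, then positive homogeneity in $g$, then the upper semicontinuity, and finally to deduce the analogous properties of $D^-$ from \eqref{eq:4}.

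\emph{Step 1: dependence on $f$.} Positive $1$-homogeneity of $f\mapsto D^+f(\nabla g)$ will follow at once from the substitution $\eps'=\lambda\eps$ in definition \eqref{eq:defdpm}. For convexity, for each fixed $\eps>0$ I observe that $f\mapsto\weakgrad{(g+\eps f)}$ is $\mm$-a.e.\ convex by \eqref{eq:convweak} (since $g+\eps f$ is affine in $f$); composing with $t\mapsto t^p$, which is increasing and convex on $[0,\infty)$, gives $\mm$-a.e.\ convexity of $f\mapsto\weakgrad{(g+\eps f)}^p$, preserved under division by $p\eps\weakgrad{g}^{p-2}$. Since convexity is stable under pointwise limits, letting $\eps\downarrow 0$ along a countable sequence (the infimum equals this limit by monotonicity) yields $\mm$-a.e.\ convexity of $D^+f(\nabla g)$ in $f$. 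A convex positively $1$-homogeneous map vanishing at $0$ is subadditive, and combined with \eqref{eq:1} this produces
\[
D^+f_1(\nabla g)-D^+f_2(\nabla g)\leq D^+(f_1-f_2)(\nabla g)\leq\weakgrad{(f_1-f_2)}\weakgrad{g},
\]
which, with the symmetric estimate, proves \eqref{eq:1lip}.

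\emph{Step 2: positive $1$-homogeneity in $g$.} For $\lambda>0$, the chain rule \eqref{eq:chaineasy} with $\varphi(z)=\lambda z$ gives $\weakgrad{(\lambda g+\eps f)}=\lambda\weakgrad{(g+(\eps/\lambda)f)}$ and $\weakgrad{(\lambda g)}=\lambda\weakgrad{g}$. Substituting into the $\inf_{\eps>0}$ form of $D^+f(\nabla g)\weakgrad{g}^{p-2}$ and changing variables $\eps'=\eps/\lambda$ gives $D^+f(\nabla(\lambda g))=\lambda D^+f(\nabla g)$ $\mm$-a.e.

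\emph{Step 3: upper semicontinuity in $g$.} The hard part will be this one. For fixed $\eps>0$ the inequality
\[
D^+f(\nabla g_n)\weakgrad{g_n}^{p-2}\leq\frac{\weakgrad{(g_n+\eps f)}^p-\weakgrad{g_n}^p}{p\eps}
\]
holds $\mm$-a.e. I will integrate over $E'\subset E$ and pass to the limit $n\to\infty$. For this, \eqref{eq:convweak} provides the pointwise inequalities $\big|\weakgrad{g_n}-\weakgrad{g}\big|\leq\weakgrad{(g_n-g)}$ and $\big|\weakgrad{(g_n+\eps f)}-\weakgrad{(g+\eps f)}\big|\leq\weakgrad{(g_n-g)}$, which together with the hypothesis $\weakgrad{(g_n-g)}\to 0$ in $L^p(E)$ and the uniform $L^p(E)$ bounds on $\weakgrad{g_n}$ and $\weakgrad{f}$ yield strong $L^p(E)$ convergence of $\weakgrad{g_n}$ and $\weakgrad{(g_n+\eps f)}$ to their limits. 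Their $p$-th powers then converge in $L^1(E')$, giving
\[
\lims_n\int_{E'}D^+f(\nabla g_n)\weakgrad{g_n}^{p-2}\,\d\mm\leq\frac{1}{p\eps}\int_{E'}\bigl(\weakgrad{(g+\eps f)}^p-\weakgrad{g}^p\bigr)\,\d\mm,
\]
for every $\eps>0$. The remaining step, the chief subtlety, is to take $\inf_{\eps>0}$ on the right and commute it with the integral: convexity of $\eps\mapsto\weakgrad{(g+\eps f)}^p$ pointwise $\mm$-a.e.\ (by the argument of Step 1) makes the difference quotient non-decreasing in $\eps>0$, while \eqref{eq:1} provides a uniform $L^1(E)$ lower bound, so monotone convergence (as $\eps\downarrow 0$ along a countable sequence) identifies the infimum with $\int_{E'}D^+f(\nabla g)\weakgrad{g}^{p-2}\,\d\mm$. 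Finally, all claims about $D^-$ follow from the identity $D^-f(\nabla g)=-D^+(-f)(\nabla g)$ of \eqref{eq:4}: concavity and lower semicontinuity are the duals of convexity and upper semicontinuity of $D^+$, while $1$-Lipschitzness and positive homogeneity transfer directly.
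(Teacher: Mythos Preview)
Your proof is correct and follows essentially the same route as the paper: positive $1$-homogeneity is immediate, convexity in $f$ comes from the $\mm$-a.e.\ convexity of $f\mapsto\weakgrad{(g+\eps f)}$, and upper semicontinuity in $g$ follows because for each fixed $\eps>0$ the map $g\mapsto\int_{E'}\frac{\weakgrad{(g+\eps f)}^p-\weakgrad g^p}{p\eps}\,\d\mm$ is continuous along the given sequence, so the infimum over $\eps$ is upper semicontinuous. Two small presentational differences: the paper obtains the $1$-Lipschitz bound directly from the difference-quotient estimate $\big|\frac{\weakgrad{(g+\eps f_1)}-\weakgrad g}{\eps}-\frac{\weakgrad{(g+\eps f_2)}-\weakgrad g}{\eps}\big|\leq\weakgrad{(f_1-f_2)}$ rather than via subadditivity plus \eqref{eq:1}, and it leaves the commutation $\int_{E'}\inf_\eps=\inf_\eps\int_{E'}$ implicit, whereas you justify it explicitly by monotone convergence---your treatment there is in fact more careful.
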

\begin{proof}
Positive 1-homogeneity in $f,g$ is obvious.

For convexity (resp. concavity) in $f$, just notice that from \eqref{eq:convweak} we get
\[
\weakgrad{g+\eps ((1-\lambda)f_0+\lambda f_1)}-\weakgrad g\leq (1-\lambda)\Big(\weakgrad{(g+\eps f_1)}-\weakgrad g\Big) +\lambda\Big( \weakgrad{(g+\eps  f_1)}-\weakgrad g\Big),
\]
so that the conclusion follows  dividing by $\eps>0$ (resp. $\eps<0$) and letting $\eps\downarrow0 $ (resp. $\eps\uparrow 0$).

The Lipschitz continuity in $f$ is a consequence of
\[
\left|\frac{\weakgrad{(g+\eps f)}-\weakgrad g}{\eps}-\frac{\weakgrad{(g+\eps \tilde f)}-\weakgrad g}\eps\right|\leq \weakgrad{(f-\tilde f)},\qquad\forall\eps\neq 0.
\]
For the semicontinuity in $g$, let $E\subset X$ be a Borel set as in the assumptions and $V\subset \s^p_{\rm loc}(X,\sfd,\mm)$ the space of $g$'s such that $\int_E \weakgrad g^p\,\d\mm<\infty$. Endow $V$ with the seminorm $\|g\|_{\s^p,E}:=\sqrt[p]{\int_E \weakgrad g^p\,\d\mm}$ and notice that for any $\eps\neq 0$ the real valued map
\[
V\ni g \qquad\mapsto\qquad\int_{E'}\frac{\weakgrad{(g+\eps f)}^p-\weakgrad g^p}{p\eps}\,\d\mm,
\]
is continuous. Hence the map
\[
V\ni g\qquad\mapsto\qquad \int_{E'}D^+f(\nabla g)\weakgrad g^{p-2}\,\d\mm=\inf_{\eps>0}\int_{E'}\frac{\weakgrad{(g+\eps f)}^p-\weakgrad g^p}{p\eps}\,\d\mm,
\] 
is upper semicontinuous.  Similarly for $D^-f(\nabla g)$.
\end{proof}
We now propose  the following definition:
\begin{definition}[$q$-infinitesimally strictly convex spaces]
Let $(X,\sfd,\mm)$ be as in \eqref{eq:mms}, $p\in(1,\infty)$ and $q$ the conjugate exponent. We say that $(X,\sfd,\mm)$ is $q$-infinitesimally strictly convex provided
\begin{equation}
\label{eq:defsc}
\int D^+f(\nabla g)\weakgrad g^{p-2}\,\d\mm=\int D^-f(\nabla g)\weakgrad g^{p-2}\,\d\mm,\qquad\forall f,g\in\s^p(X,\sfd,\mm).
\end{equation}
\end{definition}
In the case of normed spaces analyzed in Section \ref{se:normato}, $q$-infinitesimal strict convexity is equivalent to strict convexity of the norm, whatever $q$ is, which in turn is equivalent to the differentiability of the dual norm in the cotangent space. Formula \eqref{eq:defsc} describes exactly such differentiability property, whence the terminology used.  
Notice that $q$-infinitesimal strict convexity has nothing to do with the strict convexity of $W^{1,p}(X,\sfd,\mm)$ as the former is about strict convexity of the `tangent space', while the latter is about the one of the `cotangent space'.

We don't know whether there is any general relation between infinitesimal strict convexity and non-branching (not even under additional assumptions like doubling of the measure, validity of a weak local Poincar\'e inequality or bounds from below on the Ricci curvature), but observe that the former makes sense also in non-geodesic spaces.

From inequality \eqref{eq:3} we get that the integral equality \eqref{eq:defsc} is equivalent to the pointwise one:
\begin{equation}
\label{eq:sc}
D^+f(\nabla g)=D^-f(\nabla g),\qquad\mm-a.e.,\qquad\forall f,g\in \s^p(X,\sfd,\mm).
\end{equation}
Then a simple cut-off argument and the locality \eqref{eq:localgrad2} ensures that \eqref{eq:sc} is true also for $f,g\in\s^p_{\rm loc}(X,\sfd,\mm)$. Furthermore, from Remark \ref{re:notazione} we know that if the measure is doubling and the space supports a $p'$-weak local Poincar\'e inequality, then $q'$-infinitesimal strict convexity implies $q$-infinitesimal strict convexity for any $q\in(1,q')$, where $q'$ is the conjugate exponent of $p'$.

On $q$-infinitesimally strictly convex spaces, we will denote by $Df(\nabla g)$ the common value of $D^+f(\nabla g)=D^-f(\nabla g)$, for $f,g\in\s^p_{\rm loc}(X,\sfd,\mm)$. As a direct consequence of Proposition \ref{prop:convconc} we have the following corollary:
\begin{corollary}\label{cor:dfgg}
Let $(X,\sfd,\mm)$ be as in \eqref{eq:mms}, $p,q\in (1,\infty)$ be conjugate exponents. Assume that the space is $q$-infinitesimally strictly convex. Then:
\begin{itemize}
\item For any $g\in\s^p_{\rm loc}(X,\sfd,\mm)$ the map 
\[
\s^p_{\rm loc}(X,\sfd,\mm)\ni f\qquad\mapsto\qquad Df(\nabla g)
\]
is linear  $\mm$-a.e., i.e. 
\[
D(\alpha_1f_1+\alpha_2f_2)(\nabla g)=\alpha_1 Df_1(\nabla g)+\alpha_2 Df_2(\nabla g),\qquad  \mm-a.e.\ 
\]
for any $f_1,f_2\in\s^p_{\rm loc}(X,\sfd,\mm)$,  $\alpha_1,\alpha_2\in\R$.
\item For any $f\in \s^p_{\rm loc}(X,\sfd,\mm)$ the map
\[
\s^p_{\rm loc}(X,\sfd,\mm)\ni g\qquad\mapsto\qquad Df(\nabla g),
\]
is 1-homogeneous and continuous, the latter meaning that if $g_n,g\in\s^p_{\rm loc}(X,\sfd,\mm)$, $n\in\N$, and for some Borel set $E\subset X$ it holds $\sup_n\int_E\weakgrad{g_n}^p\,\d\mm<\infty$ and $\int_E\weakgrad{(g_n-g)}^p\,\d\mm\to 0$, then 
\[
\lim_{n\to\infty} \int_{E'}Df(\nabla g_n)\weakgrad{g_n}^{p-2}\,\d\mm=\int_{E'}Df(\nabla g)\weakgrad g^{p-2}\,\d\mm,\qquad\forall\textrm{ Borel sets }E'\subset  E.
\]
\end{itemize} 
\end{corollary}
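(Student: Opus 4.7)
The plan is to derive everything from Proposition \ref{prop:convconc} by exploiting the equality $D^+f(\nabla g)=D^-f(\nabla g)=Df(\nabla g)$ $\mm$-a.e., which holds in a $q$-infinitesimally strictly convex space (as already noted after the definition via the pointwise reformulation \eqref{eq:sc}). The key observation is that convexity of $f\mapsto D^+f(\nabla g)$ combined with concavity of $f\mapsto D^-f(\nabla g)$ collapses, under this identification, to affinity of $f\mapsto Df(\nabla g)$ in the $\mm$-a.e. sense.

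For the first bullet, I would first establish additivity. Fix $f_1,f_2\in\s^p_{loc}(X,\sfd,\mm)$ and $\lambda\in[0,1]$; applying the convexity inequality for $D^+$ and the concavity inequality for $D^-$ from Proposition \ref{prop:convconc} and using $D^+=D^-=D$ gives
\[
D((1-\lambda)f_1+\lambda f_2)(\nabla g)=(1-\lambda)Df_1(\nabla g)+\lambda Df_2(\nabla g),\qquad\mm-a.e.
\]
To pass to an arbitrary positive linear combination $\alpha_1 f_1+\alpha_2 f_2$ with $\alpha_i>0$, I would factor out $\alpha_1+\alpha_2$ and combine the affine identity above with positive $1$-homogeneity of $f\mapsto Df(\nabla g)$. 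To allow negative coefficients, I would use \eqref{eq:4}, which under strict convexity reads $D(-f)(\nabla g)=-Df(\nabla g)$; together with positive homogeneity this yields $D(\alpha f)(\nabla g)=\alpha Df(\nabla g)$ for every $\alpha\in\R$. A short case analysis (rewriting $\alpha_1 f_1+\alpha_2 f_2$ so that the two coefficients have the same sign, as in the computation $\alpha_1 f_1=(\alpha_1 f_1+\alpha_2 f_2)+(-\alpha_2)f_2$ when $\alpha_2<0<\alpha_1$) then gives full linearity in $f$.

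For the second bullet, $1$-homogeneity in $g$ is inherited directly from Proposition \ref{prop:convconc}. For continuity, let $(g_n)$ be as in the assumption. Using both semicontinuity statements in Proposition \ref{prop:convconc} applied to the given sets $E\supset E'$, the identification $D^+=D^-=D$ yields simultaneously
\[
\lims_{n\to\infty}\int_{E'}Df(\nabla g_n)\weakgrad{g_n}^{p-2}\,\d\mm\leq\int_{E'}Df(\nabla g)\weakgrad g^{p-2}\,\d\mm,
\]
\[
\limi_{n\to\infty}\int_{E'}Df(\nabla g_n)\weakgrad{g_n}^{p-2}\,\d\mm\geq\int_{E'}Df(\nabla g)\weakgrad g^{p-2}\,\d\mm,
\]
so the limit exists and equals the desired integral.

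I do not anticipate real difficulties here: the whole statement is essentially a formal consequence of Proposition \ref{prop:convconc} once the two one-sided objects have been merged. The only point that deserves a moment of care is to make sure that $1$-homogeneity is used with the correct sign convention when extending linearity from positive to arbitrary real coefficients, since positive homogeneity alone is not enough and must be coupled with \eqref{eq:4}.
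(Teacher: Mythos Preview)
Your proposal is correct and follows exactly the route the paper intends: the corollary is stated in the paper as ``a direct consequence of Proposition \ref{prop:convconc}'' with no further argument, and you have supplied precisely the details one would fill in---collapsing convexity/concavity of $D^\pm$ into affinity via $D^+=D^-$, using positive $1$-homogeneity together with \eqref{eq:4} to get full linearity in $f$, and combining the two semicontinuity statements to obtain continuity in $g$.
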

\begin{remark}[Weak semicontinuity in $f$]\label{re:weak}{\rm As a direct consequence of Proposition \ref{prop:convconc} we get the following result. For $p\in(1,\infty)$ and $g\in\s^p(X,\sfd,\mm)$ the map
\[
\s^p(X,\sfd,\mm)\ni f\qquad\mapsto\qquad\int D^+ f(\nabla g)\weakgrad g^{p-2}\,\d\mm,
\]
is weakly lower semicontinuous in $\s^p(X,\sfd,\mm)$, in the sense that if $L(f_n)\to L(f)$ as $n\to\infty$ for any bounded - w.r.t. the seminorm $\|\cdot\|_{\s^p}$ - linear functional $L:\s^p(X,\sfd,\mm)\to\R$, then 
\[
\limi_{n\to\infty}\int D^+ f_n(\nabla g)\weakgrad g^{p-2}\,\d\mm\geq \int D^+ f(\nabla g)\weakgrad g^{p-2}\,\d\mm.
\]
(in other words, we are picking the quotient of $\s^p(X,\sfd,\mm)$ w.r.t. the subspace of functions $f$ with $\|f\|_{\s^p}=0$, taking its completion and then considering the usual weak convergence in the resulting Banach space. This abstract completion has some right to be called $p$-cotangent space of $(X,\sfd,\mm)$, see Appendix \ref{app:cottan} for some results in this direction). 

To prove this weak lower semicontinuity, just notice that $f\mapsto\int D^+ f(\nabla g)\weakgrad g^{p-2}\,\d\mm $ is convex and continuous.

Similarly, $f\mapsto \int D^- f(\nabla g)\weakgrad g^{p-2}\,\d\mm$ is weakly upper semicontinuous in $\s^p(X,\sfd,\mm)$.
}\fr\end{remark}

\subsection{Horizontal and vertical derivatives}\label{se:horver}
In Section \ref{se:normato} we proved that on a flat normed space, for any two smooth functions $f,g$ it holds
\begin{equation}
\label{eq:letto1}
\begin{split}
\inf_{\eps>0}\frac{\|D(g+\eps f)(x)\|^2_*-\|Dg(x)\|^2_*}{2\eps}&=\max_{v\in\nabla g(x)}Df(v),\\
\sup_{\eps<0}\frac{\|D(g+\eps f)(x)\|^2_*-\|Dg(x)\|^2_*}{2\eps}&=\min_{v\in\nabla g(x)}Df(v).
\end{split}
\end{equation}
In this section we investigate the validity of this statement in a metric setting.  More precisely, in Theorem \ref{thm:horver} we will  prove  an analogous of
\begin{equation}
\label{eq:letto}
\inf_{\eps>0}\frac{\|D(g+\eps f)(x)\|^2_*-\|Dg(x)\|^2_*}{2\eps}\geq Df(v)\geq \sup_{\eps<0}\frac{\|D(g+\eps f)(x)\|^2_*-\|Dg(x)\|^2_*}{2\eps},
\end{equation}
for any $v\in\nabla g(x)$, given that this will be sufficient for our purposes. The non-smooth analogous of \eqref{eq:letto1} will be proved in Appendix \ref{app:cottan}, see Theorem \ref{thm:horver2}. 

The ideas behind the main results of this part of the paper, namely Theorems \ref{thm:horver} and \ref{thm:extest}, already appeared in \cite{Ambrosio-Gigli-Savare11bis} as key tools both to find a formula for the differentiation of the relative entropy along a Wasserstein geodesic, and to develop some notions of  calculus in spaces with linear heat flow. In this paper, these ideas are extended and generalized, see in particular the next section for general calculus rules.

\bigskip

The definition we gave of $D^+f(\nabla g)$ and $D^-f(\nabla g)$ are what play the role of the leftmost and rightmost sides in \eqref{eq:letto}.   In order to define a metric-measure theoretic analogous of the middle term, we need to introduce the notion of \emph{plan representing the gradient of a function}. We start with the $q$-norm of a plan. Recall that the $q$-energies $E_{q,t}$ of a curve were defined in \eqref{eq:qenergy}.
\begin{definition}[$q$-norm of a plan]
For $q\in (1,\infty)$ and $\ppi\in\prob{C([0,1],X)}$ we define the $q$-norm $\|\ppi\|_q\in[0,\infty]$ of $\ppi$  by
\begin{equation}
\label{eq:qnorm}
\|\ppi\|_q^q:=\lims_{t\downarrow 0}\int\left(\frac{E_{q,t}}{t}\right)^q\,\d\ppi
\end{equation}
so that $\|\ppi\|_q^q=\lims_{t\downarrow0}\frac1t\iint_0^t|\dot\gamma_s|^q\,\d s\,\d\ppi(\gamma)$ if $({\rm restr}_0^T)_\sharp\ppi$ is concentrated on absolutely continuous curves for some $T\in(0,1]$ and $+\infty$ otherwise.
\end{definition}
Notice that $\prob{C([0,1],X)}$ is not a vector space, so that in calling $\|\cdot\|_q$ a norm we are somehow abusing the notation. Yet, this wording well fits the discussion hereafter, and  a rigorous meaning to it can be given, see Appendix \ref{app:cottan}.

If $\ppi$ is of bounded compression and $\|\ppi\|_q<\infty$, then not necessarily $\ppi$ is a $q$-test plan, because it might be that $\iint_0^1|\dot\gamma_t|^q\,\d t\,\d\ppi(\gamma)=\infty$. However, for $T>0$ small enough, $({\rm restr}_0^T)_\sharp\ppi$ is a $q$-test plan. Hence if $p,q\in(1,\infty)$ are conjugate exponents, $\ppi$ is of bounded compression with $\|\ppi\|_q<\infty$ and $g\in \s^p(X,\sfd,\mm)$,  \eqref{eq:altra} yields  that
\begin{equation}
\label{eq:dual}
\begin{split}
\lims_{t\downarrow 0}\int\frac{g(\gamma_t)-g(\gamma_0)}{t}\,\d\ppi \leq\frac{\|\weakgrad g\|_{L^p(X,(\e_0)_\sharp\sppi)}^p}p+\frac{\|\ppi\|_q^q}q.
\end{split}
\end{equation}
We then  propose the following definition.
\begin{definition}[Plans representing gradients]\label{def:represent} Let $(X,\sfd,\mm)$ be as in \eqref{eq:mms},  $p,q\in(1,\infty)$ conjugate exponents and $g\in \s^p(X,\sfd,\mm)$. We say that $\ppi\in\prob{C([0,1],X)}$ $q$-represents $\nabla g$ if   $\ppi$ is of bounded compression, $\|\ppi\|_q<\infty$ and it holds
\begin{equation}
\label{eq:degiorgi}
\limi_{t\downarrow 0}\int\frac{g(\gamma_t)-g(\gamma_0)}{t}\,\d\ppi\geq  \frac{\|\weakgrad g\|_{L^p(X,(\e_0)_\sharp\sppi)}^p}p+\frac{\|\ppi\|_q^q}q.
\end{equation}
\end{definition}
\begin{remark}\label{re:degiorgi2}{\rm
Notice that the definition we just gave is not too far from De Giorgi's original definition of gradient flow in a metric setting (see \cite{Ambrosio-Gigli-Savare05} for a modern approach to the topic). Differences are the fact that here we care only about the behavior of the 
curves in the support of $\ppi$ for $t$ close to 0, and the fact that we are considering a Sobolev notion, rather than a metric one.}\fr\end{remark}
\begin{remark}{\rm The key information encoded by a plan $\ppi$ $q$-representing $\nabla g$ is contained in the disintegration $\{\ppi_x\}_{x\in X}$ of $\ppi$ w.r.t. $\e_0$: this provides a Borel map $X\ni x\mapsto \ppi_x\in\prob{C([0,1],X)}$ which ``associates to $(\e_0)_\sharp\ppi$-a.e. $x$ a weighted set of curves which is a gradient flow at time 0 of $g$ starting from $x$''. Once this disintegration is given, for every measure $\mu$ such that $\mu\leq C(\e_0)_\sharp\ppi$ for some $C>0$, the plan $\ppi_\mu:=\int \ppi_x\,\d\mu(x)$ also $q$-represents $\nabla g$, see also Proposition \ref{prop:altracar} below.  

Yet, it is curious to observe that the property of $q$-representing a gradient can be fully described by looking at the marginals $(\e_t)_\sharp\ppi$ only. Indeed, assume that $\ppi$ $q$-represents $\nabla g$. Then the curve $t\mapsto\mu_t:=(\e_t)_\sharp\ppi$ is $q$-absolutely continuous w.r.t. $W_q$ in a neighborhood of $0$. Now let $\tilde\ppi\in \prob{C([0,1],X)}$ be any plan associated to $(\mu_t)$ via Theorem \ref{thm:lisini}. We have
\[
\int\frac{g(\gamma_t)-g(\gamma_0)}{t}\,\d\ppi(\gamma)=\frac{1}{t}\left(\int g\,\d\mu_t-\int g\,\d\mu_0\right)=\int\frac{g(\gamma_t)-g(\gamma_0)}{t}\,\d\tilde\ppi(\gamma),
\]
$\|\weakgrad g\|_{L^p(X,(\e_0)_\sharp\sppi)}=\|\weakgrad g\|_{L^p(X,\mu_0)}=\|\weakgrad g\|_{L^p(X,(\e_0)_\sharp\tilde\sppi)}$ and
\begin{equation}
\label{eq:tantononestretta}
\lims_{t\downarrow 0}\frac1t\iint_0^t|\dot\gamma_t|^q\,\d t\,\d\ppi(\gamma)\geq \lims_{t\downarrow 0}\frac1t\int_0^t|\dot\mu_t|^q\,\d t=\lims_{t\downarrow 0}\frac1t\iint_0^t|\dot\gamma_t|^q\,\d t\,\d\tilde\ppi(\gamma),
\end{equation}
which gives that $\tilde\ppi$ also $q$-represents $\nabla g$. This argument also shows that the inequality in \eqref{eq:tantononestretta} must be an equality, or else for the plan $\tilde\ppi$ the inequality \eqref{eq:dual} would fail.
}\fr\end{remark}
The following theorem is the key technical point on which the theory developed in this paper relies. It relates the `vertical derivatives' $D^\pm f(\nabla g)$ (so called because $D^+ f(\nabla g),D^- f(\nabla g)$ are obtained by a perturbation in the dependent variable), with the `horizontal derivatives' $\limi_{t\downarrow0}\int\frac{f\circ\e_t-f\circ\e_0}{t}\,\d\ppi$, $\lims_{t\downarrow0}\int\frac{f\circ\e_t-f\circ\e_0}{t}\,\d\ppi$ (so called because $f$ is perturbed in the dependent variable).
\begin{theorem}[Horizontal and vertical derivatives]\label{thm:horver} Let $(X,\sfd,\mm)$ be as in \eqref{eq:mms},  $p,q\in(1,\infty)$ conjugate exponents and  $f,g\in\s^p(X,\sfd,\mm)$. Then for every   plan $\ppi\in\prob{C([0,1],X)}$ which $q$-represents  $\nabla g$ it holds
\begin{equation}
\label{eq:horver1}
\begin{split}
\int D^+f(\nabla g)\weakgrad g^{p-2}\,\d(\e_0)_\sharp\ppi
&\geq\lims_{t\downarrow 0}\int \frac{f(\gamma_t)-f(\gamma_0)}{t}\,\d\ppi(\gamma)\\
&\geq\limi_{t\downarrow 0}\int \frac{f(\gamma_t)-f(\gamma_0)}{t}\,\d\ppi(\gamma)\geq \int D^-f(\nabla g)\weakgrad g^{p-2}\,\d(\e_0)_\sharp\ppi.
\end{split}
\end{equation}
\end{theorem}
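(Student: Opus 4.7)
The middle inequality in \eqref{eq:horver1} is tautological, so I would focus on the two outer inequalities. By symmetry, it suffices to explain the upper bound for the $\lims$; the $\liminf$ lower bound is obtained by repeating the argument with the sign of the perturbation reversed.

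The first observation is that, given the definition of ``$\ppi$ $q$-represents $\nabla g$'' and the general inequality \eqref{eq:dual} applied to $g$ itself, the bound in Definition \ref{def:represent} is in fact saturated, and moreover limsup and liminf in \eqref{eq:dual} coincide. Hence
$$\lim_{t \downarrow 0} \int \frac{g(\gamma_t) - g(\gamma_0)}{t} \, \d\ppi(\gamma) = \frac{1}{p}\int \weakgrad g^p \, \d(\e_0)_\sharp \ppi + \frac{\|\ppi\|_q^q}{q}.$$
Now fix $\eps > 0$ and apply \eqref{eq:dual} to the Sobolev function $g + \eps f$; because the limit for $g$ alone exists, the limsup on the left splits linearly, the $\|\ppi\|_q^q/q$ contribution cancels, and after subtracting the displayed equality and dividing by $\eps$ one obtains
$$\lims_{t \downarrow 0} \int \frac{f(\gamma_t) - f(\gamma_0)}{t} \, \d\ppi(\gamma) \leq \int \frac{\weakgrad{(g+\eps f)}^p - \weakgrad g^p}{p\eps} \, \d(\e_0)_\sharp \ppi.$$
For $\eps < 0$ the division by $\eps$ flips the inequality and turns $\lims (\eps B_t)$ into $\eps\limi B_t$; the same procedure then produces the companion inequality with $\limi$ on the left and the supremum over $\eps < 0$ on the right, involving the same integrand.

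The remaining step, which I expect to be the only real technical point, is interchanging the infimum over $\eps > 0$ (respectively the supremum over $\eps < 0$) with the integration, so as to recognise $\int D^+f(\nabla g)\weakgrad g^{p-2}\, \d(\e_0)_\sharp \ppi$ (respectively $\int D^-f(\nabla g)\weakgrad g^{p-2}\, \d(\e_0)_\sharp \ppi$) on the right-hand side. Here I would invoke the convexity of $\eps \mapsto \weakgrad{(g+\eps f)}^p$, already recorded in the excerpt: the pointwise difference quotient is monotone non-decreasing in $\eps$, so for $\eps > 0$ it decreases to $D^+f(\nabla g)\weakgrad g^{p-2}$ as $\eps \downarrow 0$, while for $\eps < 0$ it increases to $D^-f(\nabla g)\weakgrad g^{p-2}$ as $\eps \uparrow 0$. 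The uniform pointwise bound \eqref{eq:1}, combined with $\weakgrad f, \weakgrad g \in L^p(X,\mm)$, Hölder's inequality, and $(\e_0)_\sharp \ppi \leq C \mm$, supplies an integrable envelope; monotone convergence then delivers the interchange and closes both the $D^+$ upper bound and the $D^-$ lower bound in \eqref{eq:horver1}.
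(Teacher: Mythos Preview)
Your proof is correct and follows essentially the same strategy as the paper: apply \eqref{eq:dual} to $g+\eps f$, subtract the defining inequality \eqref{eq:degiorgi} for $g$, divide by $\eps$, and pass to the limit $\eps\to 0$ using dominated convergence. The only cosmetic difference is that you first observe the full limit for $g$ exists (so the $\lims$ splits cleanly), whereas the paper uses directly $\lims_{t}(A_t-B_t)\le \lims_t A_t-\limi_t B_t$; both routes yield the same intermediate inequality, and your justification of the final passage to the limit via monotonicity of the difference quotient plus an $L^1$ envelope is a valid (and slightly more explicit) version of the paper's appeal to dominated convergence.
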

\begin{proof}
By the inequality \eqref{eq:dual} applied to $g+\eps f$ we have
\[
\lims_{t\downarrow0}\int\frac{(g+\eps f)(\gamma_t)-(g+\eps f)(\gamma_0)}t\,\d\ppi(\gamma)\leq \frac{\|\weakgrad{(g+\eps f)}\|^p_{L^p(X,(\e_0)_\sharp\sppi)}}{p}+\frac{\|\ppi\|_q^q}{q},
\]
and by the assumption on $g$ and $\ppi$ we know that
\[
\limi_{t\downarrow0}\int\frac{g(\gamma_t)-g(\gamma_0)}t\,\d\ppi(\gamma)\geq \frac{\|\weakgrad{g}\|^p_{L^p(X,(\e_0)_\sharp\sppi)}}{p}+\frac{\|\ppi\|_q^q}{q}.
\]
Subtract the second inequality from the first to get
\[
\lims_{t\downarrow0}\eps\int\frac{f(\gamma_t)-f(\gamma_0)}{t}\,\d\ppi(\gamma)\leq \int\frac{\weakgrad{(g+\eps f)}^p-\weakgrad g^p}{p}\,\d(\e_0)_\sharp\ppi.
\]
Divide by $\eps>0$ (resp. $\eps<0$) and use the dominate convergence theorem to get the first (resp. the third) inequality in \eqref{eq:horver1}.
\end{proof}
The rest of the section is devoted to the study of the properties of plans $q$-representing gradients. In particular, in Theorem \ref{thm:extest} we will show that they actually exist for a wide class of initial data (i.e. given value of $(\e_0)_\sharp\ppi$).
\begin{proposition}\label{prop:altracar} Let $(X,\sfd,\mm)$ be as in \eqref{eq:mms}, $p,q\in(1,\infty)$ conjugate exponents, $g\in\s^p(X,\sfd,\mm)$ and $\ppi\in\prob{C([0,1],X)}$ a plan of bounded compression.

Then $\ppi$ $q$-represents $\nabla g$ if and only if
\begin{equation}
\label{eq:repcambiata}
\lim_{t\downarrow0}\frac{g\circ\e_t -g\circ\e_0}{E_{q,t}}=\lim_{t\downarrow0}\left(\frac{E_{q,t}}{t}\right)^{\frac qp}=\weakgrad g\circ\e_0,\qquad\textrm{in }\quad L^p(C([0,1],X),\ppi).
\end{equation}
\end{proposition}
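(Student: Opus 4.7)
The main ingredient is the pointwise Young's inequality already appearing in \eqref{eq:bridge}:
\[
\frac{g(\gamma_t)-g(\gamma_0)}{t}\;=\;A_t(\gamma)\,B_t(\gamma)\;\leq\;\frac{|A_t(\gamma)|^p}{p}+\frac{B_t(\gamma)^q}{q},
\]
where $A_t := (g\circ\e_t-g\circ\e_0)/E_{q,t}$ and $B_t := E_{q,t}/t$ (setting $A_t := 0$ on $\{E_{q,t}=0\}$), with Young gap $\Phi(A,B) := \tfrac{|A|^p}{p}+\tfrac{B^q}{q}-AB$ nonnegative and vanishing precisely when $A=B^{q-1}\geq 0$. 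The crucial additional input is the pointwise bound $|A_t|\leq C_t := \bigl(\tfrac{1}{t}\int_0^t\weakgrad g^p\circ\e_s\,\d s\bigr)^{1/p}$ from \eqref{eq:curve} together with the $L^p(\ppi)$ convergence $C_t\to\weakgrad g\circ\e_0$ supplied by Proposition \ref{prop:bridge}; this provides the effective $L^p$-domination of $|A_t|$ that will drive the harder implication.

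For $(\Leftarrow)$, assume both $L^p$-limits in \eqref{eq:repcambiata}; the second is equivalent to $B_t\to\weakgrad g^{p-1}\circ\e_0$ in $L^q(\ppi)$, so $\int B_t^q\,\d\ppi\to\int\weakgrad g^p\circ\e_0\,\d\ppi$, yielding $\|\ppi\|_q<\infty$ and $\|\ppi\|_q^q=\|\weakgrad g\|_{L^p(X,(\e_0)_\sharp\sppi)}^p$. Pairing the $L^p$-convergence of $A_t$ and the $L^q$-convergence of $B_t$ via H\"older gives $\int A_tB_t\,\d\ppi\to\int\weakgrad g^p\circ\e_0\,\d\ppi$; since $\tfrac{1}{p}+\tfrac{1}{q}=1$, this value coincides with $\tfrac{1}{p}\|\weakgrad g\|_{L^p(X,(\e_0)_\sharp\sppi)}^p+\tfrac{1}{q}\|\ppi\|_q^q$, proving \eqref{eq:degiorgi}.

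For $(\Rightarrow)$, combining the representation hypothesis, \eqref{eq:limsup}, the definition of $\|\ppi\|_q^q$, and Young's inequality yields the pincer
\[
\tfrac{1}{p}\|\weakgrad g\|_{L^p(X,(\e_0)_\sharp\sppi)}^p+\tfrac{1}{q}\|\ppi\|_q^q\;\leq\;\limi_{t\downarrow 0}\!\int\! A_tB_t\,\d\ppi\;\leq\;\lims_{t\downarrow 0}\!\int\!\Bigl(\tfrac{|A_t|^p}{p}+\tfrac{B_t^q}{q}\Bigr)\d\ppi\;\leq\;\tfrac{1}{p}\|\weakgrad g\|_{L^p(X,(\e_0)_\sharp\sppi)}^p+\tfrac{1}{q}\|\ppi\|_q^q,
\]
forcing $\int|A_t|^p\,\d\ppi\to\|\weakgrad g\|_{L^p(X,(\e_0)_\sharp\sppi)}^p$, $\int B_t^q\,\d\ppi\to\|\ppi\|_q^q$, and $\int\Phi(A_t,B_t)\,\d\ppi\to 0$. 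The elementary inequality $(b-a)^p\leq b^p-a^p$ for $0\leq a\leq b$, $p\geq 1$, applied with $a=|A_t|$, $b=C_t$, gives $\int(C_t-|A_t|)^p\,\d\ppi\leq\int(C_t^p-|A_t|^p)\,\d\ppi\to 0$, and Proposition \ref{prop:bridge} then upgrades this to $|A_t|\to\weakgrad g\circ\e_0$ in $L^p(\ppi)$. Since $\Phi(A_t,B_t)\geq\tfrac{(A_t^-)^p}{p}$ on $\{A_t\leq 0\}$, also $A_t^-\to 0$ in $L^p(\ppi)$, whence $A_t\to\weakgrad g\circ\e_0$ in $L^p(\ppi)$.

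The main obstacle is the convergence of $B_t^{q/p}$, for which no pointwise domination analogous to $|A_t|\leq C_t$ is available. To circumvent this, fix an arbitrary sequence $t_n\downarrow 0$ and extract a sub-subsequence along which $A_{t_n}\to\weakgrad g\circ\e_0$ and $\Phi(A_{t_n},B_{t_n})\to 0$ both hold $\ppi$-a.e.; the Young equality case then forces $B_{t_n}\to\weakgrad g^{p-1}\circ\e_0$ $\ppi$-a.e. (any blow-up of $B_{t_n}$ would contradict $\Phi(A_{t_n},B_{t_n})\to 0$). Pairing this a.e.\ convergence with the already-established $L^q$-norm convergence $\int B_{t_n}^q\,\d\ppi\to\int(\weakgrad g^{p-1}\circ\e_0)^q\,\d\ppi$, the Brezis--Lieb lemma upgrades it to $L^q(\ppi)$ convergence, equivalently $B_{t_n}^{q/p}\to\weakgrad g\circ\e_0$ in $L^p(\ppi)$. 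Since every sequence admits such a sub-subsequence, the full limit in \eqref{eq:repcambiata} follows.
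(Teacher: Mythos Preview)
Your proof is correct and essentially parallels the paper's up to the point where $|A_t|\to\weakgrad g\circ\e_0$ in $L^p(\ppi)$; your use of the superadditivity inequality $(b-a)^p\le b^p-a^p$ makes explicit what the paper leaves implicit, and your argument for $A_t^-\to 0$ via the pointwise bound $\Phi(A_t,B_t)\ge (A_t^-)^p/p$ is cleaner than the paper's somewhat elliptical appeal to equality in the first inequality of its chain.

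The genuine divergence is in the treatment of $B_t$. The paper argues functional-analytically: having $\|B_t\|_{L^q}^q\to L$ it passes to a weak $L^q$-limit $B$, pairs it against the already strongly convergent $A_t$ to obtain $\int\weakgrad g\circ\e_0\, B\,\d\ppi=L$, then uses the equality case of Young together with weak lower semicontinuity of the norm to identify $B=\weakgrad g^{p-1}\circ\e_0$ and to upgrade weak convergence plus norm convergence to strong convergence (Radon--Riesz in the uniformly convex space $L^q$). You instead argue pointwise: from $\Phi(A_t,B_t)\to 0$ in $L^1$ and $A_t\to\weakgrad g\circ\e_0$ in $L^p$ you extract a.e.\ convergence along a sub-subsequence, observe that the Young gap vanishing pointwise forces $B_{t_n}\to\weakgrad g^{p-1}\circ\e_0$ a.e., and then invoke Brezis--Lieb to promote a.e.\ plus norm convergence to $L^q$ convergence. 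Both routes are standard and of comparable length; the paper's avoids naming Brezis--Lieb and the sub-subsequence bookkeeping, while yours avoids the weak-compactness detour and gives a more transparent reason why the sign of $A_t$ is eventually nonnegative.
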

\begin{proof} The `if' is obvious, so we turn to the `only if'.

Thanks to \eqref{eq:dual} and its proof, \eqref{eq:degiorgi} and the equality case in Young's inequality, we have
\begin{equation}
\label{eq:ok}
\lims_{t\downarrow0}\int\frac{g\circ\e_t-g\circ\e_0}{t}\,\d\ppi=\limi_{t\downarrow0}\int\frac{g\circ\e_t-g\circ\e_0}{t}\,\d\ppi= \|\ppi\|_q^q=\|\weakgrad g\|_{L^p(X,(\e_0)_\sharp\sppi)}^p=:L
\end{equation}
Define the functions $A_t,B_t,C_t:C([0,1],X)\to\R\cup\{\pm\infty\}$ by
\[
A_t:=\frac{g\circ\e_t-g\circ\e_0}{E_{q,t}},\qquad \qquad B_t:=\frac{E_{q,t}}{t},\qquad\qquad  C_t:=\sqrt[p]{\frac1t\int_0^t\weakgrad g^p\circ\e_s\,\d s},
\]
and notice that from \eqref{eq:curve} we have 
\begin{equation}
\label{eq:dt}
|A_t|\leq C_t,\qquad\ppi-a.e.,
\end{equation}
and that from \eqref{eq:convlp} we know that
\begin{equation}
\label{eq:dt2}
C_t\to \weakgrad g\circ\e_0,\qquad\textrm{ in }L^p(\ppi).
\end{equation}
From \eqref{eq:ok}, \eqref{eq:dt} and \eqref{eq:dt2} we deduce
\begin{equation}
\label{eq:ok2}
\begin{split}
L&=\lim_{t\downarrow0}\int\frac{g\circ\e_t-g\circ\e_0}{t}\,\d\ppi=\lim_{t\downarrow0}\int A_tB_t\,\d\ppi\leq \limi_{t\downarrow0}\int |A_t|B_t\,\d\ppi\\
&\leq \limi_{t\downarrow0}\left(\frac{\|A_t\|^p_{L^p(\sppi)}}{p}+\frac{\|B_t\|^q_{L^q(\sppi)}}{q}\right)\leq \lims_{t\downarrow0}\frac{\|A_t\|^p_{L^p(\sppi)}}{p}+\lims_{t\downarrow0}\frac{\|B_t\|^q_{L^q(\sppi)}}{q}\\
&\leq \lims_{t\downarrow0}\frac{\|C_t\|^p_{L^p(\sppi)}}{p}+\frac{\|\ppi\|_q^q}{q}=L.
\end{split}
\end{equation}
In particular it holds $\limi_{t\downarrow0}\left(\frac{\|A_t\|^p_{L^p(\sppi)}}{p}+\frac{\|B_t\|^q_{L^q(\sppi)}}{q}\right)= \lims_{t\downarrow0}\frac{\|A_t\|^p_{L^p(\sppi)}}{p}+\lims_{t\downarrow0}\frac{\|B_t\|^q_{L^q(\sppi)}}{q}$, which forces the limits of $\|A_t\|^p_{L^p(\sppi)},\|B_t\|^q_{L^q(\sppi)}$ as $t\downarrow 0$ to exist. Then, since the last inequality of \eqref{eq:ok2} is an equality, we also get that $\lim_{t\downarrow0}\|A_t\|^p_{L^p(\sppi)}=\lim_{t\downarrow0}\|C_t\|^p_{L^p(\sppi)}$, which together with \eqref{eq:dt} and \eqref{eq:dt2} gives that $|A_t|\to \weakgrad g\circ\e_0 $ as $t\downarrow0$ in $L^p(\ppi)$. Since also the first inequality in \eqref{eq:ok2} is an equality, we deduce also that $A_t\to \weakgrad g\circ\e_0 $ as $t\downarrow0$ in $L^p(\ppi)$.

Using \eqref{eq:ok2} again we get  that $\lim_{t\downarrow0}\|B_t\|^q_{L^q(\sppi)}=L$. Let $B\in L^q(\ppi)$ be any weak limit of $B_t$ as $t\downarrow0$ in $L^q(\ppi)$. From
\begin{equation}
\label{eq:ok3}
L=\lim_{t\downarrow0}\int A_tB_t\,\d\ppi=\int  \weakgrad g\circ\e_0\ B\,\d\ppi\leq \frac{\| \weakgrad g\circ\e_0\|^p_{L^p(\sppi)}}{p}+\frac{\|B\|^q_{L^q(\sppi)}}{q}\leq L,
\end{equation}
we deduce $\|B\|^q_{L^q(\sppi)}=L$, so that the weak convergence of $B_t$ is actually strong. Finally, since the first inequality in \eqref{eq:ok3} is an equality, we deduce $B^q=\weakgrad g^p\circ\e_0$ $\ppi$-a.e., and the proof is achieved.
\end{proof}
Direct consequences of characterization \eqref{eq:repcambiata} are the following. For $g\in \s^p(X,\sfd,\mm)$ it holds:
\begin{itemize}
\item if $\ppi_1,\ppi_2$ are plans which $q$-represent $\nabla g$ then
\begin{equation}
\label{eq:somma}
\ppi:=\lambda\ppi_1+(1-\lambda)\ppi_2\qquad q-\textrm{represents }\nabla g,\qquad\textrm{for any }\lambda\in[0,1],
\end{equation}
\item if $\ppi$ is a plan  which $q$-represents $\nabla g$  and $F:C([0,1],X)\to \R$ is a non-negative bounded Borel function such that $\int F\,\d\ppi=1$, then
\begin{equation}
\label{eq:scalato}
\tilde\ppi:= F\,\ppi \qquad q-\textrm{represents }\nabla g.
\end{equation}
\end{itemize}
We turn to the existence of plans representing gradients.
\begin{remark}{\rm
In connection with Remark \ref{re:degiorgi2}, it is worth stressing the following. The general existence results for gradient flows in metric spaces are based either on some  compactness assumption, possibly w.r.t. some weak topology, of the sublevels of the function (see e.g. Chapters 2 and 3 of \cite{Ambrosio-Gigli-Savare05}), or on some structural assumption on the geometry of the space, which, very roughly said, expresses the fact that the metric space `looks like an Hilbert space on small scales' (see e.g. Chapter 4 of \cite{Ambrosio-Gigli-Savare05} and \cite{Savare07}).

In our current framework, none of the two assumptions is made. Yet, it is still possible to prove that this sort of `infinitesimal gradient flow of a Sobolev function' exists. The necessary compactness is gained by looking at the evolution of the probability densities in $L^2(X,\mmz)$, and then coming back to measures on $\prob{C([0,1],X)}$ via the use of Theorems  \ref{thm:keyex} and \ref{thm:lisini}.
}\fr\end{remark}

Recall that we fixed $\mmz\in\prob X$ satisfying \eqref{eq:mmz} and \eqref{eq:momp} and that we defined the Cheeger energy functional $\widetilde\c_p:L^2(X,\mmz)\to[0,\infty]$ in \eqref{eq:cheegertilde}.
\begin{lemma}\label{le:planf}
Let $(X,\sfd,\mm)$ be as in \eqref{eq:mms}, $\mmz$ as in \eqref{eq:mmz} and \eqref{eq:momp}, $p,q\in(1,\infty)$  conjugate exponents and $g\in\s^p(X,\sfd,\mm)\cap L^\infty(X,\mm)$. Then there exists a plan $\ppi$ which $q$-represents $\nabla g$ and such that $c\mmz\leq (\e_0)_\sharp\ppi\leq C\mmz$ for some $c,C>0$.
\end{lemma}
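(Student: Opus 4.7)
The plan is to combine Kuwada's sharp Wasserstein-speed bound for the $L^2(X,\mmz)$-gradient flow of $\widetilde\c_p$ (Theorem~\ref{thm:keyex}(v)) with Lisini's superposition principle (Theorem~\ref{thm:lisini}), via a perturbative construction and a time rescaling. Fix $K > 2\|g\|_\infty$ (eventually sent to $+\infty$) and take as initial probability density on $(X,\mmz)$
\[
\rho_0^K \;:=\; 1+\frac{\overline g-g}{K}, \qquad \overline g:=\int g\,\d\mmz,
\]
so that $c\leq \rho_0^K\leq C$ uniformly in $K$. Let $(\rho_t^K)_{t\geq 0}\subset L^2(X,\mmz)$ be the $\widetilde\c_p$-gradient flow starting at $\rho_0^K$. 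Items (i)--(ii) of Theorem~\ref{thm:keyex} preserve the $L^\infty$ bounds and the total mass $1$; item (v) gives $\mu_t^K:=\rho_t^K\mmz$ as a $q$-absolutely continuous curve in $(\probp Xq, W_q)$ with $|\dot\mu_t^K|^q\leq \int \weakgrad{\rho_t^K}^p/(\rho_t^K)^{q-1}\,\d\mmz$, and Theorem~\ref{thm:lisini} lifts it to a plan $\tilde\ppi^K$ with $(\e_t)_\sharp\tilde\ppi^K=\mu_t^K$ and $\int|\dot\gamma_t|^q\,\d\tilde\ppi^K=|\dot\mu_t^K|^q$ for a.e.\ $t$.

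Writing $g=(K+\overline g)-K\rho_0^K$, mass conservation kills the constant, and the convexity inequality $\rho_0^K(\rho_0^K-\rho_t^K)\geq \tfrac12((\rho_0^K)^2-(\rho_t^K)^2)$ combined with the entropy identity (Theorem~\ref{thm:keyex}(iii)) applied to $u(r)=r^2/2$ gives
\[
\int g\,\d\mu_t^K-\int g\,\d\mu_0^K \;=\; K\int\rho_0^K(\rho_0^K-\rho_t^K)\,\d\mmz \;\geq\; K\int_0^t\!\!\int \weakgrad{\rho_s^K}^p\,\d\mmz\,\d s.
\]
Theorem~\ref{thm:keyex}(iv) together with the chain rule \eqref{eq:chaineasy} in the form $\weakgrad{\rho_0^K}=\weakgrad g/K$ then yield, after division by $t$ and letting $t\downarrow 0$, the bound $\limi_{t\downarrow 0}t^{-1}\int(g\circ\e_t-g\circ\e_0)\,\d\tilde\ppi^K\geq K^{1-p}\int\weakgrad g^p\,\d\mmz$. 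Set $T_K:=K^{p-1}$ and define $\hat\ppi^K\in\prob{C([0,1],X)}$ as the pushforward of $\tilde\ppi^K$ under $\gamma\mapsto (t\mapsto \gamma_{T_K t})$. A direct scaling argument, using right-continuity at $0$ of $t\mapsto\int \weakgrad{\rho_t^K}^p/(\rho_t^K)^{q-1}\,\d\mmz$ (obtained from Theorem~\ref{thm:keyex}(iv) applied to $u(r):=r^{3-q}/((2-q)(3-q))$, with the obvious modification when $q\in\{2,3\}$), shows that as $K\to\infty$ one has $(\e_0)_\sharp\hat\ppi^K=\rho_0^K\mmz\to\mmz$, $\limi_{t\downarrow 0}t^{-1}\int(g\circ\e_t-g\circ\e_0)\,\d\hat\ppi^K\geq (1+o(1))\int\weakgrad g^p\,\d\mmz$, and $\|\hat\ppi^K\|_q^q= T_K^q|\dot\mu_0^K|^q\leq \int \weakgrad g^p/(\rho_0^K)^{q-1}\,\d\mmz\to \int\weakgrad g^p\,\d\mmz$. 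Since $1/p+1/q=1$, the $q$-representing inequality of Definition~\ref{def:represent} is asymptotically saturated.

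Uniform density bounds combined with \eqref{eq:momp} yield tightness of $\{\hat\ppi^K\}$ in $\prob{C([0,1],X)}$. Extracting a weak subsequential limit $\ppi$: it has bounded compression (with constants inherited from $c,C$), satisfies $(\e_0)_\sharp\ppi=\mmz$ (so the marginal condition holds with $c=C=1$), and a combination of lower semicontinuity of the $q$-energy \eqref{eq:qnorm} with the upper bound \eqref{eq:altra} applied to $\pm g$ verifies Definition~\ref{def:represent} for $\ppi$.

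The main obstacle is this last compactness step. Indeed, the rescaled plans $\hat\ppi^K$ have their $q$-energy increasingly concentrated near $t=0$: the total $L^2$-dissipation of $\widetilde\c_p$ along the flow is $O(K^{-2})$, so $\int_0^1\int|\dot\gamma_s|^q\,\d\hat\ppi^K\,\d s=O(K^{-1})\to 0$, and one must ensure the limit plan retains non-trivial short-time speed rather than degenerating to constant curves. A safer route is to first replace $\hat\ppi^K$ throughout by its restriction $({\rm restr}_0^{\tau_K})_\sharp\hat\ppi^K$ with $\tau_K\downarrow 0$ chosen slowly (so that the relevant quantities are captured within the shrinking window and then stretched back to $[0,1]$), and pass to the limit in that restricted and rescaled plan; the short-time nature of the representing condition makes this restriction harmless.
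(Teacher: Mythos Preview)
Your computations up to the point of passing to the limit are essentially correct, but the compactness step is not just an obstacle to be patched---it fails, and the fix you propose does not help.

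You correctly observe that the total $q$-energy of $\hat\ppi^K$ over $[0,1]$ is $O(K^{-1})$. This forces any weak limit of $(\hat\ppi^K)$ to be concentrated on constant curves, so the limit plan $\ppi$ has $\|\ppi\|_q=0$ and $\int(g\circ\e_t-g\circ\e_0)\,\d\ppi\equiv 0$; it represents $\nabla 0$, not $\nabla g$. Your proposed remedy (restrict $\hat\ppi^K$ to $[0,\tau_K]$ and stretch back to $[0,1]$) cannot cure this: under such a rescaling the $q$-norm becomes $\tau_K^q\|\hat\ppi^K\|_q^q$ and the incremental quotient is multiplied by $\tau_K$, while $\|\weakgrad g\|_{L^p((\e_0)_\sharp\ppi)}^p$ is unchanged. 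So the defining inequality \eqref{eq:degiorgi} transforms into $\tau_K\cdot(\liminf)\geq \tfrac1p\|\weakgrad g\|^p+\tfrac{\tau_K^q}{q}\|\hat\ppi^K\|_q^q$, and as $\tau_K\downarrow 0$ the left side tends to $0$ while the right side stays bounded away from $0$. More conceptually, for fixed $K$ your plan $\hat\ppi^K$ simply does \emph{not} $q$-represent $\nabla g$: pointwise one has $\tfrac{\rho_0^K}{p}+\tfrac{1}{q(\rho_0^K)^{q-1}}\geq 1$ with equality only at $\rho_0^K=1$, so the Kuwada upper bound on $\|\hat\ppi^K\|_q^q$ together with your lower bound $\int\weakgrad g^p\,\d\mmz$ on the incremental quotient can never close the gap in \eqref{eq:degiorgi} unless $g$ is constant.

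The missing idea is to match the entropy to Kuwada's bound rather than to perturb and rescale. The Wasserstein-speed estimate in Theorem~\ref{thm:keyex}(v) reads $|\dot\mu_t|^q\leq\int u_q''(\rho_t)\weakgrad{\rho_t}^p\,\d\mmz$ precisely when one chooses the entropy $u_q$ with $u_q''(z)=z^{1-q}$. If one then picks $\rho_0$ so that $u_q'(\rho_0)=-g+\text{const}$ (which, since $g\in L^\infty$, can be arranged with $\rho_0$ bounded above and below), the convexity inequality $\int u_q'(\rho_0)(\rho_0-\rho_t)\,\d\mmz\geq\int u_q(\rho_0)-u_q(\rho_t)\,\d\mmz$ combined with (iii),(iv) of Theorem~\ref{thm:keyex} gives directly
\[
\limi_{t\downarrow0}\frac1t\int(g\circ\e_t-g\circ\e_0)\,\d\ppi\;\geq\;\int u_q''(\rho_0)\weakgrad{\rho_0}^p\,\d\mmz\;=\;\int\rho_0\weakgrad g^p\,\d\mmz\;\geq\;\|\ppi\|_q^q,
\]
so the Lisini lift $\ppi$ $q$-represents $\nabla g$ with no limiting procedure whatsoever. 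The identity $u_q''(\rho_0)\weakgrad{\rho_0}^p=\rho_0\weakgrad g^p$ (a one-line chain-rule computation using $(q-1)(p-1)=1$) is exactly what makes the two sides of \eqref{eq:degiorgi} meet. Your quadratic entropy $u(r)=r^2/2$ has $u''\equiv 1\neq r^{1-q}$ (unless $q=1$), which is why your estimates are off by the factor that forces the subsequent, failing, compactness argument.
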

\begin{proof}
Define the convex function $u_q:[0,\infty)\to\R$ as
\begin{align*}
u_q(z)&:=\frac{z^{3-q}-(3-q)z}{(3-q)(2-q)},\qquad \textrm{if }q\neq2,3,\\
u_2(z)&:=z\log z-z,\\
u_3(z)&:=z-\log z.
\end{align*}
If $q=2$ put $\rho_0:=\tilde ce^{-g}$ otherwise find $a\in\R$ such that $1+(a-g)(2-q)>b>0$ $\mm$-a.e. and define
\[
\rho_0:=\tilde c\Big(1+\big(a-g\big)(2-q)\Big)^{\frac1{2-q}},
\]
where in any case $\tilde c$ is chosen so that $\int\rho_0\,\d\mmz=1$. Notice that by construction it holds $c\leq \rho_0\leq C$ $\mm$-a.e. for some $c,C>0$, $u_q'(\rho_0)=-g+constant$ and, by the chain rule \eqref{eq:chaineasy}, $\rho_0\in\s^p(X,\sfd,\mm)\subset\s^p(X,\sfd,\mmz)$.

Now consider the gradient flow $(\rho_t)$ of the Cheeger energy $\widetilde\c_p$ in $L^2(X,\mmz)$ starting from $\rho_0$. Part $(ii)$ of Theorem \ref{thm:keyex} ensures that the densities $\rho_t$ are non-negative and uniformly bounded in $L^\infty(X,\mmz)$, while $(i)$ of the same theorem grants $\int\rho_t\,\d\mmz=1$ for any $t\geq 0$. Hence we have $\mu_t:=\rho_t\mmz\in\prob X$, and the $L^\infty$ bound on $\rho_t$ in conjunction with \eqref{eq:momp} yields that the measures $\mu_t$ have finite $q$-moment for any $t\geq 0$. Furthermore, from
$(v)$ of Theorem \ref{thm:keyex} we also have that the curve $[0,1]\ni t\mapsto \mu_t$  is $q$-absolutely continuous w.r.t. $W_q$. Hence we can use Theorem \ref{thm:lisini} to associate to $(\mu_t)$ a plan $\ppi\in\prob{C([0,1],X)}$ concentrated on $AC^q([0,1],X)$ satisfying \eqref{eq:lisini}.

We claim that $\ppi$ $q$-represents $\nabla g$. Since we already know by construction that  $c\mmz\leq (\e_0)_\sharp\ppi\leq C\mmz$ for some $c,C>0$, this will give the thesis. 

Since the $\rho_t$'s are uniformly bounded and $\mmz\leq \tilde C\mm$ for some $\tilde C>0$,  $\ppi$ has bounded compression. Using again $(v)$ of Theorem \ref{thm:keyex} and \eqref{eq:lisini} we get
\[
\iint_0^t|\dot\gamma_s|^q\,\d s\,\d\ppi(\gamma)=\int_0^t|\dot\mu_s|^q\,\d s\leq  \iint_0^t\frac{\weakgrad{\rho_s}^p}{\rho_s^{q-1}}\,\d s\,\d\mmz=  \iint_0^t u_q''(\rho_s)\weakgrad{\rho_s}^p\,\d s\,\d\mmz.
\]
Recall  that $\rho_0\in\s^p(X,\sfd,\mm)$ so that applying \eqref{eq:dissipazione0} we get
\begin{equation}
\label{eq:Ct}
\|\ppi\|_q^q=\lims_{t\downarrow0}\frac1t\iint_0^t|\dot\gamma_s|^q\,\d s\leq \lims_{t\downarrow 0}\frac1t\iint_0^t u_q''(\rho_s)\weakgrad{\rho_s}^p\,\d s\,\d\mmz=\int u_q''(\rho_0)\weakgrad{\rho_0}^p\,\d\mmz.
\end{equation}
On the other hand, the  convexity of $u_q$ gives
\[
\begin{split}
\int\frac{g(\gamma_t)-g(\gamma_0)}t\,\d\ppi(\gamma)&=\int\frac{u'_q\big(\rho_0(\gamma_0)\big)-u'_q\big(\rho_0(\gamma_t)\big)}t\,\d\ppi(\gamma)= \frac1t\int u_q'(\rho_0)(\rho_0-\rho_t)\,\d\mmz\\
&\geq \frac1t\int u_q(\rho_0)-u_q(\rho_t)\,\d\mmz,
\end{split}
\]
so that using  part $(iii)$ and $(iv)$ of Theorem \ref{thm:keyex} we get
\begin{equation}
\label{eq:At}
\limi_{t\downarrow0}\int\frac{g(\gamma_t)-g(\gamma_0)}t\,\d\ppi(\gamma)\geq\int u_q''(\rho_0)\weakgrad{\rho_0}^p\,\d\mmz.
\end{equation}
Since $u_q''(\rho_0)\weakgrad{\rho_0}^p=\rho_0\weakgrad g^p$ $\mmz$-a.e., we have $\int u_q''(\rho_0)\weakgrad{\rho_0}^p\,\d\mmz=\|\weakgrad g\|^p_{L^p((\e_0)_\sharp\sppi)}$, thus the thesis follows from \eqref{eq:At}, \eqref{eq:Ct}.
\end{proof}

\begin{theorem}[Existence of plans representing $\nabla g$]\label{thm:extest}
Let $(X,\sfd,\mm)$ be as in \eqref{eq:mms}, $\mmz$ as in \eqref{eq:mmz} and \eqref{eq:momp}, $p,q\in(1,\infty)$  conjugate exponents, $g\in\s^p(X,\sfd,\mm)$ and $\mu\in\prob X$ a measure such that $\mu\leq C\mmz$ for some $C>0$. 

Then there exists $\ppi\in\prob{C([0,1],X)}$ which $q$-represents $\nabla g$ and such that $(\e_0)_\sharp\ppi=\mu$.
\end{theorem}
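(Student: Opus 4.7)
\emph{Step 1: bounded $g$.} Assume first $g\in\s^p(X,\sfd,\mm)\cap L^\infty(X,\mm)$. Lemma~\ref{le:planf} furnishes a plan $\tilde\ppi$ which $q$-represents $\nabla g$ with $c\mmz\leq(\e_0)_\sharp\tilde\ppi\leq C\mmz$. Since $\mu\leq C'\mmz\leq(C'/c)(\e_0)_\sharp\tilde\ppi$, the Radon--Nikodym density $h:=\d\mu/\d(\e_0)_\sharp\tilde\ppi$ is a bounded nonnegative Borel function with $\int h\,\d(\e_0)_\sharp\tilde\ppi=1$. Setting $F(\gamma):=h(\gamma_0)$, property~\eqref{eq:scalato} gives that $\ppi:=F\tilde\ppi$ still $q$-represents $\nabla g$, and by construction $(\e_0)_\sharp\ppi=h\cdot(\e_0)_\sharp\tilde\ppi=\mu$.

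\emph{Step 2: general $g$.} Truncate $g_n:=(-n)\vee g\wedge n$; by~\eqref{eq:chaineasy} these are bounded Sobolev functions with $\weakgrad{g_n}=\weakgrad g\cdot\mathbbm 1_{\{|g|<n\}}\to\weakgrad g$ in $L^p(\mu)$ by dominated convergence (using $\mu\leq C\mmz\leq\tilde C\mm$). Step~1 then provides plans $\ppi_n$ with $(\e_0)_\sharp\ppi_n=\mu$ that $q$-represent $\nabla g_n$; combining the definition of $q$-representation with \eqref{eq:dual} gives
\[
\|\ppi_n\|_q^q=\|\weakgrad{g_n}\|^p_{L^p(\mu)},\qquad \lim_{t\downarrow0}\int\frac{g_n(\gamma_t)-g_n(\gamma_0)}{t}\,\d\ppi_n=\|\weakgrad{g_n}\|^p_{L^p(\mu)},
\]
with both quantities uniformly bounded by $\|\weakgrad g\|^p_{L^p(\mu)}$. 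After extracting a weakly convergent subsequence $\ppi_n\to\ppi_\infty$ in $\prob{C([0,1],X)}$, the first marginal $\mu$ is preserved automatically (since $\e_0$ is continuous), and one verifies that $\ppi_\infty$ $q$-represents $\nabla g$ by passing to the limit in the displayed identities: the right-hand sides converge to $\|\weakgrad g\|^p_{L^p(\mu)}$, and for the left-hand side I would decompose $g=g_n+(g-g_n)$ and use~\eqref{eq:defsobpunt} together with $\weakgrad{(g-g_n)}\to 0$ in $L^p(\mm)$ and uniform bounded compression of $(\ppi_n)$ to make the discrepancy term negligible uniformly in small $t$.

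\emph{Main obstacle.} The delicate point is securing tightness and uniform bounded compression of $(\ppi_n)$: the control on $\|\ppi_n\|_q$ is only a limsup as $t\downarrow0$, hence insufficient for Ascoli--Arzel\`a on $C([0,1],X)$, and the rescaling in Step~1 a priori spoils the $L^\infty$-bound of $\d(\e_t)_\sharp\ppi_n/\d\mmz$ since the initial density $\rho_0^n$ furnished by Lemma~\ref{le:planf} has oscillation depending on $n$. To overcome this I would revisit the construction of Lemma~\ref{le:planf} and combine the two steps, choosing the initial density in the $L^2(X,\mmz)$-gradient flow of $\widetilde\c_p$ as a bounded modification of $(u_q')^{-1}(-g_n+c_n)$ designed so that it dominates $\d\mu/\d\mmz$ uniformly in $n$; the full-interval energy $\iint_0^1|\dot\gamma_s|^q\,\d s\,\d\ppi_n$ is then controlled by the total entropy drop~\eqref{eq:dissipazione} and remains uniform in $n$. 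With these uniform bounds in hand, tightness in $\prob{C([0,1],X)}$ follows from the energy estimate on curves combined with tightness of the fixed first marginal $\mu$, and the limit argument of Step~2 yields the plan claimed by the theorem.
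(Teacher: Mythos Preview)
Your Step~1 is correct and is essentially how the paper handles the bounded case: apply Lemma~\ref{le:planf} and then use~\eqref{eq:scalato} with $F=h\circ\e_0$ to adjust the initial marginal.

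Step~2, however, has a genuine gap that your proposed workaround does not close. The plan $\tilde\ppi_n$ produced by Lemma~\ref{le:planf} for $g_n$ has $(\e_0)_\sharp\tilde\ppi_n=\rho_0^n\mmz$ with $\rho_0^n$ determined by the relation $u_q'(\rho_0^n)=-g_n+\text{const}$; for instance when $q=2$ this forces $\rho_0^n=\tilde c_n e^{-g_n}$, so $\inf\rho_0^n\to 0$ as $n\to\infty$ whenever $g$ is unbounded above. Consequently the rescaling factor $h_n=\d\mu/\d(\e_0)_\sharp\tilde\ppi_n$ has $\|h_n\|_\infty\to\infty$, and the bounded-compression constant of $\ppi_n=h_n\circ\e_0\,\tilde\ppi_n$ blows up. This kills both the tightness argument and the passage to the limit in the incremental quotients, since every estimate you invoke (\eqref{eq:defsobpunt}, \eqref{eq:dual}) uses the compression bound. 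Your suggestion to ``choose the initial density as a bounded modification'' is not available: the identity $u_q'(\rho_0)=-g+\text{const}$ is exactly what makes the key inequality~\eqref{eq:At} in Lemma~\ref{le:planf} work, and any deviation from it destroys that step.

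The paper avoids the limiting argument altogether by a different, direct reduction to the bounded case. It composes $g$ with the $1$-Lipschitz triangle wave $\varphi:\R\to[0,1]$ and its complement $\psi=1-\varphi$, so that $\varphi\circ g,\,\psi\circ g\in\s^p\cap L^\infty$ regardless of $g$. Applying your Step~1 to each yields plans $\ppi^1,\ppi^2$ $q$-representing $\nabla(\varphi\circ g)$ and $\nabla(\psi\circ g)$. On $A:=g^{-1}\big(\bigcup_n[2n,2n+1)\big)$ one has $\varphi'\circ g=1$, hence $\weakgrad{(\varphi\circ g)}=\weakgrad g$ there, and on $X\setminus A$ the same holds for $\psi$. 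Using the pointwise characterization~\eqref{eq:repcambiata} together with~\eqref{eq:scalato} and~\eqref{eq:somma}, the combined plan $\ppi:=\nchi_A\circ\e_0\,h_1\ppi^1+\nchi_{X\setminus A}\circ\e_0\,h_2\ppi^2$ (with $h_i$ the appropriate densities giving $(\e_0)_\sharp\ppi=\mu$) then $q$-represents $\nabla g$ directly, with no compactness needed.
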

\begin{proof}
Let $\varphi:\R\to[0,1]$ be defined by
\[
\varphi(z):=\left\{
\begin{array}{ll}
x-2n&,\qquad\textrm{ if } x\in[2n,2n+1),\textrm{ for some }n\in\Z,\\
2n-x&,\qquad\textrm{ if } x\in[2n-1,2n),\textrm{ for some }n\in\Z,\\
\end{array}
\right.
\]
and $\psi:=1-\varphi$. Clearly, both  $\varphi$ and $\psi$ are $1$-Lipschitz. Apply Lemma \ref{le:planf} above to $\varphi\circ g$ and $\psi\circ g$ to get plans $\ppi^1,\ppi^2$ $q$-representing $\nabla(\varphi\circ g),\nabla(\psi\circ g)$ respectively and with $c\mmz\leq(\e_0)_\sharp\ppi^i\leq C\mmz$, $i=1,2$, for some $c,C>0$. Let $A:=\{g^{-1}(\cup_n[2n,2n+1))\}\subset X$, define $F^1,F^2: C([0,1],X)\to \R$ by
\[
\begin{split}
F^1(\gamma)&:=\nchi_A(\gamma_0)\frac{\d\mu}{\d(\e_0)_\sharp\ppi^1}(\gamma_0),\\
F^2(\gamma)&:=\nchi_{X\setminus A}(\gamma_0)\frac{\d\mu}{\d(\e_0)_\sharp\ppi^2}(\gamma_0),
\end{split}
\]
and notice that since $\frac{\d(\e_0)\sharp\sppi^i}{\d\mmz}$ is bounded from below, $i=1,2$, $F^1$ and $F^2$ are bounded. Also, we know from \eqref{eq:nullset} that $\mmz$-a.e. on $g^{-1}(\Z)$ it holds $\weakgrad g=\weakgrad{(-g)}=0$. Now let $\ppi:=F^1\ppi^1+F^2\ppi^2$ and use \eqref{eq:somma} and \eqref{eq:scalato} to conclude that $\ppi$ indeed $q$-represents $\nabla g$. Since by construction it holds $(\e_0)_\sharp\ppi=\mu$, the proof is concluded.
\end{proof}
\subsection{Calculus rules}
In this section we prove the basic calculus rules for $D^\pm f(\nabla g)$: the chain and Leibniz rules.
\begin{proposition}[Chain rules]\label{prop:chain}
Let $(X,\sfd,\mm)$ be as in \eqref{eq:mms}, $p\in(1,\infty)$ and $f,g\in\s^p_{\rm loc}(X,\sfd,\mm)$. Also, let $\varphi:\R\to\R$ be locally Lipschitz with the following property: for any $x\in X$ there exists a neighborhood $x\in U_x\subset X$ and an open interval $I_x\subset\R$ such that $\mm(U_x\setminus f^{-1}(I_x))=0$ and $\varphi\restr{I_x}$ is Lipschitz.

Then for any $\varphi\circ f\in \s^p_{\rm loc}(X,\sfd,\mm)$ and it holds
\begin{equation}
\label{eq:chainf}
D^\pm(\varphi\circ f)(\nabla g)=\varphi'\circ f\ D^{\pm{\rm sign}(\varphi'\circ f)}f(\nabla g),\qquad\mm-a.e.,
\end{equation}
where the value of $\varphi'\circ f$ is defined arbitrarily at points $x$ where $\varphi$ is not differentiable at $f(x)$. 

Similarly,  assume that $\varphi:\R\to\R$ is  locally Lipschitz  and with the following property: for any $x\in X$ there exists a neighborhood $x\in U_x\subset X$ and an open interval $I_x\subset\R$ such that $\mm(U_x\setminus g^{-1}(I_x))=0$ and $\varphi\restr{I_x}$ is Lipschitz.

Then $\varphi\circ g\in\s^p_{\rm loc}(X,\sfd,\mm)$ and it holds
\begin{equation}
\label{eq:chaing}
D^{\pm} f(\nabla (\varphi\circ g))=\varphi'\circ g\ D^{\pm{\rm sign}(\varphi'\circ g)}f(\nabla g),\qquad\mm-a.e.,
\end{equation}
where  the value of $\varphi'\circ g$ is defined arbitrarily at points $x$ where $\varphi$ is not differentiable at $g(x)$. 
\end{proposition}
\begin{proof} The fact that $\varphi\circ f$ (resp. $\varphi\circ g$) is in $\s^p_{\rm loc}(X,\sfd,\mm)$ is a direct consequence of the assumptions and formula \eqref{eq:chaineasy}. Also, thanks to the local nature of the claim it is sufficient to deal with the case $f,g\in \s^p(X,\sfd,\mm)$ and $\varphi:\R\to\R$ Lipschitz.

We now prove \eqref{eq:chainf}. Let $\mathcal N\subset \R$ be the $\mathcal L^1$-negligible set of points of non-differentiability of $\varphi$. Then $\varphi(\mathcal N)$ is also $\mathcal L^1$-negligible and by \eqref{eq:nullset} we deduce that \eqref{eq:chainf} holds $\mm$-a.e. on $f^{-1}(\mathcal N)$ because both sides are 0. Now observe that by the very definition of $D^\pm f(\nabla g)$, the thesis is obvious if $\varphi$ is affine. By the locality principle \eqref{eq:localgrad} and arguing as before to deal with non-differentiability points, the thesis is also true if $\varphi$ is piecewise affine. For general $\varphi$, find a sequence of piecewise affine functions $\varphi_n$ such that $\varphi_n'\to \varphi'$ $\mathcal L^1$-a.e..  Let $\mathcal N'$ be the $\mathcal L^1$-negligible set of $z$'s such that either one of $\varphi,\varphi_n$ is not differentiable at $z$ or $\varphi'_n(z)$ does not converge at $\varphi'(z)$. With the same arguments used before we get that \eqref{eq:chainf} holds $\mm$-a.e. on $f^{-1}(\mathcal N')$ because both sides are 0. On $X\setminus f^{-1}(N')$ we use the continuity property \eqref{eq:1lip} and the chain rule \eqref{eq:chaineasy} to get
\[
\begin{split}
\big|D^\pm(\varphi\circ f)(\nabla g)-D^\pm(\varphi_n\circ f)(\nabla g)\big|&\leq \weakgrad{((\varphi-\varphi_n)\circ f)}\weakgrad g=|\varphi'-\varphi_n'|\circ f\weakgrad f\weakgrad g,
\end{split}
\]
and by construction the rightmost term goes to 0 as $n\to\infty$ $\mm$-a.e. on $X\setminus f^{-1}(\mathcal N')$.

We pass to \eqref{eq:chaing}. Using  \eqref{eq:4} we can reduce ourselves to prove that $D^+f(\nabla(\varphi\circ g))=\varphi'\circ g\,D^{{\rm sign}(\varphi'\circ g)}f(\nabla g)$. Notice that the trivial identity
\[
\frac{\weakgrad{(ag+\eps f)}^p-\weakgrad{(ag)}^p}{p\eps\weakgrad{(ag)}^{p-2}}=a\frac{\weakgrad{(g+\frac\eps af)}^p-\weakgrad{g}^p}{p\frac\eps a\weakgrad g^{p-2}},\qquad\mm-a.e.,
\]
valid for any $\eps, a\neq 0 $ together with \eqref{eq:chainf}, yields \eqref{eq:chaing} for $\varphi$ linear. Hence \eqref{eq:chaing} also holds for $\varphi$ affine and, by locality, also for  $\varphi$ piecewise affine, where the same arguments as before are used to deal with the set of points of non-differentiability of $\varphi$.

Let $\tilde J\subset J$ be an interval where $\varphi$ is Lipschitz, and $(\varphi_n)$ a sequence of uniformly Lipschitz and piecewise affine functions such that $\varphi_n'\to \varphi'$ $\mathcal L^1$-a.e. on $\tilde J$. By construction, for every $E\subset g^{-1}(\tilde J)$ the sequence of functions $\varphi_n'\circ g\, D^{{\rm sign}(\varphi_n'\circ g)}f(\nabla g)\weakgrad{(\varphi_n\circ g)}^{p-2}$ is dominated in $L^1(E,\mm\restr E)$ and pointwise $\mm$-a.e. converges to $\varphi'\circ g\, D^{{\rm sign}(\varphi'\circ g)}f(\nabla g)\weakgrad{(\varphi\circ g)}^{p-2}$. From the fact that $\int_E\weakgrad{(\varphi_n\circ g-\varphi\circ g)}^p\,\d\mm\to 0$ as $n\to\infty$ and the upper semicontinuity statement of Proposition \ref{prop:convconc} we deduce
\[
\begin{split}
\int_E D^+f(\nabla(\varphi\circ g))\weakgrad{(\varphi\circ g)}^{p-2}\,\d\mm&\geq\lims_{n\to\infty}\int_E  D^+f(\nabla(\varphi_n\circ g))\weakgrad{(\varphi_n\circ g)}^{p-2}\,\d\mm\\
&=\lims_{n\to\infty}\int_E  \varphi_n'\circ g\, D^{{\rm sign}(\varphi_n'\circ g)}f(\nabla g)\weakgrad{(\varphi_n\circ g)}^{p-2}\,\d\mm\\
&=\int_E  \varphi'\circ g\, D^{{\rm sign}(\varphi'\circ g)}f(\nabla g)\weakgrad{(\varphi\circ g)}^{p-2}\,\d\mm.
\end{split}
\]
By the arbitrariness of  $\tilde J$ and $E$ we get
\begin{equation}
\label{eq:daunlato}
D^+f(\nabla(\varphi\circ g))\geq \varphi'\circ g\ D^{{\rm sign}(\varphi'\circ g)}f(\nabla g),\qquad\mm-a.e..
\end{equation}
Roughly said, the conclusion comes by applying this very same inequality with $g$ replaced by $\varphi\circ g$ and $\varphi$ replaced by $\varphi^{-1}$. To make this rigorous, assume at first that $\varphi$ is in $C^1_{\rm loc}$. Notice that equality in \eqref{eq:daunlato} holds $\mm$-a.e. on $g^{-1}(\{\varphi'=0\})$. Now pick $z$ such that $\varphi'(z)\neq 0$, say $\varphi'(z)>0$ (the other case is similar) and let $(a,b)\ni z$ be an open interval such that $\inf_{(a,b)}\varphi'>0$. Put $\tilde g:=\min\{\max\{g,a\},b\}$, notice that  $\varphi$ is invertible on $\tilde g(X)$ with $C^1_{\rm loc}$ inverse and use \eqref{eq:daunlato} to get
\[
D^+ f(\nabla \tilde g)=D^+  f(\nabla(\varphi^{-1}\circ(\varphi\circ \tilde g)))\geq (\varphi^{-1})'\circ(\varphi\circ \tilde g) D^+  f(\nabla(\varphi\circ \tilde g))=\frac{1}{\varphi'\circ \tilde g}D^+  f(\nabla(\varphi\circ \tilde g)),
\]
$\mm$-a.e.. From the locality principle \eqref{eq:localgrad} and the arbitrariness of $z$ we conclude that \eqref{eq:chaing} holds for generic $C^1_{\rm loc}$ functions $\varphi$. The general case follows by approximating $\varphi$ with a sequence $(\varphi_n)$ of $C^1_{\rm loc}$ functions such that $\mathcal L^1(\{\varphi_n\neq\varphi\}\cup\{\varphi_n'\neq\varphi'\})\to 0$ as $n\to\infty$ and using again the locality principle.
\end{proof}
The following simple lemma will be useful in proving the Leibniz rule.
\begin{lemma}\label{le:perleib} Let $(X,\sfd,\mm)$ be as in \eqref{eq:mms}, $p,q\in(1,\infty)$ conjugate exponents, $f_1,f_2\in\s^p(X,\sfd,\mm)\cap L^\infty(X,\mm)$, $g\in \s^p(X,\sfd,\mm)$ and $\ppi\in\prob{C([0,1],X)}$ a  plan $q$-representing $\nabla g$.

Then
\[
\lim_{t\downarrow 0}\int\left|\frac{\big(f_1(\gamma_t)-f_1(\gamma_0)\big)\big(f_2(\gamma_t)-f_2(\gamma_0)\big)}{t}\right|\,\d\ppi(\gamma)=0.
\] 
\end{lemma}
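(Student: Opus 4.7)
The plan is to use Hölder's inequality to split the integrand into a factor controlled by \eqref{eq:limsup} and a factor that vanishes thanks to the continuity of $f_2$ along $\ppi$-a.e.\ curve, combined with equi-integrability of $(E_{q,t}/t)^q$ guaranteed by Proposition \ref{prop:altracar}.

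First I would rewrite the integrand as the triple product
\[
\frac{|f_1(\gamma_t)-f_1(\gamma_0)|\,|f_2(\gamma_t)-f_2(\gamma_0)|}{t}=\frac{|f_1(\gamma_t)-f_1(\gamma_0)|}{E_{q,t}(\gamma)}\cdot|f_2(\gamma_t)-f_2(\gamma_0)|\cdot\frac{E_{q,t}(\gamma)}{t},
\]
and apply Hölder's inequality with conjugate exponents $p,q$, taking the first factor in $L^p(\ppi)$ and the product of the remaining two in $L^q(\ppi)$:
\[
\int\frac{|f_1(\gamma_t)-f_1(\gamma_0)||f_2(\gamma_t)-f_2(\gamma_0)|}{t}\,\d\ppi\leq\left(\int\left|\frac{f_1(\gamma_t)-f_1(\gamma_0)}{E_{q,t}}\right|^p\,\d\ppi\right)^{1/p}\left(\int|f_2(\gamma_t)-f_2(\gamma_0)|^q\left(\frac{E_{q,t}}{t}\right)^q\,\d\ppi\right)^{1/q}.
\]
By \eqref{eq:limsup} applied to $f_1$, the first factor has finite $\lims_{t\downarrow 0}$. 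Therefore it suffices to prove that the second factor tends to zero.

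For this, I would combine three ingredients. First, since $\ppi$ $q$-represents $\nabla g$, Proposition \ref{prop:altracar} yields $(E_{q,t}/t)^{q/p}\to\weakgrad g\circ\e_0$ in $L^p(\ppi)$, so raising to the $p$-th power gives $(E_{q,t}/t)^q\to\weakgrad g^p\circ\e_0$ in $L^1(\ppi)$; in particular the family $\{(E_{q,t}/t)^q\}_{t\in(0,1]}$ is equi-integrable. Second, from \eqref{eq:defsobpunt} applied to $f_2\in\s^p(X,\sfd,\mm)$ we get $|f_2(\gamma_t)-f_2(\gamma_0)|\leq\int_0^t\weakgrad{f_2}(\gamma_r)|\dot\gamma_r|\,\d r$ for $\ppi$-a.e.\ $\gamma$, and the right-hand side tends to $0$ as $t\downarrow 0$, so $f_2(\gamma_t)\to f_2(\gamma_0)$ pointwise $\ppi$-a.e. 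Third, we have the uniform bound $|f_2(\gamma_t)-f_2(\gamma_0)|^q\leq(2\|f_2\|_{L^\infty})^q$. The integrand in the second factor is then dominated by the equi-integrable family $(2\|f_2\|_\infty)^q(E_{q,t}/t)^q$ and converges to $0$ $\ppi$-a.e.\ along any sequence $t_n\downarrow 0$ (extracting a subsequence so that $(E_{q,t_n}/t_n)^q\to\weakgrad g^p\circ\e_0$ also pointwise), hence Vitali's convergence theorem forces it to tend to $0$ in $L^1(\ppi)$.

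The only genuinely non-trivial step is the equi-integrability of $(E_{q,t}/t)^q$; this is precisely where the hypothesis that $\ppi$ $q$-represents $\nabla g$ (as opposed to merely having $\|\ppi\|_q<\infty$) enters, through the sharp $L^p$ convergence provided by Proposition \ref{prop:altracar}. Everything else is Hölder, pointwise continuity of Sobolev functions along test plans, and Vitali.
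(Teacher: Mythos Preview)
Your proof is correct and rests on the same ingredients as the paper's: the $L^p$-convergence of $(E_{q,t}/t)^{q/p}$ from Proposition~\ref{prop:altracar}, the uniform bound $|f_2(\gamma_t)-f_2(\gamma_0)|\leq 2\|f_2\|_{L^\infty}$, and its pointwise convergence to $0$. The paper organizes the estimate slightly differently: rather than your H\"older split, it simply writes $\tfrac{|F^1_tF^2_t|}{t}=\tfrac{|F^1_t|}{t}\cdot|F^2_t|$, invokes Proposition~\ref{prop:bridge} (which packages your H\"older/Young step) to obtain $L^1$-domination of $\tfrac{F^1_t}{t}$, and concludes by dominated convergence; your explicit use of equi-integrability and Vitali is arguably more careful, since $L^p$-convergence alone does not literally furnish a single dominating function.
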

\begin{proof}
Let $F^i_t(\gamma):=f_i(\gamma_t)-f_i(\gamma_0)$, $i=1,2$, $t\in(0,1]$. From \eqref{eq:repcambiata} we know that the family $\frac{E_{q,t}}{t}$ is dominated in $L^q(\ppi)$, so that from the second part of Proposition \ref{prop:bridge} we get that  the family $\frac{F^1_t}t$ is dominated in $L^1(\ppi)$. By definition we also know that $\|F^2_t\|_{L^\infty(\sppi)}\leq 2\|f_2\|_{L^\infty(\smm)}$ for any $t\in(0,1]$. Hence to conclude it is sufficient to check that $\ppi$-a.e. it holds $F^2_t\to 0$ as $t\downarrow 0$. But this is obvious, because $\frac{F^2_t}{t}$ is dominated in $L^1(\ppi)$ as well.    
\end{proof}

\begin{proposition}[Leibniz rule]\label{prop:leibniz}  Let $(X,\sfd,\mm)$ be as in \eqref{eq:mms}, $p\in(1,\infty)$,  $f_1,f_2\in \s^p_{\rm loc}(X,\sfd,\mm)\cap L^\infty_{\rm loc}(X,\mm)$ and $g\in\s^p_{\rm loc}(X,\sfd,\mm)$. 

Then $\mm$-a.e. it holds
\begin{equation}
\label{eq:leibniz}
\begin{split}
 D^+(f_1f_2)(\nabla g)&\leq f_1D^{s_1}f_2(\nabla g)+f_2D^{s_2}f_1(\nabla g),\\
 D^-(f_1f_2)(\nabla g)&\geq f_1D^{-s_1}f_2(\nabla g)+f_2D^{-s_2}f_1(\nabla g),
\end{split}
\end{equation}
where $s_i:={\rm sign}f_i$, $i=1,2$.
\end{proposition}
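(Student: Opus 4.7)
The plan is to establish the two inequalities in three stages: first a reduction to the case $f_1, f_2 \ge 0$ bounded and in $\s^p(X,\sfd,\mm)$; then a ``weak'' Leibniz inequality obtained via plans representing $\nabla g$ which places the \emph{wrong} one-sided derivative on the left-hand side; and finally a ``boosting'' step that corrects it via the variational identity \eqref{eq:6}. For the reduction, by \eqref{eq:nullset2} both sides of both inequalities vanish on $\{f_1 f_2 = 0\}$, while \eqref{eq:localgrad2} lets us split $X$ according to the signs of $f_1, f_2$ and work on each piece separately. Combined with the sign-flip identities \eqref{eq:4}, this reduces both inequalities to the case $f_1, f_2 \ge 0$ (so $s_1 = s_2 = +$), with the inequalities becoming $D^+(f_1 f_2)(\nabla g) \le f_1 D^+ f_2(\nabla g) + f_2 D^+ f_1(\nabla g)$ and $D^-(f_1 f_2)(\nabla g) \ge f_1 D^- f_2(\nabla g) + f_2 D^- f_1(\nabla g)$. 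A standard cut-off brings us to $f_1, f_2 \in \s^p(X,\sfd,\mm) \cap L^\infty(X,\mm)$ and $g \in \s^p(X,\sfd,\mm)$.

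For the weak form I fix $\mu \le C\mmz$ with bounded density and pick via Theorem \ref{thm:extest} a plan $\ppi$ that $q$-represents $\nabla g$ with $(\e_0)_\sharp\ppi = \mu$. The identity
\[
\frac{(f_1 f_2)(\gamma_t) - (f_1 f_2)(\gamma_0)}{t} = f_1(\gamma_0)\frac{f_2(\gamma_t)-f_2(\gamma_0)}{t} + f_2(\gamma_0)\frac{f_1(\gamma_t)-f_1(\gamma_0)}{t} + \frac{(f_1(\gamma_t)-f_1(\gamma_0))(f_2(\gamma_t)-f_2(\gamma_0))}{t},
\]
combined with Lemma \ref{le:perleib}, which sends the last term to $0$ in $L^1(\ppi)$ as $t \downarrow 0$, reduces the behavior of $\lims_t \int ((f_1 f_2)(\gamma_t) - (f_1 f_2)(\gamma_0))/t\,\d\ppi$ (and of $\limi_t$) to that of the two cross terms. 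For each cross term the non-negativity of $f_i$ lets us scale $\ppi$ by the bounded density $f_i\circ\e_0$, getting (after normalization, via \eqref{eq:scalato} and \eqref{eq:somma}) another plan that $q$-represents $\nabla g$; applying Theorem \ref{thm:horver} to this scaled plan and unscaling yields
\[
\lims_{t \downarrow 0}\int f_i(\gamma_0)\frac{f_j(\gamma_t)-f_j(\gamma_0)}{t}\,\d\ppi \le \int f_i\, D^+ f_j(\nabla g)\weakgrad g^{p-2}\,\d\mu,
\]
and the symmetric $\limi_t \ge \int f_i\, D^- f_j(\nabla g)\weakgrad g^{p-2}\,\d\mu$. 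Combining these with the extreme inequalities of \eqref{eq:horver1} applied to $f_1 f_2$, and letting $\mu$ vary over all measures dominated by $C\mmz$ with bounded density, I obtain the pointwise $\mm$-a.e.\ \emph{weak} bounds $D^-(f_1 f_2) \le f_1 D^+ f_2 + f_2 D^+ f_1$ and $D^+(f_1 f_2) \ge f_1 D^- f_2 + f_2 D^- f_1$ (both trivial where $\weakgrad g = 0$, since by convention $D^\pm(\cdot)(\nabla g)\weakgrad g^{p-2}$ vanishes there).

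To boost the first weak bound to the required $D^+(f_1 f_2) \le f_1 D^+ f_2 + f_2 D^+ f_1$, set $g_\eta := g + \eta f_1 f_2$ and apply the weak form at $g_\eta$: for every $\eta > 0$,
\[
D^-(f_1 f_2)(\nabla g_\eta)\weakgrad{g_\eta}^{p-2} \le \bigl[f_1 D^+ f_2(\nabla g_\eta) + f_2 D^+ f_1(\nabla g_\eta)\bigr]\weakgrad{g_\eta}^{p-2} \qquad \mm\textrm{-a.e.}
\]
Integrate over an arbitrary bounded Borel set $E'$ and let $\eta \downarrow 0$ along a sequence. By \eqref{eq:6} the LHS integrand is monotone non-decreasing in $\eta > 0$ with pointwise infimum $D^+(f_1 f_2)(\nabla g)\weakgrad g^{p-2}$, so dominated convergence produces $\int_{E'} D^+(f_1 f_2)(\nabla g)\weakgrad g^{p-2}\,\d\mm$ in the limit. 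On the RHS, since $\int \weakgrad{(g_\eta - g)}^p\,\d\mm = \eta^p\int\weakgrad{(f_1 f_2)}^p\,\d\mm \to 0$ and $\int_{E'}\weakgrad{g_\eta}^p\,\d\mm$ stays bounded, the upper-semicontinuity of $g \mapsto D^+ f_j(\nabla g)\weakgrad g^{p-2}$ from Proposition \ref{prop:convconc} --- which remains valid with bounded non-negative weight $f_i$, as $\int_{E'} f_i D^+ f_j(\nabla g)\weakgrad g^{p-2}\,\d\mm$ is still the infimum over $\eps > 0$ of a continuous functional of $g$ --- yields $\lims_\eta \int_{E'} f_i D^+ f_j(\nabla g_\eta)\weakgrad{g_\eta}^{p-2}\,\d\mm \le \int_{E'} f_i D^+ f_j(\nabla g)\weakgrad g^{p-2}\,\d\mm$. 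Arbitrariness of $E'$ gives the pointwise upper Leibniz. The lower Leibniz is the mirror image, using the $\sup_{\eps < 0}$ clause of \eqref{eq:6}, the weak lower bound at $g_\eta$, and the lower-semicontinuity of $g \mapsto D^- f_j(\nabla g)\weakgrad g^{p-2}$.

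The main obstacle is the boosting step. Theorem \ref{thm:horver} is intrinsically asymmetric, pairing $D^+$ with an upper bound on $\lims$ and $D^-$ with a lower bound on $\limi$, so the direct horizontal/vertical argument delivers only inequalities in which the \emph{wrong} one-sided derivative appears on the LHS. Exchanging $D^-$ for $D^+$ (and vice versa) requires the convexity-driven monotonicity of $\eta \mapsto D^\mp(f_1 f_2)(\nabla g_\eta)\weakgrad{g_\eta}^{p-2}$ underlying \eqref{eq:6}, paired with the sign-correct semicontinuity of Proposition \ref{prop:convconc} on the RHS; coordinating these two ingredients is the delicate point of the argument.
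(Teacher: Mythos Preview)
Your proof is correct and follows essentially the same route as the paper's. The only organizational difference is that you split the argument into a ``weak'' inequality (at an arbitrary $g$, using a plan representing $\nabla g$) followed by a boosting step, whereas the paper works directly at $g_\eps=g+\eps f_1f_2$ from the outset: it invokes \eqref{eq:6} first to pass from $D^+(f_1f_2)(\nabla g)$ to $D^-(f_1f_2)(\nabla g_\eps)$, then runs the horizontal/vertical comparison with a plan $\ppi^\eps$ representing $\nabla g_\eps$, and finally lets $\eps\downarrow 0$. Your handling of the $\eps\downarrow 0$ limit on the right-hand side via the weighted version of the upper-semicontinuity in Proposition~\ref{prop:convconc} is in fact cleaner than the paper's somewhat terse reference to \eqref{eq:6} at that point (since $g_\eps$ is perturbed by $f_1f_2$, not by $f_j$).
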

\begin{proof} With a cut-off argument and from the locality property \eqref{eq:localgrad2}, we can assume that $f_1,f_2\in \s^p(X,\sfd,\mm)\cap L^\infty(X,\mm)$ and $g\in\s^p(X,\sfd,\mm)$. Also, replacing $f_1,f_2$ with $|f_1|, |f_2|$ and using Proposition \ref{prop:chain} above, we reduce to the case $f_1,f_2\geq 0$. 

Under this assumptions, we will prove that for any measure $\mu\in\prob X$ with $\mu\leq C\mmz$ for some $C>0$ (with $\mmz$  as in \eqref{eq:mmz}, \eqref{eq:momp})  it holds
\begin{equation}
\label{eq:mu}
\int D^+(f_1f_2)(\nabla g)\weakgrad g^{p-2}\,\d\mu\leq \int \Big(f_1D^{+}f_2(\nabla g)+f_2D^{+}f_1(\nabla g)\Big)\weakgrad g^{p-2}\,\d\mu, 
\end{equation}
which implies, since $\mu$ is arbitrary and $\mm\ll\mmz$, the first  inequality in \eqref{eq:leibniz}. The other one will then follow putting $-g$ in place of $g$ and using \eqref{eq:4}. 

Fix such $\mu$ and notice that if $f_1f_2=0$ $\mu$-a.e., then \eqref{eq:mu} is obvious, thus we can assume $\int f_i\,\d\mu>0$, $i=1,2$.

Fix $\eps>0$, put $g_\eps:=g+\eps f_1f_2\in \s^p(X,\sfd,\mm)$ and let $\ppi^\eps$ be a plan $q$-representing $\nabla g_\eps$ with $(\e_0)_\sharp\ppi=\mu$ (Theorem \ref{thm:extest}, here $q$ is the conjugate exponent of $p$) and recall \eqref{eq:6}, \eqref{eq:horver1} to get 
\[
\begin{split}
\int D^+(f_1f_2)(\nabla g)\weakgrad g^{p-2}\,\d\mu&\leq \int D^-(f_1f_2)(\nabla g_\eps)\weakgrad{g_\eps}^{p-2}\,\d\mu\\
&\leq \limi_{t\downarrow 0}\int\frac{f_1(\gamma_t)f_2(\gamma_t)-f_1(\gamma_0)f_2(\gamma_0)}{t}\,\d\ppi^\eps(\gamma)\\
&\leq \lims_{t\downarrow 0}\int\frac{f_1(\gamma_t)f_2(\gamma_t)-f_1(\gamma_0)f_2(\gamma_0)}{t}\,\d\ppi^\eps(\gamma).
\end{split}
\]
Taking into account Lemma \ref{le:perleib} we have
\[
\begin{split}
&\lims_{t\downarrow 0}\int\frac{f_1(\gamma_t)f_2(\gamma_t)-f_1(\gamma_0)f_2(\gamma_0)}{t}\,\d\ppi^\eps(\gamma)\\
&=\lims_{t\downarrow 0}\int f_1(\gamma_0)\frac{f_2(\gamma_t)-f_2(\gamma_0)}{t}+ f_2(\gamma_0)\frac{f_1(\gamma_t)-f_1(\gamma_0)}{t}\,\d\ppi^\eps(\gamma)\\
&\leq\lims_{t\downarrow 0}\int f_1(\gamma_0)\frac{f_2(\gamma_t)-f_2(\gamma_0)}{t}\,\d\ppi^\eps(\gamma)+\lims_{t\downarrow 0}\int f_2(\gamma_0)\frac{f_1(\gamma_t)-f_1(\gamma_0)}{t}\,\d\ppi^\eps(\gamma).
\end{split}
\]
From \eqref{eq:scalato} we know that the plans $\ppi^\eps_i:=\frac{1}{\int f_i\,\d\mu}f_i\circ\e_0\ppi^\eps$, $i=1,2$, $q$-represent $\nabla g_\eps$ with $(\e_0)_\sharp\ppi^\eps_i=\frac{1}{\int f_i\,\d\mu}f_i\circ\e_0\,\mu$. Hence using again \eqref{eq:horver1}  we  obtain
\[
\begin{split}
\lims_{t\downarrow 0}\int f_1(\gamma_0)\frac{f_2(\gamma_t)-f_2(\gamma_0)}{t}\,\d\ppi^\eps(\gamma)&\leq \int f_1 D^+f_2(\nabla g_\eps )\weakgrad{g_\eps }^{p-2}\,\d\mu,\\
\lims_{t\downarrow 0}\int f_2(\gamma_0)\frac{f_1(\gamma_t)-f_1(\gamma_0)}{t}\,\d\ppi^\eps(\gamma)&\leq \int f_2 D^+f_1(\nabla g_\eps)\weakgrad{g_\eps}^{p-2}\,\d\mu.
\end{split}
\]
In summary, we proved that for any $\eps>0$ it holds
\[
\int D^+(f_1f_2)(\nabla g)\weakgrad g^{p-2}\,\d\mu\leq \int f_1 D^+f_2(\nabla g_\eps)\weakgrad{g_\eps}^{p-2}+ f_2 D^+f_1(\nabla g_\eps)\weakgrad{g_\eps}^{p-2}\,\d\mu.
\]
Notice now that the integrand in the right hand side is dominated in $L^1(X,\mu)$  and use \eqref{eq:6} in considering its limit  as $\eps\downarrow0$. The thesis follows.
\end{proof}
\begin{remark}[The infinitesimally strictly convex case]{\rm
If $(X,\sfd,\mm)$ is $q$-infinitesimally strictly convex, $q$ being the conjugate exponent of $p$, then under the same assumptions of Propositions \ref{prop:chain} and \ref{prop:leibniz} we have the following $\mm$-a.e. equalities:
\begin{align}
\label{eq:chainfs}
D(\varphi\circ f)(\nabla g)&=\varphi'\circ f\ Df(\nabla g),\\
\label{eq:chaings}
Df(\nabla(\varphi\circ g))&=\varphi'\circ g\ Df(\nabla g),\\
\label{eq:leibs}
D(f_1f_2)(\nabla g)&=f_1 Df_2(\nabla g)+f_2Df_1(\nabla g).
\end{align}

}\fr\end{remark}
\begin{remark}{\rm
It is natural to question whether it is valid a Leibniz rule of a form like
\begin{equation}
\label{eq:leibno}
D f(\nabla(g_1g_2))=g_1\,Df(\nabla g_2)+g_2\,Df(\nabla g_1),\qquad\forall f,g_1,g_2\textrm{ ``smooth''}
\end{equation}
provided possibly we replace the equality with an inequality and do appropriate sign choices in $D^\pm$.

This is well known to be false even in a smooth world. For instance, in the flat normed setting considered in Section \ref{se:normato}, the above equality holds for any couple of smooth $f,g$ if and only if the norm comes from a scalar product.  Indeed, recall that $\nabla g={\rm Dual}^{-1}(Dg)$, so that \eqref{eq:leibno} holds for any $f,g_1,g_2$ if and only if
\[
{\rm Dual}^{-1}(D(g_1g_2))=g_1{\rm Dual}^{-1}(Dg_2)+g_2{\rm Dual}^{-1}(Dg_1).
\]
By the Leibniz rule for the differentials, we know that the left hand side is equal to ${\rm Dual}^{-1}(g_1Dg_2+g_2Dg_1)$. Therefore, since ${\rm Dual}^{-1}$  is always 1-homogeneous, \eqref{eq:leibno} is true if and only if ${\rm Dual}^{-1}$ is linear. That is, if and only if the norm $\|\cdot\|$ comes from a scalar product (compare this fact with the Leibniz rule \eqref{eq:leibb} obtained in Section \ref{se:lineare} under the assumption that the space is infinitesimally Hilbertian). 
}\fr\end{remark}
\begin{remark}[The different roles of horizontal and vertical derivation]\label{re:horver}{\rm In developing this abstract differential calculus, we proved the chain rule as a consequence of the `vertical' derivations and the Leibniz rule as a byproduct of the `horizontal' ones. We chose this path to present a first direct application of both the approaches and to show how they relate, but actually this is unnecessary, because the Leibniz rule can also be proved via the `vertical' approach only. Indeed, notice that as in the proof of Proposition \ref{prop:leibniz} we can reduce to prove that
\[
D^+(f_1f_2)(\nabla g)\leq f_1 D^+f_2(\nabla g)+f_2 D^+f_1(\nabla g),\qquad\mm-a.e.,
\]
for positive  $f_1,f_2\in \s^p(X,\sfd,\mm)\cap L^\infty(X,\mm)$ and $g\in\s^p(X,\sfd,\mm)$. Then use the chain rule\footnote{I saw this approach to the proof of  the Leibniz rule at the page \texttt{http://planetmath.org/?op=getobj\&from=objects\&id=8438}, where it is  acknowledged to W. Buck} \eqref{eq:chainf} and the convexity and positive 1-homogeneity of $f\mapsto D^+f(\nabla g)$ to get
\[
\begin{split}
D^+(f_1f_2)(\nabla g)&=f_1f_2\,D^+(\log(f_1f_2))(\nabla g)\leq f_1f_2\,\big(D^+(\log(f_1))(\nabla g)+D^+(\log(f_2))(\nabla g)\big)\\
&=f_1 D^+f_2(\nabla g)+f_2 D^+f_1(\nabla g),\qquad\mm-a.e..
\end{split}
\]
On the other hand, a calculus can also be developed starting from the `horizontal' approach only (but this would be much more cumbersome to do w.r.t. Definition \ref{def:dfgg}).

Therefore the point is not which one of the two approaches is the best, because they produce an equally powerful calculus. The point is that they produce the same calculus. Notice that the `horizontal' derivation is common when working with the Wasserstein geometry, while the `vertical' one is typical of the $L^p$ world. So that every time these two geometries are put in relation, in a way or another the same principle underlying Theorem \ref{thm:horver} comes into play. For instance, the relation between horizontal and vertical derivation was already present, although well hidden, in the first proof in a non-smooth context of the equivalence between the gradient flow of $\c_2$ in $L^2(X,\mm)$ and the one of the relative entropy in the Wasserstein space $(\probt X,W_2)$, provided in the case of Alexandrov spaces with lower curvature bounds in \cite{GigliKuwadaOhta10}.

In several  situations, one specific approach works better than the other, and their interaction has fruitful consequences. For instance, in getting the Laplacian comparison out of the $\CD(K,N)$ condition (see the introduction and Theorem \ref{thm:controllo}) it is crucial to take an horizontal derivative, which is not surprising given that the $\CD(K,N)$ condition is given in terms of the Wasserstein geometry: the same result seems hardly achievable working with the vertical approach only. On the other hand, the vertical approach is needed if one wants to know under which conditions the limit of
\[
\int \frac{f(\gamma_t)-f(\gamma_0)}{t}\,\d\ppi(\gamma),
\]
exists as $t\downarrow0$, where here $f\in\s^p(X,\sfd,\mm)$ and $\ppi$  $q$-represents $\nabla g$ for some $g\in\s^p(X,\sfd,\mm)$. In other words, if we want to know up to which extent we can consider plans $\ppi$ as differentiation operators. Using Theorem \ref{thm:horver} we know that this is the case as soon as $(X,\sfd,\mm)$ is $q$-infinitesimally strictly convex, a fact which would otherwise be not at all obvious from the horizontal approach only. Also, consider the following question: let $f_1,f_2\in\s^2(X,\sfd,\mm)$ and $\ppi_1,\ppi_2$ plans of bounded compression with $(\e_0)_\sharp\ppi^1=(\e_0)_\sharp\ppi^2=:\mu$ such that $\ppi_i$ $2$-represents $\nabla f_i$, $i=1,2$. Assume that the seminorm $\|\cdot\|_{\s^2}$ satisfies the parallelogram rule. Can we deduce that the equality
\[
\lim_{t\downarrow0}\int\frac{f_1(\gamma_t)-f_1(\gamma_0)}{t}\,\d\ppi_2(\gamma)=\lim_{t\downarrow0}\int\frac{f_2(\gamma_t)-f_2(\gamma_0)}{t}\,\d\ppi_1(\gamma),
\]
holds? This is expected, as the fact that  $\|\cdot\|_{\s^2}$ satisfies the parallelogram rule should mean that we can identify differentials and gradients, so that both the expressions above should be equal to something like 
\[
\int \nabla f_1\cdot\nabla f_2\,\d\mu.
\]
This is indeed the case, as we will see in the Chapter  \ref{se:lineare}, and once again this will follow from the `vertical' approach, rather than from the `horizontal' one (studies in directions have been firstly made in \cite{Ambrosio-Gigli-Savare11bis}).
}\fr\end{remark}
\begin{remark}[Comparison with Cheeger's differential structure]\label{re:cheeger}{\rm In the seminal paper \cite{Cheeger00}, Cheeger built a differential structure and defined the differential of Lipschitz functions in a completely different way. Working under the assumption that the measure is doubling and that the space supports a weak-local Poincar\'e inequality, he proved (among other things) that the space has a finite dimensional cotangent space, in the following sense. There exists $N\in\N$, a measurable decomposition  $X=\sqcup_{i\in \N} U_i$ and  Lipschitz functions ${\bf x}_i:U_i\to\R^{N_i}$, $N_i\leq N$,  such that for any $i\in\N$ and any Lipschitz function $f:X\to\R$ there exists a unique - up to negligible sets - map $df_i:U_i\to\R^{N_i}$ such that 
\begin{equation}
\label{eq:taylor}
\lim_{y\to x}\left|\frac{f(y)-f(x)-df_i(x)\cdot\big({\bf x}_i(y)-{\bf x}_i(x)\big)}{\sfd(y,x)}\right|=0,\qquad\textrm{ for }\mm-a.e.\ x\in U_i.
\end{equation}
This result can be interpreted as: the $U_i$ are coordinate charts, the map ${\bf x}_i$ provides the coordinate maps of the chart and formula \eqref{eq:taylor} is the first order Taylor expansion of $f$ in this chart. 

Relying on this result, he then interpreted (i.e. defined) the function $df_i$  as the differential of the Lipschitz map $f$ on the set $U_i$. From the construction, it is obvious that it satisfies the Leibniz rule in the following form, sharper than \eqref{eq:leibniz}:
\begin{equation}
\label{eq:leibch}
d(f_1f_2)_i=f_1(df_2)_i+f_2(df_1)_i.
\end{equation}
In particular, this holds without any sort of infinitesimal strict convexity. Yet, even in Cheeger's paper the issue of possibly not strictly convex norms in tangent spaces arose, in particular in connection with the surjectivity of a certain natural map on tangent cones. This issue was worked out using the finite dimensionality of the cotangent spaces to define equivalent well-behaved norms: his resulting Theorem 13.4 in \cite{Cheeger00} should be compared with our, weaker but valid in higher generality, Theorem \ref{thm:figata}.

Using the tools described in Appendix \ref{app:cottan} (see Theorems \ref{thm:horver2} and \ref{thm:figata}) one can check that \eqref{eq:leibniz} is a consequence of \eqref{eq:leibch}, which means in particular that the two constructions are compatible.

The fact that formula \eqref{eq:leibniz} is in form of inequalities rather than being an equality, is due to the fact that  with the approach we proposed `we don't really know who is the differential of a function before applying it to a gradient', and gradients in general are not uniquely defined. On the contrary, Cheeger's approach allows for a more intrinsic definition of differential, but it passes through the finite dimensionality of the cotangent space, a property which on arbitrary spaces certainly is not satisfied.
}\fr\end{remark}
\section{Laplacian}\label{se:lap}
\subsection{Definition and basic properties}
Having in mind the discussion made in Section \ref{se:normato} and the tools developed in the previous chapter, we will now discuss the definition of Laplacian. We start noticing that Proposition \ref{prop:srestr} allows to define the Sobolev class $\s^p_{\rm loc}(\Omega)$  in the following way:
\begin{definition}[The Sobolev class $\s^p_{\rm loc}(\Omega)$]
Let $(X,\sfd,\mm)$ be as in \eqref{eq:mms}, $\Omega\subset X$ an open set and $p\in(1,\infty)$. The space $\s^p_{\rm loc}(\Omega)$ is the space of Borel functions $g:\Omega\to\R$ such that $g\nchi\in\s^p_{\rm loc}(X,\sfd,\mm)$ for any Lipschitz function $\nchi:X\to[0,1]$ such that $\sfd(\supp(\nchi),X\setminus\Omega)>0$, where $g\nchi$ is taken 0 by definition on $X\setminus\Omega$.
\end{definition}
Here and in the following for $A,B\subset X$, we denote  by $\sfd(A,B)$ the value of $\inf\sfd(x,y)$, where the infimum is taken among all $x\in A$ and $y\in B$.
 
It is immediate to verify that this is a good definition, and that for $g\in\s^p_{\rm loc}(\Omega)$ the function $\weakgrad g\in L^p_{\rm loc}(\Omega,\mm\restr\Omega)$ is well defined, thanks to \eqref{eq:localgrad},  by
\begin{equation}
\label{eq:vabene}
\weakgrad g:=\weakgrad{(g\nchi)},\qquad\mm-a.e.\ \textrm{ on }\{\nchi=1\},
\end{equation}
where $\nchi:X\to[0,1]$ is any Lipschitz function  such that $\sfd(\supp(\nchi),X\setminus\Omega)>0$. Using the locality property \eqref{eq:localgrad2} it is easy to see that the objects $D^\pm f(\nabla g)$ are well defined $\mm$-a.e. on $\Omega$ and that the chain rules \eqref{eq:chainf}, \eqref{eq:chaing} and the Leibniz rule \eqref{eq:leibniz} hold $\mm$-a.e. on $\Omega$ for functions $f,g\in \s^p_{\rm loc}(\Omega)$, $p\in(1,\infty)$.

To define the Laplacian of $g$ we will need to ask for an integrability of $\weakgrad g$ which on non-proper spaces is slightly stronger than $L^p_{\rm loc}(\Omega,\mm\restr\Omega)$:
\begin{definition}[The class $\Int\Omega$ and `internal' Sobolev spaces]\label{def:int}
Let $(X,\sfd,\mm)$ be as in \eqref{eq:mms} and $\Omega\subset X$ an open set. The class $\Int\Omega$ is the collection of all open sets $\Omega'\subset\Omega$ such that
\begin{itemize}
\item[i)] $\Omega'$ is bounded,
\item[ii)] $\sfd(\Omega',X\setminus \Omega)>0$,
\item[iii)] $\mm(\Omega')<\infty$.
\end{itemize}
For $p\in(1,\infty)$ we define the space $\s^p_{\rm int}(\Omega)$ of functions Sobolev internally in $\Omega$ as 
\[
\begin{split}
\s^p_{\rm int}(\Omega):=\Big\{g\in\s^p_{\rm loc}(\Omega) \ :\ & \int_{\Omega'}\weakgrad{g}^p\,\d\mm<\infty,\ \forall \Omega'\in\Int\Omega\Big\}.
\end{split}
\]
\end{definition}
We also introduce the class $\test\Omega$ of test functions in $\Omega$ as follows.
\begin{definition}[Test functions]\label{def:testf}
Let $(X,\sfd,\mm)$ be as in \eqref{eq:mms} and $\Omega\subset X$ an open set. The space $\test\Omega$ is defined by
\[
\begin{split}
\test\Omega:=\Big\{f\in {\rm LIP}(X):\  \supp(f)\subset\Omega',\ \textrm{ for some }\ \Omega'\in\Int\Omega\}.
\end{split}
\]
\end{definition}
Recall that if $f:X\to\R$ is a Lipschitz function, then $f\in\s^p_{\rm loc}(X,\sfd,\mm)$ for any $p\in(1,\infty)$ and the $p$-minimal weak upper gradient $\weakgrad f$ is  uniformly bounded by $\Lip(f)$.

Clearly, for $g\in\s^p_{\rm int}(\Omega)$ and $f\in\test\Omega$ it holds $g+\eps f\in\s^p_{\rm int}(\Omega)$ for any $\eps\in\R$. Thus  for $g\in\s^p_{\rm int}(\Omega)$ and $f\in\test\Omega$, thanks to \eqref{eq:vabene} the functions $D^\pm f(\nabla g)$ are $\mm$-a.e. defined on $\Omega$ by
\begin{equation}
\label{eq:defrestr2}
D^\pm f(\nabla g):= D^\pm f(\nabla (g\nchi)),\qquad \textrm{ on }\{\nchi=1\},
\end{equation}
for any $\nchi:X\to[0,1]$ Lipschitz with $\supp(\nchi)\subset \Omega'$  for some $\Omega'\in\Int\Omega$.
From the bound \eqref{eq:1} and the fact that $\mm(\supp(f))<\infty$ we have $D^\pm f(\nabla g)\in L^1(\Omega,\mm\restr{\Omega})$.

We are now ready to give the definition of distributional Laplacian.
\begin{definition}[Laplacian]\label{def:lap}
Let $(X,\sfd,\mm)$ be as in \eqref{eq:mms},  $\Omega\subset X$ an open set and $g:\Omega\to\R$ a Borel function. We say that $g$ is in the domain of the Laplacian in $\Omega$, and write $g\in D(\bd,\Omega)$, provided $g\in\s^p_{\rm int}(\Omega)$ for some $p>1$ and there exists a Radon   measure $\mu$ on $\Omega$ such that for any $f\in\test\Omega\cap L^1(\Omega,|\mu|)$ it holds
\begin{equation}
\label{eq:deflap}
-\int_\Omega D^+f(\nabla g)\,\d\mm\leq\int_\Omega f\,\d\mu\leq-\int_\Omega D^-f(\nabla g)\,\d\mm.
\end{equation}
In this case we write $\mu\in \bd g\restr{\Omega}$. If $\Omega=X$ we simply write $g\in D(\bd)$ and $\mu\in \bd g$.
\end{definition}
We are using the bold font for the Laplacian to emphasize that in our discussion it  will always be a measure, possibly multivalued.

\begin{remark}\label{re:occhiolap}{\rm
As for the objects $\weakgrad f$ and $D^\pm f(\nabla g)$, the choice of the Sobolev exponent $p$ might affect the definition of Laplacian, but in our writing we will omit such explicit dependence (see also Remark \ref{re:notazione}). In this direction, notice that we require in the definition that the function $g$ belongs to $\s^p_{\rm int}(\Omega)$ for some $p>1$: the exponent $p$ is then the one chosen to compute  $\weakgrad{(g+\eps f)}$, which in turn  is the tool used to introduce $D^\pm f(\nabla g)$.
}\fr\end{remark}
\begin{remark}[A coincidence about a priori regularity]\label{re:apriori}{\rm  
We already observed in Section \ref{se:normato} that in a non-Riemannian situation, in order to define the distributional Laplacian of a function $g$, we need to impose some a priori Sobolev regularity on it. The minimal one is then that - in the appropriate local sense - $g\in\s^1(\Omega)$. Yet, we didn't define the Sobolev class $\s^1$, because its definition requires some care (for recent results in this direction, see \cite{Ambrosio-DiMarino}), so that we just asked for $g$ to locally belong to $\s^p$ for some $p>1$. 

Notice that not only this allows to write down $D^\pm f(\nabla g)$, but is in perfect duality with the integrability of $\weakgrad f$ for the smoothest functions available in this setting, i.e. Lipschitz $f$'s, because as we already remarked it holds $\weakgrad f\in L^\infty$. 

Here there is an interesting coincidence. Definition \ref{def:lap} differs from the one of distributional Laplacian  on the Euclidean setting $\R^d$, as in this framework in order for the expression $\Delta g=\mu$ to make sense, the only a priori condition that is imposed on $g$ is to be locally in $L^1$. Yet, standard regularity theory gives that if $\Delta g=\mu$ in the sense of distributions, then the distributional gradient $\nabla g$ is a function which locally belongs to $L^p$ for any $p\in[1,1+\frac1d)$. Therefore if we are in a Euclidean setting and we `don't know the dimension', we are anyway sure that $\Delta g=\mu$ implies $\nabla g\in L^p_{\rm loc}(X,\mm)$ for some $p>1$. 

This is precisely the a priori condition on $g$ we asked for in Definition \ref{def:lap}. Which means that we are, in practice, not asking more than what is already true in the classical situation. In particular, our definition reduces to the standard one in a Euclidean setting.
}\fr\end{remark}
A drawback of Definition \ref{def:lap} is that in general  the Laplacian might be non-unique: in Proposition \ref{prop:comp} below we will actually see that explicit counterexamples to uniqueness can be produced as soon as the space is not 2-infinitesimally strictly convex.  Yet, it is immediate to verify that the Laplacian is one-homogeneous, i.e.
\begin{equation}
\label{eq:unohom}
g\in D(\bd,\Omega),\ \mu\in \bd g\restr\Omega\qquad\Rightarrow\qquad \lambda g \in D(\bd,\Omega),\ \lambda\mu\in \bd (\lambda g)\restr\Omega,\quad\forall \lambda\in\R,
\end{equation}
invariant by addition of a constant, i.e.
\[
g\in D(\bd,\Omega),\ \mu\in \bd g\restr\Omega\qquad\Rightarrow\qquad c+g \in D(\bd,\Omega),\ \mu\in \bd (c+ g)\restr\Omega,\quad\forall c\in\R,
\]
that the set $\bd g\restr\Omega$ is convex and that it is weakly closed in the following sense: if $\mu_n\in \bd g\restr\Omega$ for any $n\in\N$ and $\mu$ is a Radon  measure on $\Omega$ such that $f\in\test\Omega\cap L^1(\Omega,|\mu|)$ implies $f\in L^1(\Omega,|\mu_n|)$ for $n$ large enough and
\[
\lim_{n\to\infty}\int f\,\d\mu_n=\int f\,\d\mu,
\]
then $\mu\in\bd g\restr\Omega$ (see also the proof of Proposition \ref{prop:laplstab}).

Another direct consequence of the definition is the following `global-to-local' property
\begin{equation}
\label{eq:globloc}
\widetilde\Omega\subset\Omega\textrm{ open sets, }g\in D(\bd,\Omega),\ \mu\in\bd g\restr\Omega\ \qquad\Rightarrow\qquad g\in D(\bd,\widetilde\Omega),\ \mu\restr{\widetilde\Omega}\in \bd g\restr{\widetilde\Omega},
\end{equation}
and the fact that measures in  $\bd g\restr\Omega$ are always concentrated on $\supp(\mm)$.

By the definition of $D^\pm f(\nabla g)$, we easily get the following sufficient condition for the Laplacian to be unique.
\begin{proposition}\label{prop:lapunique}
Let $(X,\sfd,\mm)$ be as in \eqref{eq:mms}, $\Omega\subset X$ an open set, $p\in(1,\infty)$, $g\in\s^p_{\rm int}(\Omega)\cap  D(\bd,\Omega)$. Assume that  $(X,\sfd,\mm)$ is $q$-infinitesimally strictly convex, $q$ being the conjugate exponent of $p$.

Then $\bd g\restr\Omega$ contains only one measure.
\end{proposition}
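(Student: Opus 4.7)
The plan is to use $q$-infinitesimal strict convexity to collapse the two-sided inequality \eqref{eq:deflap} defining the Laplacian into a single integral identity, and then to invoke local finiteness plus a Lipschitz-density argument to conclude that a locally finite Borel measure on $\Omega$ is determined by this identity.

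First, I would show that $D^+f(\nabla g)=D^-f(\nabla g)$ $\mm$-a.e.\ on $\Omega$ for every $f\in\test\Omega$. Pick $\Omega'\in\Int\Omega$ containing $\supp(f)$ and a Lipschitz cutoff $\nchi:X\to[0,1]$ with $\nchi\equiv1$ on a neighborhood of $\overline{\Omega'}$ and $\sfd(\supp(\nchi),X\setminus\Omega)>0$. Then $g\nchi\in\s^p_{loc}(X,\sfd,\mm)$ by the definition of $\s^p_{loc}(\Omega)$, and $f\in\s^p(X,\sfd,\mm)$ since $f$ is bounded Lipschitz with bounded support of finite $\mm$-measure. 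The assumed $q$-infinitesimal strict convexity, extended to $\s^p_{loc}$ as in the discussion right after \eqref{eq:sc}, yields $D^+f(\nabla(g\nchi))=D^-f(\nabla(g\nchi))$ $\mm$-a.e.\ on $X$. By the restriction recipe \eqref{eq:defrestr2} the common value coincides with $D^\pm f(\nabla g)$ on $\{\nchi=1\}\supset\overline{\Omega'}$; outside $\overline{\Omega'}$ the locality \eqref{eq:nullset} gives $\weakgrad f=0$ $\mm$-a.e., and then the bound \eqref{eq:1} forces $D^\pm f(\nabla g)$ to vanish there as well. Writing $Df(\nabla g)$ for the common value, the chain \eqref{eq:deflap} collapses into
\[
\int_\Omega f\,\d\mu=-\int_\Omega Df(\nabla g)\,\d\mm
\]
for every $\mu\in\bd g\restr\Omega$ and every $f\in\test\Omega\cap L^1(\Omega,|\mu|)$, with a right-hand side independent of $\mu$.

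To conclude, let $\mu_1,\mu_2\in\bd g\restr\Omega$ and localize. For each $x\in\Omega$ pick a bounded open $V\ni x$ with $\sfd(V,X\setminus\Omega)>0$ and $(|\mu_1|+|\mu_2|)(V)<\infty$; this is possible because each $|\mu_i|$ is locally finite on $\Omega$. Any Lipschitz $f$ supported in $V$ then lies in $\test\Omega\cap L^1(|\mu_1|)\cap L^1(|\mu_2|)$, so the preceding identity gives $\int f\,\d\mu_1=\int f\,\d\mu_2$. For any closed $F\subset V$ with $\sfd(F,X\setminus V)>0$ the Lipschitz functions $f_n(x):=\max\{0,1-n\sfd(x,F)\}$ satisfy $\supp(f_n)\subset V$ for $n$ large, $|f_n|\leq 1$, and $f_n\downarrow\nchi_F$ pointwise, so dominated convergence (using $(|\mu_1|+|\mu_2|)(V)<\infty$) gives $\mu_1(F)=\mu_2(F)$. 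Such sets $F$ form a $\pi$-system generating the Borel $\sigma$-algebra of $V$, whence $\mu_1\restr V=\mu_2\restr V$; varying $x\in\Omega$ finishes the proof.

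The main obstacle is the cutoff bookkeeping in the first step: one must verify both the identification of $D^\pm f(\nabla g)$ with $D^\pm f(\nabla(g\nchi))$ on $\{\nchi=1\}$ via \eqref{eq:defrestr2} and their $\mm$-a.e.\ vanishing outside $\supp(f)$, so that the equality coming from $q$-infinitesimal strict convexity transfers to an equality of $D^+f(\nabla g)$ and $D^-f(\nabla g)$ on all of $\Omega$. Once this is in place the remaining argument is elementary measure theory.
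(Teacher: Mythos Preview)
Your proof is correct and follows exactly the same approach as the paper: use $q$-infinitesimal strict convexity together with \eqref{eq:defrestr2} and \eqref{eq:sc} to obtain $D^+f(\nabla g)=D^-f(\nabla g)$ $\mm$-a.e.\ on $\Omega$ for every $f\in\test\Omega$, so that \eqref{eq:deflap} collapses to an equality. The paper's own proof is a two-liner that stops there and says ``the thesis follows'', whereas you have carefully spelled out the measure-theoretic localization argument needed to conclude $\mu_1=\mu_2$ (which is genuinely required, since the $L^1(|\mu|)$ constraint in \eqref{eq:deflap} makes the class of admissible test functions depend on $\mu$). One very minor point: when you pick $V$ you should also arrange $\mm(V)<\infty$, so that Lipschitz functions supported in $V$ actually belong to $\test\Omega$; this is immediate from the local finiteness of $\mm$.
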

\begin{proof} By the definition \eqref{eq:defrestr2} and \eqref{eq:sc}, we deduce that for any $f\in\test\Omega$ it holds $D^+f(\nabla g)=D^-f(\nabla g)$ $\ \mm$-a.e. on $\Omega$. The thesis follows.
\end{proof}
The definition of Laplacian we just proposed is compatible with those available on Alexandrov spaces with curvature bounded from below, see Remark \ref{re:alex}.

We conclude this introduction by comparing our definition with the one appeared in \cite{Ambrosio-Gigli-Savare11}. Recall the definition given in \eqref{eq:cheegerbase} of the Cheeger energy functional $\c_2:L^2(X,\mm)\to[0,\infty]$, and that $\c_2$ is convex and lower semicontinuous.
\begin{definition}[Laplacian as defined in \cite{Ambrosio-Gigli-Savare11}]
Let $(X,\sfd,\mm)$ be as in \eqref{eq:mms} and $g\in L^2(X,\mm)$.  We say that $g$ is in the domain of the Laplacian provided $\c_2(g)<\infty$ and the subdifferential $\partial^-\c_2(g)$ of $\c_2$ at $g$ is non-empty. In this case the Laplacian of $g$ is the element of minimal $L^2(X,\mm)$ norm in (the closed convex set) $-\partial^-\c_2(g)$.
\end{definition}
\begin{proposition}[Compatibility]\label{prop:comp}
Let $(X,\sfd,\mm)$ be as in \eqref{eq:mms}, $g\in L^2(X,\mm)$ and $h\in L^2(X,\mm)$. Assume that $\c_2(g)<\infty$ and $-h\in\partial^-\c_2(g)$. Then  $g\in D(\bd)$ and $h\mm\in \bd g$.
\end{proposition}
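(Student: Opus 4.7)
The plan is to unfold the subdifferential relation $-h\in\partial^-\c_2(g)$ by testing it against perturbations $g+\eps f$ with $f\in\test X$, and then to pass to the limit $\eps\to 0^\pm$; the definition of $D^\pm f(\nabla g)$ as one-sided difference quotients of $\eps\mapsto\weakgrad{(g+\eps f)}^2$ is tailor-made to turn the resulting inequalities into the two-sided bound \eqref{eq:deflap}.

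First I would collect the a priori integrability. Since $\c_2(g)<\infty$, the definition \eqref{eq:cheegerbase} of the Cheeger energy forces $g\in\s^2(X,\sfd,\mm)\cap L^2(X,\mm)$, so in particular $g\in\s^p_{int}(X)$ with $p=2$ and $D^\pm f(\nabla g)$ is well defined $\mm$-a.e.\ for every $f\in\test X$. Moreover $h\in L^2(X,\mm)$ combined with the fact that every $\Omega'\in\Int X$ has finite $\mm$-measure ensures, via Cauchy--Schwarz, that $h\mm$ is a locally finite signed measure and that every $f\in\test X$ automatically lies in $L^1(X,|h|\mm)$.

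Next, for any $f\in\test X$ and any $\eps\in\R$ one has $g+\eps f\in L^2(X,\mm)$, so the subdifferential inequality
\[
\c_2(g+\eps f)-\c_2(g)\ \ge\ -\eps\int hf\,\d\mm
\]
is available. Unfolding $\c_2$ and dividing by $\eps>0$ (respectively $\eps<0$, which reverses the inequality) gives
\[
\int\frac{\weakgrad{(g+\eps f)}^2-\weakgrad g^2}{2\eps}\,\d\mm\ \ge\ -\int hf\,\d\mm\qquad(\eps>0),
\]
and the opposite inequality for $\eps<0$. The main analytic step is to swap integral and limit as $\eps\to 0^\pm$: since $\eps\mapsto\weakgrad{(g+\eps f)}^2$ is convex, the integrand is monotone in $\eps$ and converges $\mm$-a.e.\ to $D^+f(\nabla g)$ as $\eps\downarrow 0$ and to $D^-f(\nabla g)$ as $\eps\uparrow 0$; it is also dominated near $\eps=0$ by $\weakgrad f\,\weakgrad g+\weakgrad f^2$, which is integrable because $f\in\test X$ gives $\weakgrad f\in L^\infty(X,\mm)$ with support of finite $\mm$-measure while $\weakgrad g\in L^2(X,\mm)$. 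Dominated convergence then yields
\[
-\int D^+f(\nabla g)\,\d\mm\ \le\ \int f\,\d(h\mm)\ \le\ -\int D^-f(\nabla g)\,\d\mm
\]
for every $f\in\test X$, which is exactly \eqref{eq:deflap}, so $g\in D(\bd)$ and $h\mm\in\bd g$.

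I do not anticipate any serious obstacle: the argument is a direct unpacking of the subdifferential inequality, with the only delicate point being the interchange of integral and limit, handled uniformly by the bound \eqref{eq:1} together with the convexity of $\eps\mapsto\weakgrad{(g+\eps f)}^2$. Note that uniqueness (or non-uniqueness) of $\bd g$ plays no role here; the argument merely exhibits one admissible element of $\bd g$.
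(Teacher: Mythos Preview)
Your proposal is correct and follows essentially the same route as the paper: perturb $g$ by $\eps f$ with $f\in\test X$, apply the subdifferential inequality for $\c_2$, divide by $\eps$, and pass to the limit via dominated convergence to recover the two bounds in \eqref{eq:deflap}. The only cosmetic difference is that the paper first writes the argument for general $f\in L^2$ with $\c_2(f)<\infty$ before specializing to $\test X$, whereas you work directly with test functions; your added care about the domination and the integrability of $h\mm$ against test functions is welcome but does not change the strategy.
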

\begin{proof}
Fix $f\in L^2(X,\mm)$ with $\c_2(f)<\infty$ and notice that the definition of subdifferential yields
\[
\c_2(g)-\eps\int fh\,\d\mm\leq \c_2(g+\eps f),\qquad\forall \eps\in\R.
\]
Hence for $\eps>0$ it holds
\[
-\int fh\,\d\mm\leq\int\frac{\weakgrad{(g+\eps f)}^2-\weakgrad g^2}{2\eps}\,\d\mm.
\]
Passing to the limit as $\eps\downarrow 0$ and noticing that the integrand at the right hand side is dominated, we get
\[
-\int fh\,\d\mm\leq\int D^+f(\nabla g)\,\d\mm.
\]
In particular, this holds for $f\in\test X\subset\{\c_2<\infty\}\cap L^1(X,|h|\mm)$. Replacing $f$ with $-f$ we also get the other inequality in \eqref{eq:deflap} and the conclusion.
\end{proof}
Therefore from the perspective of Definition \ref{def:lap} there is nothing special about the element of minimal norm in $-\partial^-\c_2(g)$, as any of its elements is (the density of) an admissible distributional Laplacian. This is not so surprising, as in \cite{Ambrosio-Gigli-Savare11} the choice of the element of minimal norm was done essentially only for cosmetic reasons, as it allowed to identify uniquely who the Laplacian was, and to write the equation of the gradient flow of $\c_2$ in the form $\frac{\d}{\d t}f_t=\Delta f_t$. Yet, the weak integration by part rules provided in \cite{Ambrosio-Gigli-Savare11} are valid, as the proof shows, for all the elements in $-\partial^-\c_2(g)$.

The main technical difference between the approach in \cite{Ambrosio-Gigli-Savare11} and the current one is then - beside the integrability assumption on $g$ and $\weakgrad g$ - that in \cite{Ambrosio-Gigli-Savare11} the Laplacian is well defined `only when it belongs to $L^2$', while here we are giving a meaning to the abstract distributional Laplacian, allowing it to be a measure.
\begin{remark}{\rm It is natural to ask if the converse of Proposition \ref{prop:comp} holds, i.e. if the following is true: suppose that $g\in D(\c_2)\cap D(\bd)$ and that for some $\mu\in\bd g$ it holds $\mu\ll\mm$ with density $h\in L^2(X,\mm)$. Can we deduce that $-h\in\partial^-\c_2(g)$? Without further assumptions, it is unclear to us whether this is true or not, the point being that we would like to deduce from
\[
\c_2(g)-\int fh\,\d\mm\leq \c_2(g+f),\qquad\forall f\in\test X,
\]
that
\[
\c_2(g)-\int fh\,\d\mm\leq \c_2(g+f),\qquad\forall f\in L^2(X,\mm).
\]
This implication seems to rely on a density result of Lipschitz functions in $W^{1,2}(X,\sfd,\mm)$ finer than the one expressed in Theorem \ref{thm:lipdense}. Indeed, we would like to know that for any $f,g\in D(\c_2)$ there exists a sequence of Lipschitz functions $(f_n)$ converging to $f$ in $L^2(X,\mm)$ such that $\c_2(g+f_n)$ converges to $\c_2(g+f)$ (Theorem \ref{thm:lipdense} ensures this fact only when $g$ is itself Lipschitz).

If $W^{1,2}(X,\sfd,\mm)$ is uniformly convex and $\mm$ is finite on bounded sets, then such approximation result is indeed true, because Corollary \ref{cor:lipdense} ensures  the density of $\test X$  in $W^{1,2}(X,\sfd,\mm)$, see for instance the argument in Proposition \ref{prop:complin} for the case in which $W^{1,2}$ is Hilbert.
}\fr\end{remark}

\subsection{Calculus rules}\label{se:lapcalc}
In this section we collect the basic calculus properties of the Laplacian.

\begin{proposition}[Chain rule]\label{prop:chainlapl} Let $(X,\sfd,\mm)$ be as in \eqref{eq:mms}, $\Omega\subset X$ an open set, $g\in D(\bd,\Omega)$. Assume that $g$  is Lipschitz on $\Omega'$ for any $\Omega'\in\Int\Omega$  and let $\varphi:g(\Omega)\to\R$ a $C^{1,1}_{\rm loc}$ map. Then $\varphi\circ g\in D(\bd,\Omega)$ and for any $\mu\in \bd g\restr{\Omega}$ it holds
\begin{equation}
\label{eq:chainlapl}
\bd(\varphi\circ g)\restr\Omega\ni\tilde\mu:= \varphi'\circ g\,\mu+\varphi''\circ g\,\weakgrad g^2\mm.
\end{equation}
\end{proposition}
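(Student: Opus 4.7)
My plan is to reduce the two integration-by-parts inequalities defining $\bd(\varphi\circ g)\restr\Omega$ to those already known for $g$ against $\mu$ by using $h := f\cdot(\varphi'\circ g)$ as an auxiliary test function against $\mu$. Before doing so there are three preliminary facts to check. First, $\varphi\circ g\in\s^p_{int}(\Omega)$ with $\weakgrad{(\varphi\circ g)}=|\varphi'\circ g|\weakgrad g$, which follows from \eqref{eq:chaineasy} using that $\varphi'$ is locally bounded (from $\varphi\in C^{1,1}_{loc}$) and $g$ is Lipschitz on each $\Omega'\in\Int\Omega$. Second, $\tilde\mu$ is a locally finite Borel measure on $\Omega$, since $\varphi'\circ g$ and $\varphi''\circ g$ are $\mm$-essentially locally bounded and $\weakgrad g^2$ is locally integrable. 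Third, for $f\in\test\Omega\cap L^1(\Omega,|\tilde\mu|)$ with $\supp(f)\subset\Omega'\in\Int\Omega$, the function $h$ itself lies in $\test\Omega\cap L^1(\Omega,|\mu|)$: Lipschitzness with the right support is immediate, and the $L^1(|\mu|)$-integrability follows from the identity $\varphi'\circ g\,\mu=\tilde\mu-\varphi''\circ g\,\weakgrad g^2\mm$ together with the fact that $|f|\,|\varphi''\circ g|\,\weakgrad g^2$ is bounded on $\supp(f)$, which has finite $\mm$-measure.

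The main computation combines Leibniz and the two chain rules at the level of $D^\pm$. By \eqref{eq:leibniz} one has
\begin{equation*}
D^+ h(\nabla g)\leq f\cdot D^{{\rm sign}(f)}(\varphi'\circ g)(\nabla g)+(\varphi'\circ g)\cdot D^{{\rm sign}(\varphi'\circ g)}f(\nabla g)\qquad \mm\text{-a.e.}
\end{equation*}
The chain rule \eqref{eq:chainf} rewrites the first summand as $f\cdot\varphi''\circ g\cdot D^{{\rm sign}(f)\cdot{\rm sign}(\varphi''\circ g)}g(\nabla g)$, which by \eqref{eq:5} equals $f\cdot\varphi''\circ g\cdot\weakgrad g^2$ regardless of the signs. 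The chain rule \eqref{eq:chaing} identifies the second summand as exactly $D^+ f(\nabla(\varphi\circ g))$. Thus
\begin{equation*}
D^+ h(\nabla g)\leq f\cdot\varphi''\circ g\cdot\weakgrad g^2+D^+ f(\nabla(\varphi\circ g))\qquad\mm\text{-a.e.}
\end{equation*}
Inserting this bound into the left-hand half $-\int D^+ h(\nabla g)\,\d\mm\leq\int h\,\d\mu$ of \eqref{eq:deflap} for $g$ and rearranging gives
\begin{equation*}
-\int_\Omega D^+ f(\nabla(\varphi\circ g))\,\d\mm\leq\int_\Omega f\,\varphi'\circ g\,\d\mu+\int_\Omega f\,\varphi''\circ g\,\weakgrad g^2\,\d\mm=\int_\Omega f\,\d\tilde\mu,
\end{equation*}
which is the left half of \eqref{eq:deflap} for $\varphi\circ g$ and $\tilde\mu$.

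The reverse inequality $\int f\,\d\tilde\mu\leq-\int_\Omega D^- f(\nabla(\varphi\circ g))\,\d\mm$ comes by the completely symmetric argument: the $D^-$ half of \eqref{eq:leibniz} is a lower bound, the $D^-$ version of \eqref{eq:chaing} supplies the corresponding term, \eqref{eq:5} again collapses the signs on the $\varphi''$-summand, and the right-hand half of \eqref{eq:deflap} for $g$ closes the chain. The main obstacle I anticipate is the sign bookkeeping through Leibniz and the two chain rules; the clean cancellation that produces the $\varphi''\circ g\,\weakgrad g^2$ term without sign decoration is not accidental but rests on the sign-independent identity $D^\pm g(\nabla g)=\weakgrad g^2$ from \eqref{eq:5}, which eliminates the dependence on both ${\rm sign}(f)$ and ${\rm sign}(\varphi''\circ g)$ that would otherwise survive.
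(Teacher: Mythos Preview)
Your proof is correct and follows essentially the same route as the paper's own argument: both derive the same pointwise inequality $D^+f(\nabla(\varphi\circ g))\geq D^+(f\varphi'\circ g)(\nabla g)-f\,\varphi''\circ g\,\weakgrad g^2$ by combining the Leibniz rule \eqref{eq:leibniz}, the two chain rules \eqref{eq:chainf}, \eqref{eq:chaing}, and the sign-collapsing identity \eqref{eq:5}, then feed $h=f\,\varphi'\circ g$ into \eqref{eq:deflap} for $g$. The only cosmetic differences are that the paper applies \eqref{eq:chaing} first and then rearranges Leibniz, whereas you apply Leibniz first and then the chain rules, and that the paper obtains the second inequality by replacing $f$ with $-f$ rather than repeating the computation with $D^-$; both variants are equally valid.
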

\begin{proof}
Notice that since $g$ is continuous, the expression $\varphi'\circ g\,\mu$ makes sense and defines a locally finite measure.  Similarly, $\varphi''\circ g\,\weakgrad g^2\in L^\infty_{\rm loc}(\Omega,\mm\restr\Omega)$, so that $\tilde\mu$ is a locally finite measure and the statement makes sense. 

More precisely, $\varphi'\circ g$ is Lipschitz on $\Omega'$ and $\varphi''\circ g\,\weakgrad g^2$ is bounded on $\Omega'$ for any $\Omega'\in\Int\Omega$. Therefore if $f\in\test\Omega\cap L^1(\Omega,|\tilde\mu|)$, we have $f\varphi'\circ g\in\test\Omega\cap L^1(|\mu|)$. Fix such $f$, recall  the chain rules \eqref{eq:chainf}, \eqref{eq:chaing}, the Leibniz rule \eqref{eq:leibniz} and \eqref{eq:5} to get that $\mm$-a.e. on $\Omega$ it holds
\[
\begin{split}
D^+f\nabla(\varphi\circ g)&=\varphi'\circ g D^{{\rm sign}\varphi'\circ g}f\nabla g\geq D^+(f\varphi'\circ g)(\nabla g)-fD^{{\rm sign}f}(\varphi'\circ g)(\nabla g)\\
&= D^+(f\varphi'\circ g)(\nabla g)-f\varphi''\circ g D^{{\rm sign}(f\varphi''\circ g)}(g)(\nabla g)\\
&= D^+(f\varphi'\circ g)(\nabla g)-f\varphi''\circ g\weakgrad g^2.
\end{split}
\]
Integrating we obtain $\int D^+f(\nabla(\varphi\circ g))\,\d\mm\geq -\int f\,\d\tilde\mu$. Replacing $f$ with $-f$ we conclude.
\end{proof}
\begin{remark}{\rm
The assumption that $g$ was Lipschitz in $\Omega'$ was needed to ensure that $f\in\test\Omega$ implies $f\varphi'\circ g\in\test\Omega$, so that from assumptions one could deduce
\[
\int D^+(f\varphi'\circ g)(\nabla g)\,\d\mm\geq -\int f\,\d\mu.
\]
Yet, it would be more natural to have the chain rule to hold under the only assumption that $g\in \s^2_{\rm int}(\Omega)\cap C(\Omega)$, because these are the minimal regularity requirements needed for the right hand side of \eqref{eq:chainlapl} to make sense.

To get the chain rule under this weaker assumption on $g$, a suitable generalization of the integration by parts rule is needed, which looks linked to the uniform convexity of $W^{1,2}(X,\sfd,\mm)$. See for instance the proof of Lemma \ref{le:finalmente} for the case of proper spaces such that $W^{1,2}$ is Hilbert.
}\fr\end{remark}
The following proposition provides a sufficient condition on $g$ which ensures that it is in the domain of the Laplacian and gives a bound on elements of $\bd g\restr\Omega$. Technicalities apart, it can be seen as a rewording of the statement  `a non-negative distribution is always a non-negative measure' valid in a Euclidean context.
\begin{proposition}[Laplacian existence and comparison]\label{prop:comparison}
Let $(X,\sfd,\mm)$ be as in \eqref{eq:mms} and assume also that $(X,\sfd)$ is proper. Let $\Omega\subset X$ be an open set, $g\in \s^p_{\rm int}(\Omega)$, $p\in(1,\infty)$,   and $\tilde\mu$ a Radon  measure on $\Omega$. Assume that for any non-negative $f\in\test\Omega\cap L^1(\Omega,|\tilde\mu|)$  we have
\begin{equation}
\label{eq:boundalto}
-\int_\Omega D^-f(\nabla g)\,\d\mm\leq \int_\Omega f\,\d\tilde\mu.
\end{equation}
Then $g\in D(\bd,\Omega)$ and for any $\mu\in\bd g\restr\Omega$ it holds $\mu\leq\tilde\mu$.
\end{proposition}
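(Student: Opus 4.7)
The plan is to split the claim into two parts: every $\mu\in\bd g\restr\Omega$ already satisfies $\mu\le\tilde\mu$ (which is essentially a restatement of the hypothesis), and existence of at least one such $\mu$ will be produced by a Hahn--Banach construction followed by a Riesz--Markov representation.

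For the comparison half, fix any $\mu\in\bd g\restr\Omega$ and any non-negative $f\in\test\Omega$. Properness makes $\supp(f)$ compact, so $f\in L^1(\Omega,|\mu|)\cap L^1(\Omega,\tilde\mu)$ automatically, and chaining the right inequality of Definition~\ref{def:lap} with the standing hypothesis yields
\[
\int_\Omega f\,\d\mu\;\le\;-\int_\Omega D^-f(\nabla g)\,\d\mm\;\le\;\int_\Omega f\,\d\tilde\mu.
\]
Since Lipschitz non-negative compactly supported functions are uniformly dense in $C_c^+(\Omega)$, Riesz--Markov upgrades this to $\mu\le\tilde\mu$ as signed Borel measures.

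For existence, I will work on $V:=\test\Omega$ and introduce
\[
P(f):=\int_\Omega f\,\d\tilde\mu+\int_\Omega D^+f(\nabla g)\,\d\mm,
\]
which is finite (by \eqref{eq:1}, local finiteness of $\mm$ on $\supp(f)$, and local finiteness of $\tilde\mu$), positively $1$-homogeneous and subadditive by Proposition~\ref{prop:convconc}. A direct computation gives
\[
-P(-f)\;=\;\int_\Omega f\,\d\tilde\mu+\int_\Omega D^-f(\nabla g)\,\d\mm,
\]
which is $\ge 0$ for $f\ge 0$ precisely by the hypothesis. Hahn--Banach applied with the trivial seed $0$ on $\{0\}\subset V$ produces a linear $L:V\to\R$ with $L\le P$; replacing $f$ by $-f$ in this bound automatically yields the reverse inequality $L(f)\ge-P(-f)$, and in particular $L(f)\ge 0$ whenever $f\ge 0$.

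The remaining step, which I expect to be the main technical obstacle, is to show that $L$ comes from a measure. For any $\Omega'\in\Int\Omega$ I choose a Lipschitz cutoff $\chi\in\test\Omega$ with $0\le\chi\le 1$ and $\chi\equiv 1$ on $\overline{\Omega'}$; positivity of $L$ applied to $\|f\|_\infty\chi\pm f$ yields $|L(f)|\le\|f\|_\infty L(\chi)$ for every $f\in\test\Omega$ with $\supp(f)\subset\overline{\Omega'}$. Properness of $X$ makes $\Omega$ locally compact Hausdorff and $\overline{\Omega'}$ compact; combined with uniform density of Lipschitz compactly supported functions in $C_c(\Omega)$, this lets $L$ extend uniquely to a positive linear functional on $C_c(\Omega)$, and Riesz--Markov delivers a locally finite non-negative Borel measure $\nu$ with $L(f)=\int f\,\d\nu$. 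Setting $\mu:=\tilde\mu-\nu$, the two-sided bound on $L$ rewrites as
\[
-\int_\Omega D^+f(\nabla g)\,\d\mm\;\le\;\int_\Omega f\,\d\mu\;\le\;-\int_\Omega D^-f(\nabla g)\,\d\mm,\qquad f\in\test\Omega,
\]
so $\mu\in\bd g\restr\Omega$, while $\mu\le\tilde\mu$ is immediate from $\nu\ge 0$.
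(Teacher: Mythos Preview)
Your proof is correct and follows the same overall Hahn--Banach plus Riesz--Markov strategy as the paper. The difference is organizational: the paper applies Hahn--Banach to the sublinear functional $T(f):=-\int_\Omega D^-f(\nabla g)\,\d\mm$, obtains a linear $L\le T$ (which then automatically satisfies $-\int D^+f(\nabla g)\le L(f)\le -\int D^-f(\nabla g)$), and must then work to show $L$ is bounded on compacta by estimating $L(f^+)$ and $L(f^-)$ separately via the auxiliary functional $f\mapsto\int f\,\d\tilde\mu-L(f)$; only at that point does Riesz apply. You instead fold $\tilde\mu$ into the sublinear functional from the start, so that the Hahn--Banach output $L$ is \emph{positive} by construction and the sup-norm bound on compacta follows immediately from the cutoff trick, with the Laplacian candidate recovered as $\mu=\tilde\mu-\nu$. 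This buys you a cleaner argument at no cost; the paper's route is slightly more direct in that its $L$ is the Laplacian measure itself, but it pays for this with a more involved boundedness estimate.
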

\begin{proof} Consider the real valued map $\test\Omega\ni f\mapsto T(f):=-\int_\Omega D^-f(\nabla g)\,\d\mm$. From Proposition \ref{prop:convconc} we know that it  satisfies
\[
\begin{split}
T(\lambda f)&=\lambda T(f),\qquad\qquad\forall f\in \test\Omega,\ \lambda\geq 0,\\
T(f_1+f_2)&\leq T(f_1)+T(f_2),\qquad\forall f_1,f_2\in \test\Omega.
\end{split}
\]
Hence by the Hahn-Banach theorem there exists a linear map $L:\test\Omega\to\R$ such that $L(f)\leq T(f)$ for any $f\in \test\Omega$. By \eqref{eq:4} we then have
\[
-\int_\Omega D^+f(\nabla g)\,\d\mm\leq L(f)\leq -\int_\Omega D^-f(\nabla g)\,\d\mm,\qquad\forall f\in \test\Omega.
\]
Our goal is to prove that $L$ can be represented as integral w.r.t. some measure $\mu$. By \eqref{eq:boundalto} we get
\[
\int_\Omega f\,\d\tilde\mu-L(f)\geq 0,\qquad\forall f\in \test\Omega,\ f\geq 0.
\]
Fix a compact set $K\subset\Omega$ and a  function $\nchi_K\in \test\Omega$ such that $0\leq\nchi_K\leq 1$ everywhere and $\nchi_K= 1$ on $K$. Let $V_K\subset \test\Omega$ be the set of those functions with support contained in $K$ and observe that for any non-negative $f\in V_K$, the fact that $(\max f)\nchi_K-f$ is in $\test\Omega$ and non-negative yields
\[
\begin{split}
L(f)=-L((\max f)\nchi_K-f)+L((\max f)\nchi_K)\geq&  -\int(\max f) \nchi_K-f\,\d\tilde\mu+L((\max f)\nchi_K)\\
\geq &-(\max f)\big(\tilde\mu(\supp(\nchi_K))+L(\nchi_K)\big).
\end{split}
\]
Thus for a generic $f\in V_K$ it holds
\[
\begin{split}
L(f)=L(f^+-f^-)=L(f^+)-L(f^-)&\leq \int_\Omega f^+\,\d\tilde\mu+(\max f^-)\big(\tilde\mu(\supp(\nchi_K))+L(\nchi_K)\big)\\
&\leq(\max|f|)\Big(\tilde\mu(K)+\tilde\mu(\supp(\nchi_K))+L(\nchi_K)\Big),
\end{split}
\]
i.e. $L:V_K\to\R$ is continuous w.r.t. the $\sup$ norm. Hence it can be extended to a (unique, by the density of Lipschitz functions in the uniform norm) linear bounded functional on the set $C_K\subset C(X)$ of continuous functions with support contained in $K$. Since $K$ was arbitrary, by the Riesz theorem we get that there exists a Radon  measure $\mu$ such that
\[
L(f)=\int f\,\d\mu,\qquad\forall f\in \test\Omega.
\]
Thus $g\in D(\bd,\Omega)$ and $\mu\in\bd g\restr\Omega$.

Finally, from \eqref{eq:boundalto} it is immediate to get that for any $\mu'\in\bd g\restr\Omega$ it holds $\mu'\leq\tilde\mu$.
\end{proof}
\begin{remark}{\rm
In the proof of Proposition \ref{prop:comparison} we used the Hahn-Banach theorem, which means that some weak form of the Axiom of Choice (but still stronger than the Axiom of Dependent Choice) is required in the argument. In full generality, it seems hard to avoid such use of Hahn-Banach.

Yet, if furthermore   $(X,\sfd,\mm)$ is $q$-infinitesimally strictly convex and $g\in\s^p_{\rm int}(\Omega)$ - $p,q$ being conjugate exponents - then the map $T$ is linear and the proof goes on without any use of Choice. This is the situation we will work on when proving our main Laplacian comparison result in Theorem \ref{thm:controllo}.
}\fr\end{remark}
We now turn to the stability of $\bd g$ under convergence of $g$. Since in our definition of Laplacian we required some Sobolev regularity of $g$, in order to get stability we need to impose some sort of Sobolev convergence as well. This is clearly a much stronger assumption than the usual convergence in $L^1_{\rm loc}$ which is sufficient in the Euclidean context, but looks unavoidable in this general non-linear setting. 
\begin{proposition}[Stability]\label{prop:laplstab} Let $(X,\sfd,\mm)$ be as in \eqref{eq:mms}, $\Omega\subset X$ an open set and $p\in(1,\infty)$. Let $(g_n)\subset \s^p_{\rm int}(\Omega)$ be a sequence and $g\in \s^p_{\rm int}(\Omega)$ be such that  $\int_{\Omega'}\weakgrad{(g_n-g)}\,\d\mm\to 0$ for every $\Omega'\in\Int\Omega$. Assume also that $g_n\in D(\bd,\Omega)$ for every $n\in\N$, let $\mu_n\in \bd g_n\restr\Omega$ and suppose that for some locally finite measure $\mu$ on $\Omega$ it holds
\begin{equation}
\label{eq:assstrana}
f\in\test\Omega\cap L^1(\Omega,|\mu|)\qquad\Rightarrow\qquad 
\left\{\begin{array}{l}
f\in L^1(\Omega,|\mu_n|)\textrm{ for $n$ large enough and }\\
\phantom{ciccia}\\
\displaystyle{ \lim_{n\to\infty}\int f\,\d\mu_n=\int f\,\d\mu.}
\end{array}
\right.
\end{equation}
Then $g\in D(\bd,\Omega)$ and $\mu\in \bd g\restr\Omega$.
\end{proposition}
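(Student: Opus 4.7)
The plan is to fix $f\in\test\Omega\cap L^1(\Omega,|\mu|)$, write down the defining inequality of $\mu_n\in\bd g_n\restr\Omega$ against $f$, and then pass to the limit in $n$, using on one hand the convergence $\int f\,\d\mu_n\to\int f\,\d\mu$ guaranteed by \eqref{eq:assstrana} and on the other the upper/lower semicontinuity of $g\mapsto\int D^\pm f(\nabla g)\,\d\mm$ supplied by Proposition \ref{prop:convconc}. The a priori Sobolev regularity $g\in\s^p_{int}(\Omega)$ needed for $g$ to be in the domain of the Laplacian is already in the hypotheses.

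Concretely, I would pick $\Omega'\in\Int\Omega$ containing $\supp(f)$ and observe that by the locality \eqref{eq:nullset2} the quantities $D^\pm f(\nabla h)$ vanish $\mm$-a.e. outside $\supp(f)$, so every integral in play reduces to an integral over $\Omega'$. From the subadditivity \eqref{eq:convweak} combined with the hypothesis that $(g_n)$ Sobolev-converges to $g$ on $\Omega'$, one gets $\sup_n\int_{\Omega'}\weakgrad{g_n}^p\,\d\mm<\infty$, so that Proposition \ref{prop:convconc} is applicable on $\Omega'$. By \eqref{eq:assstrana} we have $f\in L^1(\Omega,|\mu_n|)$ for all large enough $n$, so that $\mu_n\in\bd g_n\restr\Omega$ reads
\[
-\int_\Omega D^+f(\nabla g_n)\,\d\mm \;\leq\; \int_\Omega f\,\d\mu_n \;\leq\; -\int_\Omega D^-f(\nabla g_n)\,\d\mm.
\]
Taking $\lims_n$ of the left inequality and $\limi_n$ of the right, and using $\int f\,\d\mu_n\to\int f\,\d\mu$, one obtains $-\limi_n\int D^+f(\nabla g_n)\,\d\mm\leq\int f\,\d\mu\leq-\lims_n\int D^-f(\nabla g_n)\,\d\mm$. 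The upper semicontinuity statement of Proposition \ref{prop:convconc} yields $\lims_n\int D^+f(\nabla g_n)\,\d\mm\leq\int D^+f(\nabla g)\,\d\mm$, and in particular the same bound with $\limi$; the symmetric argument applies to $D^-$. Chaining these bounds delivers $-\int D^+f(\nabla g)\,\d\mm\leq\int f\,\d\mu\leq-\int D^-f(\nabla g)\,\d\mm$, so $\mu\in\bd g\restr\Omega$.

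The only point that truly requires care is the cosmetic discrepancy between the weighted integrand $D^\pm f(\nabla g)\weakgrad g^{p-2}$ for which Proposition \ref{prop:convconc} is phrased and the unweighted one appearing in Definition \ref{def:lap}. For $p=2$ the weight equals $1$ and there is nothing to do; for $p>2$ Proposition \ref{prop:inv} lets one rerun the whole argument at exponent $2$, because then $g_n,g\in\s^2_{int}(\Omega)$ and the $p=2$ and the $p$ values of $D^\pm f(\nabla\cdot)$ agree; for $p\in(1,2)$ one repeats the proof of Proposition \ref{prop:convconc} verbatim with the unweighted quotient in place of the weighted one, relying on the $L^p$-continuity of $g\mapsto\weakgrad{(g+\eps f)}^p$ along the convergent sequence.
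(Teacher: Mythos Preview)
Your overall strategy is the same as the paper's, but there is a genuine gap in the way you invoke Proposition~\ref{prop:convconc}. The hypothesis of the proposition you are proving is that $\int_{\Omega'}\weakgrad{(g_n-g)}\,\d\mm\to 0$, i.e.\ $L^1$-convergence of the weak gradients on each $\Omega'\in\Int\Omega$, \emph{not} $L^p$-convergence. From the triangle inequality $\weakgrad{g_n}\leq\weakgrad{(g_n-g)}+\weakgrad g$ you only obtain $\sup_n\int_{\Omega'}\weakgrad{g_n}\,\d\mm<\infty$; the claim $\sup_n\int_{\Omega'}\weakgrad{g_n}^p\,\d\mm<\infty$ simply does not follow, and the semicontinuity statement in Proposition~\ref{prop:convconc} explicitly requires both $\sup_n\int_E\weakgrad{g_n}^p\,\d\mm<\infty$ and $\int_E\weakgrad{(g_n-g)}^p\,\d\mm\to 0$. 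The same issue recurs in your last paragraph: even at $p=2$ you would need $\int\weakgrad{(g_n-g)}^2\,\d\mm\to 0$, which you do not have, and for $p<2$ the phrase ``$L^p$-continuity of $g\mapsto\weakgrad{(g+\eps f)}^p$'' again presupposes $L^p$-convergence.

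The paper closes this gap by rewriting
\[
\int_\Omega D^+f(\nabla g)\,\d\mm=\inf_{\eps>0}\int_\Omega \weakgrad g\,\frac{\weakgrad{(g+\eps f)}-\weakgrad g}{\eps}\,\d\mm,
\]
and similarly for $D^-$. For fixed $\eps\neq 0$ the quotient $\tfrac{\weakgrad{(g_n+\eps f)}-\weakgrad{g_n}}{\eps}$ is uniformly bounded in $L^\infty$ by $\Lip(f)$ (since $f\in\test\Omega$) and converges to $\tfrac{\weakgrad{(g+\eps f)}-\weakgrad g}{\eps}$ in $L^1(\supp(f),\mm)$; paired with the $L^1$-convergence of $\weakgrad{g_n}$ to $\weakgrad g$ on $\supp(f)$, this gives continuity of the integrand at fixed $\eps$ under the \emph{given} $L^1$ hypothesis. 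Taking the infimum (supremum) in $\eps$ then yields the desired semicontinuity without ever needing $L^p$-bounds on $\weakgrad{g_n}$. This is precisely the point where the Lipschitz regularity of test functions is exploited, and it is what makes the $L^1$ assumption sufficient.
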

Note: if $(X,\sfd,\mm)$ is proper and the measures $\mu_n$ are equibounded - in total variation norm - on compact subsets of $\Omega$, then the convergence in \eqref{eq:assstrana} is nothing but weak convergence in duality with $C_c(\Omega)$. 
\begin{proof} The argument is similar to that of the proof of Proposition \ref{prop:convconc}. It is easy to check that for any $f\in\test\Omega$, $g\in\s^p_{\rm int}(\Omega)$ it holds
\[
\begin{split}
\int D^+f(\nabla g)\,\d\mm&=\inf_{\eps>0}\int\weakgrad g\frac{\weakgrad{(g+\eps f)}-\weakgrad g}\eps\,\d\mm,\\
\int D^-f(\nabla g)\,\d\mm&=\sup_{\eps<0}\int\weakgrad g\frac{\weakgrad{(g+\eps f)}-\weakgrad g}\eps\,\d\mm.
\end{split}
\]
By our assumptions we have that  for any $f\in\test\Omega$ it holds $\int_{\supp(f)}\weakgrad{(g_n-g)}\,\d\mm\to 0$,   the sequence $\frac{\weakgrad{(g_n+\eps f)}-\weakgrad {g_n}}\eps$ is uniformly bounded in $L^\infty(\Omega,\mm\restr\Omega)$ (by $\Lip(f)$) and converges to $\frac{\weakgrad{(g+\eps f)}-\weakgrad {g}}\eps$ in $L^1(\Omega,\mm\restr\Omega)$ for any $\eps\neq 0$. It easily follows that
\begin{equation}
\label{eq:checulo}
\begin{split}
\lims_{n\to\infty}\int_\Omega D^+f(\nabla g_n)\,\d\mm&\leq\int_\Omega D^+f(\nabla g)\,\d\mm,\\
\limi_{n\to\infty}\int_\Omega D^-f(\nabla g_n)\,\d\mm&\geq\int_\Omega D^-f(\nabla g)\,\d\mm.
\end{split}
\end{equation}
If furthermore $f\in L^1(\Omega,|\mu|)$,  \eqref{eq:assstrana} ensures that $f\in L^1(\Omega,|\mu_n|)$  for large $n$, so that from $\mu_n\in \bd g_n\restr\Omega$ we have
\[
-\int D^+f(\nabla g_n)\,\d\mm\leq \int f\,\d\mu_n\leq-\int D^-f(\nabla g_n)\,\d\mm,\qquad\forall n\gg 0.
\]
Using \eqref{eq:checulo} and \eqref{eq:assstrana} we can pass to the limit in these inequalities and get the thesis.
\end{proof}
The next statement shows that the definition of Laplacian is compatible with local minimizers of the Cheeger energy $\c_2$.
\begin{proposition}\label{prop:divano}
Let $(X,\sfd,\mm)$ be as in \eqref{eq:mms}, $\Omega\subset X$ an open set  and $g\in\s^2_{\rm int}(\Omega)$ with $\int_\Omega\weakgrad g^2\,\d\mm<\infty$. Assume that it holds
\begin{equation}
\label{eq:harm}
\int_\Omega\weakgrad g^2\,\d\mm\leq \int_\Omega\weakgrad{(g+h)}^2\,\d\mm,\qquad\forall h\in\s^2(X,\sfd,\mm) \textrm{ such that }\supp(h)\subset\Omega.
\end{equation}
Then $g\in D(\bd,\Omega)$ and $0\in \bd g\restr\Omega$
\end{proposition}
\begin{proof}
Notice that for $f\in\test\Omega$ we certainly have $\supp(f)\subset\Omega$ and $f\in \s^2(X,\sfd,\mm)$. Thus for   $f\in\test\Omega$ and $\eps\in\R$,  \eqref{eq:harm} yields
\[
\int_\Omega\weakgrad{(g+\eps f)}^2\,\d\mm\geq\int_\Omega\weakgrad{g}^2\,\d\mm.
\]
Therefore
\[
\begin{split}
\int_\Omega\frac{\weakgrad{(g+\eps f)}^2-\weakgrad g^2}{2\eps}\,\d\mm&\geq0,\qquad\qquad\forall\eps >0,\\
\int_\Omega\frac{\weakgrad{(g+\eps f)}^2-\weakgrad g^2}{2\eps}\,\d\mm&\leq0,\qquad\qquad\forall\eps <0.
\end{split}
\]
Letting $\eps\downarrow 0$ and $\eps\uparrow0$ we get the thesis.
\end{proof}
Another expected property of the Laplacian is the `local-to-global' property. Shortly said, one would like a theorem of the following form: if for some measure $\mu$ on $\Omega_1\cup\Omega_2$ it holds $\bd g\restr{\Omega_i}\ni\mu\restr{\Omega_i}$, $i=1,2$, then $\bd g\restr{\Omega_1\cup\Omega_2}\ni\mu$. Unfortunately, it is not clear - to us - whether this is true in the general case or not (actually, this is not so obvious  even in normed spaces, when the norm is not strictly convex). Yet, at least for infinitesimally strictly convex and proper spaces the result holds, and follows by a standard partition of the unit argument.
\begin{proposition}[Local-to-global]\label{prop:laplocal}
Let $(X,\sfd,\mm)$ be as in \eqref{eq:mms} and assume also that $(X,\sfd)$ is proper. Let $p\in(1,\infty)$ and $q$ the conjugate exponent, assume that $(X,\sfd,\mm)$ is $q$-infinitesimally strictly convex, let $\Omega\subset X$ be an open set and $\{\Omega_i\}_{i\in I}$ a family of open sets such that $\Omega=\cup_i\Omega_i$. Let $g\in \s^p_{\rm int}(\Omega)$, with  $g\in D(\bd,\Omega_i)$ for every $i\in I$ and let $\mu_i$ be the only element of $\bd g\restr{\Omega_i}$. Then
\begin{equation}
\label{eq:inters}
\mu_i\restr{\Omega_i\cap\Omega_j}=\mu_j\restr{\Omega_i\cap\Omega_j},\qquad\forall i,j\in I,
\end{equation}
$g\in D(\bd,\Omega)$ and the measure $\mu$ on $\Omega$ defined by
 \begin{equation}
\label{eq:defmu}
\mu\restr{\Omega_i}:=\mu_i,\qquad\forall i\in I,
\end{equation}
is the only element of $\bd g\restr\Omega$.
\end{proposition}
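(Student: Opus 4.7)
The plan breaks the statement into three movements: agreement on overlaps, construction of a pasted measure, and the integration-by-parts verification via a partition of unity.

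For \eqref{eq:inters}, fix $i,j\in I$ and set $\Omega_{ij}:=\Omega_i\cap\Omega_j$. The global-to-local property \eqref{eq:globloc} yields $\mu_i\restr{\Omega_{ij}}\in\bd g\restr{\Omega_{ij}}$ and $\mu_j\restr{\Omega_{ij}}\in\bd g\restr{\Omega_{ij}}$. Since $g\in\s^p_{int}(\Omega)$ restricts to $g\in\s^p_{int}(\Omega_{ij})$ (any $\Omega''\in\Int(\Omega_{ij})$ has positive distance from $X\setminus\Omega$, hence lies in $\Int(\Omega)$) and the space is $q$-infinitesimally strictly convex, Proposition \ref{prop:lapunique} ensures $\bd g\restr{\Omega_{ij}}$ is a singleton, proving the two restrictions coincide. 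Using separability (hence the Lindel\"of property of $\Omega$) extract a countable subcover $\{\Omega_{i_n}\}_{n\in\N}$, form the disjoint refinement $A_n:=\Omega_{i_n}\setminus\bigcup_{m<n}\Omega_{i_m}$, and define
\[
\mu(E):=\sum_n\mu_{i_n}(E\cap A_n),\qquad E\subset\Omega\text{ Borel}.
\]
This is a locally finite Borel measure (each $x\in\Omega$ lies in some $\Omega_{i_n}$, in which $\mu_{i_n}$ is locally finite, and $\mu$ agrees with $\mu_{i_n}$ on sufficiently small neighborhoods of $x$); compatibility together with \eqref{eq:inters} yields $\mu\restr{\Omega_i}=\mu_i$ for \emph{every} $i\in I$.

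To verify $\mu\in\bd g\restr\Omega$, pick $f\in\test\Omega\cap L^1(\Omega,|\mu|)$. By definition $\supp(f)$ is bounded and at positive distance from $X\setminus\Omega$, hence compact by properness. Cover it with finitely many sets $\Omega_{i_1},\dots,\Omega_{i_n}$ from the family, and build a Lipschitz partition of unity $\chi_1,\dots,\chi_n$ subordinate to this finite subcover with $\sum_k\chi_k\equiv 1$ on a neighborhood of $\supp(f)$. Using the Lebesgue number lemma applied to $\supp(f)$ and truncated distance functions $\min\{d(\cdot,X\setminus\Omega_{i_k}),r\}$, one arranges that each $\chi_k$ has bounded support at positive distance from $X\setminus\Omega_{i_k}$, so that $\chi_k f\in\test\Omega_{i_k}$. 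Since $f\in L^1(\Omega,|\mu|)$ and $\mu\restr{\Omega_{i_k}}=\mu_{i_k}$, also $\chi_k f\in L^1(\Omega_{i_k},|\mu_{i_k}|)$.

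Finally, combine the pieces. By $q$-infinitesimal strict convexity, Corollary \ref{cor:dfgg} says $f\mapsto Df(\nabla g)$ is linear $\mm$-a.e., so from $f=\sum_k\chi_k f$ on $\supp(f)$,
\[
\int_\Omega Df(\nabla g)\,\d\mm=\sum_{k=1}^n\int_{\Omega_{i_k}}D(\chi_k f)(\nabla g)\,\d\mm=-\sum_{k=1}^n\int_{\Omega_{i_k}}\chi_k f\,\d\mu_{i_k}=-\int_\Omega f\,\d\mu,
\]
which is the desired \eqref{eq:deflap} (an equality, since $D^+=D^-$). Uniqueness of $\mu\in\bd g\restr\Omega$ is then a last application of Proposition \ref{prop:lapunique}. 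The delicate step I expect to be the main obstacle is the construction of the partition of unity: one must simultaneously ensure that each $\chi_k$ is Lipschitz, that the supports of $\chi_k f$ lie at a \emph{positive} distance from $X\setminus\Omega_{i_k}$ (so as to land in $\test\Omega_{i_k}$), and that $\sum_k\chi_k=1$ on all of $\supp(f)$; properness is essential here to pass from the compact support of $f$ to a finite subcover with uniform Lebesgue number.
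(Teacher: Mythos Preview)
Your proof is correct and follows essentially the same strategy as the paper: establish compatibility on overlaps, glue to a global measure, then verify the defining inequality via a finite Lipschitz partition of unity subordinate to a subcover of the compact set $\supp(f)$, using the $\mm$-a.e.\ linearity of $f\mapsto Df(\nabla g)$ from Corollary~\ref{cor:dfgg}. The only cosmetic differences are that the paper proves \eqref{eq:inters} by the direct one-line computation $-\int f\,\d\mu_i=\int Df(\nabla g)\,\d\mm=-\int f\,\d\mu_j$ for $f\in\test{\Omega_i\cap\Omega_j}$ (rather than invoking \eqref{eq:globloc} and Proposition~\ref{prop:lapunique}), and it simply asserts that $\mu$ is well defined by \eqref{eq:defmu} without spelling out the Lindel\"of/disjoint-refinement construction you gave; the paper also notes at the outset that properness makes every $f\in\test{\tilde\Omega}$ automatically lie in $L^1(|\nu|)$ for any locally finite $\nu$, which absorbs your integrability checks.
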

\begin{proof} Since $(X,\sfd)$ is proper, for any $\tilde\Omega\subset X$ open and Radon measure $\nu$ on $\tilde\Omega$ and any $f\in\test{\tilde\Omega}$, it holds $f\in L^1(\tilde\Omega,|\nu|)$, because $\supp(f)$ is compact.

Let $i,j\in I$,   $f\in\test{\Omega_i\cap\Omega_j}$  and notice that by definition it holds
\[
-\int_{\Omega_i\cap\Omega_j} f\,\d\mu_i=\int_{\Omega_i\cap\Omega_j} Df(\nabla g)\,\d\mm=-\int_{\Omega_i\cap\Omega_j} f\,\d\mu_j,
\]
which yields \eqref{eq:inters}. In particular, the measure $\mu$ is well defined by \eqref{eq:defmu}.

Fix $f\in\test\Omega$. Since $\supp(f)$ is compact,  there exists a finite set $I_f\subset I$ of indexes such that $\supp(f)\subset \cup_{i\in I_f}\Omega_i$. From the fact that $(X,\sfd)$ is proper it is easy to build a family $\{\nchi_i\}_{i\in I_f}$ of Lipschitz functions such that $\sum_{i\in I_f}\nchi_i\equiv 1$ on $\supp(f)$ and  $\supp(\nchi_i)$ is compact and contained in $\Omega_i$ for any $i\in I_f$. Hence $f\nchi_i\in\test{\Omega_i}$ for any $i\in I_f$ and  taking into account the linearity of the differential expressed in Corollary \ref{cor:dfgg}, we have
\[
\begin{split}
\int Df(\nabla g)\,\d\mm&=\int D\Big(\sum_{i\in I_f}\nchi_if\Big)(\nabla g)\,\d\mm=\sum_{i\in I_f}\int D(f\nchi_i)(\nabla g)\,\d\mm\\
&=-\sum_{i\in I_f}\int f\nchi_i\,\d\mu_i=-\int f\,\d\Big(\sum_{i\in I_f}\nchi_i\mu_i\Big),
\end{split}
\]
which is the thesis.
\end{proof}

We conclude the section discussing the effect of a change of the reference measure. As for the locality, the correct formula shows up under the assumption that the space is $q$-infinitesimally strictly convex.
\begin{proposition}[Change of the reference measure]
Let $(X,\sfd,\mm)$ be a metric measure space and $V:X\to\R$ a locally Lipschitz function, which is Lipschitz when restricted to bounded sets. Define the measure $\mm':=e^{-V}\mm$ and let $\bd'$ be the Laplacian in $(X,\sfd,\mm')$, where in Definition \ref{def:lap} we replace $\mm$ with $\mm'$. Let $\Omega\subset X$ be an open set, $g\in D(\bd)\cap \s^p_{\rm int}(\Omega)$ for some $p\in(1,\infty)$ and assume that $(X,\sfd,\mm)$ is $q$-infinitesimally strictly convex, $q$ being the conjugate exponent of $p$. Then $g\in D(\bd',\Omega)$ and the measure
\[
\mu':=e^{-V}\mu-DV(\nabla g)e^{-V}\mm,
\]
is the only element in $\bd'g\restr\Omega$, where here $\mu$ is the only element of $\bd g\restr\Omega$ (Proposition \ref{prop:lapunique}). 
\end{proposition}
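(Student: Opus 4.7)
\textbf{Step 1: transfer of the functional setup.} Since $V$ is Lipschitz on bounded sets, the weight $e^{-V}$ is locally bounded from above and from below by positive constants. By Proposition \ref{prop:inv2}, the classes $\s^p_{loc}(X,\sfd,\mm)$ and $\s^p_{loc}(X,\sfd,\mm')$ coincide and have the same $p$-weak upper gradients; in particular $\weakgrad g$, and hence the pointwise objects $D^\pm f(\nabla g)$, are insensitive to the switch $\mm \rightsquigarrow \mm'$. The same is true of the class $\Int{\Omega}$ (up to modifying $\Omega'\in\Int\Omega$ by intersecting with a ball, which is harmless), so $g\in\s^p_{int}(\Omega)$ with respect to $\mm'$ as well. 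The $q$-infinitesimal strict convexity assumption transfers from $\mm$ to $\mm'$ because $D^+ f(\nabla g)=D^- f(\nabla g)$ holds $\mm$-a.e.\ and $\mm'\sim\mm$; write simply $Df(\nabla g)$ in the sequel. Consequently, if we prove that $\mu'\in\bd' g\restr\Omega$, uniqueness of this element will follow from Proposition \ref{prop:lapunique}.

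\textbf{Step 2: the correct test function.} Fix $f\in\test\Omega$, so $\supp(f)\subset\Omega'$ for some $\Omega'\in\Int\Omega$. Since $V$ is Lipschitz on the bounded set $\Omega'$ and bounded there, $e^{-V}$ is bounded and Lipschitz on $\Omega'$; hence $h:=fe^{-V}$, extended by $0$ outside $\supp(f)$, belongs to $\test\Omega$. Moreover both $f$ and $h$ are in $L^1(\Omega,|\mu|)\cap L^1(\Omega,|\mu'|)$ by local finiteness. We may therefore apply the defining identity of $\mu\in\bd g\restr\Omega$ (an equality now, by strict convexity) to the test function $h$:
\begin{equation}
\label{eq:pianobase}
-\int h\,\d\mu=\int D h(\nabla g)\,\d\mm.
\end{equation}

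\textbf{Step 3: Leibniz and chain rules.} The infinitesimally strictly convex versions of the Leibniz rule \eqref{eq:leibs} and the chain rule \eqref{eq:chainfs} give $\mm$-a.e.\ on $\Omega$
\begin{equation}
\label{eq:leibchain}
D h(\nabla g)=e^{-V}Df(\nabla g)+f\,D(e^{-V})(\nabla g)=e^{-V}Df(\nabla g)-f\,e^{-V}DV(\nabla g).
\end{equation}
Note $DV(\nabla g)$ is well defined because $V$ restricted to the bounded set containing $\supp(f)$ is Lipschitz, hence locally in any Sobolev class, and the product $f e^{-V}DV(\nabla g)$ is in $L^1(\Omega,\mm)$. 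Plugging \eqref{eq:leibchain} into \eqref{eq:pianobase} yields
\begin{equation}
\label{eq:conclusion}
-\int f\,e^{-V}\,\d\mu=\int Df(\nabla g)\,e^{-V}\,\d\mm-\int f\,DV(\nabla g)\,e^{-V}\,\d\mm,
\end{equation}
which rearranges to $\int Df(\nabla g)\,\d\mm'=-\int f\,\d\mu'$ with $\mu'=e^{-V}\mu-DV(\nabla g)\,e^{-V}\mm$.

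\textbf{Step 4: conclusion.} The identity holds for every $f\in\test\Omega$, so in particular yields the two-sided integration-by-parts inequalities required by Definition \ref{def:lap}, with $\mm$ replaced by $\mm'$. Hence $g\in D(\bd',\Omega)$ and $\mu'\in\bd' g\restr\Omega$, and uniqueness was already discussed in Step 1. The main (minor) obstacle is purely bookkeeping: verifying that $fe^{-V}$ is a legitimate element of $\test\Omega$ and that the Leibniz and chain rules apply in their equality form, both of which rest on the fact that $V$ is Lipschitz on bounded sets and on the $q$-infinitesimal strict convexity of the space.
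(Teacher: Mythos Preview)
Your proof is correct and follows essentially the same route as the paper's: both reduce the claim to applying the defining identity of $\mu\in\bd g\restr\Omega$ to the test function $fe^{-V}$, then expand $D(fe^{-V})(\nabla g)$ via the Leibniz rule \eqref{eq:leibs} and chain rule \eqref{eq:chainfs}. Your Step~1 makes explicit what the paper handles in one line (that ${\rm Test}'(\Omega)=\test\Omega$ because $e^{-V}$ is locally bounded above and below, so the two $\Int\Omega$ classes coincide); the only small item you leave implicit is the local finiteness of $\mu'$, which follows immediately from the local boundedness of $e^{-V}$ and $\weakgrad V$.
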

\begin{proof} Let ${\rm Test'}(\Omega)$ be defined as $\test\Omega$ with the measure $\mm'$ replacing $\mm$ (the role of the measure is in the definition of $\Int\Omega$).

Since $e^{-V}$ and $\weakgrad V$ are locally bounded, $\mu'$ is a locally finite measure and the statement makes sense. For  $f\in{\rm Test'}(\Omega)\cap L^1(\Omega,|\mu'|)$, the fact that $\supp(f)$ is bounded yields that the restriction of $V$ to $\supp(f)$ is Lipschitz and bounded. It easily follows that $fe^{-V}\in\test\Omega\cap L^1(\Omega,|\mu|)$, so that from the chain rule \eqref{eq:chainfs} and the Leibniz rule \eqref{eq:leibs} we get
\[
\begin{split}
\int f\,\d\mu'&=\int fe^{-V}\,\d\mu-\int f DV(\nabla g)e^{-V}\,\d\mm\\
&=-\int D(fe^{-V})(\nabla g)-f D(e^{-V})(\nabla g)\,\d\mm=-\int e^{-V}Df(\nabla g)\,\d\mm,
\end{split}
\]
which is the thesis.
\end{proof}

\subsection{The linear case}\label{se:lineare}
In this section we introduce a sufficient, and in most cases also necessary, condition in order for the Laplacian to be linear, and analyze its properties also in connection with Ricci curvature bounds.

\begin{definition}[Infinitesimally Hilbertian spaces]
We say that $(X,\sfd,\mm)$ is infinitesimally Hilbertian provided it is as in \eqref{eq:mms} and the seminorm $\|\cdot\|_{\s^2(X,\sfd,\smm)}$ on $\s^2(X,\sfd,\smm)$ satisfies the parallelogram rule.
\end{definition}
In particular, infinitesimally Hilbertian spaces are 2-infinitesimally strictly convex, hence for any $f,g\in\s^2_{\rm loc}(X,\sfd,\mm)$ the object $Df(\nabla g)$ is well defined $\mm$-a.e., and for $g\in D(\bd)\cap\s^2(X,\sfd,\mm)$ the set $\bd g$ contains only one element, which - abusing a bit the notation - we will denote by $\bd g$.

\begin{remark}[About stability]{\rm In the class of compact and normalized (i.e. $\mm\in\prob X$) metric measure spaces, the class of infinitesimally Hilbertian spaces is definitively \emph{not} closed w.r.t. measured Gromov-Hausdorff convergence. Actually it is dense, because any compact normalized space is the mGH limit of a sequence of discrete spaces, and on discrete spaces Sobolev analysis trivializes, i.e. $\weakgrad f\equiv 0$ for any $f$. The fact that infinitesimal Hilbertianity is not a closed condition w.r.t. mGH convergence is not at all surprising, because the former is a first order condition on the space, while the latter is a zeroth order convergence.

It is worth underlying, however, that the class of spaces which are both infinitesimally Hilbertian and $\CD(K,N)$ for some $K\in\R$, $N\in(1,\infty]$, \emph{is} closed w.r.t. mGH convergence. This can be seen - at the very heuristic level - as the stability of a first order condition w.r.t. a zeroth order convergence under a uniform second order bound (the curvature bound). The proof of this stability passes through the identification of the gradient flow of $\c_2$ in $L^2$ with the one of the relative entropy in the Wasserstein space (proved in \cite{Ambrosio-Gigli-Savare11}) and the stability of gradient flows of a sequence of $K$-geodesically convex functionals which $\Gamma$-converges (proved in \cite{Gigli10}) along the following lines:
\begin{itemize}
\item[i)] $(X,\sfd,\mm)$ is infinitesimally Hilbertian if and only if $\c_2$ is a Dirichlet form (obvious, see Proposition \ref{prop:ovvio} below and the discussion thereafter for the details)
\item[ii)] $\c_2$ is a Dirichlet form if and only if its  gradient flow linearly depends on the initial datum (obvious)
\item[iii)] The gradient flow of $\c_2$ in $L^2$ coincides with the gradient flow of the relative entropy in the Wasserstein space (Theorem 9.3 of \cite{Ambrosio-Gigli-Savare11})
\item[iv)] If $(X_n,\sfd_n,\mm_n)$ converges to $(X,\sfd,\mm)$ in the mGH sense, then the relative entropies $\Gamma$-converge (this is the heart of the proof of stability of the $\CD(K,\infty)$ condition proved, up to minor technical differences, by Lott-Villani in \cite{Lott-Villani09} and Sturm in \cite{Sturm06I})
\item[v)] If a sequence of $K$-geodesically convex functionals $\Gamma$-converges to a limit functional, then gradient flows of the approximating sequence converge to gradient flows of the limit functional (Theorem 21 of \cite{Gigli10})
\end{itemize}
So that in the end we know that on the limit space the gradient flow of the relative entropy linearly depends on the initial datum and we conclude using again $(iii),(ii),(i)$. 

We remark that the 
combination of $(iv)$ and $(v)$ above is sufficient to prove that \emph{a compact smooth Finsler manifold is the mGH-limit of a sequence of Riemannian manifolds with Ricci curvature 
uniformly bounded from below if and only if it is Riemannian itself}. To prove it just recall that the heat flow on a Finsler manifold is linear if and only if it is Riemannian and that $(iv)$, $(v)$ yield that 
the heat flow passes to the limit under a uniform lower Ricci bound. 

The fact that Finsler manifolds cannot arise as limits of Riemannian manifolds with a uniform Ricci bound from below is certainly not a new  result, at least under the additional assumption that the dimension of the approximating sequence is uniformly bounded: in \cite{Cheeger-Colding97I}, \cite{Cheeger-Colding97II}, \cite{Cheeger-Colding97III} Cheeger and Colding  proved the 
same thing (and much more). The proof we propose has the advantage of being simple and of avoiding the use of the almost splitting theorem, although clearly to (hope to) recover the results in \cite{Cheeger-Colding97I}, \cite{Cheeger-Colding97II}, \cite{Cheeger-Colding97III}  in the non-smooth context a lot of work has still to be done.

It is natural to think that the right condition to add to $\CD(K,N)$ spaces in order to enforce a Riemannian-like behavior on small scales and to rule out Finsler geometries, is infinitesimal Hilbertianity. Yet, from the  technical point of view it is not so clear whether this is sufficient or not to derive the expected properties of the heat flow (e.g., $K$-contractivity of $W_2$ along two flows): the first attempt in this direction has been done in \cite{Ambrosio-Gigli-Savare11bis}, where infinitesimal Hilbertianity was enforced with a notion of $K$-geodesic convexity of the entropy stronger than the standard one (see Definition \ref{def:rcd} below and Theorem \ref{thm:rcd} for a - non-complete - overview of the results of \cite{Ambrosio-Gigli-Savare11bis}). Recent results by Rajala (see \cite{Rajala11bis}) suggest that infinitesimal Hilbertianity plus $\CD(K,\infty)$ are actually sufficient to recover all the properties obtained in  \cite{Ambrosio-Gigli-Savare11bis}, without any strengthening of the curvature condition, but the situation, as of today, is not yet completely clear\footnote{this turned out to be true: in the recent paper \cite{AmbrosioGigliMondinoRajala12} it has been shown that infinitesimal Hilbertianity plus $\CD(K,\infty)$ is sufficient to deduce all the properties of the heat flow stated in  \cite{Ambrosio-Gigli-Savare11bis}}.
}\fr\end{remark}
We start proving that  infinitesimal Hilbertianity can be checked by looking at the symmetry of  the map $\s^2\ni f,g\mapsto Df(\nabla g)$. The proof follows the same lines  of Section 4.3 of \cite{Ambrosio-Gigli-Savare11bis}.
\begin{proposition}\label{prop:simm}
Let $(X,\sfd,\mm)$ be as in \eqref{eq:mms}. Then it is an infinitesimally Hilbertian space if and only if  it is 2-infinitesimally strictly convex and  for every $f,g\in\s^2_{\rm loc}(X,\sfd,\mm)$ it holds
\begin{equation}
\label{eq:simmetrico}
Df(\nabla g)=Dg(\nabla f),\qquad\mm-a.e..
\end{equation}
\end{proposition}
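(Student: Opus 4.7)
My plan is to prove the two implications separately. The backward direction is essentially algebraic, while the forward one extracts an integrated polarization identity from the parallelogram rule and then upgrades it to a pointwise statement.

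For the backward implication, fix $f, g \in \s^2_{loc}(X,\sfd,\mm)$ and expand $\weakgrad{(f\pm g)}^2 = D(f\pm g)(\nabla(f\pm g))$ via identity \eqref{eq:5}. The linearity in the first slot from Corollary \ref{cor:dfgg}, combined with the assumed symmetry $Df(\nabla h) = Dh(\nabla f)$ to swap slots and linearize in the second argument, yields $\mm$-a.e.
\begin{equation*}
\weakgrad{(f\pm g)}^2 = \weakgrad f^2 \pm Dg(\nabla f) \pm Df(\nabla g) + \weakgrad g^2 = \weakgrad f^2 \pm 2\,Df(\nabla g) + \weakgrad g^2,
\end{equation*}
where the last equality uses the symmetry once more. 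Summing the $+$ and $-$ cases gives the pointwise parallelogram identity $\weakgrad{(f+g)}^2 + \weakgrad{(f-g)}^2 = 2\weakgrad f^2 + 2\weakgrad g^2$, which integrated against $\mm$ is the parallelogram rule for $\|\cdot\|_{\s^2(X,\sfd,\smm)}$.

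For the forward direction, polarization of the quadratic seminorm defines the symmetric bilinear form $B(f, g) := \tfrac{1}{4}(\|f+g\|_{\s^2}^2 - \|f-g\|_{\s^2}^2)$, and the parallelogram rule forces $\|g+\eps f\|_{\s^2}^2 = \|g\|_{\s^2}^2 + 2\eps B(f,g) + \eps^2\|f\|_{\s^2}^2$. On the other hand, the definition \eqref{eq:defdpm} supplies the pointwise $\mm$-a.e. inequalities $\weakgrad{(g+\eps f)}^2 \geq \weakgrad g^2 + 2\eps D^+f(\nabla g)$ for $\eps > 0$ and $\weakgrad{(g+\eps f)}^2 \geq \weakgrad g^2 + 2\eps D^-f(\nabla g)$ for $\eps < 0$. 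Integrating, dividing by $2\eps$, and sending $\eps \to 0^\pm$ produces
\begin{equation*}
\int D^+f(\nabla g)\,\d\mm \;\leq\; B(f,g) \;\leq\; \int D^-f(\nabla g)\,\d\mm.
\end{equation*}
Combined with the pointwise inequality $D^-f(\nabla g) \leq D^+f(\nabla g)$ from \eqref{eq:3}, these collapse to equalities, yielding both 2-infinitesimal strict convexity and the integrated symmetry $\int Df(\nabla g)\,\d\mm = B(f,g) = B(g,f) = \int Dg(\nabla f)\,\d\mm$ for every $f, g \in \s^2(X,\sfd,\mm)$.

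The main obstacle is upgrading this integrated symmetry to the pointwise identity. My plan is to exploit the chain rules \eqref{eq:chainfs} and \eqref{eq:chaings}, now at our disposal under 2-infinitesimal strict convexity: applying the integrated symmetry to $(\varphi\circ f, \psi\circ g)$ for arbitrary Lipschitz $\varphi, \psi : \R \to \R$ and invoking the chain rules in both slots produces
\begin{equation*}
\int \varphi'(f)\,\psi'(g)\bigl(Df(\nabla g) - Dg(\nabla f)\bigr)\,\d\mm = 0,
\end{equation*}
so the function $\Psi := Df(\nabla g) - Dg(\nabla f)$, which lies in $L^1(X,\mm)$ by \eqref{eq:1}, has vanishing integral against $a(f)b(g)$ for every pair of bounded Borel $a, b$. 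The delicate point is that this only controls $\Psi$ on the sub-$\sigma$-algebra generated by the pair $(f, g)$. To close the gap, I would enrich the class of test functions by further replacements $f \mapsto f + h$ and $g \mapsto g + k$ for arbitrary Sobolev $h, k$, combined with the Leibniz rule \eqref{eq:leibs}; since bounded Lipschitz functions on the separable space $X$ already generate the Borel $\sigma$-algebra and are dense in $L^\infty_{loc}$, a standard density argument should then force $\Psi = 0$ $\mm$-a.e., completing the proof and allowing the extension from $\s^2(X,\sfd,\mm)$ to $\s^2_{loc}(X,\sfd,\mm)$ via the locality property \eqref{eq:localgrad2}.
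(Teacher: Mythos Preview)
Your backward implication is correct and matches the paper's argument (you work pointwise where the paper integrates, but the content is the same). In the forward direction your derivation of 2-infinitesimal strict convexity and of the integrated identity $\int Df(\nabla g)\,\d\mm=\int Dg(\nabla f)\,\d\mm$ is also fine.

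The gap is in the pointwise upgrade. You correctly observe that testing with $\varphi'(f)\psi'(g)$ only kills the conditional expectation of $\Psi=Df(\nabla g)-Dg(\nabla f)$ with respect to $\sigma(f,g)$, and you propose to enlarge the family of test functions by replacing $f\mapsto f+h$, $g\mapsto g+k$. The problem is that $\Psi$ itself depends on $f$ and $g$: when you perturb them you are no longer testing the same function, so no ``standard density argument'' is available. Concretely, applying the integrated symmetry to $(hf,g)$ and expanding $D(hf)(\nabla g)$ by Leibniz leaves you with $\int Dg(\nabla(hf))\,\d\mm$ on the other side, which you cannot expand (there is no Leibniz rule in the gradient slot), and using integrated symmetry again just sends you back where you started. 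Your sketch does not indicate how to break this circularity.

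The paper avoids the issue by changing target: rather than proving pointwise symmetry directly, it proves the \emph{pointwise parallelogram identity} for $\weakgrad{\cdot}^2$, from which both strict convexity and symmetry follow by polarization. The key is to show that $\s^2\cap L^\infty\ni f\mapsto \int h\,\weakgrad f^2\,\d\mm$ is a quadratic form for every $h\in \s^2\cap L^\infty$; since such $h$ are plentiful, this forces $\weakgrad{(f+g)}^2+\weakgrad{(f-g)}^2=2\weakgrad f^2+2\weakgrad g^2$ $\mm$-a.e. The computation is
\[
\int h\,\weakgrad f^2\,\d\mm=\int D(hf)(\nabla f)\,\d\mm-\tfrac12\int D(f^2)(\nabla h)\,\d\mm,
\]
obtained from Leibniz, the chain rule, and one use of the integrated symmetry. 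Both terms on the right are quadratic in $f$: for the second, linearity in the first slot together with $(f_1\pm f_2)^2=f_1^2\pm 2f_1f_2+f_2^2$ does it; for the first, linearity in the first slot gives linearity in $f_1\mapsto\int D(hf_1)(\nabla f_2)\,\d\mm$, and integrated symmetry rewrites it as $\int Df_2(\nabla(hf_1))\,\d\mm$, linear in $f_2$. The crucial point is that the test function $h$ is decoupled from the pair $(f,g)$ whose parallelogram defect you are measuring --- exactly the feature your approach was missing.
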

\begin{proof} 
Assume at first that the space is 2-infinitesimally strictly convex and that \eqref{eq:simmetrico} holds. Fix $f,g\in\s^2(X,\sfd,\mm)$ and notice that
\[
\begin{split}
\|g+f\|^2_{\s^2}-\|g\|^2_{\s^2}&=\|\weakgrad{(g+f)}\|^2_{L^2}-\|\weakgrad g\|^2_{L^2}=\int_0^1\frac{\d}{\d t}\|\weakgrad{(g+tf)}\|^2_{L^2}\,\d t\\
&=2\iint_0^1Df(\nabla(g+tf))\,\d t\,\d\mm\stackrel{\eqref{eq:simmetrico}}=2\iint_0^1D(g+tf)(\nabla f )\,\d t\,\d\mm\\
&= 2\int Dg(\nabla f)\,\d\mm+\|  f\|^2_{\s^2},
\end{split}
\]
having used the linearity of the differential (Corollary \ref{cor:dfgg}) in the last step. Exchange $f$ with $-f$ and add up to conclude.

We pass to the converse implication. With a cut-off and truncation argument we can reduce to the case $f,g\in \s^2(X,\sfd,\mm)\cap L^\infty(X,\mm)$.
The fact that $\|\cdot\|_{\s^2}$ satisfies the parallelogram rule easily yields
\[
\frac{\|g+\eps f\|^2_{\s^2}-\|g\|^2_{\s^2}}{\eps}=\frac{\|f+\eps g\|^2_{\s^2}-\|f\|^2_{\s^2}}{\eps}+O(\eps),\qquad\forall f,g\in\s^2(X,\sfd,\mm),
\] 
so that  by the very definition of $Df(\nabla g)$ and \eqref{eq:5} we get
\begin{equation}
\label{eq:simmint}
\int Df(\nabla g)\,\d\mm=\int Dg(\nabla f)\,\d\mm,\qquad\forall f,g\in \s^2(X,\sfd,\mm).
\end{equation}
We want to pass from this integrated equality, to the pointwise statement \eqref{eq:simmetrico}.   The thesis will follow if we prove that 
\[
\s^2(X,\sfd,\mm)\cap L^\infty(X,\mm)\ni f\qquad\mapsto\qquad\int h\weakgrad f^2\,\d\mm\ \in\R
\]
is a quadratic form for any $h\in \s^2(X,\sfd,\mm)\cap L^\infty(X,\mm)$. Fix such $h$ and use \eqref{eq:5}, the Leibniz rule \eqref{eq:leibs} and the chain rule  to get
\[
\begin{split}
\int h\weakgrad f^2\,\d\mm&=\int hDf(\nabla f)\,\d\mm=\int D(hf)(\nabla f)- fDh(\nabla f)\,\d\mm\\
&=\int D(hf)(\nabla f)\,\d\mm-\frac12\int Dh(\nabla(f^2))\,\d\mm\\
&=\int D(hf)(\nabla f)\,\d\mm-\frac12\int D(f^2)(\nabla h)\,\d\mm,
\end{split}
\]
having used \eqref{eq:simmint} in the last step. Now recall from Corollary \ref{cor:dfgg} that the map $\s^2(X,\sfd,\mm)\ni g\mapsto \int Dg(\nabla h)\,\d\mm$ is linear, hence the map $\s^2(X,\sfd,\mm)\cap L^\infty(X,\mm)\ni f\mapsto\int D(f^2)(\nabla h)\,\d\mm$ is a quadratic form. Similarly, from the fact that both the maps $\s^2(X,\sfd,\mm)\cap L^\infty(X,\mm)\ni g\mapsto \int D(hg)(\nabla f)\,\d\mm$ and $\s^2(X,\sfd,\mm)\ni g\mapsto \int D(hf)(\nabla g)\,\d\mm=\int Dg(\nabla(hf))\,\d\mm$ are linear, we get that $\s^2(X,\sfd,\mm)\cap L^\infty(X,\mm)\ni f\mapsto\int D(hf)(\nabla f)\,\d\mm$ is a quadratic form.
\end{proof}
On infinitesimally Hilbertian spaces we will denote $D f(\nabla g)$ by $\nabla f\cdot\nabla g$, in order to highlight the symmetry of this object. A direct consequence of \eqref{eq:simmetrico} and the linearity of the differential expressed in Corollary \ref{cor:dfgg}, is the bilinearity of $\nabla f\cdot\nabla g$, i.e.
\begin{equation}
\label{eq:bilin}
\begin{split}
\nabla(\alpha_1f_1+\alpha_2f_2)\cdot\nabla g&=\alpha_1\nabla f_1\cdot\nabla g+\alpha_2\nabla  f_2\cdot\nabla g,\qquad\forall f_1,f_2,g\in \s^2_{\rm loc},\ \alpha_1,\alpha_2\in\R,\\
\nabla f\cdot\nabla (\beta_1g_1+\beta_2g_2)&=\beta_1\nabla f\cdot\nabla g_1+\beta_2\nabla  f\cdot\nabla g_2,\qquad\forall f,g_1,g_2\in \s^2_{\rm loc},\ \beta_1,\beta_2\in\R,
\end{split}
\end{equation}
and from  \eqref{eq:leibs} we also get
\begin{equation}
\label{eq:leibb}
\begin{split}
\nabla(f_1f_2)\cdot\nabla g&=f_1\nabla f_2\cdot\nabla g+f_2\nabla f_1\cdot\nabla g,\qquad\forall f_1,f_2\in \s^2_{\rm loc}\cap L^\infty_{\rm loc},\ g\in\s^2_{\rm loc},\\
\nabla f\cdot\nabla(g_1g_2)&=g_1\nabla f\cdot\nabla g_2+g_2\nabla f\cdot\nabla g_1,\qquad\forall f\in \s^2_{\rm loc},\ g_1,g_2\in\s^2_{\rm loc}\cap L^\infty_{\rm loc},
\end{split}
\end{equation}
all the equalities being intended $\mm$-a.e..

The following proposition collects some different characterizations of infinitesimally Hilbertian spaces. 
\begin{proposition}\label{prop:ovvio}
Let $(X,\sfd,\mm)$ be as in \eqref{eq:mms}. Then the following are equivalent.
\begin{itemize}
\item[i)] $(X,\sfd,\mm)$ is infinitesimally Hilbertian.
\item[ii)] For any $\Omega\subset X$ open with $\mm(\partial\Omega)=0$ the space $(\overline\Omega,\sfd,\mm\restr\Omega)$ is infinitesimally Hilbertian.
\item[iii)] $W^{1,2}(X,\sfd,\mm)$ is an Hilbert space.
\item[iv)] The Cheeger energy $\c_2:L^2\to[0,\infty]$ is a quadratic form, i.e.
\begin{equation}
\label{eq:parallelogramma}
\c_2(f+g)+\c_2(f-g)=2\Big(\c_2(f)+\c_2(g)\Big),\qquad\forall f,g\in L^2(X,\mm).
\end{equation}
\end{itemize}
\end{proposition}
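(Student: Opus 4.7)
I plan to prove the chain of equivalences (iii)$\Leftrightarrow$(iv), (iv)$\Leftrightarrow$(i), and (i)$\Leftrightarrow$(ii), the main step being an upgrade of the integral-form parallelogram law to a pointwise $\mm$-a.e.\ identity
\begin{equation}\label{eq:ppar}
\weakgrad{(f+g)}^2 + \weakgrad{(f-g)}^2 = 2\weakgrad f^2 + 2\weakgrad g^2 \quad \mm\text{-a.e.},\qquad \forall\, f,g \in \s^2_{loc}(X,\sfd,\mm),
\end{equation}
obtained by invoking Proposition \ref{prop:simm} and the bilinearity \eqref{eq:bilin}. For (iii)$\Leftrightarrow$(iv), the identity $\|f\|_{W^{1,2}}^2 = \|f\|_{L^2}^2 + \|f\|_{\s^2}^2$, combined with the fact that the $L^2$ norm always obeys the parallelogram law, reduces ``$W^{1,2}$ is Hilbert'' to ``$\|\cdot\|_{\s^2}^2$ satisfies the parallelogram identity on $W^{1,2} = \s^2 \cap L^2$''. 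Since $\c_2 = \frac12\|\cdot\|_{\s^2}^2$ on $\s^2 \cap L^2$ and $+\infty$ elsewhere, and since $\s^2 \cap L^2$ is a vector space (so $\c_2(f) = \infty$ and $\c_2(g) < \infty$ force $\c_2(f\pm g) = \infty$, making both sides of \eqref{eq:parallelogramma} infinite), this is exactly (iv).

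\textbf{(i) $\Leftrightarrow$ (iv).} The implication (i) $\Rightarrow$ (iv) follows by restricting the parallelogram identity of $\|\cdot\|_{\s^2}$ to $\s^2 \cap L^2 \subset \s^2$ and handling the infinite cases as above. For (iv) $\Rightarrow$ (i), I first derive $2$-infinitesimal strict convexity: expanding $\c_2(g + \eps f) = \c_2(g) + \eps L(f,g) + \eps^2 \c_2(f)$ for $f,g \in \s^2 \cap L^2$, with $L$ the symmetric bilinear form obtained by polarization, the one-sided limits defining $\int D^\pm f(\nabla g)\,\d\mm$ both reduce to $L(f,g)$, so \eqref{eq:3} upgrades the integrated equality to the pointwise identity $D^+f(\nabla g) = D^-f(\nabla g)$ $\mm$-a.e. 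A routine cutoff/truncation argument based on \eqref{eq:leibweak} and the locality \eqref{eq:localgrad} extends this to $f,g \in \s^2_{loc}$. The symmetry $L(f,g) = L(g,f)$ lifts by the same cutoff argument to the pointwise symmetry $Df(\nabla g) = Dg(\nabla f)$, and Proposition \ref{prop:simm} then gives (i). Finally, \eqref{eq:ppar} is read off from the pointwise bilinearity \eqref{eq:bilin} via $\weakgrad{(f\pm g)}^2 = \weakgrad f^2 \pm 2\,\nabla f\cdot\nabla g + \weakgrad g^2$.

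\textbf{(i) $\Leftrightarrow$ (ii).} Trivially (ii) $\Rightarrow$ (i) upon taking $\Omega = X$. For (i) $\Rightarrow$ (ii), fix $\Omega$ open with $\mm(\partial\Omega) = 0$ and $f, g \in \s^2(\overline\Omega, \sfd, \mm\restr{\overline\Omega})$, and introduce the Lipschitz cutoffs
\[
\nchi_k(x) := \max\bigl\{0, \min\{1, k\,\sfd(x, X\setminus\Omega) - 1\}\bigr\},
\]
so that $\{\nchi_k = 1\} \uparrow \Omega$ as $k\to\infty$ and $\sfd(\supp(\nchi_k), X\setminus\Omega) \geq 1/k$. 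By Proposition \ref{prop:srestr}(ii), the zero-extensions to $X$ of $\nchi_k f$ and $\nchi_k g$ lie in $\s^2(X,\sfd,\mm)$ with the same weak gradients as computed in $\s^2(\overline\Omega)$. Applying \eqref{eq:ppar} in $X$ to these extensions and restricting to $\{\nchi_k = 1\}$, where $\nchi_k f = f$ and $\nchi_k g = g$, the locality \eqref{eq:localgrad} in $(\overline\Omega, \sfd, \mm\restr{\overline\Omega})$ yields \eqref{eq:ppar} $\mm$-a.e.\ on $\{\nchi_k = 1\}$ for the weak gradients computed on $\overline\Omega$. Letting $k\to\infty$ and using $\mm(\partial\Omega) = 0$, \eqref{eq:ppar} holds $\mm$-a.e.\ on $\overline\Omega$; integrating gives the parallelogram identity for $\|\cdot\|_{\s^2(\overline\Omega)}$, which is (ii).

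\textbf{Main obstacle.} The only step that requires more than routine bookkeeping is extracting the pointwise parallelogram law \eqref{eq:ppar} from the purely integral statement (iv). This is efficiently handled by routing through Proposition \ref{prop:simm}, which itself packages the chain and Leibniz rules of Section \ref{se:diff} into the symmetry/bilinearity of $Df(\nabla g)$ needed to define $\nabla f\cdot\nabla g$ pointwise; the cutoff/truncation step needed to pass from $\s^2 \cap L^2$ to $\s^2_{loc}$ is then straightforward thanks to \eqref{eq:localgrad}. The remaining implications — the $W^{1,2}$/$\c_2$ reduction and the localization to $\overline\Omega$ through Proposition \ref{prop:srestr} — are essentially formal.
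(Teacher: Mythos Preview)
Your argument for (iv)$\Rightarrow$(i) has a gap. You correctly obtain, for $f,g\in\s^2\cap L^2$, the integrated identity $\int D^+f(\nabla g)\,\d\mm = L(f,g) = L(g,f) = \int D^+g(\nabla f)\,\d\mm$ and thence the pointwise equality $D^+f(\nabla g)=D^-f(\nabla g)$ via \eqref{eq:3}. But the claim that ``the symmetry $L(f,g)=L(g,f)$ lifts by the same cutoff argument to the pointwise symmetry $Df(\nabla g)=Dg(\nabla f)$'' is not justified. Cutoff-and-locality arguments propagate \emph{pointwise} identities from a smaller class of functions to a larger one; they do not convert an \emph{integrated} identity into a pointwise one. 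Concretely, if you try to localize by replacing $f$ with $hf$ in $\int Df(\nabla g)\,\d\mm = \int Dg(\nabla f)\,\d\mm$ and expand the left side by the Leibniz rule \eqref{eq:leibs}, the right side becomes $\int Dg(\nabla(hf))\,\d\mm$, and you cannot split $Dg(\nabla(hf))$ without already knowing the symmetry you are trying to prove.

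The paper's (iii)$\Rightarrow$(i) (equivalent to your (iv)$\Rightarrow$(i)) handles exactly this point differently: it runs the computation from the second half of the proof of Proposition~\ref{prop:simm} (the ``$h$-trick'') to show that for each fixed $h\in\s^2\cap L^\infty$ the map $f\mapsto\int h\,\weakgrad f^2\,\d\mm$ is a quadratic form on $W^{1,2}$; varying $h$ then yields the \emph{pointwise} parallelogram identity on $W^{1,2}$. Only after that do cutoffs and truncation extend it to $\s^2\cap L^\infty$ and then to $\s^2$. In your write-up you invoke Proposition~\ref{prop:simm} only in the direction ``2-infinitesimal strict convexity $+$ pointwise symmetry $\Rightarrow$ infinitesimally Hilbertian'', which presupposes the pointwise symmetry; the content you actually need lives in the proof of the \emph{opposite} implication. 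Your scheme becomes correct if you either (a) replace the unjustified lifting step by the $h$-trick to obtain pointwise symmetry on $\s^2\cap L^2$ before extending, or (b) drop the detour through symmetry and upgrade the parallelogram law directly as the paper does. The remaining parts of your proposal---(iii)$\Leftrightarrow$(iv), (i)$\Rightarrow$(iv), and the localization (i)$\Leftrightarrow$(ii) via \eqref{eq:ppar} and Proposition~\ref{prop:srestr}---are fine and in line with the paper.
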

\begin{proof}\\*
\noindent$\mathbf{(i)\Rightarrow (ii)}$ Follows from  Proposition \ref{prop:srestr} and the equivalence stated in Proposition \ref{prop:simm}.\\
\noindent$\mathbf{(ii)\Rightarrow (i)}$ Obvious: just take $\Omega:=X$.\\
\noindent$\mathbf{(i)\Rightarrow (iii)}$ Obviously follows from the definition of the  $W^{1,2}$ norm.\\
\noindent$\mathbf{(iii)\Rightarrow (i)}$ We know that $W^{1,2}(X,\sfd,\mm)\ni f\mapsto \|\weakgrad f\|^2_{L^2}$ is a quadratic form, and we want to prove that the same its true for its extension to $\s^2(X,\sfd,\mm)$. Arguing exactly as in the proof of Proposition \ref{prop:simm} above we have that 
\begin{equation}
\label{eq:wq}
W^{1,2}(X,\sfd,\mm)\ni f\qquad\mapsto\qquad \weakgrad f^2\in L^1(X,\mm),\qquad\textrm{ is a quadratic form}.
\end{equation}
Now use the local finiteness of $\mm$ and the Lindel\"of property of $(X,\sfd)$ to build an increasing sequence $(K_n)$ of compact sets such that $\mm(X\setminus \cup_nK_n)=0$ and an increasing sequence $(\nchi_n)$ of Lipschitz bounded functions such that $\mm(\supp(\nchi_n))<\infty$ and $\nchi_n\equiv 1$ on $K_n$ for every $n\in\N$. Then for every $f\in \s^2(X,\sfd,\mm)\cap L^\infty(X,\mm)$ and $n\in\N$ it holds $f\nchi_n\in W^{1,2}(X,\sfd,\mm)$. Since $\weakgrad{f}=\weakgrad{(f\nchi_n)}$ $\mm$-a.e. on $K_n$, from \eqref{eq:wq} and letting $n\to\infty$ we get that $\s^2(X,\sfd,\mm)\cap L^\infty(X,\mm)\ni f\mapsto \weakgrad f^2\in L^1(X,\mm)$ is a quadratic form as well. With a truncation argument we get that also $\s^2(X,\sfd,\mm)\ni f\mapsto \weakgrad f^2\in L^1$ is a quadratic form. Integrate to conclude.\\
\noindent$\mathbf{(iii)\Leftrightarrow (iv)}$ The formula $\|f\|_{W^{1,2}}^2=\|f\|_{L^2}^2+2\c_2(f)$ shows that  $\c_2$ satisfies the parallelogram rule if and only if   so does the $W^{1,2}$ norm.
\end{proof}

A property of  particular importance of infinitesimally Hilbertian spaces is that the Laplacian is linear, as explained in the following proposition. We will denote by $D_{{\rm ifm}}(\bd,\Omega)\subset D(\bd,\Omega)$ the set of those $g$'s whose Laplacian has `internally finite mass', i.e.   for any   $\mu\in \bd g\restr\Omega$ it holds $|\mu|(\Omega')<\infty$ for any $\Omega'\in\Int \Omega$. If $(X,\sfd)$ is proper, then $D_{{\rm ifm}}(\bd,\Omega) =D(\bd,\Omega)$.
\begin{proposition}[Linearity of the Laplacian]
Let $(X,\sfd,\mm)$ be an infinitesimally Hilbertian space. Then for any $\Omega\subset X$ open, the set $D_{{\rm ifm}}(\bd,\Omega)\cap \s^2_{\rm int}(\Omega)$ is a vector space and for $g\in D_{{\rm ifm}}(\bd,\Omega)\cap \s^2_{\rm int}(\Omega)$, $\bd g\restr\Omega$ is single valued and linearly depends on $g$.
\end{proposition}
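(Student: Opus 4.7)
The plan is to split the statement into two natural pieces: (a) uniqueness of the measure in $\bd g\restr\Omega$ for $g\in\s^2_{int}(\Omega)$, and (b) linearity of the map $g\mapsto\bd g\restr\Omega$ on $D_{\rm ifm}(\bd,\Omega)\cap\s^2_{int}(\Omega)$, which automatically implies that this set is a vector space. Uniqueness is essentially for free: an infinitesimally Hilbertian space is by definition $2$-infinitesimally strictly convex, hence Proposition \ref{prop:lapunique} applies with $p=q=2$ and forces $\bd g\restr\Omega$ to be a singleton whenever $g\in D(\bd,\Omega)\cap\s^2_{int}(\Omega)$. From here on I would write $\mu_g$ for the unique element.

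For homogeneity I would simply quote \eqref{eq:unohom}: if $g\in D_{\rm ifm}(\bd,\Omega)\cap\s^2_{int}(\Omega)$ and $\lambda\in\R$ then $\lambda g\in \s^2_{int}(\Omega)$, $\lambda g\in D(\bd,\Omega)$ with $\mu_{\lambda g}=\lambda\mu_g$, and $|\lambda\mu_g|(\Omega')=|\lambda||\mu_g|(\Omega')<\infty$ for every $\Omega'\in\Int\Omega$, so $\lambda g$ is again in $D_{\rm ifm}(\bd,\Omega)\cap\s^2_{int}(\Omega)$.

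The additivity step is the substantive one. Fix $g_1,g_2\in D_{\rm ifm}(\bd,\Omega)\cap\s^2_{int}(\Omega)$ with Laplacians $\mu_1,\mu_2$. Set $g:=g_1+g_2$ and $\mu:=\mu_1+\mu_2$; clearly $g\in\s^2_{int}(\Omega)$ and $|\mu|(\Omega')<\infty$ for every $\Omega'\in\Int\Omega$. Pick any $f\in\test\Omega$: since $f$ is bounded and compactly supported inside some $\Omega'\in\Int\Omega$, automatically $f\in L^1(\Omega,|\mu_1|)\cap L^1(\Omega,|\mu_2|)\cap L^1(\Omega,|\mu|)$, so no integrability condition has to be imposed by hand. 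By Proposition \ref{prop:simm} the space is $2$-infinitesimally strictly convex and $Df(\nabla\cdot)$ is symmetric; together with Corollary \ref{cor:dfgg} this gives the bilinearity \eqref{eq:bilin}, in particular $Df(\nabla g)=Df(\nabla g_1)+Df(\nabla g_2)$ $\mm$-a.e.\ on $\Omega$. Integrating and using that $\mu_i\in\bd g_i\restr\Omega$ yields
\[
-\int_\Omega Df(\nabla g)\,\d\mm=-\int_\Omega Df(\nabla g_1)\,\d\mm-\int_\Omega Df(\nabla g_2)\,\d\mm=\int_\Omega f\,\d\mu_1+\int_\Omega f\,\d\mu_2=\int_\Omega f\,\d\mu,
\]
and since in the infinitesimally Hilbertian case $D^+f(\nabla g)=D^-f(\nabla g)=Df(\nabla g)$ $\mm$-a.e., the chain of inequalities \eqref{eq:deflap} reduces to this equality. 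Hence $g\in D(\bd,\Omega)$ with $\mu\in\bd g\restr\Omega$; uniqueness then forces $\mu_g=\mu_1+\mu_2$, which establishes simultaneously that $g\in D_{\rm ifm}(\bd,\Omega)\cap\s^2_{int}(\Omega)$ and that $g\mapsto\mu_g$ is additive. Combined with homogeneity, this is the linearity claim.

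No step looks like a real obstacle: the only subtlety is making sure the $L^1(|\mu|)$ integrability conditions in the definition of the Laplacian are satisfied for test functions, which is exactly why the hypothesis $g_i\in D_{\rm ifm}$ is made, and making sure the pointwise linearity $Df(\nabla(g_1+g_2))=Df(\nabla g_1)+Df(\nabla g_2)$ is available, which is exactly the content of the symmetry property \eqref{eq:simmetrico} characterizing infinitesimal Hilbertianity.
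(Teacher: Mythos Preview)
Your proof is correct and follows essentially the same approach as the paper: uniqueness via Proposition~\ref{prop:lapunique}, then linearity from the bilinearity \eqref{eq:bilin} of $(f,g)\mapsto\nabla f\cdot\nabla g$, with the $D_{\rm ifm}$ hypothesis ensuring $\test\Omega\subset L^1(\Omega,|\mu_i|)$. The only cosmetic differences are that the paper handles homogeneity and additivity in one step via $\beta_1 g_1+\beta_2 g_2$, and it remarks explicitly that a cut-off argument transfers \eqref{eq:bilin} from $\s^2_{loc}(X,\sfd,\mm)$ to $\s^2_{int}(\Omega)$.
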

\begin{proof}
Fix $\Omega\subset X$ open. Being $(X,\sfd,\mm)$ 2-infinitesimally strictly convex, by Proposition \ref{prop:lapunique} we  know that $\bd g\restr\Omega$ is single valued for any $g\in\s^2_{\rm int}(\Omega)\cap D(\bd,\Omega)$. With a cut-off argument   we deduce that \eqref{eq:bilin} is satisfied also for functions in $\s^2_{\rm int}(\Omega)$. Pick $g_1,g_2\in D_{{\rm ifm}}(\bd,\Omega)\cap \s^2_{\rm int}(\Omega)$, $\beta_1,\beta_2\in\R$. Observe that $|\bd g_i|(\supp(f))<\infty$ for any $f\in\test\Omega$, so that in particular $\test\Omega\subset L^1(\Omega,|\bd g\restr\Omega|)$, $i=1,2$. Also, it holds  $\test\Omega\subset \s^2(X,\sfd,\mm)$. Let $f\in\test\Omega$ and conclude with
\[
\begin{split}
\int_\Omega f\,\d(\beta_1\bd g_1\restr\Omega+\beta_2\bd g_2\restr\Omega)&=\beta_1\int_\Omega f\,\d\bd g_1\restr\Omega+\beta_2\int_\Omega f\,\d\bd g_2\restr\Omega\\
&=-\beta_1\int_\Omega \nabla f\cdot \nabla g_1\,\d\mm-\beta_2\int_\Omega \nabla f\cdot \nabla g_2\,\d\mm\\
&=-\int_\Omega\nabla f\cdot\nabla (\beta_1g_1+\beta_2 g_2)\,\d\mm.
\end{split}
\]
\end{proof}
The fact that on infinitesimally Hilbertian spaces the Cheeger energy $\c_2$ satisfies \eqref{eq:parallelogramma} yields that the formula
\[
\mathcal E(f,g):=\c_2(f+g)-\c_2(f)-\c_2(g),
\]
defines a symmetric bilinear form on $W^{1,2}(X,\sfd,\mm)$. The chain rule \eqref{eq:chaineasy} gives that the form is Markovian and the semicontinuity of $\c_2$ means that the form is closed. Hence, $\mathcal E$ is (more precisely, can be extended to) a Dirichlet form on $L^2(X,\mm)$. As a consequence of the locality principle \eqref{eq:localgrad}, $\mathcal E$ is strongly local. The generator of $\mathcal E$ will be denoted by $\Delta$, so that $\Delta:D(\Delta)\subset L^2(X,\mm)\to L^2(X,\mm)$ and $\Delta$ and its domain $D(\Delta)$ are defined by
\begin{equation}
\label{eq:defdelta}
g\in D(\Delta),\ h=\Delta g\qquad\textrm{ if and only if }\qquad \mathcal E(f,g)=-\int fh\,\d\mm,\qquad\forall f\in D(\mathcal E) .
\end{equation}
Notice that $D(\mathcal E)=D(\c_2)=W^{1,2}(X,\sfd,\mm)$.

In the following proposition we compare the various definitions of `the Laplacian of $g$ exists and is in $L^2$'. Notice that the results are sharper than those of Proposition \ref{prop:comp}.
\begin{proposition}[Compatibility]\label{prop:complin}
Let $(X,\sfd,\mm)$ be an infinitesimally Hilbertian space, $g\in W^{1,2}(X,\sfd,\mm)$ and $h\in L^2(X,\mm)$. Consider the  following statements:
\begin{itemize}
\item[i)] $g\in D(\bd)$ and $\bd g=h\mm$,
\item[ii)]  $h\in -\partial^-\c_2(g)$,
\item[iii)] $g\in D(\Delta)$ and $\Delta g=h$.
\end{itemize}
Then
\[
(ii)\quad\Leftrightarrow \quad(iii)\quad\Rightarrow \quad (i),
\]
and if $\mm$ is finite on bounded sets it also holds
\[
(i) \quad \Rightarrow \quad  (ii),(iii).
\]
\begin{proof}\\
\noindent $\mathbf{ (iii)\Rightarrow (ii).}$ We need to prove that for any $f\in L^2(X,\mm)$ it holds
\begin{equation}
\label{eq:iiiaii}
\c_2(g)-\int (f-g)h\,\d\mm\leq \c_2(f). 
\end{equation}
If $f\notin D(\c_2)=D(\mathcal E)$ there is nothing to prove. Otherwise $f-g$ is in $D(\mathcal E)$ and we can use the definition of $\Delta$ to get
\[
-\int  (f-g)h\,\d\mm=\mathcal E(f-g,g)=\mathcal E(f,g)-\mathcal E(g,g)\leq\frac12\mathcal E(f,f)-\frac12\mathcal E(g,g),
\]
which is \eqref{eq:iiiaii}.\\
\noindent$\mathbf {(ii)\Rightarrow (iii})$. Pick $f\in D(\mathcal E)$ and notice that by definition of $\partial^-\c_2(g)$ it holds
\[
\c_2(g)-\int \eps f h\,\d\mm\leq\c_2(g+\eps f),\qquad\forall \eps\in\R.
\]
Conclude observing that $\c_2(g+\eps f)=\c_2(g)+\eps^2\c_2(f)+\mathcal E(f,g)$, dividing by $\eps$ and letting $\eps\downarrow0$ and $\eps\uparrow 0$.\\
\noindent$\mathbf {(ii)\Rightarrow (i})$. This is a particular case of Proposition \ref{prop:comp}.\\
\noindent$\mathbf{ (i)\Rightarrow (iii)}$. Here we assume that $\mm$ is finite on bounded sets. We know that
\[
-\int\nabla f\cdot\nabla g\,\d\mm=\int fh\,\d\mm,\qquad\forall f\in\test X\cap L^1(X,|h|\mm)
\]
and we want to conclude that the same is true for any $f\in W^{1,2}(X,\sfd,\mm)$. Pick $f\in W^{1,2}(X,\sfd,\mm)$ and assume for the moment that $\supp(f)$ is bounded. Let $\nchi$ be a Lipschitz bounded function with bounded support and identically 1 on   $\supp(f)$. Also, let $(f_n)$ be a sequence of Lipschitz functions converging to $f$ in $W^{1,2}(X,\sfd,\mm)$ (Corollary \ref{cor:lipdense}). Then $f_n\nchi\to f$ in $W^{1,2}(X,\sfd,\mm)$ and $f_n\nchi\in\test X\cap L^1(|h|\mm)$  for any $n\in\N$. Thus passing to the limit in $-\int\nabla( f_n\nchi)\cdot\nabla g\,\d\mm=\int f_n\nchi h\,\d\mm$ we get $-\int\nabla f\cdot\nabla g\,\d\mm=\int fh\,\d\mm$ for any $f\in W^{1,2}(X,\sfd,\mm)$ with bounded support.

To achieve the general case, let $(\nchi_n)$ be an increasing sequence of non-negative, 1-Lipschitz functions with bounded support and such that  $\nchi_n\equiv 1$ on $B_n(x_0)$, where $x_0\in X$ is some fixed point, for any $n\in\N$. Then fix an arbitrary $f\in W^{1,2}(X,\sfd,\mm)$, notice that $f\nchi_n\in W^{1,2}(X,\sfd,\mm)$ as well and has bounded support. We claim that $f\nchi_n\to f$ in $W^{1,2}(X,\sfd,\mm)$. Indeed, by the dominate convergence theorem we immediately get that $\|f-f\nchi_n\|_{L^2}\to 0$ as $n\to\infty$. Also, we have $\weakgrad{(f-f\nchi_n)}\to 0$ $\mm$-a.e. as $n\to\infty$ and $\weakgrad{(f-f\nchi_n)}\leq\weakgrad f|1-\nchi_n|+|f|$ $\mm$-a.e., so that again by dominate convergence we get $\weakgrad{(f-f\nchi_n)}\to 0$ in $L^2(X,\mm)$ as $n\to\infty$. Conclude by letting $n\to\infty$ in
\[
-\int\nabla (f\nchi_n)\cdot\nabla g\,\d\mm=\int f\nchi_nh\,\d\mm.
\]
\end{proof}
\end{proposition}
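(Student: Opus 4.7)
\bigskip

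\noindent\textbf{Proof plan.} The plan is to establish the easy chain $(iii)\Leftrightarrow(ii)\Rightarrow(i)$ first by purely algebraic manipulations involving the Dirichlet form $\mathcal{E}$ and the Cheeger energy $\c_2$, and then attack the harder reverse implication $(i)\Rightarrow(iii)$ under the additional assumption that $\mm$ is finite on bounded sets, via an approximation argument.

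For $(iii)\Rightarrow(ii)$, I would fix $f\in L^2(X,\mm)$ and show $\c_2(g)-\int(f-g)h\,\d\mm\leq\c_2(f)$. If $f\notin D(\c_2)=D(\mathcal{E})$ there is nothing to prove; otherwise $f-g\in D(\mathcal{E})$, and by definition of $\Delta g=h$ one has $-\int(f-g)h\,\d\mm=\mathcal{E}(f-g,g)=\mathcal{E}(f,g)-\mathcal{E}(g,g)$. Since $\mathcal{E}$ is a symmetric bilinear form arising from $\c_2$, the Cauchy--Schwarz-type inequality $\mathcal{E}(f,g)\leq\tfrac12\mathcal{E}(f,f)+\tfrac12\mathcal{E}(g,g)$ concludes. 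For $(ii)\Rightarrow(iii)$, take $f\in D(\mathcal{E})$, plug $g+\eps f$ into the subdifferential inequality to obtain $\c_2(g)-\eps\int fh\,\d\mm\leq\c_2(g+\eps f)$, expand $\c_2(g+\eps f)=\c_2(g)+\eps\mathcal{E}(f,g)+\eps^2\c_2(f)$ using bilinearity, divide by $\eps$ and let $\eps\downarrow0$ and $\eps\uparrow0$ to get equality $-\int fh\,\d\mm=\mathcal{E}(f,g)$, which is the defining property of $\Delta g=h$. The implication $(ii)\Rightarrow(i)$ is an immediate instance of Proposition~\ref{prop:comp}.

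The genuine work is in $(i)\Rightarrow(iii)$. Starting from $\bd g=h\mm$, I have the integration by parts identity $-\int\nabla f\cdot\nabla g\,\d\mm=\int fh\,\d\mm$ for every $f\in\test X\cap L^1(X,|h|\mm)$, and I must upgrade it to hold for every $f\in W^{1,2}(X,\sfd,\mm)$, which is precisely the characterization $\Delta g=h$. I would proceed in two approximation steps. First, for $f\in W^{1,2}(X,\sfd,\mm)$ with bounded support, pick a Lipschitz cutoff $\nchi$ with bounded support equal to $1$ on $\supp(f)$, and invoke Corollary~\ref{cor:lipdense} (which needs $W^{1,2}$ Hilbert and $\mm$ finite on bounded sets) to get Lipschitz $f_n\to f$ in $W^{1,2}$; then $f_n\nchi\in\test X\cap L^1(X,|h|\mm)$, $f_n\nchi\to f$ in $W^{1,2}$, so one can pass to the limit on both sides. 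Second, for general $f\in W^{1,2}$, take an increasing sequence $(\nchi_n)$ of $1$-Lipschitz non-negative functions with bounded support equal to $1$ on $B_n(x_0)$; then $f\nchi_n$ has bounded support, and I would verify $f\nchi_n\to f$ in $W^{1,2}$: the $L^2$-convergence is by dominated convergence, while for the gradient one uses $\weakgrad{(f-f\nchi_n)}\to 0$ $\mm$-a.e. together with the Leibniz-type bound $\weakgrad{(f-f\nchi_n)}\leq\weakgrad f\,|1-\nchi_n|+|f|$ (valid from \eqref{eq:leibweak}) and again dominated convergence.

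The main obstacle is precisely the step $(i)\Rightarrow(iii)$: the gap between weak integration by parts against test functions and against arbitrary $W^{1,2}$ functions, which requires both the Hilbert (hence uniformly convex) structure of $W^{1,2}$ to get density of Lipschitz functions via Corollary~\ref{cor:lipdense}, and a careful two-level truncation to reduce the general $f\in W^{1,2}$ first to one with bounded support and then to a Lipschitz one. The other three implications, by contrast, are formal manipulations with the Dirichlet form.
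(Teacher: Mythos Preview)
Your proposal is correct and follows essentially the same route as the paper's proof: the same formal manipulations with $\mathcal E$ for $(iii)\Leftrightarrow(ii)$, the same reference to Proposition~\ref{prop:comp} for $(ii)\Rightarrow(i)$, and the same two-level approximation (Lipschitz cutoff plus Corollary~\ref{cor:lipdense}, then exhaustion by $\nchi_n$) for $(i)\Rightarrow(iii)$. Your expansion $\c_2(g+\eps f)=\c_2(g)+\eps\,\mathcal E(f,g)+\eps^2\c_2(f)$ even corrects a minor typo in the paper, which omits the factor $\eps$ in front of $\mathcal E(f,g)$.
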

\begin{remark}[Compatibility with the theory of Alexandrov spaces]\label{re:alex}{\rm
In  \cite{KMS}, using key results of \cite{OtsuShioya} and \cite{PerelmanDC}, it has been proved that finite dimensional Alexandrov spaces with curvature bounded from below are infinitesimally Hilbertian.

On these spaces there are four - compatible - ways of defining a Laplacian:
\begin{itemize}
\item[i)] To use the fact that $W^{1,2}$ is Hilbert to build a natural Dirichlet form and then considering its generator.
\item[ii)] To use the DC structure of the space to define the Laplacian of DC$^1$ functions on $X\setminus S_\delta$  by direct calculus. Recall that DC functions are functions which are locally the Difference of Concave functions, DC$^1$ functions are also $C^1$, that a DC chart is a chart made of maps with DC components and that Perelman showed in \cite{PerelmanDC} that `most' of an Alexandrov space (i.e. all of it with the exception of a small singular set $S_\delta$) can be covered by DC charts.
\item[iii)] To use the Riemannian structure of the space to give a meaning to the integration by parts formula for functions which are not necessarly DC.
\item[iv)] To equip these spaces with their natural Hausdorff measure $\mathcal H^n$ and use the definition given here. Notice that the resultant metric measure space is always doubling and supports a 1-1 Poincar\'e inequality, hence in this case there is no ambiguity due to the choice of the Sobolev exponent $p$ in the definition of distributional Laplacian (see also Remarks \ref{re:notazione} and \ref{re:occhiolap}).
\end{itemize}
For the first two approaches  see \cite{KMS}. Due to Proposition \ref{prop:complin}, the approach $(i)$  is clearly in  line with $(iv)$. The same is true for the approach $(ii)$, because $(iv)$  clearly reduces to the standard definition of distributional Laplacian on Riemannian manifolds equipped with a metric tensor locally uniformly elliptic, which is the case for DC manifolds.

The approach $(iii)$ was introduced by Petrunin (\cite{Pet}) for the case of DC functions. To define the Laplacian of these functions, he used the integration by parts rule: for $g$ which is DC on some open domain $\Omega$ he proposed
\begin{equation}
\label{eq:pet}
\int_{\Omega} f\,\d\bd g:=-\int_\Omega \la{\rm grad}\, f,{\rm grad}\, g\ra \,\d\mathcal H^n,\qquad\forall f:\overline\Omega\to\R \textrm{ Lipschitz with }f\restr{\partial\Omega}=0,
\end{equation}
where $n$ is the dimension and $\mathcal H^n$ the corresponding Hausdorff measure and the object $ \la{\rm grad}\, f,{\rm grad}\, g\ra $ is defined $\mathcal H^n$-a.e. via the use of DC charts.  He used this definition of Laplacian to prove that  for DC functions the measure $\bd$ actually exists and that it is non-negative for convex functions. 

To see that  \eqref{eq:pet} defines the same measure as the one given in \eqref{def:lap}, start observing that  the requirements  `$f\restr{\partial\Omega}=0$' and `$\supp(f)\subset\subset\Omega$' are easily seen to be interchangeable in this setting, so that it is sufficient to prove that $\mathcal H^n$-a.e. it holds $\la{\rm grad}\, f,{\rm grad}\, g\ra=\nabla f\cdot\nabla g$. This follows from the fact that $\mathcal H^n$-almost all the space can be covered by DC charts, and that on these charts the two objects are clearly the same.
}\fr\end{remark}
In the next two lemmas we prove that the higher regularity of $g$ and $\bd g$ is, the wider is the class of functions for which the integration by part rules holds.
\begin{lemma}\label{le:finalmente}
Let $(X,\sfd,\mm)$ be a proper infinitesimally Hilbertian space, $\Omega\subset X$ an open set and $g\in D(\bd,\Omega)\cap\s^2_{\rm int}(\Omega)$. Then for every   $\psi\in \s^2(X,\sfd,\mm)\cap C_c(X)$ with support contained in $\Omega$ it holds
\[
-\int_\Omega \nabla\psi\cdot\nabla g\,\d\mm=\int_\Omega \psi\,\d\bd g\restr\Omega.
\]
\end{lemma}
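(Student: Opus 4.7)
The strategy is to approximate $\psi$ by elements of $\test\Omega$ and pass to the limit in the integration-by-parts identity guaranteed by the definition of $\bd g\restr\Omega$. The key subtlety is that $\bd g\restr\Omega$ can be singular with respect to $\mm$, so $W^{1,2}$-convergence of approximants alone cannot control $\int\psi_n\,\d\bd g\restr\Omega$; I will need \emph{simultaneous} $W^{1,2}$ and uniform convergence of the approximants.

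First, the setup. Since $(X,\sfd)$ is proper and $\mm$ is locally finite, $\mm$ is finite on bounded (hence compact) sets. Let $K:=\supp(\psi)\subset\Omega$, compact, and pick open sets $V_1,V_2\in\Int\Omega$ with $K\subset V_1\subset\overline{V_1}\subset V_2\subset\overline{V_2}\subset\Omega$ (possible using $\sfd(K,X\setminus\Omega)>0$ and properness). Fix a Lipschitz cutoff $\chi:X\to[0,1]$ with $\chi\equiv1$ on $\overline{V_1}$ and $\supp\chi\subset V_2$. Since $\psi$ is bounded with support of finite $\mm$-measure, $\psi\in W^{1,2}(X,\sfd,\mm)$; and $|\bd g\restr\Omega|(\overline{V_2})<\infty$ by local finiteness of $\bd g\restr\Omega$ and compactness of $\overline{V_2}$.

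Next, I produce approximants. By Corollary \ref{cor:lipdense} (applicable since $W^{1,2}$ is Hilbert, hence uniformly convex, and $\mm$ is finite on bounded sets) together with the Hopf-Lax/Moreau--Yosida construction underlying the proof of Theorem \ref{thm:lipdense}, there exist Lipschitz $\tilde\psi_n\in W^{1,2}$ converging to $\psi$ both in the $W^{1,2}$-norm and uniformly on $X$ (uniform convergence of such envelopes is standard for continuous $\psi$ with compact support, hence uniformly continuous). Set $\psi_n:=\chi\,\tilde\psi_n$: this is Lipschitz with $\supp\psi_n\subset V_2\in\Int\Omega$, so $\psi_n\in\test\Omega$. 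Since $\chi\psi=\psi$, we have $\psi_n-\psi=\chi(\tilde\psi_n-\psi)$, and the Leibniz rule \eqref{eq:leibweak} gives
\[
\weakgrad{(\psi_n-\psi)}\leq \chi\,\weakgrad{(\tilde\psi_n-\psi)}+{\rm Lip}(\chi)\,|\tilde\psi_n-\psi|,
\]
so $\psi_n\to\psi$ in $W^{1,2}$; uniform convergence $\psi_n\to\psi$ on $X$ is inherited from that of $\tilde\psi_n$.

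Finally, I pass to the limit. Each $\psi_n\in\test\Omega\cap L^1(\Omega,|\bd g\restr\Omega|)$ (bounded with support in $\overline{V_2}$), so by the definition of $\bd g\restr\Omega$,
\[
-\int_\Omega \nabla\psi_n\cdot\nabla g\,\d\mm=\int_\Omega\psi_n\,\d\bd g\restr\Omega.
\]
The pointwise bound $|\nabla\phi\cdot\nabla g|\leq\weakgrad\phi\,\weakgrad g$ of \eqref{eq:1}, together with $\weakgrad g\in L^2(V_2,\mm)$ (since $g\in\s^2_{int}(\Omega)$), yields via Cauchy--Schwarz that the left-hand side converges to $-\int_\Omega \nabla\psi\cdot\nabla g\,\d\mm$. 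Uniform convergence $\psi_n\to\psi$ on $\overline{V_2}$ combined with the finite mass $|\bd g\restr\Omega|(\overline{V_2})<\infty$ gives convergence of the right-hand side to $\int_\Omega\psi\,\d\bd g\restr\Omega$ by dominated convergence, completing the argument. The principal obstacle is securing the twofold Sobolev-plus-uniform convergence of the Lipschitz approximants: without the uniform part, the possibly singular behavior of $\bd g\restr\Omega$ with respect to $\mm$ could not be handled by the $L^2$-based gradient estimate alone, which is why one must appeal to the specific Hopf-Lax construction rather than the abstract density statement of Corollary \ref{cor:lipdense}.
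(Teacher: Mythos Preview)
Your overall strategy is the same as the paper's: approximate $\psi$ by functions in $\test\Omega$ in such a way that you can pass to the limit both in $\int\nabla(\cdot)\cdot\nabla g\,\d\mm$ (which needs $W^{1,2}$-convergence) and in $\int(\cdot)\,\d\bd g\restr\Omega$ (which, since $\bd g\restr\Omega$ may be singular, needs uniform or at least pointwise-bounded convergence). You correctly identify this dual requirement as the crux.

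The difference lies in how the dual convergence is manufactured. You assert the existence of a \emph{single} Lipschitz sequence $\tilde\psi_n$ converging to $\psi$ simultaneously uniformly and in $W^{1,2}$, justifying this by an appeal to ``the Hopf--Lax construction underlying the proof of Theorem~\ref{thm:lipdense}''. This claim is true, but it is not a consequence of anything stated in the paper: Theorem~\ref{thm:lipdense} and Corollary~\ref{cor:lipdense} only guarantee \emph{some} approximating sequence in $W^{1,2}$, and the fact that the Hopf--Lax envelopes themselves converge in energy (hence in $W^{1,2}$, by Hilbertianity) requires going into the details of the proofs in \cite{Ambrosio-Gigli-Savare11,Ambrosio-Gigli-Savare-pq}. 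The paper instead decouples the two convergences by a two-parameter construction: it takes an arbitrary $W^{1,2}$-approximating Lipschitz sequence $(\psi_n)$ from Corollary~\ref{cor:lipdense}, forms the upper and lower Hopf--Lax envelopes $\psi^{t,\pm}$ of $\psi$ (which sandwich $\psi$ and converge to it uniformly), truncates $\psi_n$ between them to get $\psi_{n,t}:=\min\{\max\{\psi_n,\psi^{t,+}\},\psi^{t,-}\}$, and then argues that for fixed $t$ one has $\chi\psi_{n,t}\to\psi$ in $W^{1,2}$ as $n\to\infty$, while $\psi_{n,t}\to\psi$ pointwise as $t\downarrow0$ uniformly in $n$. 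A diagonalization then yields the result. This is more self-contained: it uses only the abstract density statement plus the elementary pointwise/uniform behavior of Hopf--Lax envelopes, never needing to know that those envelopes themselves converge in $W^{1,2}$.

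In short: your argument is correct provided one accepts the simultaneous-convergence claim, but that claim is the substantive content of the lemma, and the paper supplies an explicit construction to establish it rather than citing it from the external proof of Theorem~\ref{thm:lipdense}.
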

\begin{proof}
Since $(X,\sfd)$ is proper, $\supp(\psi)$ is compact and therefore $|\bd g\restr\Omega|(\supp(\psi))<\infty$. Similarly, we have  $\mm(\supp(\psi))<\infty$  and thus $\psi\in L^2(X,\mm)$. Hence $\psi\in W^{1,2}(X,\sfd,\mm)$ and from Corollary \ref{cor:lipdense} we know there exists a sequence $(\psi_n)\subset W^{1,2}(X,\sfd,\mm)$ of Lipschitz functions converging to $\psi$ in $W^{1,2}(X,\sfd,\mm)$. Define $\psi^{t,+}, \psi^{t,-}:X\to\R$ by
\[
\psi^{t,+}(x):=\inf_y \psi(y)+\frac{\sfd^2(x,y)}{2t},\qquad\qquad \psi^{t,-}(x):=\sup_y \psi(y)-\frac{\sfd^2(x,y)}{2t}.
\]
It is not hard to show  that since $\psi\in C_b(X)$, the functions $\psi^{t,+},\psi^{t,-}$ are Lipschitz, equibounded and it holds   $\psi^{t,+}(x)\uparrow \psi(x) $, $\psi^{t,-}(x)\downarrow \psi(x) $ as $t\downarrow0$ for any $x\in X$ (see for instance Chapter 3 of \cite{Ambrosio-Gigli-Savare11}). 

Put $\psi_{n,t}:=\min\{\max\{\psi_n,\psi^{t,+}\},\psi^{t,-}\}$ and observe that $\psi_{n,t}$ is Lipschitz for any $n\in\N$, $t>0$. Let $\nchi\in\test\Omega$ be identically 1 on $\supp(\psi)$, and consider the functions $\nchi\psi_{n,t}\in\test\Omega$.

Since $\psi^{t,+},\psi^{t,-},\nchi$ are Lipschitz, an application of the dominate convergence theorem ensures that for any $t>0$ the sequence $n\mapsto \psi_{n,t}$ converges to $\psi$ in energy 
$W^{1,2}(X,\sfd,\mm)$ as $n\to\infty$. Thus this convergence is also w.r.t. the $W^{1,2}$ norm and from $\supp(\nchi\psi_{n,t})\subset\supp(\nchi)$, $\mm(\supp(\nchi))<\infty$ we get 
\begin{equation}
\label{eq:euno}
\lim_{n\to\infty}\int_\Omega\nabla(\nchi\psi_{n,t})\cdot\nabla g\,\d\mm=\int_\Omega\nabla\psi\cdot\nabla g\,\d\mm,\qquad\forall t>0.
\end{equation}
By construction $\psi_{n,t}$ is uniformly bounded in $n,t$ and pointwise converges to $\psi$ as $t\to 0$ uniformly on $n$. Taking into account that $\supp(\nchi\psi_{n,t})\subset\supp(\nchi)$, $|\bd g\restr\Omega|(\supp(\nchi))<\infty$ and the dominate convergence theorem we get
\begin{equation}
\label{eq:eddue}
\lim_{t\downarrow0}\int_\Omega\nchi\psi_{n,t}\,\d\bd g\restr\Omega=\int\psi\,\d\bd g\restr\Omega,\qquad\textrm{ uniformly on }n.
\end{equation}
Since $\nchi\psi_{n,t}\in\test\Omega$, \eqref{eq:euno} and \eqref{eq:eddue} together with a diagonalization argument give the thesis.
\end{proof}

\begin{lemma}\label{le:l2}
Let $(X,\sfd,\mm)$ be a proper infinitesimally Hilbertian space, $\Omega\subset X$ an open set and $g\in D(\bd,\Omega)\cap \s^2_{\rm int}(\Omega)$ with $\bd g\restr\Omega\ll\mm$, $\bd g\restr\Omega=h\mm$ and $h\in L^2_{\rm loc}(\Omega,\mm\restr{\Omega})$. Then for every $\psi\in W^{1,2}(X,\sfd,\mm)$ with bounded support contained in $\Omega$ it holds
\[
-\int_\Omega \nabla\psi\cdot\nabla g\,\d\mm=\int_\Omega \psi h\,\d\mm.
\]
\end{lemma}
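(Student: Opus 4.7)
The plan is to reduce to Lemma \ref{le:finalmente} by approximating a general $\psi\in W^{1,2}(X,\sfd,\mm)$ with bounded support in $\Omega$ by compactly supported continuous (indeed Lipschitz) functions, using the density afforded by infinitesimal Hilbertianity.

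First, fix the geometric setup. Since $(X,\sfd)$ is proper, the bounded set $\supp(\psi)$ is compact, hence I can choose $\Omega'\in\Int\Omega$ containing $\supp(\psi)$ (using that $\mm$, being locally finite on a proper space, is automatically finite on bounded sets). Let $\nchi\in\test\Omega$ with $\nchi\equiv 1$ on $\supp(\psi)$ and $\supp(\nchi)\subset\Omega'$, so in particular $\psi=\nchi\psi$.

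Next, since the space is infinitesimally Hilbertian, $W^{1,2}(X,\sfd,\mm)$ is a Hilbert space by Proposition \ref{prop:ovvio}, hence uniformly convex; together with the fact that $\mm$ is finite on bounded sets, Corollary \ref{cor:lipdense} applies and gives a sequence $(\psi_n)\subset\mathrm{LIP}(X)\cap W^{1,2}(X,\sfd,\mm)$ with $\psi_n\to\psi$ in the $W^{1,2}$ norm. Put $\tilde\psi_n:=\nchi\psi_n\in\mathrm{LIP}(X)\cap W^{1,2}(X,\sfd,\mm)$. Each $\tilde\psi_n$ has support contained in $\supp(\nchi)\subset\Omega'\subset\Omega$, is Lipschitz hence continuous with compact support (by properness), and the Leibniz rule \eqref{eq:leibweak} together with $\nchi\in L^\infty$ and $\weakgrad\nchi\in L^\infty$ with bounded support gives $\tilde\psi_n\to\nchi\psi=\psi$ in $W^{1,2}(X,\sfd,\mm)$. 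In particular $\tilde\psi_n\in\s^2(X,\sfd,\mm)\cap C_c(X)$ with support in $\Omega$, so Lemma \ref{le:finalmente} yields
\[
-\int_\Omega \nabla\tilde\psi_n\cdot\nabla g\,\d\mm=\int_\Omega \tilde\psi_n\,\d\bd g\restr\Omega=\int_\Omega \tilde\psi_n h\,\d\mm,
\]
using the hypothesis $\bd g\restr\Omega=h\mm$ in the last equality.

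Finally, pass to the limit $n\to\infty$. For the left-hand side, the bilinearity \eqref{eq:bilin} and the Cauchy--Schwarz inequality give
\[
\left|\int_\Omega\nabla(\tilde\psi_n-\psi)\cdot\nabla g\,\d\mm\right|\leq\|\weakgrad{(\tilde\psi_n-\psi)}\|_{L^2(\Omega',\mm)}\,\|\weakgrad g\|_{L^2(\Omega',\mm)},
\]
and the right-hand side tends to $0$ since $g\in\s^2_{int}(\Omega)$ with $\Omega'\in\Int\Omega$ forces $\weakgrad g\in L^2(\Omega',\mm)$, while $\tilde\psi_n\to\psi$ in $W^{1,2}$. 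For the other side, $\tilde\psi_n\to\psi$ in $L^2(\mm)$ and $h\in L^2_{loc}(\Omega,\mm)$, so $h\in L^2(\supp(\nchi),\mm)$ (bounded closed subset of $\Omega$) and another Cauchy--Schwarz estimate gives $\int\tilde\psi_n h\,\d\mm\to\int\psi h\,\d\mm$. The conclusion follows. The only delicate point is guaranteeing enough regularity of the approximants to invoke Lemma \ref{le:finalmente}, which is precisely what infinitesimal Hilbertianity---through uniform convexity of $W^{1,2}$ and Corollary \ref{cor:lipdense}---provides.
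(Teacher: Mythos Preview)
Your proof is correct and follows essentially the same approach as the paper: approximate $\psi$ by Lipschitz functions via Corollary \ref{cor:lipdense}, multiply by a cutoff $\nchi\in\test\Omega$ to localize the support, and pass to the limit using Cauchy--Schwarz on both sides. The only cosmetic difference is that you invoke Lemma \ref{le:finalmente} for the approximants $\tilde\psi_n$, whereas the paper notes directly that $\nchi\psi_n\in\test\Omega\cap L^1(\Omega,|h|\mm)$ and uses the defining inequality \eqref{eq:deflap}; since your $\tilde\psi_n$ are in fact in $\test\Omega$, the detour through Lemma \ref{le:finalmente} is unnecessary but harmless.
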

\begin{proof}
By Corollary \ref{cor:lipdense} we know there exists a sequence $(\psi_n)\subset W^{1,2}(X,\sfd,\mm)$ of Lipschitz functions converging to $\psi$ in $W^{1,2}(X,\sfd,\mm)$. Let $\nchi\in\test\Omega$ be identically 1 on $\supp(\psi)$ and notice that $\nchi\psi_n\in\test\Omega\cap L^1(\Omega,|h|\mm)$ and $\nchi\psi_n\to \psi$ in $W^{1,2}(X,\sfd,\mm)$. Hence we can pass to the limit in
\[
-\int_\Omega \nabla(\nchi\psi_n)\cdot\nabla g\,\d\mm=\int_\Omega \nchi\psi_n h\,\d\mm,
\]
and get the thesis.
\end{proof}
\begin{proposition}[Chain rule]\label{prop:chainlinear}
Let $(X,\sfd,\mm)$ be an infinitesimally Hilbertian metric measure space,  $\Omega\subset X$ an open set, $g\in D(\bd,\Omega)\cap\s^2_{\rm int}(\Omega)$, $I\subset\R$ an open set such that $\mm(g^{-1}(\R\setminus I))=0$ and $\varphi\in C^{1,1}_{\rm loc}(I)$. Then the following holds.
\begin{itemize}
\item[i)] Assume that $g\restr{\Omega'}$ is Lipschitz for every $\Omega'\in\Int\Omega$. Then 
\begin{equation}
\label{eq:chainlaplh}
\varphi\circ g\in D(\bd,\Omega),\qquad\textrm{ and }\qquad \bd(\varphi\circ g)\restr\Omega= \varphi'\circ g\,\bd g\restr{\Omega}+\varphi''\circ g\,\weakgrad g^2\mm\restr\Omega.
\end{equation}
\item[ii)] Assume that $(X,\sfd)$ is proper and  $g\in C(\Omega)$. Then \eqref{eq:chainlaplh} holds.
\item[iii)] Assume that $(X,\sfd)$ is proper, $g\in L^2_{\rm loc}(X,\mm)$ and $\bd g\restr\Omega\ll\mm$  with $\frac{\d\bd g\restr\Omega}{\d\mm}\in   L^2_{\rm loc}(\Omega,\mm\restr\Omega)$. Then  \eqref{eq:chainlaplh} holds.
\end{itemize}
\end{proposition}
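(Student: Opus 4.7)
The three cases all reduce to verifying the integration-by-parts identity
\begin{equation*}
-\int_\Omega \nabla f\cdot\nabla(\varphi\circ g)\,\d\mm \;=\; \int_\Omega f\,\varphi'\circ g\,\d\bd g\restr\Omega+\int_\Omega f\,\varphi''\circ g\,\weakgrad g^2\,\d\mm
\end{equation*}
for every test function $f$ in an appropriate class that is rich enough to characterize the measure on the left. Note that by $2$-infinitesimal strict convexity (implied by infinitesimal Hilbertianity) and Proposition \ref{prop:lapunique}, the Laplacian is unique, so it suffices to exhibit one measure satisfying the defining inequalities; then the above identity identifies it.

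\textbf{Case (i).} Under the Lipschitz assumption, Proposition \ref{prop:chainlapl} (valid for general $\mm$ as in \eqref{eq:mms}) applies directly; the only point to check is that although $\varphi$ is only assumed $C^{1,1}_{loc}$ on $I$ rather than on an open set containing $g(\Omega)$, the identity remains valid. This is handled by the locality properties \eqref{eq:nullset}, \eqref{eq:localgrad2}: since $\mm(g^{-1}(\R\setminus I))=0$, for any $\Omega'\in\Int\Omega$ one can truncate $g$ so that the image lies in a compact subset of $I$, extend $\varphi$ to a global $C^{1,1}$ function agreeing with the original on that compact subset, and invoke Proposition \ref{prop:chainlapl}; uniqueness then gives equality rather than inclusion in \eqref{eq:chainlaplh}.

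\textbf{Cases (ii) and (iii).} The computation on the right-hand side of the displayed identity is purely algebraic, using:
(a) the gradient chain rule \eqref{eq:chaings}, giving $\nabla f\cdot\nabla(\varphi\circ g)=\varphi'\circ g\,(\nabla f\cdot\nabla g)$ $\mm$-a.e.;
(b) the Leibniz rule \eqref{eq:leibb}, which after rearranging yields
$\varphi'\circ g\,(\nabla f\cdot\nabla g)=\nabla(f\,\varphi'\circ g)\cdot\nabla g - f\,\nabla(\varphi'\circ g)\cdot\nabla g$;
(c) the chain rule again: $\nabla(\varphi'\circ g)\cdot\nabla g = \varphi''\circ g\,\weakgrad g^2$ $\mm$-a.e.
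Substituting and integrating, the identity reduces to the single integration-by-parts statement
\begin{equation*}
-\int_\Omega \nabla(f\,\varphi'\circ g)\cdot\nabla g\,\d\mm = \int_\Omega f\,\varphi'\circ g\,\d\bd g\restr\Omega .
\end{equation*}
For case (ii), $g\in C(\Omega)$ and properness force $g(\supp f)$ to be a compact subset of $I$ once $f$ is chosen with support in $\Omega'\in\Int\Omega$ (after harmless truncation of $\varphi$ as in case (i)), so $\varphi'\circ g,\varphi''\circ g$ are bounded and continuous on $\supp f$; the function $f\,\varphi'\circ g$ then belongs to $\s^2(X,\sfd,\mm)\cap C_c(X)$ with support in $\Omega$ by \eqref{eq:leibweak} and \eqref{eq:chaineasy}, and Lemma \ref{le:finalmente} applies. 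For case (iii) one tests instead against $\psi\in W^{1,2}(X,\sfd,\mm)$ with bounded support in $\Omega$ via Lemma \ref{le:l2}, and one needs $f\,\varphi'\circ g$ to belong to this class.

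\textbf{Main obstacle.} The delicate point is case (iii): with $g$ only in $L^2_{loc}$, $\varphi'\circ g$ need not be locally bounded, so arguing that $f\,\varphi'\circ g\in W^{1,2}$ requires care. The natural remedy is a truncation: replace $\varphi$ by a sequence $\varphi_n$ agreeing with $\varphi$ on $\{|g|\leq n\}$ and affine outside, prove the identity for each $\varphi_n$ (where $\varphi_n'\circ g$ is bounded, hence $f\,\varphi_n'\circ g\in W^{1,2}$ by the Leibniz rule in $W^{1,2}$), and pass to the limit. The limit passage on the right-hand side is controlled by dominated convergence once one observes that, for $f\in\test\Omega$ supported in $\Omega'\in\Int\Omega$, the sets $\{|g|>n\}\cap\Omega'$ have vanishing $\mm$-measure as $n\to\infty$ (since $g\in L^2_{loc}$), and the $L^2_{loc}$ assumption on $h=\d\bd g\restr\Omega/\d\mm$ together with $\weakgrad g^2\in L^1_{loc}(\Omega)$ provides the requisite dominations; the left-hand side is handled using the chain rule \eqref{eq:chaineasy} to compare $\weakgrad{(\varphi_n\circ g)}$ and $\weakgrad{(\varphi\circ g)}$ on $\Omega'$.
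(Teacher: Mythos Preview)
Your argument is essentially the paper's: case (i) is referred to Proposition~\ref{prop:chainlapl}, and for (ii) and (iii) the paper performs exactly your algebraic computation (the identities \eqref{eq:sonnolenza1}, \eqref{eq:sonnolenza2}) to reduce everything to
\[
-\int_\Omega\nabla(f\varphi'\circ g)\cdot\nabla g\,\d\mm=\int_\Omega f\varphi'\circ g\,\d\bd g\restr\Omega,
\]
which is then obtained from Lemma~\ref{le:finalmente} (case (ii)) resp.\ Lemma~\ref{le:l2} (case (iii)) with $\psi:=f\varphi'\circ g$.

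The one substantive difference is in (iii): the paper does \emph{not} truncate. It simply asserts that the hypotheses give $\varphi'\circ g\in L^2_{loc}(\Omega)$ and $\varphi''\circ g\in L^\infty_{loc}(\Omega)$, whence $\psi=f\varphi'\circ g\in W^{1,2}(X,\sfd,\mm)$ with bounded support and Lemma~\ref{le:l2} applies directly. These assertions are immediate once one reads the hypothesis as $\varphi''\in L^\infty$ (so that $|\varphi'|$ has at most linear growth and $g\in L^2_{loc}$ suffices); under the literal $C^{1,1}_{loc}$ assumption neither is automatic, and indeed the right-hand side of \eqref{eq:chainlaplh} may fail to define a locally finite measure, so some such implicit reading is needed even for the statement to make sense. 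Your truncation is thus a reasonable precaution, but note that as written your domination is incomplete: with $\varphi_n$ affine outside $[-n,n]$, on $\{|g|>n\}$ one has $|\varphi_n'\circ g|=|\varphi'(\pm n)|$, which is unbounded in $n$, and you have not exhibited an $n$-independent majorant for either side.
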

\begin{proof}\\*
\noindent{$\mathbf{(i)}$} This is a particular case of Proposition \ref{prop:chainlapl}.

\noindent{$\mathbf{(ii)}$} Let $\Omega'\in\Int\Omega$ and observe that since $\overline{\Omega'}$ is compact, $g(\overline{\Omega'})$ is compact as well and thus $\varphi''$ is bounded on $g(\Omega')$. It easily follows that the formula $\tilde\mu:= \varphi'\circ g\,\bd g\restr{\Omega}+\varphi''\circ g\,\weakgrad g^2\mm\restr\Omega$ defines  a locally finite measure on $\Omega$, so that the statement makes sense. 

Now notice that  $\test\Omega\subset L^1(\Omega,|\tilde\mu|)$, pick   $f\in\test\Omega$ and use \eqref{eq:leibb} to get
\begin{equation}
\label{eq:sonnolenza1}
\begin{split}
\nabla f\cdot\nabla (\varphi\circ g)= \varphi'\circ g\,\nabla f\cdot\nabla g &= \nabla (f\varphi'\circ g)\cdot\nabla g-f\,\nabla(\varphi'\circ g)\cdot\nabla g\\
&=\nabla (f\varphi'\circ g)\cdot\nabla g-f\varphi''\circ g\weakgrad g^2\qquad \mm-a.e.,
\end{split}
\end{equation}
which by integration gives
\begin{equation}
\label{eq:sonnolenza2}
-\int\nabla f\cdot\nabla (\varphi\circ g)\,\d\mm=-\int \nabla (f\varphi'\circ g)\cdot\nabla g\,\d\mm+\int f\varphi''\circ g\weakgrad g^2\,\d\mm.
\end{equation}
Hence to conclude it is sufficient to show that
\begin{equation}
\label{eq:show}
-\int \nabla (f\varphi'\circ g)\cdot\nabla g\,\d\mm=\int  f\varphi'\circ g\,\d\bd g\restr{\Omega}.
\end{equation}
This is a consequence of Lemma \ref{le:finalmente} applied to $\psi:=f\varphi'\circ g$, which by our assumptions belongs to $\s^2(X,\sfd,\mm)\cap C_c(\Omega)$.

\noindent{$\mathbf{(iii)}$} From the assumptions we know that $\varphi'\circ g\in L^2_{\rm loc}(\Omega,\mm\restr\Omega)$ and $\varphi''\circ g\in L^\infty_{\rm loc}(\Omega,\restr\Omega)$. Therefore, since  $\bd g\restr\Omega\ll\mm$ with $L^2_{\rm loc}$ density, the formula  $\tilde\mu:= \varphi'\circ g\,\bd g\restr{\Omega}+\varphi''\circ g\,\weakgrad g^2\mm\restr\Omega$ defines  a locally finite measure on $\Omega$ and  the statement makes sense. As before, we have $\test\Omega\subset L^1(\Omega,|\tilde\mu|)$. With the same computations done in \eqref{eq:sonnolenza1}, \eqref{eq:sonnolenza2} we reduce to show that \eqref{eq:show} holds also in this case. This is a consequence of Lemma \ref{le:l2} applied to $\psi:=f\varphi'\circ g$, which belongs to $W^{1,2}(X,\sfd,\mm)$ and has bounded support contained in $\Omega$.
\end{proof}
The next theorem shows that in this linear case, the Leibniz rule holds also for the Laplacian, a property which, being consequence of the second identity in \eqref{eq:leibb}, is not available in the general non-linear setting. As for the chain rule, the theorem is stated in 3 different ways, depending on the assumptions on $g_i$, $\bd g_i\restr\Omega$ and the space.
\begin{theorem}[Leibniz rule for the Laplacian]\label{thm:leiblap} Let $(X,\sfd,\mm)$ be an infinitesimally Hilbertian space,  $\Omega\subset X$ an open set and  $g_1,g_2\in D(\bd,\Omega)\cap \s^2_{\rm int}(\Omega)$. Then the following is true.
\begin{itemize}
\item[i)] If $g_1,g_2$ are Lipschitz on  $\Omega'$ for every $\Omega'\in\Int\Omega$, and $g_1,g_2\in D_{\rm ifm}(\bd,\Omega)$, then $g_1g_2\in D(\bd,\Omega)$ and 
\begin{equation}
\label{eq:leinlapl}
\bd(g_1g_2)\restr\Omega=g_1\bd  g_2\restr\Omega+g_2\bd  g_1\restr\Omega+2\nabla g_1\cdot\nabla g_2\,\mm.
\end{equation}
\item[ii)] If $(X,\sfd)$ is proper and $g_1,g_2\in C(\Omega)$, then  $g_1g_2\in D(\bd,\Omega)$ and \eqref{eq:leinlapl} holds.
\item[iii)] If $(X,\sfd)$ is proper, $g_1,g_2\in L^2_{\rm loc}(\Omega,\mm\restr\Omega)\cap L^\infty_{\rm loc}(\Omega,\mm\restr\Omega)$ and $\bd g_i\restr\Omega\ll\mm$ with $L^2_{\rm loc}(\Omega,\mm\restr\Omega)$ density, $i=1,2$, then $g_1g_2\in D(\bd,\Omega)$ and \eqref{eq:leinlapl} holds.
\end{itemize} 
\end{theorem}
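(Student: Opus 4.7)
The plan is to reduce the three cases to a single identity using the bilinearity of $\nabla\cdot\nabla$ on an infinitesimally Hilbertian space, with the only variation being which integration-by-parts rule is invoked at the end. Since the space is $2$-infinitesimally strictly convex by Proposition \ref{prop:simm}, Proposition \ref{prop:lapunique} makes $\bd g_i\restr\Omega$ single valued whenever $g_i\in\s^2_{int}(\Omega)\cap D(\bd,\Omega)$, so what we need to prove is just
\[
-\int_\Omega \nabla f\cdot\nabla(g_1g_2)\,\d\mm=\int_\Omega f\,\d\tilde\mu,\qquad\forall f\in\test\Omega\cap L^1(\Omega,|\tilde\mu|),
\]
where $\tilde\mu:=g_1\bd g_2\restr\Omega+g_2\bd g_1\restr\Omega+2(\nabla g_1\cdot\nabla g_2)\mm$. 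As a preliminary step I would verify in each of the three cases that $g_1g_2\in\s^2_{int}(\Omega)$ via the Sobolev Leibniz inequality \eqref{eq:leibweak} (combined with local boundedness of the $g_i$, which in (i) follows from the Lipschitz hypothesis, in (ii) from continuity on $\overline{\Omega'}$, compact by properness, and in (iii) from $L^\infty_{loc}$), and that $\tilde\mu$ is a locally finite measure on $\Omega$ (using $D_{\rm ifm}$ in (i), properness plus $D(\bd,\Omega)=D_{\rm ifm}(\bd,\Omega)$ in (ii), and the $L^2_{loc}$ density $h_i$ in (iii)).

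The computational core is to apply the two instances of \eqref{eq:leibb}
\[
\nabla f\cdot\nabla(g_1g_2)=g_1\nabla f\cdot\nabla g_2+g_2\nabla f\cdot\nabla g_1,\qquad \nabla(fg_i)\cdot\nabla g_j=g_i\nabla f\cdot\nabla g_j+f\nabla g_i\cdot\nabla g_j,
\]
to rewrite, $\mm$-a.e.,
\[
\nabla f\cdot\nabla(g_1g_2)=\nabla(fg_1)\cdot\nabla g_2+\nabla(fg_2)\cdot\nabla g_1-2f\,\nabla g_1\cdot\nabla g_2.
\]
Integrating against $\mm$ on $\Omega$, the theorem reduces to the pair of integration-by-parts identities
\[
-\int_\Omega \nabla(fg_i)\cdot\nabla g_j\,\d\mm=\int_\Omega fg_i\,\d\bd g_j\restr\Omega,\qquad\{i,j\}=\{1,2\}.
\]

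Now the three statements of the theorem correspond to three different mechanisms for validating this last identity. In case (i), the Lipschitz assumption on $g_i$ together with $\supp(f)\subset\Omega'\in\Int\Omega$ puts $fg_i$ in $\test\Omega$, and the $D_{\rm ifm}$ hypothesis gives $fg_i\in L^1(\Omega,|\bd g_j\restr\Omega|)$, so the identity follows directly from applying the defining inequalities of $g_j\in D(\bd,\Omega)$ (combined with the collapse $D^+=D^-$). In case (ii), properness and continuity of $g_i$ make $fg_i\in C_c(X)$ with support in $\Omega$, while \eqref{eq:leibweak} puts it in $\s^2(X,\sfd,\mm)$, so Lemma \ref{le:finalmente} yields the identity. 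In case (iii), $fg_i$ is instead in $W^{1,2}(X,\sfd,\mm)$ with bounded support inside $\Omega$ (via $g_i\in L^\infty_{loc}\cap L^2_{loc}$ and \eqref{eq:leibweak}), so Lemma \ref{le:l2} applies. The only real obstacle is making sure that the auxiliary function $fg_i$ genuinely meets the hypotheses of the lemma matched to each case — the rest is bookkeeping via \eqref{eq:leibb}; the point is that Lemmas \ref{le:finalmente} and \ref{le:l2} have already been set up precisely to enlarge the admissible class of test functions beyond $\test\Omega$, so the Leibniz rule for $\bd$ falls out as a direct consequence of the Leibniz rule for $\nabla\cdot\nabla$.
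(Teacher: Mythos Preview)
Your proposal is correct and follows essentially the same route as the paper: the same algebraic identity $\nabla f\cdot\nabla(g_1g_2)=\nabla(fg_1)\cdot\nabla g_2+\nabla(fg_2)\cdot\nabla g_1-2f\,\nabla g_1\cdot\nabla g_2$ derived from \eqref{eq:leibb}, the same reduction to the two integration-by-parts identities for $fg_i$ against $\bd g_j$, and the same three-way split using $fg_i\in\test\Omega$ directly in (i), Lemma \ref{le:finalmente} in (ii), and Lemma \ref{le:l2} in (iii). The preliminary checks you list (local finiteness of $\tilde\mu$, $g_1g_2\in\s^2_{int}(\Omega)$, membership of $fg_i$ in the appropriate test class) are exactly those the paper carries out.
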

\begin{proof}

\noindent{$\mathbf{(i)}$} Being Lipschitz, $g_1,g_2$ are bounded on $\Omega'$ for any $\Omega'\in\Int\Omega$, hence  $g_1g_2\in\s^2_{\rm int}(\Omega)$. It is also clear  that the right hand side of \eqref{eq:leinlapl} defines a locally finite measure $\mu$ on $\Omega$, so that the statement makes sense. The fact that $|\bd g_i\restr\Omega|(\Omega')$ is finite for every $\Omega'\in\Int\Omega $, $i=1,2$, grants that  $\test\Omega\subset  L^1(\Omega,|\bd g_i\restr\Omega|)$, $i=1,2$, and $\test\Omega\subset L^1(\Omega,|\mu|)$. To conclude, pick $f\in\test{\Omega}$, notice that $fg_1,fg_2\in\test\Omega$ and take the Leibniz rule \eqref{eq:leibb} into account to get
\begin{equation}
\label{eq:contoleib}
\begin{split}
\nabla f\cdot\nabla(g_1g_2)&=g_1\nabla f\cdot \nabla g_2 + g_2\nabla f\cdot \nabla g_1 =\nabla(fg_1)\cdot\nabla g_2+\nabla(fg_2)\cdot\nabla g_1-2f\,\nabla g_1\cdot\nabla g_2,
\end{split}
\end{equation}
which integrated gives the thesis.

\noindent{$\mathbf{(ii)}$} As before, the right hand side of \eqref{eq:leinlapl} defines a locally finite measure $\mu$, and as before  $g_1g_2\in\s^2_{\rm int}(\Omega)$, $\test\Omega\subset  L^1(\Omega,|\bd g_i\restr\Omega|)$, $i=1,2$, and $\test\Omega\subset L^1(\Omega,|\mu|)$. Pick $f\in\test\Omega$ and notice that with the same computations done in \eqref{eq:contoleib} the thesis follows if we show that
\begin{equation}
\label{eq:fineleib}
\begin{split}
\int_\Omega \nabla(fg_1)\cdot\nabla g_2\,\d\mm&=-\int_\Omega fg_1\,\d\bd g_2\restr\Omega,\\
\int_\Omega \nabla(fg_2)\cdot\nabla g_1\,\d\mm&=-\int_\Omega fg_2\,\d\bd g_1\restr\Omega.
\end{split}
\end{equation}
These are a consequence of Lemma \ref{le:finalmente} applied to $\psi:=fg_1$, $g:=g_2$ and $\psi:=fg_2$, $g:=g_1$ respectively.

\noindent{$\mathbf{(iii)}$} Same as $(ii)$ with Lemma \ref{le:l2} in place of Lemma \ref{le:finalmente} to justify \eqref{eq:fineleib}
\end{proof}

\bigskip

In the discussion we made up to now, the Laplacian has always been understood by means of the integration by parts formula in Definition \ref{def:lap}. On a Euclidean setting there is at least another completely different approach: to look at the behavior of the heat flow for small times. It is therefore natural to try to understand whether a similar approach is possible also in the non-smooth context. Although it is always possible to define the heat flow as the gradient flow of $\c_2$ in $L^2$, in practice, in order for it to have at least some of the regularization properties available in a Euclidean context, it seems better to restrict the attention to spaces with \emph{Riemannian Ricci curvature bounded from below}.  This class of spaces was introduced in \cite{Ambrosio-Gigli-Savare11bis} as a strengthening of the standard $\CD(K,\infty)$ class which rules out Finsler geometries. On such spaces the heat flow can be equivalently regarded as gradient flow of $\c_2$ in $L^2$ or as gradient flow of the relative entropy in $(\probt X,W_2)$, it is linear, self adjoint and has some very general regularization properties, see Theorem \ref{thm:rcd} below.

Recall that for $\mm\in\prob X$ the relative entropy functional ${\rm Ent}_\smm:\prob X\to[0,+\infty]$ is defined as
\[
{\rm Ent}_\smm(\mu):=\left\{
\begin{array}{ll}
\displaystyle{\int\rho\log\rho\,\d\mm},&\qquad\textrm{ if }\mu=\rho\mm,\\
+\infty,&\qquad\textrm{ otherwise}.
\end{array}
\right.
\]
By $D({\rm Ent}_\smm)$ we denote the set of $\mu$'s such that ${\rm Ent}_\smm(\mu)<\infty$.
\begin{definition}[Riemannian Ricci curvature bounds]\label{def:rcd}
Let $(X,\sfd,\mm)$ be as in \eqref{eq:mms} with $\mm\in\prob X$ and $K\in\R$. We say that it is a $\RCD(K,\infty)$ space provided it is infinitesimally Hilbertian and for any $\mu,\nu\in D({\rm Ent}_\smm)\cap \probt X$  there exists $\ppi\in\gopt(\mu,\nu)$ such that the inequality
\begin{equation}
\label{eq:weight}
{\rm Ent}_\smm((\e_t)_\sharp\ppi_F)\leq (1-t){\rm Ent}_\smm((\e_0)_\sharp\ppi_F)+t{\rm Ent}_\smm((\e_1)_\sharp\ppi_F)-\frac K2 t(1-t)W_2^2\big((\e_0)_\sharp\ppi_F,(\e_1)_\sharp\ppi_F\big),
\end{equation}
holds for any $t\in[0,1]$, where $F:\geo(X)\to\R$ is any Borel non-negative bounded function such that $\int F\,\d\ppi=1$ and $\ppi_F:= F\ppi$.
\end{definition}
Notice that the usual $K$-geodesic convexity of the entropy is enforced by asking it along all weighted plans $\ppi_F$ in \eqref{eq:weight}\footnote{in the recent paper \cite{AmbrosioGigliMondinoRajala12} it is shown that this assumption is in fact redundant, as it follows by the standard $\CD(K,\infty)$ condition plus infinitesimal Hilbertianity}.

Let $(X,\sfd,\mm)$ be a $\RCD(K,\infty)$ space. Denote by $\heatl_t:L^2(X,\mm)\to L^2(X,\mm)$ the gradient flow of $\c_2$ (which clearly exists and is unique), i.e. for any $f\in L^2(X,\mm)$ the curve $t\mapsto\heatl_t(f)$ is the gradient flow of $\c_2$ in $L^2(X,\mm)$ starting from $f$. Similarly, denote by $\heatw_t:D({\rm Ent}_\smm)\cap\probt X \to D({\rm Ent}_\smm)\cap\probt X$ the gradient flow of ${\rm Ent}_\smm$ in $(\probt X,W_2)$ (uniqueness has been established in \cite{Gigli10}, existence in \cite{Ambrosio-Gigli-Savare05}, \cite{Gigli10}, \cite{Ambrosio-Gigli-Savare11bis} for, respectively, compact / locally compact / extended Polish spaces).

In the next theorem we collect those results proved in \cite{Ambrosio-Gigli-Savare11},  \cite{Ambrosio-Gigli-Savare11bis} which we will use in the rest of the section.
\begin{theorem}[Properties of the heat flow]\label{thm:rcd}
Let $K\in \R$ and $(X,\sfd,\mm)$ a $\RCD(K,\infty)$ space. Then the following hold.
\begin{itemize}
\item[i)] \underline{Coincidence of the two flows} Let $f\in L^2(X,\mm)$ be such that $f\mm\in\probt X$. Then for any $t\geq 0$ it holds $(\heatl_tf)\mm=\heatw_t(f\mm)$.
\item[ii)] \underline{Linearity and extendibility} $\heatl_t$ is a linear semigroup which can be uniquely extended to a continuous semigroup on $L^1(X,\mm)$, still denoted by $\heatl_t$. Similarly, $\heatw_t$ is a linear semigroup which can be uniquely extended to a continuous linear semigroup, still denoted by $\heatw_t$, on the space $\mathcal M(\supp(\mm))$ of Radon measures with finite mass on $\supp(\mm)$, where continuity is meant in duality with $C_b(\supp(\mm))$.
\item[iii)]\underline{Regularization properties} For $t>0$, $\heatw_t$ maps $\mathcal M(\supp(\mm))$ on $D({\rm Ent}_\smm)$ and  $\heatl_t$ maps contractively $L^\infty(X,\mm)$ in $C_b(\supp(\mm))$, with $L^\infty\to \Lip$ regularization:
\begin{equation}
\label{eq:inflip}
\Lip(\heatl_tf)\leq \frac{\|f\|_{L^\infty}}{\sqrt{2\int_0^te^{2Ks}\,\d s}}.
\end{equation}
\item[iv)]\underline{Self-adjointness} For any $f\in L^\infty(X,\mm)$, $\mu\in \mathcal M(\supp(\mm))$ and $t> 0$ it holds
\begin{equation}
\label{eq:self}
\int f\,\d\heatw_t\mu=\int\heatl_tf\,\d\mu.
\end{equation}
\item[v)]\underline{Compatibility with the Dirichlet form theory} The intrinsic distance $\sfd_\mathcal E$ associated to $\mathcal E$ coincides with $\sfd$.
\item[vi)] \underline{Validity of the Bakry-Emery curvature condition} For any $f\in W^{1,2}(X,\sfd,\mm)$  it holds
\begin{equation}
\label{eq:BE}
\weakgrad{(\heatl_tf)}^2\leq e^{-2Kt}\heatl_t(\weakgrad f^2),\qquad \mm-a.e.\ \forall t\geq 0.
\end{equation}
\end{itemize}
\end{theorem}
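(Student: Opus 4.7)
The plan is to reduce each item to the corresponding statement in \cite{Ambrosio-Gigli-Savare11} and \cite{Ambrosio-Gigli-Savare11bis}. The unifying idea is that on $RCD(K,\infty)$ spaces the $L^2$-gradient flow of $\c_2$ and the Wasserstein gradient flow of ${\rm Ent}_\smm$ coincide on their common domain; both are linear and self-adjoint, and their sharp regularization properties come from $K$-geodesic convexity of the entropy promoted to an Evolution Variational Inequality characterization ($\mathrm{EVI}_K$).

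For (i), I would start from the Energy Dissipation Identity. Given $f\in L^2(X,\mm)$ with $f\mm\in\probt X$, Theorem \ref{thm:keyex}(iii) applied to $u(z):=z\log z$ gives
\[
\frac{\d}{\dt}{\rm Ent}_\smm(\mu_t) = -\int\frac{\weakgrad{\heatl_tf}^2}{\heatl_tf}\,\d\mm,\qquad \mu_t:=(\heatl_tf)\mm,
\]
while Theorem \ref{thm:keyex}(v) bounds $|\dot\mu_t|^2\leq \int\weakgrad{\heatl_tf}^2/\heatl_tf\,\d\mm$; identifying the same Fisher integral as an upper bound for the descending slope $|\nabla^-{\rm Ent}_\smm|^2(\mu_t)$ and combining via $\max(a,b)\geq (a+b)/2$ yields the EDI characterizing Wasserstein gradient flows of ${\rm Ent}_\smm$. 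The $K$-convexity from $RCD(K,\infty)$ upgrades this to $\mathrm{EVI}_K$, whose uniqueness forces $\mu_t=\heatw_t(f\mm)$. Item (ii) is then immediate: $\heatl_t$ is linear because infinitesimal Hilbertianity makes $\c_2$ quadratic, and its $L^1$-extension comes from the $L^1$-contractivity granted by Theorem \ref{thm:keyex}(ii); the linear extension of $\heatw_t$ to $\mathcal M(\supp(\mm))$ is inherited via (i) together with the $W_2$-contraction $W_2(\heatw_t\mu,\heatw_t\nu)\leq e^{-Kt}W_2(\mu,\nu)$, itself a direct consequence of $\mathrm{EVI}_K$. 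Self-adjointness (iv) is standard for $f,g\in D(\mathcal E)$ by the symmetry of $\mathcal E$, and is then extended by density to $f\in L^\infty$ and $\mu\in\mathcal M(\supp(\mm))$.

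For (vi) the strategy I would use is Kuwada's duality \cite{Kuwada10}: the $W_2$-contraction obtained from $\mathrm{EVI}_K$ is dualized, via the Hopf-Lax semigroup and Kantorovich duality, into the pointwise gradient estimate \eqref{eq:BE}. The Lipschitz bound \eqref{eq:inflip} of (iii) is a quantitative strengthening of (vi): writing $\phi(s):=\heatl_s((\heatl_{t-s}f)^2)$, the identity $\phi'(s)=2\heatl_s(\weakgrad{(\heatl_{t-s}f)}^2)$ together with \eqref{eq:BE} applied to $\heatl_s$ of argument $\heatl_{t-s}f$ yields the Bakry-Ledoux-type inequality
\[
2\Big(\int_0^te^{2Ks}\,\d s\Big)\weakgrad{(\heatl_tf)}^2 \leq \heatl_t(f^2)-(\heatl_tf)^2 \leq \|f\|_\infty^2,\qquad \mm\text{-a.e.},
\]
which forces $\heatl_tf$ to admit a Lipschitz representative with constant bounded as claimed; the regularizing property $\heatw_t\colon\mathcal M(\supp(\mm))\to D({\rm Ent}_\smm)$ for $t>0$ is a further consequence of $\mathrm{EVI}_K$. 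Finally (v) is a double inequality: $\sfd_\mathcal E\leq\sfd$ holds because every $1$-Lipschitz function belongs to $D(\mathcal E)$ with $\weakgrad{f}\leq 1$, while $\sfd\leq\sfd_\mathcal E$ is proved by approximating the competitors $\sfd(\cdot,x)\wedge R$ by elements of $D(\mathcal E)$ with weak upper gradient bounded by $1$, via Theorem \ref{thm:lipdense}. The main obstacle is the rigorous implementation of Kuwada's duality producing (vi); once (i), (vi) and infinitesimal Hilbertianity are available, the remaining items are fairly standard gradient-flow and Dirichlet-form arguments.
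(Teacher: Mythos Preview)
The paper does not prove this theorem: it is stated as a summary of results from \cite{Ambrosio-Gigli-Savare11} and \cite{Ambrosio-Gigli-Savare11bis}, and your sketch follows precisely the strategy of those references (identification of the flows via EDI, linearity from infinitesimal Hilbertianity, $\mathrm{EVI}_K$ giving $W_2$-contraction, Kuwada duality for \eqref{eq:BE}, and the Bakry--Ledoux interpolation for \eqref{eq:inflip}). So in substance your approach matches the intended one.

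One point to fix in item (v): you have the two inequalities swapped. The statement ``every $1$-Lipschitz function lies in $D(\mathcal E)$ with $\weakgrad f\leq 1$'' enlarges the competitor class in the definition of $\sfd_{\mathcal E}$ and therefore yields $\sfd_{\mathcal E}\geq\sfd$, not $\leq$. The genuinely nontrivial direction is $\sfd_{\mathcal E}\leq\sfd$, which amounts to the \emph{Sobolev-to-Lipschitz} property: any $f\in D(\mathcal E)\cap C_b(X)$ with $\weakgrad f\leq 1$ $\mm$-a.e.\ admits a $1$-Lipschitz representative. In the $RCD(K,\infty)$ setting this is obtained not by approximating $\sfd(\cdot,x)\wedge R$ via Theorem \ref{thm:lipdense}, but rather by using the heat-flow regularization you already derived in (iii): for such $f$, \eqref{eq:BE} gives $\weakgrad{(\heatl_t f)}\leq e^{-Kt}$ and \eqref{eq:inflip} (or more precisely the argument behind it) shows $\heatl_t f$ has a Lipschitz representative with the corresponding constant; letting $t\downarrow 0$ and using $\heatl_t f\to f$ in $C_b$ (from (iii) and (ii)) yields the claim. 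Your second argument, as written, is another proof of $\sfd_{\mathcal E}\geq\sfd$ and does not address the hard direction.
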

As expected,  the Laplacian commutes with the heat flow:
\begin{proposition}[$\bd\heatl_tg=\heatw_t\bd g$]
Let $K\in\R$, $(X,\sfd,\mm)$ a $\RCD(K,\infty)$ space and $g\in W^{1,2}(X,\sfd,\mm)\cap D(\bd)$ with $\|\bd g\|_{{\rm TV}}<\infty$. Then for every $t\geq 0$ it holds
\begin{equation}
\label{eq:commuta}
\bd\big(\heatl_tg\big)=\heatw_t\big(\bd g\big).
\end{equation}
\end{proposition}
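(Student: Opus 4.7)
The plan is to verify directly that, for every $f\in\test X$,
\[
-\int \nabla f\cdot\nabla(\heatl_t g)\,\d\mm \;=\; \int f\,\d\heatw_t(\bd g),
\]
which will yield both $\heatl_t g\in D(\bd)$ and the identity \eqref{eq:commuta}. Indeed the infinitesimal Hilbertianity makes $(X,\sfd,\mm)$ $2$-infinitesimally strictly convex, and $\heatl_t g\in W^{1,2}(X,\sfd,\mm)\subset \s^2_{int}(X)$, so Proposition \ref{prop:lapunique} forces any measure fulfilling the displayed inequality to be the unique element of $\bd(\heatl_t g)$.

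First I would convert the right-hand side using the self-adjointness statement \eqref{eq:self} of Theorem \ref{thm:rcd}. Since $f$ is bounded and $\heatw_t$ extends linearly and continuously to $\mathcal M(\supp(\mm))$ (Theorem \ref{thm:rcd}(ii)), applying it separately to the Jordan parts of $\bd g$ gives a finite signed measure $\heatw_t(\bd g)$ with
\[
\int f\,\d\heatw_t(\bd g)=\int \heatl_t f\,\d\bd g.
\]
Next, Theorem \ref{thm:rcd}(iii) ensures $\heatl_t f$ is bounded and Lipschitz, hence in $W^{1,2}$, and $\mm\in\prob X$ lets me build $1$-Lipschitz cutoffs $\nchi_n$ equal to $1$ on $B_n(\overline x)$ and vanishing outside a slightly larger ball. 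Then $\nchi_n\heatl_t f\in\test X$ with $\nchi_n\heatl_t f\to \heatl_t f$ boundedly pointwise and strongly in $W^{1,2}$, so applying the definition of $\bd g$ to each $\nchi_n\heatl_t f$ and passing to the limit (Lebesgue on $\d\bd g$ since $|\bd g|$ is finite, $L^2$-convergence of $\nabla(\nchi_n\heatl_t f)$) produces
\[
\int \heatl_t f\,\d\bd g=-\int\nabla(\heatl_t f)\cdot\nabla g\,\d\mm.
\]

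It then remains to establish the Dirichlet-form symmetry
\[
\int \nabla(\heatl_t f)\cdot\nabla g\,\d\mm=\int \nabla f\cdot\nabla(\heatl_t g)\,\d\mm.
\]
For this I would invoke standard self-adjoint-semigroup theory attached to the Dirichlet form $\mathcal E$ defined after Proposition \ref{prop:complin}: for any $s>0$, $\heatl_s g\in D(\Delta)$ and $\heatl_t$ commutes with $\Delta$ on $D(\Delta)$ and is $L^2$-self-adjoint, so
\[
\mathcal E(\heatl_t f,\heatl_s g)=-\int\heatl_t f\,\Delta\heatl_s g\,\d\mm=-\int f\,\Delta\heatl_{t+s}g\,\d\mm=\mathcal E(f,\heatl_{t+s}g).
\]
Sending $s\downarrow 0$ and using strong $W^{1,2}$-continuity of $\heatl_s$ at $0$ (which in the Hilbert setting follows from $L^2$-convergence $\heatl_s h\to h$ together with $\c_2(\heatl_s h)\to\c_2(h)$) transfers the identity to $s=0$.

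The main obstacle I anticipate is the last step: although morally it is just self-adjointness of the generator of $\heatl_t$, one has to carefully assemble the limits in the cutoff index $n$ and in the regularizing parameter $s$, and to be sure that the regularity produced by the $RCD(K,\infty)$-heat flow (in particular the $W^{1,2}$ smoothing and the Bakry-\'Emery bound \eqref{eq:BE}) is strong enough to justify the differentiation of $\mathcal E(\heatl_t f,\heatl_s g)$ in $s$. Once the three displays above are chained together, the uniqueness statement of Proposition \ref{prop:lapunique} concludes.
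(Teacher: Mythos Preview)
Your proof is correct and follows exactly the same route as the paper: reduce via the self-adjointness identity \eqref{eq:self} and the definition of $\bd$ to the Dirichlet-form symmetry $\mathcal E(\heatl_t f,g)=\mathcal E(f,\heatl_t g)$, which is a standard fact about the semigroup associated to a symmetric Dirichlet form. The paper's proof simply states this reduction and omits the details; you have filled them in, including the cutoff argument needed because $\heatl_t f$ need not lie in $\test X$, and the $s\downarrow 0$ regularization for the semigroup identity (though the latter can be done more directly via $\mathcal E(u,v)=\lim_{s\downarrow 0}s^{-1}\langle u-\heatl_s u,v\rangle_{L^2}$ and the $L^2$-self-adjointness of $\heatl_t$, without invoking $W^{1,2}$-continuity at $s=0$).
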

\begin{proof}
Taking into account \eqref{eq:self} and the definition of $\bd$ we need to prove that for any $f\in\test X$ it holds
\[
\int \nabla f\cdot \nabla\heatl_t g\,\d\mm=\int\nabla \heatl_tf\cdot\nabla g\,\d\mm.
\]
Given that $f,g\in D(\mathcal E)$, such equality is a standard consequence of the self-adjointness  of the evolution semigroup associated to a Dirichlet form, we omit the details.
\end{proof}
Having a well defined heat flow allows, as we discussed before, to give a definition of Laplacian based on its infinitesimal behavior.
\begin{definition}[A variant of the definition of Laplacian]
Let $K\in\R$, $(X,\sfd,\mm)$ be an $\RCD(K,\infty)$ space and $g\in L^1(X,\mm)$. We say that $g\in D(\tilde\bd)$  provided there exists a locally finite measure $\mu$ such that 
\[
\lim_{t\downarrow 0}\int f\frac{\heatl_tg-g}{t}\,\d\mm=\int f\,\d\mu,
\] 
for any $f\in C_b(X)\cap L^1(X,|\mu|)$ with bounded support. In this case we write $\tilde\bd g=\mu$ (notice that $\mu$ is certainly unique).
\end{definition}
Given that the heat semigroup is linear, it is trivial that $D(\tilde \bd)\subset L^1$ is a vector space and that $\tilde\bd$  is linear. Our first task is  to compare this version of the Laplacian with the one previously introduced in Definition \ref{def:lap}. Notice that Proposition \ref{prop:complin} grants that 
\[
\begin{split}
g\in D(\Delta),\quad h=\Delta g&\qquad\Leftrightarrow \qquad g\in W^{1,2}(X,\sfd,\mm)\cap D(\bd),\quad \bd g=h\mm,\\
&\qquad\Leftrightarrow \qquad g\in W^{1,2}(X,\sfd,\mm)\cap D(\tilde \bd),\quad \tilde\bd g=h\mm.
\end{split}
\]
The following statement generalizes such result to situations  where  the Laplacian is a measure.
\begin{proposition}\label{prop:stessolap}
Let $K\in\R$, $(X,\sfd,\mm)$ a $\RCD(K,\infty)$ space and $g\in W^{1,2}(X,\sfd,\mm)$. Assume that $g\in D(\tilde\bd )$. Then $g\in D(\bd)$ and $\bd g=\tilde\bd g$. Viceversa, if $g\in D(\bd)$ and $\|\bd g\|_{\rm TV }<\infty$, then $g\in D(\tilde\bd)$ and $\tilde \bd g=\bd g$.
\end{proposition}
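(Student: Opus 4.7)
Both implications reduce to a single semigroup identity applied in the two possible directions. Throughout let $g_s:=\heatl_s g$ and recall from Theorem \ref{thm:rcd} and Proposition \ref{prop:complin} that, since $g\in W^{1,2}$, the curve $s\mapsto g_s$ is locally absolutely continuous in $L^2$ with $\frac{d}{ds}g_s=\Delta g_s$ for $s>0$, and $\int_0^T\|\Delta g_s\|_{L^2}^2\,\d s\le \c_2(g)<\infty$. Hence for any $f\in L^2(X,\mm)$ we have the basic identity
\begin{equation*}
\int f(g_t-g)\,\d\mm=\int_0^t\!\!\int f\,\Delta g_s\,\d\mm\,\d s. \tag{$\star$}
\end{equation*}

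\textbf{Direction 1 ($\tilde{\bd}\Rightarrow \bd$).} Let $\mu:=\tilde{\bd}g$ and pick $f\in\test X$. Since $\mu$ is locally finite and $f$ has bounded support of finite $\mm$-measure, $f\in C_b(X)\cap L^1(|\mu|)$, so by hypothesis the left hand side of ($\star$) divided by $t$ tends to $\int f\,\d\mu$ as $t\downarrow 0$. On the other hand $f\in W^{1,2}$, hence the generator identity gives $\int f\,\Delta g_s\,\d\mm=-\mathcal E(f,g_s)=-\int\nabla f\cdot\nabla g_s\,\d\mm$. The right hand side of ($\star$) is then a Cesàro average of $s\mapsto-\int\nabla f\cdot\nabla g_s\,\d\mm$; continuity of this map at $s=0$ follows from strong $W^{1,2}$-convergence $g_s\to g$, which in turn uses that $g\in D(\c_2)$: monotone decay of $\c_2(g_s)$ together with lower semicontinuity of $\c_2$ gives $\c_2(g_s)\to\c_2(g)$, and combined with $L^2$-convergence and infinitesimal Hilbertianity (norm convergence plus weak convergence in a Hilbert space implies strong convergence) this upgrades to strong $W^{1,2}$-convergence. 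The Cesàro limit is therefore $-\int\nabla f\cdot\nabla g\,\d\mm$, so
\[
\int f\,\d\mu=-\int Df(\nabla g)\,\d\mm,\qquad\forall f\in\test X.
\]
Since the space is $2$-infinitesimally strictly convex, Definition \ref{def:lap} and Proposition \ref{prop:lapunique} give $g\in D(\bd)$ and $\bd g=\mu=\tilde\bd g$.

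\textbf{Direction 2 ($\bd\Rightarrow \tilde{\bd}$).} Fix $f\in C_b(X)\cap L^1(|\bd g|)$ with bounded support; since $\mm\in\prob X$ we have $f\in L^\infty\cap L^2$ automatically, so ($\star$) applies. For $s>0$, $g_s\in D(\Delta)$, so Proposition \ref{prop:complin} gives $\bd g_s=\Delta g_s\cdot\mm$; moreover the hypothesis $g\in W^{1,2}\cap D(\bd)$ with $\|\bd g\|_{\rm TV}<\infty$ is exactly what is needed to invoke the commutation \eqref{eq:commuta}, which therefore identifies $\Delta g_s\cdot\mm=\bd g_s=\heatw_s\bd g$. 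Combining with the self-adjointness \eqref{eq:self} we rewrite $\int f\,\Delta g_s\,\d\mm=\int f\,\d\heatw_s\bd g=\int \heatl_sf\,\d\bd g$, and Fubini turns ($\star$) into
\[
\frac{1}{t}\int f(g_t-g)\,\d\mm=\frac{1}{t}\int_0^t\!\!\int\heatl_sf\,\d\bd g\,\d s.
\]
Since $\heatw_s\delta_x\to\delta_x$ in duality with $C_b$ as $s\downarrow 0$ for every $x\in\supp(\mm)$, \eqref{eq:self} gives $\heatl_sf(x)\to f(x)$ pointwise on $\supp(\mm)$, and $|\heatl_sf|\le\|f\|_\infty$. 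Because $|\bd g|$ is a finite measure concentrated on $\supp(\mm)$, dominated convergence yields $\int \heatl_sf\,\d\bd g\to\int f\,\d\bd g$ as $s\downarrow 0$, and the Cesàro average shares the same limit. This proves $g\in D(\tilde\bd)$ with $\tilde\bd g=\bd g$.

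\textbf{Main obstacle.} The delicate step in Direction 1 is the strong $W^{1,2}$-continuity of the heat flow at $s=0$, which crucially uses infinitesimal Hilbertianity to promote weak plus norm convergence to strong convergence in the gradient. In Direction 2 the analogous difficulty (passing the limit $s\downarrow 0$ under the integral against $\bd g$) is neutralized by the hypothesis $\|\bd g\|_{\rm TV}<\infty$ and reduces to routine dominated convergence; the only genuine input there is the commutation $\bd\heatl_sg=\heatw_s\bd g$ established in \eqref{eq:commuta}.
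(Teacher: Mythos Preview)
Your proof is correct and follows essentially the same route as the paper: Direction 1 uses the semigroup identity $(\star)$, the generator relation $\int f\,\Delta g_s\,\d\mm=-\int\nabla f\cdot\nabla g_s\,\d\mm$, and strong $W^{1,2}$-convergence $g_s\to g$ (obtained exactly as you describe, via monotonicity and lower semicontinuity of $\c_2$ plus Hilbertianity); Direction 2 uses the commutation \eqref{eq:commuta}, self-adjointness \eqref{eq:self}, and the pointwise convergence $\heatl_sf(x)\to f(x)$ on $\supp(\mm)$ via $\heatw_s\delta_x\to\delta_x$, followed by dominated convergence. One small quibble: in Direction 1 you assert that any $f\in\test X$ is automatically in $L^1(|\mu|)$ because $\mu$ is locally finite and $\supp(f)$ is bounded, but local finiteness does not guarantee finite mass on bounded sets without properness; however this is harmless, since Definition \ref{def:lap} only asks the integration-by-parts inequality for $f\in\test X\cap L^1(|\mu|)$, and for such $f$ your argument applies verbatim (this is how the paper phrases it as well).
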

\begin{proof}
We know that $\heatl_sg\to g$ in $L^2(X,\mm)$ as $s\downarrow 0$ and, from the lower semicontinuity of $\c_2$ and the fact that $\c_2$ decreases along the heat flow, $\c_2(\heatl_sg)\to\c_2(g)$ as $s\to 0$. Hence, since $W^{1,2}(X,\sfd,\mm)$ is Hilbert, we deduce $\heatl_sg\to g$ in $W^{1,2}(X,\sfd,\mm)$ as $s\to 0$.

Let $f\in\test X\cap L^1(X,|\tilde\bd g|)\subset W^{1,2}(X,\sfd,\mm)$ and notice that
\[
\begin{split}
\int f\frac{\heatl_tg-g}{t}\,\d\mm&=\frac1t\iint_0^tf\Delta\heatl_sg\,\d s\,\d\mm=-\frac1t\int_0^t\int \nabla f\cdot \nabla \heatl_sg\,\d\mm\,\d s\\
&=-\int \nabla f\cdot\nabla g\,\d\mm+{\rm Rem}_t,
\end{split}
\] 
where the reminder term  ${\rm Rem}_t$ is bounded by
\[
\big|{\rm Rem}_t\big|\leq\|\weakgrad f\|_{L^2(X,\smm)} \frac1t\int_0^t\|\weakgrad{(\heatl_sg-g)}\|_{L^2(X,\smm)}\,\d s.
\]
The first part of the statement follows.

For the second, fix $f\in C_b(X)$  and use \eqref{eq:commuta} and \eqref{eq:self}  to get
\[
\int f\frac{\heatl_tg-g}{t}\,\d\mm=\frac1t\int_0^t\int f\d\bd\heatl_sg\,\d s=\int\left(\frac1t\int_0^t  \heatl_s f\,\d s\right)\d\bd g.
\]
By $(iii)$ of Theorem \ref{thm:rcd} we have $\|\heatl_sf\|_{L^\infty}\leq \|f\|_{L^\infty}$, thus by the dominate convergence theorem, to conclude it is sufficient to show that $\heatl_sf(x)\to f(x)$ for any $x\in \supp(\mm)$ as $s\downarrow 0$ (recall that $|\bd g|(X\setminus\supp(\mm))=0$). Using again \eqref{eq:self}  we have
\[
\heatl_sf(x)=\int \heatl_sf(y)\,\d\delta_x(y)=\int f(y)\,\d\heatw_s(\delta_x)(y),
\]
and the conclusion follows from the weak continuity  statement in $(ii)$ of Theorem \ref{thm:rcd}.
\end{proof}
The following is a natural variant of Proposition \ref{prop:comparison}. Notice that the use of the theory of semigroups allows us to conclude that the $\limi$ at the left hand side of \eqref{eq:Lm} is always a limit. The compactness assumption can be dropped at least if the measure is doubling (in this case, also a weak local Poincar\'e inequality holds, see \cite{Rajala11}).
\begin{proposition}\label{prop:boundext}
Let $K\in\R$, $(X,\sfd,\mm)$ a compact $\RCD(K,\infty)$ space, $g\in L^1(X,\mm)$ and $\mu$  a Radon  measure on $X$. Assume that 
\begin{equation}
\label{eq:Lm}
\limi_{t\downarrow0}\int f\frac{\heatl_tg-g}t\,\d\mm\geq \int f\,\d\mu,\qquad\forall f\in C(X),\ f\geq 0.
\end{equation}
Then $g\in D(\tilde \bd)$ and $\tilde \bd g\geq \mu$.
\end{proposition}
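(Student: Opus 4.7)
The plan is to interpret $\nu_t := \tfrac{\heatl_t g - g}{t}\mm$ as a family of finite signed measures on the compact space $X$, show that this family is bounded in total variation, verify convergence on a sup-norm-dense subset of test functions, extend by density to all of $C(X)$, and then identify the limit as a Radon measure $\mu' \geq \mu$ via the Riesz representation theorem.

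First I would establish the uniform total-variation bound. Since $\heatl_t$ preserves mass (from $\heatl_t 1 = 1$ and self-adjointness \eqref{eq:self}), one has $\nu_t(X)=0$ for every $t>0$. For any $f\in C(X)$ with $\|f\|_\infty \le 1$, both $1+f$ and $1-f$ are non-negative elements of $C(X)$, and applying the hypothesis \eqref{eq:Lm} to each yields $\int f\,d\mu + \mu(X) \le \liminf_t\int f\,d\nu_t$ and $\limsup_t \int f\,d\nu_t \le \int f\,d\mu - \mu(X)$. Hence $|\int f\,d\nu_t|$ is uniformly bounded (by roughly $|\mu|(X) + |\mu(X)|$) for $t$ small, so $|\nu_t|(X) \le C$ uniformly by duality.

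Next, to show convergence on a dense subclass I would exploit self-adjointness and semigroup regularization: for $s>0$ and $\tilde f \in C(X)$,
\[
\int \heatl_s \tilde f\,d\nu_t \;=\; \int \tilde f\,\frac{\heatl_{s+t}g-\heatl_s g}{t}\,d\mm,
\]
and since $\heatl_s g$ is regular at time $s>0$ (lies in $D(\Delta)$ via the semigroup, using the $L^1\to L^2$ regularization provided in the compact RCD setting by the Bakry--\'Emery/log-Sobolev machinery together with \eqref{eq:inflip}), the difference quotient $\tfrac{\heatl_{s+t}g-\heatl_s g}{t}$ converges to $\Delta \heatl_s g$ in $L^2$ as $t\downarrow 0$; pairing with the bounded continuous $\tilde f$ gives $\lim_{t\downarrow 0}\int \heatl_s\tilde f\,d\nu_t = \int \tilde f\,\Delta\heatl_s g\,d\mm$. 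The class $\mathcal S := \{\heatl_s \tilde f : s>0,\ \tilde f \in C(X)\}$ is sup-norm dense in $C(X)$: this is the Feller/strong-continuity property of $\heatl_s$ on $C(X)$, which follows by duality from the weak continuity of $\heatw_s\mu$ in duality with $C_b(\supp\mm)$ stated in Theorem \ref{thm:rcd}(ii).

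With these ingredients the conclusion follows by a Cauchy argument: given $f\in C(X)$ and $\varepsilon>0$, choose $h\in\mathcal S$ with $\|f-h\|_\infty < \varepsilon$; then
\[
\Bigl|\int f\,d\nu_{t_1}-\int f\,d\nu_{t_2}\Bigr| \;\le\; 2C\varepsilon + \Bigl|\int h\,d\nu_{t_1}-\int h\,d\nu_{t_2}\Bigr|,
\]
and the second term vanishes as $t_1,t_2\downarrow 0$ by the previous step, while the first is controlled by the uniform TV-bound. Hence $L(f):=\lim_{t\downarrow 0}\int f\,d\nu_t$ is well-defined and bounded on $C(X)$, and Riesz representation produces a finite signed Borel measure $\mu'$ with $L(f)=\int f\,d\mu'$, so $g\in D(\tilde\bd)$ with $\tilde\bd g=\mu'$. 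The inequality $\mu'\geq\mu$ is then immediate: for non-negative $f\in C(X)$,
\[
\int f\,d\mu' = \lim_{t\downarrow 0}\int f\,d\nu_t \;\geq\; \liminf_{t\downarrow 0}\int f\,d\nu_t \;\geq\; \int f\,d\mu.
\]
The main obstacle is Step 2, namely justifying enough regularity of $\heatl_s g$ for $s>0$ and $g\in L^1$ to make the $L^2$-difference-quotient argument rigorous; this is where the compactness hypothesis (giving embedding/spectral compactness) and the Bakry--\'Emery Feller/regularization properties of Theorem \ref{thm:rcd} are essential, and it is also the step whose extension (via doubling and weak Poincar\'e) underlies the remark about dropping compactness.
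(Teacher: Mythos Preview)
Your overall strategy --- bound the linear functionals, identify a dense class on which the limit exists, extend by density, apply Riesz --- is essentially the paper's. But Step~1 contains a genuine gap that breaks the Cauchy argument in Step~3.

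The inequalities you derive from applying \eqref{eq:Lm} to $1\pm f$ only control $\liminf_t\int f\,d\nu_t$ and $\limsup_t\int f\,d\nu_t$ for each \emph{fixed} $f$; the threshold of ``small $t$'' at which these bounds take effect depends on $f$, so you cannot take a supremum over $\|f\|_\infty\le 1$ to deduce a uniform bound on $|\nu_t|(X)$. Indeed there is no reason for $\|\tfrac{\heatl_t g-g}{t}\|_{L^1}$ to stay bounded for general $g\in L^1$. Consequently the estimate $|\int f\,d\nu_{t_1}-\int f\,d\nu_{t_2}|\le 2C\varepsilon+\cdots$ is unjustified. The paper's fix is to work with $L^\pm(f):=\limsup/\liminf_t \int f\,d\nu_t$ directly: your own computation (applied to $f+\|f\|_\infty\ge 0$, using also $\nu_t(X)=0$) gives $|L^\pm(f)|\le 2\|f\|_\infty\|\mu\|_{\rm TV}$, and super/sub-additivity of $L^\mp$ upgrades this to Lipschitz continuity of $L^\pm$ on $C(X)$. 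One then shows $L^+=L^-$ on a sup-norm dense class and concludes by continuity --- this replaces your Cauchy step.

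Step~2 also needs adjustment. You want to pair $\tfrac{\heatl_{s+t}g-\heatl_s g}{t}$ with $\tilde f$, but for $g\in L^1$ no $L^1\to L^2$ smoothing is available from Theorem~\ref{thm:rcd} in the $RCD(K,\infty)$ setting, even compact; and even if you instead move the semigroup to the test side, you would need $\tfrac{\heatl_t(\heatl_s\tilde f)-\heatl_s\tilde f}{t}\to\Delta\heatl_s\tilde f$ in $L^\infty$ to pair with $g\in L^1$, which is not clear for mere $\heatl_s\tilde f$. The paper avoids this by time-mollifying the test function: for Lipschitz $f$ and a bump $\varphi\in C^\infty_c(0,1)$, set $\tau_r f:=\tfrac1r\int_0^\infty\heatl_s f\,\varphi(s/r)\,ds$. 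A shift in $s$ gives $\tfrac{\heatl_t\tau_r f-\tau_r f}{t}=\tfrac1r\int_0^\infty\heatl_s f\,\tfrac{\varphi((s-t)/r)-\varphi(s/r)}{t}\,ds$, which converges in $L^\infty$ (since $\|\heatl_s f\|_\infty\le\|f\|_\infty$) to an $L^\infty$ function; hence $L^-(\tau_r f)=\int g\,\Delta\tau_r f\,d\mm=L^+(\tau_r f)$. Uniform convergence $\tau_r f\to f$ for Lipschitz $f$ follows from the Bakry--\'Emery Lipschitz bound \eqref{eq:BE}, which supplies the required dense class.
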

\begin{proof} It is obvious that \eqref{eq:Lm} holds also with $\mu\restr{\supp(\mm)}$ in place of $\mu$ and that it implies $\mu\restr{X\setminus\supp(\smm)}\leq 0$, thus we can assume $\supp(\mm)=X$.

Define the functionals $L^\pm:C(X)\to\R\cup\{\pm\infty\}$ by
\begin{equation}
\label{eq:lpm}
L^-(f):=\limi_{t\downarrow 0}\int f\frac{\heatl_tg-g}t\,\d\mm,\qquad L^+(f):=\lims_{t\downarrow 0}\int f\frac{\heatl_tg-g}t\,\d\mm.
\end{equation}
It is clear that both are positively 1-homogeneous, that $L^-$ is concave and $L^+$ convex. Moreover,  since $\int \heatl_tg\,\d\mm=\int g\,\d\mm$, it holds $L^\pm(f+c)=L^\pm(f)$ for any $f\in C(X)$, $c\in\R$.

Pick a generic $f\in C(X)$, notice that $f+\|f\|_{L^\infty}\geq 0$ so that from \eqref{eq:Lm} we derive
\begin{equation}
\label{eq:Lp}
L^-(f)=L^-(f+\|f\|_{L^\infty})\geq  \int f+\|f\|_{L^\infty}\,\d\mu\geq -2\|f\|_{L^\infty}\|\mu\|_{{\rm TV}}.
\end{equation}
Then using the identity $L^-(-f)=-L^+(f)$ we get
\[
-2\|f\|_{L^\infty}\|\mu\|_{{\rm TV}}\leq L^-(f)\leq L^+(f)\leq 2\|f\|_{L^\infty}\|\mu\|_{{\rm TV}},\qquad\forall f\in C(X).
\]
Hence the inequality $L^-(f)-L^-(g)\geq L^-(f-g)\geq -2\|f-g\|_{L^\infty}\|\mu\|_{{\rm TV}}$, and the similar one written with reversed roles, gives that $L^-:C(X)\to \R$ is Lipschitz. Similarly for $L^+$.

Now fix  a non-negative function $\varphi\in C^\infty_c(0,1)$ such that $\int_0^\infty\varphi(t)\,\d t=1$. For any $f\in {\rm LIP}(X)$,  $r>0$ define $\tau_rf:X\to\R$ by
\[
\tau_rf:=\frac1r\int_0^\infty \heatl_sf\,\varphi\big(\frac sr\big)\,\d s.
\]
From \eqref{eq:BE} it follows that $\Lip(\tau_rf)\leq \Lip(f)\int_0^\infty e^{-Ksr}\varphi(s)\,\d s$ for any $r>0$ (see Section 6.1 of \cite{Ambrosio-Gigli-Savare11bis} for details), and therefore the $L^2(X,\mm)$ convergence of $\tau_rf$ to $f$ implies uniform convergence.

By standard semigroups computations we know that $\tau_rf\in D(\Delta)$ and that $\Delta\tau_rf=\frac1r\int_0^\infty \Delta \heatl_sf\varphi(\frac sr)\,\d s$. Hence for any $r>0$ we have
\[
L^-(\tau_r f)=\int \Delta \tau_rf g\,\d\mm=L^+(\tau_rf).
\]
Since $L^-,L^+$ are both continuous on $C(X)$, by letting $r\downarrow 0$ we deduce that $L^+(f)=L^-(f)$ for any $f\in{\rm LIP}(X)$ and hence they coincide also on $C(X)$. The conclusion follows from the Riesz theorem.
\end{proof}
The notion of Laplacian $\tilde\bd$ allows to prove the Bochner inequality with $N=\infty$ in compact $\RCD(K,\infty)$ spaces\footnote{in the recent paper \cite{Savare13}, Savar\'e proved that for `many' functions $g$ as in the statement of Theorem \ref{thm:boch} one has $\weakgrad g\in W^{1,2}(X,\sfd,\mm)$, so that thanks to Proposition \ref{prop:stessolap} in the left hand side we can write  the Laplacian $\bd$ instead of $\tilde\bd$. The regularity result  $\weakgrad g\in W^{1,2}(X,\sfd,\mm)$ is non-trivial and follows from a generalization of works of Barky in the context of Dirichlet forms \cite{Bakry06}}: shortly said, this will follow by the standard argument consisting in derivating at time $t=0$ the Bakry-Emery condition \eqref{eq:BE}. Recall that by $\Delta$ we intend the generator of the Dirichlet form $\mathcal E$, see \eqref{eq:defdelta}.
\begin{theorem}[Bochner inequality with $N=\infty$]\label{thm:boch}
Let $K\in\R$, $(X,\sfd,\mm)$ a compact $\RCD(K,\infty)$ space and $g\in D(\Delta)$ with  $\Delta g\in W^{1,2}(X,\sfd,\mm)$. Then $\weakgrad g^2\in D(\tilde\bd)$ and it holds
\begin{equation}
\label{eq:be}
\tilde\bd\frac{\weakgrad g^2}2\geq \big(\nabla\Delta g\cdot\nabla g+K\weakgrad g^2\big)\mm.
\end{equation}
\end{theorem}
Note: for any $f\in L^2(X,\mm)$, $t>0$, the function $g:=\heatl_tf$ fulfills the hypothesis.
\begin{proof}
By standard semigroup computation we get that  for $g$ as in the hypothesis it holds
\begin{equation}
\label{eq:claimboch}
\lim_{t\downarrow 0}\frac{\weakgrad{(\heatl_tg)}^2-\weakgrad g^2}t=2\nabla\Delta g\cdot\nabla g,\qquad\textrm{ in }L^2(X,\mm),
\end{equation}
we omit the details. Now recall that from \eqref{eq:BE} we know that  it holds
\[
\weakgrad{(\heatl_tg)}^2\leq e^{-2Kt}\heatl_t(\weakgrad g^2),\qquad\forall t\geq 0.
\]
Notice that for $t=0$ this is an equality, multiply both sides by  a continuous and non-negative function $f:X\to\R$ to get
\[
\int f\frac{\weakgrad{(\heatl_tg)}^2-\weakgrad g^2}t\,\d\mm\leq\int f\frac{e^{-2Kt}\heatl_t(\weakgrad g^2)-\weakgrad g^2}t\,\d\mm.
\]
Using \eqref{eq:claimboch} we get
\[
2\int f\nabla\Delta g\cdot\nabla g\,\d\mm\leq\limi_{t\downarrow0}\int f\frac{e^{-2Kt}\heatl_t(\weakgrad g^2)-\weakgrad g^2}t\,\d\mm.
\]
since $\heatl_t(\weakgrad g^2)\mm$ weakly converges to $\weakgrad g^2\mm$, it is immediate to check that for the right hand side it holds
\[
\limi_{t\downarrow0}\int f\frac{e^{-2Kt}\heatl_t(\weakgrad g^2)-\weakgrad g^2}t\,\d\mm=-2K\int f\weakgrad g^2\,\d\mm+ \limi_{t\downarrow0}\int f\frac{\heatl_t(\weakgrad g^2)-\weakgrad g^2}t\,\d\mm
\]
therefore the conclusion follows from Proposition \ref{prop:boundext} with $\weakgrad g^2$ in place of $g$ and $\mu:=2\big(\nabla\Delta g\cdot\nabla g+K\weakgrad g^2\big)\mm$.
\end{proof}

\begin{remark}{\rm The derivation of the Bochner inequality with $N=\infty$ from the contraction estimate \eqref{eq:BE} is classical. What is interesting to observe is the link between \eqref{eq:BE} and the $K$-geodesic convexity of the entropy. Indeed, even on a smooth Riemannian setting, in deriving the former from the latter, the fact that the tangent space is Hilbert is used twice:
\begin{itemize}
\item[i)] A first time to deduce the $K$-contractivity of $W_2$ along two heat flows.
\item[ii)] A second time to pass, by a duality argument, from the $K$-contractivity of $W_2$ to \eqref{eq:BE} (see e.g. the general argument in \cite{Kuwada10}).
\end{itemize}
It is known that on Finsler manifolds the $K$-convexity of the entropy does \emph{not} imply the $K$-contractivity of $W_2$ along two heat flows (as proved by Ohta-Sturm in \cite{Sturm-Ohta10}), and the duality argument in $(ii)$ works only for linear flows, so that when applied to the heat flow, it works only on Riemannian manifolds. 

The fact that a duality argument is used twice, suggests that there should be a way to state and prove the Bochner inequality also on a Finsler setting satisfying the $\CD(K,\infty)$ condition. This is actually the case: more generally,  the Bochner inequality, also with finite $N$, holds also on a Finsler structure, provided an appropriate reformulation of the term $\Delta\frac{|\nabla\varphi|^2}2$ is considered (see \cite{Ohta-Sturm-Bochner} and \cite{Bochner-CD} for two different approaches).
}\fr\end{remark}

\section{Comparison estimates}\label{se:comp}
In this chapter we recall the definition of metric measure spaces with Ricci curvature $\geq K$ and dimension $\leq N$, and prove that on these spaces the sharp Laplacian comparison estimates for the distance function holds, at least on the infinitesimally strictly convex case.
\subsection{Weak Ricci curvature bounds}
Here we recall the definition of $\CD(K,N)$, $N<\infty$, spaces given by Sturm in \cite{Sturm06II} and Lott-Villani in \cite{Lott-Villani09} (the latter reference deals with the case $K=0$ only). 

Let $u:[0,\infty)\to\R$ be a convex continuous and sublinear (i.e. $\lim_{z\to+\infty}\frac{u(z)}{z}=0$) function satisfying $u(0)=0$. Let $\mathcal M^+(X)$ the space of finite non-negative Radon measures on $X$. The \emph{internal energy} functional  $\u:\mathcal M^+(X)\to\R\cup\{+\infty\}$ associated to $u$ is well defined by the formula
\[
\u (\mu):=\int u(\rho)\,\d\mm,\qquad \mu=\rho\mm+\mu^s,\ \mu^s\perp\mm.
\]
Jensen's inequality ensures that if $\mm(\supp(\mu))<\infty$, then $\u(\mu)>-\infty$. More generally, the functional $\u$ is lower semicontinuous in $\prob{B}\subset\mathcal M^+ (X)$ w.r.t. convergence in duality with $C_b(B)$, for any closed set $B$ such that $\mm(B)<\infty$.

Functions $u$ of interest for us are 
\[
\begin{split}
u_N(z)&:=-z^{1-\frac1N},\qquad N\in(1,\infty),\\
\end{split}
\]
and we will denote the associated internal energies by $\u_N$ respectively.

For $N\in(1,\infty)$, $K\in\R$ the \emph{distortion coefficients} $\tau_{K,N}^{(t)}(\theta)$ are functions $[0,1]\times[0,\infty)\ni (t,\theta)\mapsto \tau_{K,N}^{(t)}(\theta)\in[0, +\infty]$ defined by
\[
\tau^{(t)}_{K,N}(\theta):=\left\{
\begin{array}{ll}
+\infty,&\qquad\textrm{if }K\theta^2\geq (N-1)\pi^2,\\
\displaystyle{t^{\frac1N}\left(\frac{\sin(t\theta\sqrt{K/(N-1)})}{\sin(\theta\sqrt{K/(N-1)})}\right)^{1-\frac1N}},&\qquad\textrm{if }0<K\theta^2<(N-1)\pi^2,\\
t,&\qquad\textrm{if }K\theta^2=0,\\
\displaystyle {t^{\frac1N}\left(\frac{\sinh(t\theta\sqrt{-K/(N-1)})}{\sinh(\theta\sqrt{-K/(N-1)})}\right)^{1-\frac1N}},&\qquad\textrm{if }K\theta^2<0.
\end{array}
\right.
\]

\begin{definition}[Weak Ricci curvature bound] Let $(X,\sfd,\mm)$ be a boundedly finite metric measure space. 
We say that $(X,\sfd,\mm)$ is a $\CD(K,N)$ space, $K\in\R$, $N\in(1,\infty)$ provided for any $\mu,\nu\in\prob{\supp(\mm)}$ with bounded support there exists $\ppi\in\gopt(\mu,\nu)$ such that
\begin{equation}
\label{eq:cd}
\u_{N'}((\e_t)_\sharp\ppi)\leq-\int\tau^{(1-t)}_{K,N'}\big(\sfd(\gamma_0,\gamma_1)\big)\rho^{-\frac1{N'}}(\gamma_0)+\tau^{(t)}_{K,N'}\big(\sfd(\gamma_0,\gamma_1)\big)\eta^{-\frac1{N'}}(\gamma_1)\,\d\ppi(\gamma),\quad\forall t\in[0,1],
\end{equation}
for any $N'\geq N$, where $\mu=\rho\mm+\mu^s$ and $\nu=\eta\mm+\nu^s$, with $\mu^s,\nu^s\perp\mm$.
\end{definition}
When dealing with a $\CD(K,N)$ space, we will always assume that $\supp(\mm)=X$ (this is dangerous only when discussing stability issues), and to avoid trivialities we also assume that $X$ contains more than one point. In the following proposition we collect those basic properties of $\CD(K,N)$ spaces we will use later on.
\begin{proposition}[Basic properties of $\CD(K,N)$ spaces]\label{prop:basecd}
Let $(X,\sfd,\mm)$ be a $\CD(K,N)$ space, $K\in\R$, $N<\infty$. Then $(X,\sfd)$ is proper and geodesic, $\mm$ is doubling and $(X,\sfd,\mm)$ supports a weak local 1-1 Poincar\'e inequality, i.e. for any bounded Borel function $f:X\to\R$ and any upper gradient $G$ of $f$ and $R>0$ it holds
\[
\frac{1}{\mm(B_r(x))}\int_{B_r(x)}\Big|f-\langle f\rangle_{B_r(x)} \Big|\,\d\mm\leq C(R)r\frac{1}{\mm(B_{2r}(x))}\int_{B_{2r}(x)}G\,\d\mm,\qquad\forall 0<r<R,
\]
where $\langle f\rangle_{B_r(x)}:=\frac{1}{\mm(B_r(x))}\int_{B_r(x)} f\,\d\mm$, for some constant $C(R)$ depending only on $K,N$ and the upper bound on the radius $R>0$. Also, the Bishop-Gromov comparison estimates holds, i.e. for any $x\in \supp(\mm)$ it holds
\begin{equation}
\label{eq:BGv}
\begin{split}
\frac{\mm(B_r(x))}{\mm(B_R(x))}&\geq\left\{
\begin{array}{ll}
\displaystyle{\frac{\int_0^r\sin(t\sqrt{K/(N-1)})^{N-1}\,\d t}{\int_0^R\sin(t\sqrt{K/(N-1)})^{N-1}\,\d t}}&\qquad\textrm{ if }K>0\\
&\\
\displaystyle{\frac {r^N}{R^N}}&\qquad\textrm{ if }K=0\\
&\\
\displaystyle{\frac{\int_0^r\sinh(t\sqrt{K/(N-1)})^{N-1}\,\d t}{\int_0^R\sinh(t\sqrt{K/(N-1)})^{N-1}\,\d t}}&\qquad\textrm{ if }K<0\\
\end{array}
\right.
\end{split}
\end{equation}
for any $0<r\leq R\leq \pi\sqrt{(N-1)/(\max\{K,0\})}$.

Finally, if $K>0$ then $(\supp(\mm),\sfd)$ is compact and with diameter at most $\pi\sqrt{\frac{N-1}K}$.
\end{proposition}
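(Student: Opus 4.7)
The plan is to assemble this proposition from the foundational results of Sturm \cite{Sturm06II} and Lott--Villani \cite{Lott-Villani09}, indicating where each item comes from and sketching the key geometric mechanisms.

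First, the geodesic property is essentially built into the definition: for any $\mu,\nu\in\prob{\supp(\mm)}$ with bounded support the assumed $\ppi\in\gopt(\mu,\nu)$ witnesses that $\mu,\nu$ are joined by a $W_2$-geodesic. Applying this to Dirac-approximating measures and using the closure of $\geo(X)$ in $C([0,1],X)$ shows that any two points of $\supp(\mm)=X$ are connected by a constant speed minimizing geodesic. The Bishop--Gromov comparison \eqref{eq:BGv} is the heart of the proposition: to prove it I would fix $x\in X$ and, for $0<r\le R\le \pi\sqrt{(N-1)/\max\{K,0\}}$, apply the $CD(K,N)$ inequality \eqref{eq:cd} to $\nu:=\delta_x$ and $\mu:=\tfrac{1}{\mm(A)}\mm\restr A$, where $A$ is an annulus $B_R(x)\setminus B_{r}(x)$. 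Along the $W_2$-geodesic from $\mu$ to $\delta_x$, the transport is `radial' at speed $\sfd(\gamma_0,x)$, so \eqref{eq:cd} applied with $N'=N$ and $u_N(z)=-z^{1-1/N}$ gives a one-dimensional differential inequality for the volume growth function $v(r):=\mm(B_r(x))$, whose integration yields exactly the ratios in \eqref{eq:BGv}. All of this is worked out in detail in Theorem~2.3/Section~5 of \cite{Sturm06II}.

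Properness, doubling and the Poincar\'e inequality then follow by standard machinery. Doubling is an immediate consequence of \eqref{eq:BGv} applied with $R=2r$, yielding a doubling constant depending only on $K,N$ and (if $K\ne 0$) on the scale through the explicit sinh/sin kernels. A complete doubling metric space is automatically proper, so $(X,\sfd)$ is proper. The local $1$-$1$ Poincar\'e inequality is the one established by Lott--Villani (Theorem 1.1 of \cite{Lott-Villani09} in the case $K=0$, extended to general $K\in\R$ in Section 5 of \cite{Sturm06II}); their proof proceeds via displacement interpolation: transport $\tfrac{1}{\mm(B_r(x))}\mm\restr{B_r(x)}$ optimally to itself along pairs of points in $B_r(x)$, estimate $|f(\gamma_0)-f(\gamma_1)|$ by $\int_0^1 G(\gamma_t)|\dot\gamma_t|\,\d t$, and use \eqref{eq:cd} (with $N'\to\infty$, or via the finite-$N$ distortion coefficients) to control $(\e_t)_\sharp\ppi\le C\mm$ uniformly in $t$.

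Finally, for the Bonnet--Myers type diameter bound when $K>0$, the mechanism is that $\tau^{(t)}_{K,N}(\theta)=+\infty$ as soon as $K\theta^2\ge(N-1)\pi^2$. Hence if $x_0,x_1\in\supp(\mm)$ satisfied $\sfd(x_0,x_1)>\pi\sqrt{(N-1)/K}$, picking $\mu,\nu\in\prob{\supp\mm}$ supported in sufficiently small balls around $x_0,x_1$ and any $\ppi\in\gopt(\mu,\nu)$ as in \eqref{eq:cd}, the right-hand side of \eqref{eq:cd} would be $-\infty$ at $t\in(0,1)$, while the left-hand side is finite because $(\e_t)_\sharp\ppi$ has $\u_N$-energy bounded below by Jensen (the measure has total mass $1$ and, by tightness of the transport, is supported in a bounded set of finite $\mm$-measure). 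This contradiction forces $\diam(\supp(\mm))\le\pi\sqrt{(N-1)/K}$; combined with properness, this gives compactness. The only mildly delicate step is making the `$-\infty$ on the right' argument rigorous when $\mu,\nu$ are not point masses, which is handled exactly as in Section~2 of \cite{Sturm06II}.
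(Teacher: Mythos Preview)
Your approach matches the paper's: both treat this proposition as a collection of known results and defer to the foundational literature (Sturm \cite{Sturm06II}, Lott--Villani, Villani's book). The paper's own proof is in fact nothing more than a list of references, so your more detailed sketches of the Bishop--Gromov and Bonnet--Myers mechanisms are extra expository content, not a different argument.

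There is one substantive discrepancy worth flagging. For the Poincar\'e inequality you cite \cite{Lott-Villani09} and \cite{Sturm06II}, but the paper instead points to \cite{Lott-Villani-Poincare} for the original argument \emph{under a non-branching assumption}, and to Rajala \cite{Rajala11} for the result without that assumption. This matters here: the $CD(K,N)$ definition used in the paper does not assume non-branching, so the Lott--Villani argument you sketch (which relies on controlling $(\e_t)_\sharp\ppi\le C\mm$ along geodesics, a bound that in their setup uses non-branching) does not directly apply. Rajala's contribution is precisely to obtain the Poincar\'e inequality in the possibly branching case, and that is the reference the paper leans on.
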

\begin{proof}
For the Poincar\'e inequality see \cite{Lott-Villani-Poincare} for the original argument requiring the non-branching condition and the more recent paper \cite{Rajala11} for the same result without such assumption. For the other properties, see \cite{Sturm06II} or the final Chapter of \cite{Villani09}.
\end{proof}
We conclude this introduction recalling that on general metric measure spaces $(X,\sfd,\mm)$, given a Lipschitz function $f:X\to\R$, typically the objects $\lip(f),\lip^+(f),\lip^-(f),\weakgrad f$ are all different one from each other, the only information available being $\weakgrad f\leq \lip^\pm(f)$ $\mm$-a.e. and $\lip^\pm(f)\leq\lip(f)$ everywhere. On doubling spaces supporting a weak local 1-1 Poincar\'e inequality, the fine analysis done by Cheeger in \cite{Cheeger00} ensures that these objects can be identified and, as already recalled in Remark \ref{re:notazione}, that the $p$-minimal weak upper gradient does not depend on $p$.

In particular,  we have the  following:
\begin{theorem}\label{thm:cheeger}
Let $(X,\sfd,\mm)$ be a $\CD(K,N)$ space, $K\in \R$, $N\in(1,\infty)$. Then we have
\begin{equation}
\label{eq:cheeger2}
\weakgrad f==\lip^+(f)=\lip^-(f)\lip(f),\qquad \mm-a.e.\qquad \forall f:X\to\R\textrm{ locally Lipschitz}.
\end{equation}
Furthermore, the $p$-minimal weak upper gradient of s Sobolev function does not depend on $p>1$ and in particular also the objects $D^\pm f(\nabla g)$ and $\bd$ do not depend on $p$.
\end{theorem}

\subsection{Variants of calculus rules}
In the introduction we saw that the heart of the proof of Laplacian comparison estimates for the distance function on $\CD(K,N)$ space is to differentiate the internal energy functional $\u_N$ along a Wasserstein 
geodesic. To make this computation on a metric measure space is complicated by the lack of a change of variable formula, which prevents the use of the standard analytical tools. In this abstract setting, this sort of 
computation has been done for the first time in \cite{Ambrosio-Gigli-Savare11bis} via an analogous of Theorem \ref{thm:horver} relating horizontal and vertical derivatives. In order to apply such theorem, it was 
important to be sure that the Wasserstein geodesic along which the derivative was computed was made of absolutely continuous measures with uniformly bounded densities (at least for $t$ close to 0), so that the plan associated to it was of 
bounded compression. Geodesics of this kind were obtained by assuming some strong form of geodesic convexity for the relative entropy.

Our purpose here is to show that as a consequence of Theorem \ref{thm:cheeger}, in the finite dimensional case no form of strong geodesic convexity is needed for the internal energy functionals in order for the argument to be carried out. Shortly said, the assumption that $\ppi$ has bounded compression can be replaced by $\u_N((\e_t)_\sharp\ppi)\to\u_N((\e_0)_\sharp\ppi)$ as $t\downarrow0$.

The arguments of this section are based on the following simple lemma.
\begin{lemma}\label{le:scemo}
Let $(X, \sfd,  \mm)$ be as in \eqref{eq:mms},   $(\mu_n)\subset\prob{X }$ a sequence of measures and $\mu\in\prob X$. Assume that for some closed set $B$ with $\mm(B)<\infty$ it holds $\supp(\mu_n)\subset B$ for every $n\in\N$, $\supp(\mu)\subset B$ and that $(\mu_n)$ weakly converges to $\mu$ in duality with $C_b(B)$. Assume furthermore that 
\[
\mu\ll\mm,\qquad\textrm{and}\qquad \u_N(\mu_n)\to\u_N(\mu),\quad\textrm{ as }n\to\infty.
\]
Then for every Borel bounded function $f:B\to\R$ it holds
\begin{equation}
\label{eq:bb}
\lim_{n\to\infty}\int f\,\d\mu_n=\int f\,\d\mu.
\end{equation}
\end{lemma}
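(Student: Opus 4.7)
The plan is to upgrade the narrow convergence $\mu_n\to\mu$ to convergence against bounded Borel test functions by decomposing $\mu_n=\rho_n\mm+\mu_n^s$ with $\mu_n^s\perp\mm$ and $\mu=\rho\mm$, and proving both $\mu_n^s(B)\to 0$ and $\rho_n\to\rho$ in $L^1(\mm\restr B)$. Indeed, these two facts give, for any bounded Borel $f:B\to\R$,
\[
\Bigl|\int f\,\d\mu_n-\int f\,\d\mu\Bigr|\leq \|f\|_\infty\bigl(\|\rho_n-\rho\|_{L^1(\mm\restr B)}+\mu_n^s(B)\bigr)\to 0.
\]

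To show $\mu_n^s(B)\to 0$, observe that since $(\mu_n)$ converges narrowly it is tight on $B$, and hence so are the subordinate families $(\rho_n\mm)$ and $(\mu_n^s)$. Along any subsequence extract narrow limits $\tau,\sigma\geq 0$; since $\mu_n=\rho_n\mm+\mu_n^s$ passes to $\mu=\tau+\sigma$, and since $\tau\leq\mu\ll\mm$, one has $\tau=\rho'\mm$ with $0\leq\rho'\leq\rho$ $\mm$-a.e. Lower semicontinuity of $\u_N$, together with $\u_N(\rho_n\mm)=\u_N(\mu_n)\to\u_N(\rho\mm)$, yields
\[
\int(\rho')^{1-1/N}\,\d\mm\geq\int\rho^{1-1/N}\,\d\mm,
\]
which combined with $\rho'\leq\rho$ and the strict monotonicity of $z\mapsto z^{1-1/N}$ forces $\rho'=\rho$ $\mm$-a.e. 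Hence every subsequential narrow limit of $(\mu_n^s)$ vanishes, so $\mu_n^s(B)\to 0$ and $\rho_n\mm\to\rho\mm$ narrowly.

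For $\rho_n\to\rho$ in $L^1$, exploit strict convexity of $u_N$ via the midpoint $\nu_n:=\tfrac12(\rho_n+\rho)\mm$. Convexity gives $\u_N(\nu_n)\leq\tfrac12(\u_N(\mu_n)+\u_N(\mu))$, while $\nu_n\to\rho\mm$ narrowly combined with lower semicontinuity gives $\u_N(\rho\mm)\leq\liminf\u_N(\nu_n)$. The two bounds force $\u_N(\nu_n)\to\u_N(\mu)$, and subtracting shows that the non-negative convexity defect
\[
D_n:=\tfrac12\bigl(u_N(\rho_n)+u_N(\rho)\bigr)-u_N\!\Bigl(\tfrac{\rho_n+\rho}{2}\Bigr)
\]
tends to $0$ in $L^1(\mm\restr B)$. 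Strict convexity of $u_N(z)=-z^{1-1/N}$ on $[0,\infty)$, together with the asymptotic $D_n(x)\sim c_N\,\rho_n(x)^{1-1/N}$ as $\rho_n(x)\to\infty$ (with $c_N=\tfrac12(2^{1/N}-1)>0$), implies that $D_n(x)\to 0$ forces $\rho_n(x)\to\rho(x)$ whenever $\rho(x)<\infty$. Passing to a subsequence along which $D_n\to 0$ $\mm$-a.e.\ yields $\rho_n\to\rho$ $\mm$-a.e.; combined with $\int\rho_n\,\d\mm=1-\mu_n^s(B)\to 1=\int\rho\,\d\mm$ from the previous step, Scheff\'e's lemma promotes this to convergence in $L^1(\mm\restr B)$, and a standard subsequence-of-subsequences argument upgrades it to convergence of the full sequence.

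The main obstacle is this last step: extracting a.e.\ (and hence $L^1$) convergence of the densities from the purely scalar information $\u_N(\mu_n)\to\u_N(\mu)$. The finite-dimensional internal energies $\u_N$ do \emph{not} provide uniform integrability (in contrast to the relative entropy $\u_\infty=\int\rho\log\rho\,\d\mm$), so Dunford--Pettis is unavailable; the replacement is the convexity-defect identity above, which extracts a.e.\ convergence directly from strict convexity and then relies on Scheff\'e to close the argument.
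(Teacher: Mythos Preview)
Your proof is correct and takes a genuinely different route from the paper's, especially in the second half. The paper argues by contradiction in one shot: assuming some mass escapes (either into the singular part or into high-density regions $\{\rho_n\geq c_n\}$ with $c_n\uparrow\infty$), it truncates $\rho_n$ at level $c_n$, passes to a weak limit $\nu\leq\mu$, uses $|\u_N(\mu_n)-\u_N(\nu_n)|\to 0$ together with lower semicontinuity and \emph{strict monotonicity} of $u_N$ to force $\nu=\mu$, and derives a contradiction from the missing mass. This yields $\mu_n^s(B)\to 0$ and convergence of $\rho_n$ to $\rho$ only in duality with $L^\infty$, which is exactly what the lemma needs. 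Your Step~1 is essentially a cleaner variant of this same idea (no truncation is needed because $\u_N(\rho_n\mm)=\u_N(\mu_n)$ exactly), but your Step~2 is new: instead of truncation you use the midpoint convexity defect and \emph{strict convexity} of $u_N$ to get $\rho_n\to\rho$ $\mm$-a.e.\ along a subsequence, and then Scheff\'e to upgrade to $L^1$. The upshot is that you obtain a stronger conclusion (strong $L^1$ convergence of the densities) at the cost of a slightly longer argument; the paper's route is shorter and more direct for the stated goal. Your closing remark that ``Dunford--Pettis is unavailable'' is literally true (a bound on $\u_N$ alone does not give uniform integrability), but note that the paper does extract uniform integrability, not from boundedness of $\u_N$ but from the \emph{convergence} $\u_N(\mu_n)\to\u_N(\mu)$ via the truncation-and-monotonicity argument.
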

\begin{proof}
Write $\mu=\rho\mm$,  $\mu_n=\rho_n\mm+\mu^s_n$ with $\mu_n^s\perp\mm$. We need to prove that $\mu^s_n(B)\to 0$ and that $\rho_n\to\rho$ in duality with $L^\infty(B,\mm)$. We argue by contradiction and assume that there exists a sequence $c_n\uparrow\infty$ such that 
\begin{equation}
\label{eq:lossmass}
\int_{\{\rho_n\geq c_n\}}\rho_n\,\d\mm+\mu^s_n(B)\geq C>0,\qquad\forall n\in\N.
\end{equation}
Define the measures $\nu_n:=\rho_n\nchi_{\{\rho_n\leq c_n\}}\mm\leq \mu_n$ and notice that since $(\mu_n)$ is tight, so is $(\nu_n)$. Hence up to pass to a subsequence, not relabeled, we can assume that $(\nu_n)$ converges to some Borel measure $\nu\in\mathcal M^+( B)$ in duality with $C_b(B)$. From
\[
\begin{split}
\big|\u_N(\mu_n)-\u_N(\nu_n)\big|=\int_{\{\rho_n\geq c_n\}}|u_N(\rho_n)|\,\d\mm\leq \left(\int_{\{\rho_n\geq c_n\}}\rho_n\,\d\mm\right)^{\frac{N-1}{N}}\mm(\{\rho_n\geq c_n\})^{\frac1N},
\end{split}
\]
and Chebyshev's inequality $\mm(\{\rho_n\geq c_n\})\leq\frac{1}{c_n}\downarrow 0$, we get $\lims_{n\to\infty}|\u_N(\mu_n)-\u_N(\nu_n)|=0$ and thus
\begin{equation}
\label{eq:ass}
\u_N(\nu)\leq\limi_{n\to\infty}\u_N(\nu_n)=\limi_{n\to\infty}\u_N(\mu_n)=\u_N(\mu).
\end{equation}
Write $\nu=\eta\mm+\nu^s$, with $\nu^s\perp\mm$ and notice that by construction it holds $\nu\leq\mu$, so that $\nu^s=0$ and $\eta\leq\rho$ $\mm$-a.e.. Also, since $u_N$ is strictly decreasing, \eqref{eq:ass} forces $\eta=\rho$ $\mm$-a.e., so that  $\nu(B)=\int\eta\,\d\mm=\int\rho\,\d\mm=1$. But this is a contradiction, because from \eqref{eq:lossmass} we have $\nu_n(B)\leq 1-C$ for every $n\in\N$, so that $\nu(B)\leq1-C$ as well.
\end{proof}
Notice that as a consequence of the convexity of $\u_N$ (w.r.t. affine interpolation) and its weak lower semicontinuity, the following implication holds:
\begin{equation}
\label{eq:sempre}
\begin{array}{l}
\left\{
\begin{array}{l}
\ppi\in\prob{\geo(X)}\\
\supp((\e_t)_\sharp\ppi)\subset B,\quad\forall t\in[0,1], \ \textrm{ for some bounded closed set } B\textrm{ s.t. }\mm(B)<\infty\\
\u_N((\e_t)_\sharp\ppi)\to\u_N((\e_0)_\sharp\ppi),\quad\textrm{ as }t\downarrow0
\end{array}
\right\}\\
\phantom{ciccia}\\
\qquad\qquad\displaystyle{\Rightarrow\qquad\qquad \u_N\left(\frac1t\int_0^t(\e_s)_\sharp\ppi\,\d s\right)\to\u_N((\e_0)_\sharp\ppi),\quad\textrm{ as }t\downarrow0}
\end{array}
\end{equation}

A direct consequence of Lemma \ref{le:scemo}, is the following version of the metric Brenier theorem proved in \cite{Ambrosio-Gigli-Savare11}. Notice that the argument we use here is essentially the same used in  \cite{Ambrosio-Gigli-Savare11}.
\begin{proposition}[A variant of the metric Brenier theorem]\label{prop:varbrenier} Let $(X,\sfd,\mm)$ be as in \eqref{eq:mms}, $B\subset X$ a bounded closed set such that $\mm(B)<\infty$, $\mu_0,\mu_1\in\prob{ B}$ and  $N\in(1,\infty)$. Assume that $\mu_0\ll\mm$, that there exists $\ppi\in\gopt(\mu_0,\mu_1)$ such that  $\lim_{t\downarrow 0}\u_N((\e_t)_\sharp\ppi)=\u_N(\mu_0)$ and $\supp((\e_t)_\sharp\ppi)\subset B$ for any $t\in[0,1]$.  Let $\varphi$ be a Kantorovich potential associated to $\mu_0,\mu_1$ and assume that $\varphi$  is Lipschitz on bounded subsets of $X$. 

Then for every $\tilde\ppi\in\gopt(\mu_0,\mu_1)$ it holds
\begin{equation}
\label{eq:bm}
\sfd(\gamma_0,\gamma_1)=\lip^+(\varphi)(\gamma_0),\qquad\tilde\ppi-a.e.\ \gamma.
\end{equation}
\end{proposition}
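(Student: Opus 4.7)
\noindent\emph{Proof sketch.}
The strategy has two halves. First, show that $|D^+\varphi|(x)\le \sfd(x,y)$ whenever $(x,y)\in\partial^c\varphi$ and $x$ is not isolated, so that for an arbitrary $\tilde\ppi\in\gopt(\mu_0,\mu_1)$, integrating gives $\int |D^+\varphi|^2\,\d\mu_0\le W_2^2(\mu_0,\mu_1)$. Second, exploit the hypothesis on $\u_N$ to show the matching lower bound $\int |D^+\varphi|^2\,\d\mu_0\ge W_2^2(\mu_0,\mu_1)$. The pointwise equality then follows from comparing the two sides of the inequality $|D^+\varphi|(\gamma_0)\le \sfd(\gamma_0,\gamma_1)$ on $\tilde\ppi$-a.e.\ curve.

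For the first half, fix $(x,y)\in \partial^c\varphi$; then $\varphi(z)-\varphi(x)\le \frac{\sfd^2(z,y)-\sfd^2(x,y)}{2}$ for every $z$, and the right hand side is bounded by $\frac{(\sfd(z,y)+\sfd(x,y))\sfd(x,z)}{2}$, so dividing by $\sfd(x,z)$ and passing to the lim sup as $z\to x$ yields $|D^+\varphi|(x)\le \sfd(x,y)$. Since $\tilde\ppi$ is optimal and $\varphi$ is a Kantorovich potential, $(\gamma_0,\gamma_1)\in\partial^c\varphi$ for $\tilde\ppi$-a.e.\ $\gamma$, giving $\int |D^+\varphi|^2\,\d\mu_0 =\int |D^+\varphi|^2(\gamma_0)\,\d\tilde\ppi \le \int \sfd^2(\gamma_0,\gamma_1)\,\d\tilde\ppi = W_2^2(\mu_0,\mu_1)$ (isolated initial points contribute $0$ on the left by definition).

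For the second half, apply the same $c$-concavity trick to $(\gamma_0,\gamma_1)\in \partial^c\varphi$ and $z=\gamma_t$ using that $\sfd(\gamma_t,\gamma_1)=(1-t)\sfd(\gamma_0,\gamma_1)$ for geodesics: this gives $\varphi(\gamma_0)-\varphi(\gamma_t)\ge t(1-t/2)\,\sfd^2(\gamma_0,\gamma_1)$, hence
\[
\int \frac{\varphi\circ\e_0-\varphi\circ\e_t}{t}\,\d\ppi\ge (1-t/2)\,W_2^2(\mu_0,\mu_1).
\]
Since $\varphi$ is locally Lipschitz, $|D\varphi|$ is an upper gradient of $\varphi$; applied on the geodesics in the support of $\ppi$ and combined with Cauchy--Schwarz, the left hand side is bounded above by
\[
\Big(\tfrac1t\iint_0^t |D\varphi|^2(\gamma_s)\,\d s\,\d\ppi\Big)^{1/2}\Big(\tfrac1t\iint_0^t |\dot\gamma_s|^2\,\d s\,\d\ppi\Big)^{1/2}=\Big(\int|D\varphi|^2\,\d\mu_{\mathrm{avg}}^t\Big)^{1/2}W_2(\mu_0,\mu_1),
\]
where $\mu_{\mathrm{avg}}^t:=\frac1t\int_0^t(\e_s)_\sharp\ppi\,\d s$ and we used $|\dot\gamma_s|\equiv \sfd(\gamma_0,\gamma_1)$ on geodesics together with optimality.

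The main obstacle---and the point where the $\u_N$ hypothesis enters decisively, replacing the usual bounded-compression assumption---is controlling $\int |D\varphi|^2\,\d\mu_{\mathrm{avg}}^t$ as $t\downarrow 0$. The convexity and lower semicontinuity of $\u_N$ together with the hypothesis $\u_N((\e_t)_\sharp\ppi)\to \u_N(\mu_0)$ yield, via implication \eqref{eq:sempre}, that $\u_N(\mu_{\mathrm{avg}}^t)\to \u_N(\mu_0)$; since $\mu_{\mathrm{avg}}^t\to\mu_0$ weakly in duality with $C_b(B)$ and $\mu_0\ll\mm$, Lemma~\ref{le:scemo} upgrades this to convergence in duality with bounded Borel functions on $B$, and applies in particular to the bounded Borel function $|D\varphi|^2$ (bounded since $\varphi$ is Lipschitz on bounded sets and $B$ is bounded). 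Letting $t\downarrow 0$ in the previous display gives $W_2(\mu_0,\mu_1)\le (\int |D\varphi|^2\,\d\mu_0)^{1/2}$, i.e.\ $W_2^2(\mu_0,\mu_1)\le \int |D^+\varphi|^2\,\d\mu_0$ after invoking Cheeger's identity $|D\varphi|=|D^+\varphi|$ $\mm$-a.e.\ from Theorem~\ref{thm:cheeger}. Combining with the upper bound of the first half forces equality in $|D^+\varphi|(\gamma_0)\le \sfd(\gamma_0,\gamma_1)$ on $\tilde\ppi$-a.e.\ $\gamma$, which is \eqref{eq:bm}.
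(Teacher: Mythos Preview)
Your argument follows the paper's proof closely, but there is one genuine gap. In the second half you bound the difference quotient using $|D\varphi|$ as an upper gradient and then, at the very end, invoke Theorem~\ref{thm:cheeger} to replace $|D\varphi|$ by $|D^+\varphi|$. However, Proposition~\ref{prop:varbrenier} is stated only under the standing hypothesis \eqref{eq:mms}; it does \emph{not} assume that $(X,\sfd,\mm)$ is a $CD(K,N)$ space, nor that \eqref{eq:cheeger2} holds. So Cheeger's identity is unavailable here, and since pointwise $|D^+\varphi|\le |D\varphi|$, your inequality $W_2^2(\mu_0,\mu_1)\le \int |D\varphi|^2\,\d\mu_0$ does not by itself yield the needed $W_2^2(\mu_0,\mu_1)\le \int |D^+\varphi|^2\,\d\mu_0$.

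The fix is immediate and is exactly what the paper does: use $|D^+\varphi|$ as the upper gradient from the start. As recalled in the preliminaries, for a locally Lipschitz function the one-sided slopes $|D^\pm\varphi|$ are themselves upper gradients, so the estimate
\[
\frac{\varphi(\gamma_0)-\varphi(\gamma_t)}{t}\le \frac1t\int_0^t |D^+\varphi|(\gamma_s)|\dot\gamma_s|\,\d s
\]
is available directly, and your Cauchy--Schwarz step (or the paper's variant: divide by $\sfd(\gamma_0,\gamma_t)$, square, then integrate) goes through verbatim with $|D^+\varphi|$ in place of $|D\varphi|$. With this correction the rest of your argument---the use of \eqref{eq:sempre} and Lemma~\ref{le:scemo} to pass to the limit against the bounded Borel function $|D^+\varphi|^2$ on $B$---is correct and matches the paper.
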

\begin{proof}
Fix $x\in  X$ and notice that for every $y\in\partial^c\varphi(x)$ it holds
\[
\begin{split}
\varphi(x)&=\frac{\sfd^2(x,y)}2-\varphi^c(y),\\
\varphi(z)&\leq\frac{\sfd^2(z,y)}2-\varphi^c(y),\qquad\forall z\in X.
\end{split}
\]
Therefore $\varphi(z)-\varphi(x)\leq \frac{\sfd^2(z,y)}2-\frac{\sfd^2(x,y)}2\leq\sfd(x,z)\frac{\sfd(z,y)+\sfd(x,y)}2$, so that dividing by $\sfd(x,z)$ and letting $z\to x$, since $y\in\partial^c\varphi(x)$ is arbitrary we get
\[
\lip^+(\varphi)(x)\leq \inf_{y\in\partial^c\varphi(x)} \sfd(x,y).
\]
In particular, if $\tilde\ppi$ is an arbitrary plan in $\gopt(\mu_0,\mu_1)$, from the above inequality and the fact that $\gamma_1\in\partial^c\varphi(\gamma_0)$ for $\tilde\ppi$-a.e. $\gamma$ we deduce
\[
\lip^+(\varphi)(\gamma_0)\leq \sfd(\gamma_0,\gamma_1),\qquad\tilde\ppi-a.e.\ \gamma.
\]
Thus to conclude it is sufficient to show that
\[
\int\lip^+(\varphi)^2\,\d\mu_0\geq W_2^2(\mu_0,\mu_1).
\]
To this aim, let $\ppi\in\gopt(\mu_0,\mu_1)$ be as in the hypothesis, and use again the fact that $\varphi$ is a Kantorovich potential to get that for any $t\in(0,1]$ it holds
\begin{equation}
\label{eq:p1}
\varphi(\gamma_0)-\varphi(\gamma_t)\geq\frac{\sfd^2(\gamma_0,\gamma_1)}2-\frac{\sfd^2(\gamma_t,\gamma_1)}2=\sfd^2(\gamma_0,\gamma_1)(t-t^2/2),\qquad\ppi-a.e.\ \gamma.
\end{equation}
Dividing by $\sfd(\gamma_0,\gamma_t)=t\sfd(\gamma_0,\gamma_1)$, squaring and integrating w.r.t. $\ppi$ we get
\[
\limi_{t\downarrow 0}\int\left(\frac{\varphi(\gamma_0)-\varphi(\gamma_t)}{\sfd(\gamma_0,\gamma_t)}\right)^2\,\d\ppi(\gamma)\geq\int\sfd^2(\gamma_0,\gamma_1)\,\d\ppi(\gamma)=W_2^2(\mu_0,\mu_1).
\]
Now notice that since $\varphi$ is locally Lipschitz, $\lip^+(\varphi)$ is an upper gradient for $\varphi$, hence it holds
\[
\begin{split}
\int\left(\frac{\varphi(\gamma_0)-\varphi(\gamma_t)}{\sfd(\gamma_0,\gamma_t)}\right)^2\,\d\ppi(\gamma)&\leq\int\frac1{t^2}\left(\int_0^t\lip^+(\varphi)(\gamma_s)\,\d s\right)^2\d\ppi(\gamma)\\
&\leq\frac1t\iint_0^t\lip^+(\varphi)^2(\gamma_s)\,\d s\,\d\ppi(\gamma)=\frac1t\int_0^t\int \lip^+(\varphi)^2\,\d(\e_s)_\sharp\ppi\,\d s.
\end{split}
\]
From our assumptions and \eqref{eq:sempre} we get that  $\u_N\Big(\frac1t\int_0^t(\e_s)_\sharp\ppi\,\d s\Big)\to \u_N(\mu)$ as $t\downarrow0$. Also, notice that by hypothesis the Borel function $\lip^+(\varphi)^2$ is bounded on $B$. Thus we can apply  Lemma \ref{le:scemo} and get
\[
\lim_{t\downarrow0}\frac1t\int_0^t\int \lip^+(\varphi)^2\,\d(\e_s)_\sharp\ppi\d s=\int\lip^+(\varphi)^2\,\d\mu_0,
\]
and the proof is completed.
\end{proof}

The following proposition is the variant of Proposition \ref{prop:cambiata}: notice that we are assuming \eqref{eq:cheeger2} to hold, replacing Sobolev functions with Lipschitz ones and the assumption of bounded compression with $(\e_0)_\sharp\ppi\ll\mm$ and $\u_N((\e_t)_\sharp\ppi)\to\u_N((\e_0)_\sharp\ppi)$ as $t\downarrow 0$.
\begin{proposition}\label{prop:tecnvar}
Let $(X,\sfd,\mm)$ be as in \eqref{eq:mms}. Assume that  \eqref{eq:cheeger2} holds and let  $\ppi\in\prob{C([0,1],X)}$ be such that   $(\e_0)_\sharp\ppi\ll\mm$, and  $\u_N((\e_t)_\sharp\ppi)\to\u_N((\e_0)_\sharp\ppi)$ as $t\downarrow0$ for some $N\in(1,\infty)$. Assume furthermore that for some bounded and closed set $B\subset X$ with $\mm(B)<\infty$ it holds $\supp((\e_t)_\sharp\ppi)\subset B$ for any $t\in[0,1]$.

Then for every function $f:X\to\R$ Lipschitz on bounded sets and every $p\in(1,\infty)$ it holds
\begin{equation}
\label{eq:pervarhorver}
\lims_{t\downarrow0}\int\frac{f(\gamma_t)-f(\gamma_0)}t\,\d\ppi(\gamma)\leq \frac{\|\weakgrad f\|^p_{L^p(X,(\e_0)_\sharp\sppi)}}p+\frac{\|\ppi\|_q^q}q,
\end{equation}
where $q$ is the conjugate exponent of $p$.
\end{proposition}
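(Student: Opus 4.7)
The plan is to mimic the proof of Proposition \ref{prop:cambiata}, in particular of \eqref{eq:altra}, replacing the use of the weak upper gradient $\weakgrad f$ (which required a Sobolev framework built around test plans of bounded compression) with the local Lipschitz constant $|D^+f|$, and then at the very end converting back to $\weakgrad f$ via the Cheeger-type identity \eqref{eq:cheeger2}.

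First I would observe that since $f$ is Lipschitz on bounded sets, $|D^+f|$ is an upper gradient for $f$ (as noted after \eqref{eq:slopes}), and moreover it is bounded on the closed bounded set $B$. Hence for every absolutely continuous curve $\gamma$ with values in $B$ and every $t \in (0,1]$,
\[
f(\gamma_t) - f(\gamma_0) \leq \int_0^t |D^+f|(\gamma_s)|\dot\gamma_s|\,\d s \leq \frac{1}{pt^{1/p}}\Bigl(\int_0^t |D^+f|^p(\gamma_s)\,\d s\Bigr)^{1/p}\cdot t^{1/p} \cdot \bigl(\cdots\bigr),
\]
or, more directly, dividing by $t$ and applying Young's inequality $ab \leq a^p/p + b^q/q$ to $a = (\tfrac{1}{t}\int_0^t |D^+f|^p\circ\gamma_s\,\d s)^{1/p}$ and $b = (\tfrac{1}{t}\int_0^t |\dot\gamma_s|^q\,\d s)^{1/q}$ gives
\[
\frac{f(\gamma_t) - f(\gamma_0)}{t} \leq \frac{1}{pt}\int_0^t |D^+f|^p(\gamma_s)\,\d s + \frac{1}{qt}\int_0^t |\dot\gamma_s|^q\,\d s,
\]
$\ppi$-a.e.\ $\gamma$ (where the inequality is trivially true if $\gamma$ is not absolutely continuous, as then the right hand side equals $+\infty$ on a set of positive $\ppi$-measure, or else one restricts to $\gamma \in AC^q$).

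Next I would integrate against $\ppi$ and use Fubini. The $q$-piece, taking $\lims$ as $t \downarrow 0$, is exactly $\|\ppi\|_q^q/q$ by the definition \eqref{eq:qnorm}. For the $p$-piece we have
\[
\frac{1}{pt}\iint_0^t |D^+f|^p\circ \e_s\,\d s\,\d\ppi = \frac{1}{p}\int |D^+f|^p\,\d\mu_t, \qquad \mu_t := \frac{1}{t}\int_0^t (\e_s)_\sharp\ppi\,\d s.
\]
The key step is to show that $\int |D^+f|^p\,\d\mu_t \to \int |D^+f|^p\,\d(\e_0)_\sharp\ppi$ as $t \downarrow 0$. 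This is precisely where Lemma \ref{le:scemo} enters: the measures $\mu_t$ are all supported in $B$ and converge weakly in duality with $C_b(B)$ to $(\e_0)_\sharp\ppi$ (by continuity of $s \mapsto (\e_s)_\sharp\ppi$), while \eqref{eq:sempre} applied to our hypothesis $\u_N((\e_t)_\sharp\ppi) \to \u_N((\e_0)_\sharp\ppi)$ gives $\u_N(\mu_t) \to \u_N((\e_0)_\sharp\ppi)$. Since $(\e_0)_\sharp\ppi \ll \mm$ and $|D^+f|^p$ is a bounded Borel function on $B$ (bounded by $\Lip(f|_B)^p$), Lemma \ref{le:scemo} yields the desired convergence \eqref{eq:bb}.

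Finally, the assumption \eqref{eq:cheeger2} gives $|D^+f| = \weakgrad f$ $\mm$-a.e., and since $(\e_0)_\sharp\ppi \ll \mm$ we may replace $|D^+f|^p$ with $\weakgrad f^p$ in the limit, producing exactly $\|\weakgrad f\|^p_{L^p(X,(\e_0)_\sharp\ppi)}/p$. Combining the two pieces gives \eqref{eq:pervarhorver}. The main obstacle is verifying the hypotheses of Lemma \ref{le:scemo} for the averaged marginals $\mu_t$ --- in particular, checking that the implication \eqref{eq:sempre} truly applies here (which requires only the convexity and weak lower semicontinuity of $\u_N$, so it is essentially free) --- and ensuring that the boundedness of $|D^+f|$ on $B$ is enough to apply \eqref{eq:bb} to the function $|D^+f|^p$; everything else is a routine repetition of the computation in Proposition \ref{prop:cambiata}.
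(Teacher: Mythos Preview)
Your proposal is correct and follows essentially the same route as the paper's proof: use the local Lipschitz constant as an upper gradient to get the pointwise Young inequality, integrate, pass the $|D^+f|^p$ term to the limit via \eqref{eq:sempre} and Lemma \ref{le:scemo}, and then invoke \eqref{eq:cheeger2} to convert to $\weakgrad f$. The paper uses $|Df|$ rather than $|D^+f|$, but this is immaterial since both are upper gradients and both equal $\weakgrad f$ $\mm$-a.e.\ by \eqref{eq:cheeger2}; the only point you leave slightly implicit is the reduction to $\|\ppi\|_q<\infty$ (otherwise there is nothing to prove), which is also what guarantees the weak continuity $s\mapsto(\e_s)_\sharp\ppi$ needed to feed Lemma \ref{le:scemo}.
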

\begin{proof} If $\|\ppi\|_q=\infty$ there is nothing to prove. Thus we can assume $\|\ppi\|_q<\infty$ which in particular implies that for some $T\in(0,1]$ the plan $({\rm restr}_0^T)_\sharp\ppi$ is concentrated on absolutely continuous curves and that $(\e_t)_\sharp\ppi$ weakly converges to $(\e_0)_\sharp\ppi$ in duality with $C_b(B)$. 

Since $f$ is locally Lipschitz, $\lip(f)$ is an upper gradient for $f$ and thus
\[
\frac{f(\gamma_t)-f(\gamma_0)}t\leq \frac1t\int_0^t \lip(f)(\gamma_s)|\dot\gamma_s|\,\d s\leq \frac{\int_0^t\lip( f)^p(\gamma_s)\,\d s}{tp}+\frac{\int_0^t|\dot\gamma_s|^q\,\d s}{tq},
\]
holds for any absolutely continuous curve $\gamma$ and $t\in(0,1]$. Integrate over $\ppi$ and let $t\downarrow 0$ to get
\[
\lims_{t\downarrow0}\int\frac{f(\gamma_t)-f(\gamma_0)}t\,\d\ppi(\gamma)\leq \frac1p\lims_{t\downarrow0}\frac1t\iint_0^t\lip(f)^p(\gamma_s)\,\d s\,\d\ppi(\gamma)  +\frac{\|\ppi\|_q^q}q.
\]
From our assumptions on $\ppi$ and \eqref{eq:sempre}  we get that $\u_N\left(\frac1t\int_0^t(\e_s)_\sharp\ppi\,\d s\right)\to\u_N((\e_0)_\sharp\ppi)$. Since $(\e_0)_\sharp\ppi\ll\mm$ and everything is taking place in the closed and bounded set $B$, we can apply Lemma \ref{le:scemo} to the bounded Borel function $\lip( f)^p$ to get that
\[
\lims_{t\downarrow0}\frac1t\iint_0^t\lip( f)^p(\gamma_s)\,\d s\,\d\ppi(\gamma) =\lims_{t\downarrow0}\int \lip( f)^p\,\d\left(\frac1t\int_0^t(\e_s)_\sharp\ppi\,\d s\right)=\int\lip(f)^p\,\d(\e_0)_\sharp\ppi.
\]
Finally, we use \eqref{eq:cheeger2} to replace $\lip(f)$ with $\weakgrad f$.
\end{proof}
This proposition suggest the following variant of Definition \ref{def:represent}:
\begin{definition}[Plans weakly representing gradients]
Let $(X,\sfd,\mm)$ be as in \eqref{eq:mms}, $p,q\in(1,\infty)$ conjugate exponents, $g:X\to\R$ a function Lipschitz on bounded sets and $\ppi\in\prob{C([0,1],X)}$. We say that $\ppi$  weakly $q$-represents $\nabla g$ provided:
\begin{itemize}
\item[i)] $\|\ppi\|_q<\infty$,
\item[ii)] $(\e_0)_\sharp\ppi\ll\mm$ and $\u_N((\e_t)_\sharp\ppi)\to\u_N((\e_0)_\sharp\ppi)$ as $t\downarrow0$ for some $N\in(1,\infty)$,
\item[iii)] for some bounded and closed set $B\subset X$ with $\mm(B)<\infty$ it holds $\supp((\e_t)_\sharp\ppi)\subset B$ for any $t\in[0,1]$,
\item[iv)] it holds
\begin{equation}
\label{eq:weakrepr}
\limi_{t\downarrow0}\int\frac{g(\gamma_t)-g(\gamma_0)}t\,\d\ppi(\gamma)\geq \frac{\|\weakgrad g\|^p_{L^p(X,(\e_0)_\sharp\sppi)}}p+\frac{\|\ppi\|_q^q}q.
\end{equation}
\end{itemize}
\end{definition}
Note: it is not the representation of $\nabla g$ to be weak in this definition, but the assumption that $\ppi$ has bounded compression that has been weakened into $(ii)$ above.

As a direct consequence of the metric Brenier theorem \ref{prop:varbrenier} we have the following simple and important result.
\begin{corollary}\label{cor:bm}
With the same assumptions and notations of Proposition \ref{prop:varbrenier} above, assume furthermore that \eqref{eq:cheeger2} holds. Then $\ppi$  weakly 2-represents $\nabla(-\varphi)$.
\end{corollary}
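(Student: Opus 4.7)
The plan is to verify the four defining conditions of ``weakly $2$-representing $\nabla(-\varphi)$'' one by one, noting that conditions (ii) and (iii) are built into the hypotheses of Proposition \ref{prop:varbrenier}. The two nontrivial points are the finiteness of $\|\ppi\|_2$ and the lower bound \eqref{eq:weakrepr}, and both will be handled using the optimal-geodesic structure of $\ppi$ together with the metric Brenier identity \eqref{eq:bm}.

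First I would compute $\|\ppi\|_2$. Since $\ppi\in\gopt(\mu_0,\mu_1)$, it is concentrated on constant-speed geodesics $\gamma$ with $|\dot\gamma_s|\equiv\sfd(\gamma_0,\gamma_1)$. Hence for every $t\in(0,1]$,
\[
\frac{1}{t}\iint_0^t|\dot\gamma_s|^2\,\d s\,\d\ppi(\gamma)=\int\sfd^2(\gamma_0,\gamma_1)\,\d\ppi(\gamma)=W_2^2(\mu_0,\mu_1),
\]
which is finite because $\mu_0,\mu_1$ are supported in the bounded set $B$. This gives $\|\ppi\|_2^2=W_2^2(\mu_0,\mu_1)<\infty$, settling (i). Conditions (ii) and (iii) are assumed verbatim in Proposition \ref{prop:varbrenier}.

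For the key inequality (iv), divide \eqref{eq:p1} by $t>0$ to get, $\ppi$-a.e.,
\[
\frac{(-\varphi)(\gamma_t)-(-\varphi)(\gamma_0)}{t}=\frac{\varphi(\gamma_0)-\varphi(\gamma_t)}{t}\geq\sfd^2(\gamma_0,\gamma_1)\bigl(1-\tfrac{t}{2}\bigr).
\]
Since the right-hand side is bounded (everything lives in $B$) and converges pointwise to $\sfd^2(\gamma_0,\gamma_1)$, Fatou's lemma yields
\[
\liminf_{t\downarrow 0}\int\frac{(-\varphi)(\gamma_t)-(-\varphi)(\gamma_0)}{t}\,\d\ppi(\gamma)\geq\int\sfd^2(\gamma_0,\gamma_1)\,\d\ppi=W_2^2(\mu_0,\mu_1).
\]
It remains to show that this matches the right-hand side of \eqref{eq:weakrepr}, which reads $\tfrac12\|\weakgrad{(-\varphi)}\|^2_{L^2((\e_0)_\sharp\sppi)}+\tfrac12\|\ppi\|_2^2$. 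I would now invoke the metric Brenier identity \eqref{eq:bm} from Proposition \ref{prop:varbrenier}, namely $\sfd(\gamma_0,\gamma_1)=|D^+\varphi|(\gamma_0)$ for $\ppi$-a.e.\ $\gamma$, and combine it with the Cheeger-type identification \eqref{eq:cheeger2} (which equates $|D^+\varphi|$ and $\weakgrad\varphi$ $\mm$-a.e., hence $\mu_0$-a.e.\ because $\mu_0\ll\mm$) and with the trivial equality $\weakgrad{(-\varphi)}=\weakgrad\varphi$ to obtain
\[
\|\weakgrad{(-\varphi)}\|^2_{L^2(\mu_0)}=\int|D^+\varphi|^2(\gamma_0)\,\d\ppi(\gamma)=\int\sfd^2(\gamma_0,\gamma_1)\,\d\ppi=W_2^2(\mu_0,\mu_1).
\]
Together with $\|\ppi\|_2^2=W_2^2(\mu_0,\mu_1)$, this shows that the right-hand side of \eqref{eq:weakrepr} equals $W_2^2(\mu_0,\mu_1)$, and the Fatou bound above closes the inequality.

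The whole argument is essentially bookkeeping once Proposition \ref{prop:varbrenier} and Theorem \ref{thm:cheeger} are available; the only substantive step is the realization that the equality $\|\weakgrad\varphi\|_{L^2(\mu_0)}^2=W_2^2(\mu_0,\mu_1)$ forces the Young-type inequality in the definition of ``$q$-representing'' to be saturated, so no gap appears in passing from the pointwise estimate \eqref{eq:p1} to \eqref{eq:weakrepr}. I do not anticipate a serious obstacle; the delicate point that might require care is ensuring that the appeal to Fatou is legitimate, which is granted by the uniform boundedness of $\sfd(\gamma_0,\gamma_1)$ on $\supp(\ppi)\subset\{\gamma:\gamma_0,\gamma_1\in B\}$.
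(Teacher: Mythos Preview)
Your proof is correct and follows essentially the same route as the paper: compute $\|\ppi\|_2^2=W_2^2(\mu_0,\mu_1)$ from the geodesic property, derive the liminf bound from \eqref{eq:p1}, and identify $\|\weakgrad\varphi\|_{L^2(\mu_0)}^2$ with $W_2^2(\mu_0,\mu_1)$ via \eqref{eq:bm}, \eqref{eq:cheeger2} and $\mu_0\ll\mm$. The only cosmetic remark is that the appeal to Fatou is unnecessary: since the pointwise inequality $\frac{(-\varphi)(\gamma_t)-(-\varphi)(\gamma_0)}{t}\geq(1-t/2)\sfd^2(\gamma_0,\gamma_1)$ holds $\ppi$-a.e., you can simply integrate both sides and then take the $\liminf$ of the resulting numerical inequality.
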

\begin{proof}
Dividing \eqref{eq:p1} by $t$, integrating w.r.t. $\ppi$ and letting $t\downarrow0$ we get
\[
\limi_{t\downarrow0}\int\frac{\varphi(\gamma_0)-\varphi(\gamma_t)}t\,\d\ppi(\gamma)\geq \int \sfd^2(\gamma_0,\gamma_1)\,\d\ppi(\gamma).
\]
On the other hand, since $\ppi$ is concentrated on geodesics it holds
\[
\frac1t\iint_0^t|\dot\gamma_s|^2\,\d s\,\d\ppi(\gamma)=\int \sfd^2(\gamma_0,\gamma_1)\,\d\ppi(\gamma),\qquad\forall t\in(0,1].
\]
Finally, \eqref{eq:bm}, \eqref{eq:cheeger2} and the fact that $(\e_0)_\sharp\ppi\ll\mm$  give $\int\weakgrad\varphi^2\,\d(\e_0)_\sharp\ppi=\int \sfd^2(\gamma_0,\gamma_1)\,\d\ppi(\gamma)$.
\end{proof}
We conclude with the variant of Theorem \ref{thm:horver} which we will use in the proof of the Laplacian comparison estimates.
\begin{proposition}[Variant of horizontal-vertical derivation]\label{prop:horvervar}
Let $(X,\sfd,\mm)$ be as in \eqref{eq:mms} and assume that  \eqref{eq:cheeger2} holds. Let $p,q\in(1,\infty)$ be conjugate exponents and $N\in(1,\infty)$. Let $f,g:X\to\R$ be two functions Lipschitz continuous on bounded subsets of $X$ and $\ppi\in\prob{C([0,1],X}$ a plan weakly $q$-representing $\nabla g$.

Then the same conclusions of Theorem \ref{thm:horver}  hold, i.e.
\begin{equation}
\label{eq:horvervar}
\begin{split}
\int D^+f(\nabla g)\weakgrad g^{p-2}\,\d(\e_0)_\sharp\ppi&\geq\lims_{t\downarrow 0}\int \frac{f(\gamma_t)-f(\gamma_0)}{t}\,\d\ppi(\gamma)\\
&\geq\limi_{t\downarrow 0}\int \frac{f(\gamma_t)-f(\gamma_0)}{t}\,\d\ppi(\gamma)\geq \int D^-f(\nabla g)\weakgrad g^{p-2}\,\d(\e_0)_\sharp\ppi.
\end{split}
\end{equation}
\end{proposition}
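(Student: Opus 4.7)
The plan is to follow essentially the same argument as in Theorem \ref{thm:horver}, but substituting Proposition \ref{prop:tecnvar} for the inequality \eqref{eq:dual}, which is what let us exploit the bounded compression assumption in the original setting. The fact that \eqref{eq:cheeger2} holds allows us to work with the Lipschitz regularity of $f,g$ instead.

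First, I would check that the perturbed function $g+\eps f$ satisfies the hypotheses of Proposition \ref{prop:tecnvar} together with $\ppi$: since $f$ and $g$ are both Lipschitz on bounded sets, so is $g+\eps f$, and the conditions on $\ppi$ (namely $\|\ppi\|_q<\infty$, $(\e_0)_\sharp\ppi\ll\mm$, the entropy convergence $\u_N((\e_t)_\sharp\ppi)\to \u_N((\e_0)_\sharp\ppi)$, and $\supp((\e_t)_\sharp\ppi)\subset B$) are assumed in the definition of weakly $q$-representing. Applying Proposition \ref{prop:tecnvar} to $g+\eps f$ gives
\[
\lims_{t\downarrow 0}\int\frac{(g+\eps f)(\gamma_t)-(g+\eps f)(\gamma_0)}{t}\,\d\ppi(\gamma)\leq \frac{\|\weakgrad{(g+\eps f)}\|^p_{L^p(X,(\e_0)_\sharp\sppi)}}{p}+\frac{\|\ppi\|_q^q}{q}.
\]
Combining this with the lower bound \eqref{eq:weakrepr} for $g$ and the elementary inequality $\lims(A-B)\leq \lims A-\limi B$, for $\eps>0$ we obtain
\[
\eps\,\lims_{t\downarrow 0}\int\frac{f(\gamma_t)-f(\gamma_0)}{t}\,\d\ppi(\gamma)\leq \int\frac{\weakgrad{(g+\eps f)}^p-\weakgrad g^p}{p}\,\d(\e_0)_\sharp\ppi,
\]
and the reverse inequality for $\eps<0$.

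Next, I would divide by $\eps$ (taking care of the sign) and pass to the limit $\eps\to 0$. The integrand on the right is dominated uniformly in small $\eps$: indeed, writing $|\weakgrad{(g+\eps f)}-\weakgrad g|\leq |\eps|\weakgrad f$ and using the mean value theorem in the variable $\weakgrad{(g+\eps f)}$ gives
\[
\left|\frac{\weakgrad{(g+\eps f)}^p-\weakgrad g^p}{p\eps}\right|\leq \weakgrad f\,(\weakgrad g+|\eps|\weakgrad f)^{p-1},
\]
and since $f,g$ are Lipschitz on the bounded set $B$ containing $\supp((\e_0)_\sharp\ppi)$, the right-hand side is bounded. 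So the dominated convergence theorem applies, and by the definition of $D^\pm f(\nabla g)$ we get
\[
\lims_{t\downarrow 0}\int\frac{f(\gamma_t)-f(\gamma_0)}{t}\,\d\ppi\leq \int D^+f(\nabla g)\weakgrad g^{p-2}\,\d(\e_0)_\sharp\ppi,
\]
together with the symmetric inequality for $\limi$ and $D^-$. The middle inequality in \eqref{eq:horvervar} is trivial.

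I do not expect any substantial obstacle: the only delicacy is verifying that Proposition \ref{prop:tecnvar} is really applicable to the perturbed function $g+\eps f$ (which is immediate from Lipschitz closure) and that the dominating function above is integrable with respect to $(\e_0)_\sharp\ppi$ (which follows from boundedness of $B$ and local Lipschitzness). The whole argument is a direct transposition of the proof of Theorem \ref{thm:horver}, with the weakened compression replaced by the entropy-based compactness encoded in Lemma \ref{le:scemo} and used inside Proposition \ref{prop:tecnvar}.
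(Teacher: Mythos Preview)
Your proposal is correct and follows essentially the same route as the paper's own proof: apply Proposition \ref{prop:tecnvar} to $g+\eps f$, subtract the defining inequality \eqref{eq:weakrepr}, divide by $\eps$ and let $\eps\to 0$. The extra care you take in checking applicability and in exhibiting the dominating function is fine but not strictly needed beyond what the paper sketches.
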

\begin{proof}
The proof is the same as the one of Theorem \ref{thm:horver}, with Proposition \ref{prop:tecnvar} replacing Proposition \ref{prop:cambiata}. Indeed notice that the assumptions of Proposition \ref{prop:tecnvar} are fulfilled, so that from \eqref{eq:pervarhorver} applied to the function $g+\eps f$, $\eps\in\R$, we get
\begin{equation}
\label{eq:stazione}
\lims_{t\downarrow 0}\int\frac{(g+\eps f)(\gamma_t)-(g+\eps f)(\gamma_0)}{t}\,\d\ppi(\gamma)\leq \frac{\|\weakgrad {(g+\eps f)}\|_{L^p(X,(\e_0)_\sharp\sppi)}^p}{p}+\frac{\|\ppi\|^q_q}{q},
\end{equation}
while from our assumptions on $g$ and $\ppi$ we know that
\begin{equation}
\label{eq:stazione2}
\limi_{t\downarrow 0}\int\frac{g(\gamma_t)-g(\gamma_0)}{t}\,\d\ppi(\gamma)\geq \frac{\|\weakgrad g\|_{L^p(X,(\e_0)_\sharp\sppi)}^p}{p}+\frac{\|\ppi\|^q_q}{q}.
\end{equation}
Subtract  \eqref{eq:stazione2} from  \eqref{eq:stazione} to get
\[
 \lims_{t\downarrow0}\eps\int\frac{f(\gamma_t)-f(\gamma_0)}{t}\,\d\ppi(\gamma)\leq \int \frac{\weakgrad{(g+\eps f)}^p -\weakgrad{g}^p}{p}\,\d (\e_0)_\sharp\ppi.
\]
The thesis follows.
\end{proof}

\subsection{Proof of Laplacian comparison}
We are now ready to prove the main comparison result of this paper. The proof will come by combining a bound from below and a bound from above of the derivative of the internal energy along a geodesic.

We start with the bound from below, which is the one technically more involved. We will use all the machinery developed in the previous section.
Let us introduce the pressure functionals $\rp_N:[0,\infty)\to\R$, by putting $\rp_N(z):=zu'_N(z)-u_N(z)$, i.e.
\[
\rp_N(z)=\tfrac1N z^{1-\frac1N}.
\]
\begin{proposition}[Bound from below on the derivative of the internal energy]\label{prop:boundbasso}
Let $(X,\sfd,\mm)$ be as in \eqref{eq:mms}  such that \eqref{eq:cheeger2} holds and $\mm$ is finite on bounded sets.

Let $\mu=\mu_0\in \prob X$ be a measure with bounded support such that  $\mu_0\ll\mm$, say $\mu_0=\rho\mm$. Assume that $\supp(\mu_0)=\overline\Omega$, with $\Omega$ open such that $\mm(\partial\Omega)=0$. Assume also that the restriction of  $\rho$ to $\overline\Omega$ is Lipschitz and bounded from below by a positive constant.  Also, let $\mu_1\in\prob X$ and $\ppi\in\gopt(\mu_0,\mu_1)$. Assume that  $\supp((\e_t)_\sharp\ppi)\subset \overline\Omega$ for every $t\in[0,1]$ and that  $\u_N((\e_t)_\sharp\ppi)\to \u_N(\mu_0)$, as $t\downarrow0$. 

Then it holds
\begin{equation}
\label{eq:bounddalbasso}
\limi_{t\downarrow0}\frac{\u_N((\e_t)_\sharp\ppi)-\u_N((\e_0)_\sharp\ppi)}t\geq-\int_\Omega D^+(\rp_N(\rho))(\nabla\varphi)\,\d\mm,
\end{equation}
where $\varphi$ is any Kantorovich potential from $\mu_0$ to $\mu_1$ which is Lipschitz on bounded sets and the integrand $D^+(\rp_N(\rho))(\nabla\varphi)$ is defined according to Definition \ref{def:dfgg} in the space $(\overline\Omega,\sfd,\mm\restr\Omega)$ (this remark is necessary because in general $\rho$, being discontinuous along $\partial\Omega$, does not belong to any $\s^p(X,\sfd,\mm)$).
\end{proposition}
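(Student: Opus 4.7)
The strategy is to bound $\u_N(\mu_t)-\u_N(\mu_0)$ from below via the linearization of $u_N$ at $\rho$, pull the resulting expression onto the curves through $\ppi$, and apply the horizontal/vertical identification of Proposition \ref{prop:horvervar}.

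Since $u_N$ is strictly convex and $C^2$ on $(0,\infty)$, with $u_N(0)=0$, and since $\rho\ge c>0$ on $\overline\Omega$, the tangent inequality
\[
u_N(\rho_t)\ge u_N(\rho)+u_N'(\rho)(\rho_t-\rho)
\]
holds $\mm$-a.e.\ on $\overline\Omega$, where I write $(\e_t)_\sharp\ppi=\rho_t\mm+\mu_t^s$. Integrating and noting that $u_N'(\rho)\le 0$ on $\overline\Omega$, so that the singular contribution $\int u_N'(\rho)\,\d\mu_t^s$ is nonpositive and dropping it only strengthens the inequality, I obtain
\[
\u_N(\mu_t)-\u_N(\mu_0)\;\ge\;\int u_N'(\rho)\,\d\mu_t-\int u_N'(\rho)\,\d\mu_0\;=\;\int\bigl(h\circ\e_t-h\circ\e_0\bigr)\,\d\ppi,
\]
with $h:=u_N'(\rho)$. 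Since $\rho\restr{\overline\Omega}$ is Lipschitz and stays in a compact subinterval of $(0,\infty)$, a routine Lipschitz extension produces $\tilde\rho:X\to\R$ bounded and bounded below by a positive constant, agreeing with $\rho$ on $\overline\Omega$; then $\tilde h:=u_N'(\tilde\rho)$ is Lipschitz on $X$, and by locality \eqref{eq:localgrad2} replacing $h$ by $\tilde h$ affects nothing where $(\e_t)_\sharp\ppi$ is supported.

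The hypotheses of Proposition \ref{prop:varbrenier} are precisely those of the statement (with $B=\overline\Omega$), so Corollary \ref{cor:bm} yields that $\ppi$ weakly $2$-represents $\nabla(-\varphi)$. Dividing the previous inequality by $t>0$, taking $\liminf$, and invoking Proposition \ref{prop:horvervar} with $f=\tilde h$, $g=-\varphi$ and $p=q=2$ gives
\[
\liminf_{t\downarrow 0}\frac{\u_N(\mu_t)-\u_N(\mu_0)}{t}\;\ge\;\int D^-\tilde h\bigl(\nabla(-\varphi)\bigr)\,\d\mu_0.
\]
To identify this right-hand side with $-\int_\Omega D^+(\rp_N(\rho))(\nabla\varphi)\,\d\mm$, I invoke the chain rule \eqref{eq:chainf} (noting $u_N''>0$) and \eqref{eq:4}, which give $D^-\tilde h(\nabla(-\varphi))=u_N''(\rho)D^-\rho(\nabla(-\varphi))=-u_N''(\rho)D^+\rho(\nabla\varphi)$ $\mm$-a.e.\ on $\Omega$. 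The elementary identity $\rp_N'(z)=z\,u_N''(z)$, combined with \eqref{eq:chainf} applied to $\rp_N$, yields $u_N''(\rho)D^+\rho(\nabla\varphi)=\rho^{-1}D^+(\rp_N(\rho))(\nabla\varphi)$, so that integrating against $\d\mu_0=\rho\,\d\mm$ the factor $\rho$ cancels and \eqref{eq:bounddalbasso} follows after transferring the expression to the subspace $(\overline\Omega,\sfd,\mm\restr\Omega)$ via Proposition \ref{prop:srestr}.

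The main technical obstacle is bookkeeping: checking that the chain rules and the horizontal/vertical identification, phrased for functions on $X$, transfer correctly to the subspace where the target is formulated, and that the Lipschitz extension $\tilde\rho$ introduces no spurious contribution. Both are benign consequences of the locality of $D^\pm$ and Proposition \ref{prop:srestr}, but they must be spelled out carefully to make the chain of identifications rigorous.
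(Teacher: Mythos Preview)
Your proof is correct and follows essentially the same strategy as the paper: linearize $u_N$ at $\rho$ via convexity, invoke Corollary \ref{cor:bm} to get that $\ppi$ weakly $2$-represents $\nabla(-\varphi)$, apply Proposition \ref{prop:horvervar}, and conclude via the chain rules and the identity $\rp_N'(z)=z\,u_N''(z)$. The only organizational difference is that the paper restricts to the subspace $(\overline\Omega,\sfd,\mm\restr\Omega)$ from the outset---verifying that \eqref{eq:cheeger2} transfers there because $\Omega$ is open and $\mm(\partial\Omega)=0$---whereas you extend $\rho$ to a global Lipschitz $\tilde\rho$ and work on $X$ throughout, transferring to the subspace only at the end via Proposition \ref{prop:srestr} and locality \eqref{eq:localgrad2}; both routes are equivalent and your handling of the singular part via the sign of $u_N'$ is a clean alternative to the paper's approximation argument.
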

\begin{proof} We work on the space $(\overline\Omega,\sfd,\mm\restr\Omega)$. Since $\rho,\rho^{-1}:\overline\Omega\to\R$  are bounded, the function $u_N'(\rho):\overline\Omega\to\R$ is  bounded.  Thus for every $\nu\in\prob{\overline\Omega}$ absolutely continuous w.r.t. $\mm\restr{\Omega}$, the convexity of $u_N$ gives $\u_N(\nu)-\u_N(\mu)\geq \int_\Omega u_N'(\rho)(\frac{\d\nu}{\d\mm}-\rho)\,\d\mm$. Then a simple approximation argument based on the continuity of $\rho$ gives
\[
\u_N(\nu)-\u_N(\mu)\geq \int_{\overline\Omega} u_N'(\rho)\,\d\nu-\int_{\overline\Omega} u_N'(\rho)\,\d\mu,\qquad\forall\nu\in\prob{\overline\Omega}.
\]
Plugging $\nu:=(\e_t)_\sharp\ppi$, dividing by $t$ and letting $t\downarrow0$ we get
\[
\limi_{t\downarrow 0}\frac{\u_N((\e_t)_\sharp\ppi)-\u_N((\e_0)_\sharp\ppi)}t\geq\limi_{t\downarrow0}\int\frac{u_N'(\rho)\circ\e_t-u_N'(\rho)\circ\e_0}t\,\d\ppi.
\]
Now observe that for any Lipschitz function $f:X\to\R$ and $x\in\Omega$, the values of $\lip(f)(x),\lip^\pm(f)(x)$ calculated in $(X,\sfd)$ and $(\overline\Omega,\sfd)$ are the same. Taking into account Proposition \ref{prop:srestr} and the fact that $\mm(\partial\Omega)=0$, we can conclude that \eqref{eq:cheeger2} holds also in the space $(\overline\Omega,\sfd,\mm\restr{\Omega})$. 

In particular,  Corollary \ref{cor:bm} is applicable and we have that $\ppi$ weakly 2-represents $\nabla(-\varphi)$. Then, since the maps $u_N'(\rho),\varphi:\overline\Omega\to\R$ are Lipschitz, we can apply Proposition \ref{prop:horvervar} on the space $(\overline\Omega,\sfd,\mm\restr{\Omega})$ with $p=q=2$ to get
\[
\limi_{t\downarrow0}\int\frac{u_N'(\rho)\circ\e_t-u_N'(\rho)\circ\e_0}t\,\d\ppi\geq \int_\Omega D^-(u_N'(\rho))(\nabla(-\varphi))\rho\,\d\mm.
\]
To conclude, notice that $\rp_N'(z)=zu_N''(z)$ and apply twice the chain rule \eqref{eq:chainf}:
\[
 \int_\Omega D^-(u_N'(\rho))(\nabla(-\varphi))\rho\,\d\mm=-\int_\Omega \rho u_N''(\rho)D^+\rho(\nabla\varphi)\,\d\mm=-\int_\Omega D^+(\rp_N(\rho))(\nabla\varphi)\,\d\mm.
\]
\end{proof}
For $K\in \R$, $N\in(1,\infty)$, we introduce the functions $\tilde\tau_{K,N}:[0,\infty)\to \R$ by
\[
\tilde\tau_{K,N}(\theta):=
\left\{
\begin{array}{ll}
\dfrac 1N\left(1+\theta\sqrt{K(N-1)}\,{\rm cotan}\left(\theta\sqrt{\frac{K}{N-1}}\right) \right),&\qquad\textrm{if }K>0,\\
\\
1,&\qquad\textrm{if }K=0,\\
\\
\dfrac 1N\left(1+\theta\sqrt{-K(N-1)}\,{\rm cotanh}\left(\theta\sqrt{\frac{-K}{N-1}}\right) \right),&\qquad\textrm{if }K<0,
\end{array}
\right.
\]
so that $\tilde\tau_{K,N}(\theta)=\lim_{t\downarrow0}\frac{\tau^{(1)}_{K,N}(\theta)-\tau^{(1-t)}_{K,N}(\theta)}{t}$ (provided $K\theta^2<(N-1)\pi^2$ if $K>0$). Notice that if $K\leq 0$ then $\tilde\tau_{K,N}(\cdot)$ is bounded on bounded subsets of $[0,\infty)$, but for $K>0$ it blows up as $\theta$ approaches $\pi\sqrt{(N-1)/K}$. This creates a potential complication about the integrability of $\tilde\tau_{K,N}(\sfd(\gamma_0,\gamma_1))$ w.r.t. optimal geodesic plans $\ppi$ which must be dealt with.  This is the scope of the following lemma.
\begin{lemma}\label{le:integrabile}
Let $(X,\sfd,\mm)$ be a $\CD(K,N)$ space, $K\in \R$, $N\in(1,\infty)$, and $\mu,\nu\in\prob X$ with bounded supports and  $\mu\leq C\mm$ for some $C>0$. Then for every  $\ppi\in\gopt(\mu,\nu)$  the function $\gamma\mapsto \tilde\tau_{K,N}(\sfd(\gamma_0,\gamma_1))$ is in $L^1(\ppi)$ and the convergence of $\frac{\tau^{(1)}_{K,N}(\sfd(\gamma_0,\gamma_1))-\tau^{(1-t)}_{K,N}(\sfd(\gamma_0,\gamma_1))}{t}$ to $\tilde\tau_{K,N}(\sfd(\gamma_0,\gamma_1))$ as $t\to 0$ is dominated in $L^1(\ppi)$.
\end{lemma}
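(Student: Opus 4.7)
I would split according to the sign of $K$, with the case $K > 0$ being the only nontrivial one.

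For $K \le 0$ the function $\tilde\tau_{K,N}$ is jointly $C^1$ on $[0,\infty)$ and in particular bounded on bounded subintervals. Since $\supp(\mu)$ and $\supp(\nu)$ are bounded, there is $M>0$ with $\sfd(\gamma_0,\gamma_1)\le M$ for $\ppi$-a.e. $\gamma$. The joint $C^1$-regularity of $(s,\theta)\mapsto \tau^{(s)}_{K,N}(\theta)$ on $[0,1]\times[0,M]$ yields a uniform bound on $\partial_s\tau^{(s)}_{K,N}$, hence a uniform bound (independent of $t$ and of $\gamma$) on both $|\tilde\tau_{K,N}(\sfd(\gamma_0,\gamma_1))|$ and $|\tau^{(1)}_{K,N}(\sfd(\gamma_0,\gamma_1)) - \tau^{(1-t)}_{K,N}(\sfd(\gamma_0,\gamma_1))|/t$. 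Both conclusions follow at once.

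For $K>0$ the diameter bound in Proposition \ref{prop:basecd} gives $\sfd(\gamma_0,\gamma_1)\le D:=\pi\sqrt{(N-1)/K}$, and the only obstruction is the blow-up $\tilde\tau_{K,N}(\theta)\sim -c(D-\theta)^{-1}$ as $\theta\uparrow D$. The key preliminary reduction is that for $K\ge 0$ the map $s\mapsto \tau^{(s)}_{K,N}(\theta)$ is concave on $[0,1]$ with $\tau^{(0)}=0$, $\tau^{(1)}=1$; interpreting $h_\theta(t):=[1-\tau^{(1-t)}_{K,N}(\theta)]/t$ as the slope of the chord between the graph-point $(1-t,\tau^{(1-t)}_{K,N}(\theta))$ and $(1,1)$, concavity gives that $h_\theta$ is non-decreasing in $t\in(0,1]$ with $h_\theta(0^+)=\tilde\tau_{K,N}(\theta)$ and $h_\theta(1)=1$. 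Consequently, for every $\theta\in[0,D]$ and $t\in(0,1]$,
\[
\tilde\tau_{K,N}(\theta)\ \le\ \frac{\tau^{(1)}_{K,N}(\theta)-\tau^{(1-t)}_{K,N}(\theta)}{t}\ \le\ 1,
\qquad \textrm{in particular}\ |h_\theta(t)|\le 1+|\tilde\tau_{K,N}(\theta)|.
\]
Thus, once we establish $\tilde\tau_{K,N}\circ\sfd\in L^1(\ppi)$, monotone convergence (of $h_\theta(t)$ decreasing to $\tilde\tau_{K,N}(\theta)$ as $t\downarrow 0$) together with the explicit dominator $1+|\tilde\tau_{K,N}\circ\sfd|$ gives $L^1(\ppi)$-domination of the difference quotients.

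It then remains to prove the integrability $\int|\tilde\tau_{K,N}(\sfd(\gamma_0,\gamma_1))|\,\d\ppi<\infty$, equivalently $\int (D-\sfd(\gamma_0,\gamma_1))^{-1}\,\d\ppi<\infty$ up to a bounded perturbation. Applying the $CD(K,N)$ inequality with $\mu_0=\mu$, $\mu_1=\nu$ along $\ppi$, and discarding the non-negative $\tau^{(t)}_{K,N}\eta^{-1/N}$ contribution, one gets
\[
\int \tau^{(1-t)}_{K,N}(\sfd(\gamma_0,\gamma_1))\,\rho^{-1/N}(\gamma_0)\,\d\ppi\ \le\ -\u_N((\e_t)_\sharp\ppi).
\]
Combining $\rho\le C$ with Jensen's inequality $-\u_N((\e_t)_\sharp\ppi)\le \mm(B)^{1/N}$ for a common bounded set $B$ containing $\supp((\e_t)_\sharp\ppi)$ for all $t\in[0,1]$ (available thanks to $\supp(\mm)\subset B_D$), one obtains a bound $\int \tau^{(1-t)}_{K,N}(\sfd)\,\d\ppi \le C'$ uniform in $t$. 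The hard part — which I expect to be the main obstacle — is that this estimate by itself only yields $L^1(\ppi)$-integrability of $(D-\sfd)^{-(1-1/N)}$, which is strictly weaker than what is needed for $\tilde\tau_{K,N}$. To bridge the gap I would upgrade the CD estimate using the Bishop-Gromov comparison of Proposition \ref{prop:basecd}: for every $y$, $\mm(\{x:\sfd(x,y)>D-\epsilon\})\lesssim \epsilon^N$, so a disintegration of $\ppi$ combined with $\mu\le C\mm$ and $(\e_0)_\sharp\ppi=\mu$ should upgrade the tail estimate on the law of $\sfd(\gamma_0,\gamma_1)$ to $\ppi\{\sfd(\gamma_0,\gamma_1)>D-\epsilon\}\lesssim \epsilon^N$, which since $N>1$ is more than enough to integrate $(D-\sfd)^{-1}$ and conclude.
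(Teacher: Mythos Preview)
Your treatment of $K\le 0$ is fine, and your concavity argument for $K>0$ is correct and efficient: the map $s\mapsto \tau^{(s)}_{K,N}(\theta)$ is indeed concave on $[0,1]$ for $K\ge 0$ (a short computation gives $\tau''/\tau=-\tfrac{N-1}{N^2}\big(\tfrac1s-a\cot(sa)\big)^2-\tfrac{N-1}{N}a^2<0$ with $a=\theta\sqrt{K/(N-1)}$), so the secant slopes $h_\theta(t)$ increase from $\tilde\tau_{K,N}(\theta)$ to $1$, and $1+|\tilde\tau_{K,N}\circ\sfd|$ dominates the difference quotients. The paper just asserts this reduction without detail; your justification is cleaner.

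The genuine gap is in the final integrability step. The implication ``Bishop--Gromov gives $\mm(\{x:\sfd(x,y)>D-\epsilon\})\lesssim\epsilon^N$ for each $y$, hence by disintegration and $\mu\le C\mm$ one gets $\ppi\{\sfd(\gamma_0,\gamma_1)>D-\epsilon\}\lesssim\epsilon^N$'' does not follow. If you disintegrate $\ppi$ over $\e_1$, the conditional first marginals $(\e_0)_\sharp\ppi_y$ need not satisfy $\le C\mm$ (only their $\nu$-average does); if you disintegrate over $\e_0$, you lose control of the second marginal entirely. On the round sphere with $\mu=\nu=\mm$, the antipodal coupling has $\sfd(\gamma_0,\gamma_1)\equiv D$ and satisfies all your hypotheses except optimality, so any valid argument \emph{must} exploit $\ppi\in\gopt$; your sketch does not.

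The paper closes this gap via a rigidity dichotomy that uses optimality essentially. If $\sup_{\supp\ppi}\sfd(\gamma_0,\gamma_1)<D$ there is nothing to prove. Otherwise some pair $(x_E,x_O)\in\supp(\e_0,\e_1)_\sharp\ppi$ has $\sfd(x_E,x_O)=D$; Bishop--Gromov then forces $\sfd(x,x_E)+\sfd(x,x_O)=D$ for \emph{every} $x\in X$ and turns the volume inequality into the exact formula $\mm(B_r(x_E))=\mm(X)\int_0^r\sin^{N-1}/\int_0^D\sin^{N-1}$. A short $c$-cyclical monotonicity computation then shows that for every $\gamma\in\supp\ppi$ either $\gamma_0=x_E$ or $\gamma_1=x_O$; since $\mu\le C\mm$ is non-atomic, $\gamma_1=x_O$ for $\ppi$-a.e.\ $\gamma$. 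This reduces the problem to the one-dimensional integral $\int_0^{D/2}\sin(r\pi/D)^{N-2}\,dr$, finite because $N>1$. The missing idea in your proposal is precisely this use of cyclical monotonicity together with the diameter rigidity to pin $\gamma_1$ to a single point.
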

\begin{proof}
If $K\leq 0$ there is nothing to prove, thus let $K>0$ and recall that in this case $(X,\sfd)$ is compact (Proposition \ref{prop:basecd}). It is immediate to verify that the domination of $\left|\frac{\tau^{(1)}_{K,N}(\sfd(\gamma_0,\gamma_1))-\tau^{(1-t)}_{K,N}(\sfd(\gamma_0,\gamma_1))}{t}\right|$ is a consequence of the fact that $\gamma\mapsto \tilde\tau_{K,N}(\sfd(\gamma_0,\gamma_1))$ is in $L^1(\ppi)$, thus we focus on this second problem.

If $\sup_{\gamma\in\supp(\sppi)}\sfd(\gamma_0,\gamma_1)<\pi\sqrt{(N-1)/K}$ there is nothing to prove. Otherwise, taking into account the compactness of $(X,\sfd)$, there exists $\overline\gamma\in\supp(\ppi)$ such that $\sfd(\gamma_0,\gamma_1)=\pi\sqrt{(N-1)/K}=:\sf D$. Put $x_E:=\gamma_0$, $x_O:=\gamma_1$. We claim\footnote{this claim has been proved by Ohta in \cite{Ohta07bis}, we are reporting here the proof for completeness} that for any $x\in X$ it holds
\begin{equation}
\label{eq:diam}
\sfd(x,x_E)+\sfd(x,x_O)={\sf D}.
\end{equation}
If not, for some $r\in(0,{\sf D})$ the two disjoint balls $B_{r}(x_E)$, $B_{{\sf D}-r}(x_O)$ would not cover some neighborhood of $x$, so that
\begin{equation}
\label{eq:nonva}
\mm(B_{r}(x_E))+\mm(B_{{\sf D}-r}(x_O))=\mm\big(B_{r}(x_E)\cup B_{{\sf D}-r}(x_O)\big)<\mm(X).
\end{equation}
However, taking into account   the Bishop-Gromov volume comparison estimates (inequality \eqref{eq:BGv}) we have
\[
\mm(B_{r}(x_E))+\mm(B_{{\sf D}-r}(x_O))\geq \left( \frac{\int_0^r\sin(t\pi/{\sf D})^{N-1}\,\d t}{\int_0^{\sf D}\sin(t\pi/{\sf D})^{N-1}\,\d t}+ \frac{\int_0^{{\sf D}-r}\sin(t\pi/{\sf D})^{N-1}\,\d t}{\int_0^{\sf D}\sin(t\pi/{\sf D})^{N-1}\,\d t}\right)\!\mm(X)= \mm(X),
\]
which contradicts \eqref{eq:nonva}. Hence our claim is true. The argument also yields
\begin{equation}
\label{eq:palle}
\mm(B_{r}(x_E))=\frac{\int_0^r\sin(t\pi/{\sf D})^{N-1}\,\d t}{\int_0^{\sf D}\sin(t\pi/{\sf D})^{N-1}\,\d t}\mm(X),\qquad\forall r\in[0,{\sf D}]
\end{equation}
Now we claim that for every $\gamma\in\supp(\ppi)$ it holds either $\gamma_0=x_E$ or $\gamma_1=x_O$. Indeed, by cyclical monotonicity and \eqref{eq:diam} we have
\begin{equation}
\label{eq:benebene}
\begin{split}
\sfd^2(\gamma_0,\gamma_1)+{\sf D}^2&\leq \sfd^2(\gamma_0,x_O)+\sfd^2(x_E,\gamma_1)\\
&= {\sf D}^2+\sfd^2(\gamma_0,x_E)-2 {\sf D}\sfd(\gamma_0,x_E) +\sfd^2(x_E,\gamma_1)\\
&\leq {\sf D}^2+\sfd^2(\gamma_0,x_E)-2 \sfd(x_E,\gamma_1)\sfd(\gamma_0,x_E) +\sfd^2(x_E,\gamma_1)\\
&={\sf D}^2 +\big( \sfd(x_E,\gamma_1)-\sfd(\gamma_0,x_E) \big)^2,
\end{split}
\end{equation}
which yields $\sfd(\gamma_0,\gamma_1)\leq \big|\sfd(x_E,\gamma_1)-\sfd(\gamma_0,x_E) \big|$. Hence  the triangle inequality forces the inequalities in \eqref{eq:benebene} to be equalities. The claim follows observing that the second inequality in \eqref{eq:benebene} is an equality if and only if either $\sfd(\gamma_0,x_E)=0$ or $\sfd(\gamma_1,x_E)={\sf D}$ and in this latter case, \eqref{eq:diam} yields $\sfd(\gamma_1,x_O)=0$.

By assumption, $(\e_0)_\sharp\ppi\ll\mm$, so that, since $\mm$ has no atoms, for $\ppi$-a.e. $\gamma$ it holds $\gamma_0\neq x_E$ and what we just proved yields $\gamma_1=x_O$ for $\ppi$-a.e. $\gamma$. Thus
\[
\tilde\tau_{K,N}(\sfd(\gamma_0,\gamma_1))=\tilde\tau_{K,N}(\sfd(\gamma_0,x_O)),\qquad\ppi-a.e.\ \gamma,
\]
and, taking the assumption $\mu\leq C\mm$ and the defining formula of $\tilde\tau_{K,N}(\cdot)$ in mind, our thesis reduces to prove that 
\[
\int\limits_{B_{{\sf D}/2}(x_E)} \frac{1}{\sin\big(\sfd(x,x_O)\sqrt{\frac K{N-1}}\big)}\,\d\mm(x)<\infty.
\]
Let $T:X\to[0,{\sf D}]$ be given by $T(x):=\sfd(x,x_E)$ and observe that \eqref{eq:palle} gives 
\[
\d T_\sharp\mm(r)=\left(\frac{\sin(r\pi/{\sf D})^{N-1}}{\int_0^{\sf D}\sin(t\pi/{\sf D})^{N-1}\,\d t}\mm(X)\right)\d\mathcal L^1\restr{[0,{\sf D}]}(r).
\]
Therefore
\[
\begin{split}
\int\limits_{B_{{\sf D}/2}(x_E)} \frac{1}{\sin\big(\sfd(x,x_O)\sqrt{\frac K{N-1}}\big)}\,\d\mm(x)&=\int\limits_{B_{{\sf D}/2}(x_E)} \frac{1}{\sin\big(\sfd(x,x_E)\frac\pi{\sf D}\big)}\,\d\mm(x)\\
&=\frac{\mm(X)}{\int_0^{\sf D}\sin(t\pi/{\sf D})^{N-1}\,\d t}\int_0^{{\sf D}/2}\sin\big(r\frac\pi{\sf D}\big)^{N-2}\,\d r.
\end{split}
\]
The conclusion follows from the fact that $N>1$.
\end{proof}
\begin{proposition}[Bound from above on the derivative of the internal energy]\label{prop:boundabove} Let $(X,\sfd,\mm)$ be as in \eqref{eq:mms},  $K\in \R$ and $N\in(1,\infty)$. Let $\ppi\in\prob{\geo(X)}$ be such that $(\e_i)_\sharp\ppi$ has bounded support, $i=0,1$, and $(\e_0)_\sharp\ppi\ll\mm$ with bounded density. Assume also that $\ppi$ satisfies
\begin{equation}
\label{eq:pezzocd}
\u_{N}((\e_t)_\sharp\ppi)\leq-\int\tau^{(1-t)}_{K,N}\big(\sfd(\gamma_0,\gamma_1)\big)\rho^{-\frac1{N}}(\gamma_0)\,\d\ppi(\gamma),\qquad\forall t\in[0,1],
\end{equation}
where $\rho$ is the density of $(\e_0)_\sharp\ppi$.

Then
\begin{equation}
\label{eq:dersup}
\lims_{t\downarrow 0}\frac{\u_N((\e_t)_\sharp\ppi)-\u_N((\e_0)_\sharp\ppi)}{t}\leq \int\tilde\tau_{K,N}\big(\sfd(\gamma_0,\gamma_1)\big)\rho^{-\frac1N}(\gamma_0)\,\d\ppi(\gamma).
\end{equation}
\end{proposition}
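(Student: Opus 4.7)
\medskip

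\noindent\textbf{Proof plan.} The strategy is to exploit the fact that \eqref{eq:pezzocd} is actually an equality at $t=0$, and then pass a differential inequality through the integral.

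\textbf{Step 1: Equality at $t=0$.} A direct inspection of the definition shows $\tau^{(1)}_{K,N}(\theta)=1$ for every admissible $\theta$. Evaluating \eqref{eq:pezzocd} at $t=0$ and using that $(\e_0)_\sharp\ppi=\rho\mm$, we get
\[
\u_N(\mu_0)\le-\int\rho^{-\frac1N}(\gamma_0)\,\d\ppi(\gamma)=-\int\rho^{1-\frac1N}\,\d\mm=\u_N(\mu_0),
\]
so equality holds at $t=0$. Subtracting this equality from \eqref{eq:pezzocd} and dividing by $t>0$ yields the basic estimate
\begin{equation}\label{eq:planbasic}
\frac{\u_N((\e_t)_\sharp\ppi)-\u_N(\mu_0)}{t}\le\int\frac{\tau^{(1)}_{K,N}(\sfd(\gamma_0,\gamma_1))-\tau^{(1-t)}_{K,N}(\sfd(\gamma_0,\gamma_1))}{t}\,\rho^{-\frac1N}(\gamma_0)\,\d\ppi(\gamma).
\end{equation}

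\textbf{Step 2: Passing to the limit inside the integral.} The integrand in the right hand side of \eqref{eq:planbasic} converges pointwise, as $t\downarrow 0$, to $\tilde\tau_{K,N}(\sfd(\gamma_0,\gamma_1))\rho^{-\frac1N}(\gamma_0)$. I would then take $\lims_{t\downarrow 0}$ of both sides of \eqref{eq:planbasic} and apply the reverse Fatou lemma to move the $\lim\sup$ inside the integral, thus obtaining exactly \eqref{eq:dersup}. To apply reverse Fatou one needs an $L^1(\ppi)$-domination of the integrand. Note first that $\rho^{-1/N}(\gamma_0)$ is already in $L^1(\ppi)$, since
\[
\int\rho^{-\frac1N}(\gamma_0)\,\d\ppi=\int\rho^{1-\frac1N}\,\d\mm=-\u_N(\mu_0)<\infty,
\]
by the hypotheses that $\rho$ is bounded and that $\mu_0$ has bounded support.

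\textbf{Step 3: Domination.} The bounded support assumption on $(\e_0)_\sharp\ppi$ and $(\e_1)_\sharp\ppi$ gives an upper bound $D$ on $\sfd(\gamma_0,\gamma_1)$ which holds for $\ppi$-a.e.\ $\gamma$. When $K\le 0$, the factor $\frac{\tau^{(1)}_{K,N}(\theta)-\tau^{(1-t)}_{K,N}(\theta)}{t}$ is bounded uniformly for $\theta\in[0,D]$ and $t\in(0,1]$ (as is routinely checked from the explicit formulas, using e.g.\ convexity in $t$ of $1-\tau^{(1-t)}_{K,N}(\theta)$), so the required domination by a constant multiple of $\rho^{-1/N}(\gamma_0)$ is immediate. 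The delicate case is $K>0$: here $\tilde\tau_{K,N}$ can blow up as $\theta\uparrow\pi\sqrt{(N-1)/K}$. However \eqref{eq:pezzocd} itself forces $\sfd(\gamma_0,\gamma_1)<\pi\sqrt{(N-1)/K}$ for $\ppi$-a.e.\ $\gamma$ (otherwise the right hand side would equal $-\infty$ for some $t\in(0,1)$ against the finiteness of the left hand side), and the same rigidity argument used in the proof of Lemma \ref{le:integrabile} -- based on the diameter attaining the maximal value and forcing a Bishop--Gromov-type saturation that produces integrability of $\tilde\tau_{K,N}(\sfd(\gamma_0,\gamma_1))$ -- gives the desired $L^1(\ppi)$-domination also in this case.

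\textbf{Main obstacle.} The only nontrivial technical point is the domination in Step 3 when $K>0$, where $\tilde\tau_{K,N}$ is singular at the maximum admissible distance: the inequality \eqref{eq:pezzocd} alone forbids $\sfd(\gamma_0,\gamma_1)$ from reaching the critical value, but one needs to carefully quantify this, invoking (a variant of) Lemma \ref{le:integrabile} to secure integrability of the limit integrand.
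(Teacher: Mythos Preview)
Your proposal is correct and follows essentially the same route as the paper: subtract the equality $\u_N((\e_0)_\sharp\ppi)=-\int\rho^{-1/N}(\gamma_0)\,\d\ppi$ from \eqref{eq:pezzocd}, divide by $t$, and pass to the limit by dominated convergence, with Lemma~\ref{le:integrabile} supplying the required $L^1(\ppi)$-domination (the paper simply cites that lemma rather than separating the cases $K\le 0$ and $K>0$ as you do).
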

\begin{proof}
From $\u_N((\e_0)_\sharp\ppi)=-\int\rho^{-\frac1N}(\gamma_0)\,\d\ppi(\gamma)$, and \eqref{eq:pezzocd}  we get
\[
\frac{\u_N((\e_t)_\sharp\ppi)-\u_N((\e_0)_\sharp\ppi)}{t}\leq\int\rho^{-\frac1N}\bigg(\frac{1-\tau^{(1-t)}_{K,N}\big(\sfd(\gamma_0,\gamma_1)\big)}t\bigg)\,\d\ppi(\gamma),
\]
and the conclusion follows by letting $t\downarrow0$ and using the dominate convergence  theorem and Lemma \ref{le:integrabile} to pass the limit inside the integral.
\end{proof}
The combination of \eqref{eq:bounddalbasso} and \eqref{eq:dersup} will produce the Laplacian comparison estimate. Before concluding, however, we need a result which grants that Proposition \ref{prop:boundbasso} can be applied in a sufficiently large class of situations, the tricky part being to ensure that $\supp((\e_t)_\sharp\ppi)\subset\overline\Omega$ for any $t\in[0,1]$. This is the scope of the following lemma. 
\begin{lemma}\label{le:cheppalle}
Let $(X,\sfd,\mm)$ be a proper  geodesic metric space with $\mm$ locally finite. Let $\varphi:X\to\R$ a locally Lipschitz $c$-concave function and $B\subset X$ a compact set. Then there exists another locally Lipschitz $c$-concave function $\tilde\varphi:X\to\R$ and a bounded open set $\Omega \supset B$ such that the following are true. 
\begin{itemize}
\item[i)] $\tilde\varphi=\varphi$ on $B$.
\item[ii)] For any $x\in X$, the set $\partial^c\tilde\varphi(x)$ is non-empty.
\item[iii)] For any $x\in\Omega$, $y\in\partial^c\tilde\varphi(x)$ and $\gamma\in\geo(X)$ connecting $x$ to $y$ it holds $\gamma_t\in\Omega$   for any $t\in[0,1]$.
\item[iv)] $\mm(\partial\Omega)=0$.
\end{itemize}
\end{lemma}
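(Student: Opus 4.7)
\emph{Strategy.} The plan is to construct $\tilde\varphi$ as a truncated $c$-transform whose $c$-superdifferential is forced into a fixed compact set, and to take $\Omega$ as a sublevel set of $\tilde\varphi$ itself. The crucial observation — which circumvents the absence of geodesic convexity of balls in a general proper geodesic space — is that for any $c$-concave function $\tilde\varphi$, the map $t \mapsto \tilde\varphi(\gamma_t)$ is non-increasing along every geodesic $\gamma$ from $x$ to $y \in \partial^c\tilde\varphi(x)$. So sublevel sets $\{\tilde\varphi < c\}$ are automatically ``star-like along the $c$-superdifferential'', which is exactly what (iii) requires.

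\emph{Construction of $\tilde\varphi$.} Let $V := \bigcup_{x \in B}\partial^c\varphi(x)$, which is compact by Proposition \ref{prop:fg}. Since $\varphi$ is Lipschitz on bounded sets it is bounded on $B$, hence $\varphi^c$ is real-valued and upper semicontinuous on $V$, with $\sup_V\varphi^c<\infty$. Define
\[
\tilde\varphi(x) := \inf_{y\in V}\Big(\tfrac12\sfd^2(x,y)-\varphi^c(y)\Big).
\]
This is $c$-concave (it is $\psi^c$ for $\psi=\varphi^c$ on $V$ and $-\infty$ elsewhere), and locally Lipschitz because the functions $x\mapsto\sfd^2(x,y)/2$ have uniformly controlled Lipschitz constant on any bounded set, uniformly in $y\in V$. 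Property (i), i.e.\ $\tilde\varphi\equiv\varphi$ on $B$, follows since for $x\in B$ the infimum defining $\varphi^{cc}(x)=\varphi(x)$ is attained at every $y\in\partial^c\varphi(x)\subset V$, giving $\tilde\varphi(x)\leq\varphi(x)\leq\tilde\varphi(x)$. Property (ii) is obtained by noting that $y\mapsto\sfd^2(x,y)/2-\varphi^c(y)$ is lower semicontinuous, so the infimum is attained at some $y^*\in V$, and a direct computation shows $\tilde\varphi^c(y^*)=\varphi^c(y^*)$, hence $y^*\in\partial^c\tilde\varphi(x)$.

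\emph{Monotonicity along geodesics.} Fix $x\in X$, $y\in\partial^c\tilde\varphi(x)$, and a geodesic $\gamma$ from $x$ to $y$. The $c$-concavity inequality $\tilde\varphi(z)\leq\sfd^2(z,y)/2-\tilde\varphi^c(y)$ applied at $z=\gamma_t$, combined with $\sfd(\gamma_t,y)=(1-t)\sfd(x,y)$ and the equality $\tilde\varphi(x)=\sfd^2(x,y)/2-\tilde\varphi^c(y)$, yields
\[
\tilde\varphi(\gamma_t)-\tilde\varphi(x)\leq \tfrac{(1-t)^2-1}{2}\sfd^2(x,y)=-\tfrac{t(2-t)}{2}\sfd^2(x,y)\leq 0.
\]
Hence every sublevel set $\Omega_c:=\{\tilde\varphi<c\}$ satisfies (iii).

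\emph{Choosing $c$.} Set $M:=\sup_B\tilde\varphi<\infty$ (finite by continuity and compactness of $B$). From $\tilde\varphi(x)\geq\sfd^2(x,V)/2-\sup_V\varphi^c$ we see that each $\Omega_c$ with $c>M$ is contained in a bounded neighborhood of $V$; properness then gives that its closure is compact, so $\mm(\overline{\Omega_c})<\infty$. Within the interval $(M,M+1)$ the level sets $\{\tilde\varphi=c\}$ are pairwise disjoint Borel subsets of the fixed set $\overline{\Omega_{M+1}}$ of finite measure, so only countably many of them can have positive $\mm$-measure. Pick $c\in(M,M+1)$ with $\mm(\{\tilde\varphi=c\})=0$ and set $\Omega:=\Omega_c$: it is open (by continuity of $\tilde\varphi$), bounded, contains $B$, and $\partial\Omega\subset\{\tilde\varphi=c\}$ has zero $\mm$-measure, giving (iv). The only step that requires a little care is ensuring boundedness of $\Omega$ and good behavior of $\tilde\varphi$ on the possibly large set it lives on, which is why we kept $K=V$ minimal (so that $\varphi^c$ is automatically finite and bounded there).
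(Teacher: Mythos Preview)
Your proof is correct and follows essentially the same approach as the paper: define $\tilde\varphi$ as the $c$-transform restricted to the compact set $V=\bigcup_{x\in B}\partial^c\varphi(x)$, take $\Omega$ as a sublevel set $\{\tilde\varphi<c\}$, use the monotonicity of $\tilde\varphi$ along geodesics to $\partial^c\tilde\varphi$ for (iii), and pick $c$ so that the level set is $\mm$-null for (iv). The only cosmetic difference is that for (ii) the paper simply invokes Proposition~\ref{prop:fg} applied to $\tilde\varphi$, whereas you argue directly that the minimizer $y^*\in V$ lies in $\partial^c\tilde\varphi(x)$; both are fine.
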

\begin{proof} By Proposition \ref{prop:fg} we know that for any $x\in X$ the set $\partial^c\varphi(x)$ is non-empty and that the set $C\subset X$ defined by
\[
C:=\Big\{y\in X\ :\ y\in\partial^c\varphi(x),\ \textrm{ for some }x\in  B\Big\},
\]
is compact. Define $\tilde\varphi:X\to\R$ by
\[
\tilde\varphi(x):=\inf_{y\in C}\frac{\sfd^2(x,y)}2-\varphi^c(y).
\]
By definition, $\tilde\varphi$ is $c$-concave, locally Lipschitz and $\tilde\varphi=\varphi$ on $B$. By Proposition \ref{prop:fg} again we get that $(ii)$ is fulfilled as well. Let $a:=\sup_{x\in B}\tilde \varphi(x)$ and for $b>a$ define
\[
\Omega_b:=\{x\ :\ \tilde\varphi(x)<b\}.
\]
Clearly $\Omega_b$ is an open  set which contains $B$. We claim that it is also bounded. Indeed since $\varphi^c$ is $c$-concave and $C$ is bounded, it holds $\sup_{y\in C}\varphi^c(y)<\infty$, so that the boundedness of $\Omega_b$ follows from the inequality  $\tilde\varphi(x)\geq\frac{\sfd^2(x,C)}{2}-\sup_C\varphi^c$. Now pick an arbitrary $x\in \Omega_b$ and $y\in \partial^c\tilde\varphi(x)$. From
\[
\begin{split}
\tilde\varphi(x)&=\frac{\sfd^2(x,y)}2-\tilde\varphi^c(y),\\
\tilde\varphi(z)&\leq\frac{\sfd^2(z,y)}2-\tilde\varphi^c(y),\qquad\forall z\in X,
\end{split}
\]
we get that  for any $z\in X$ such that $\sfd(z,y)\leq \sfd(x,y)$ it holds $\tilde\varphi(z)\leq \tilde\varphi(x)$. In particular, this applies to $z:=\gamma_t$, with $\gamma\in\geo(X)$ connecting $x$ to $y$, thus $(iii)$ is proved as well.

It remains to show that $b$ can be chosen so that $\mm(\partial\Omega_b)=0$. To achieve this, notice that for $b\neq b'$ it holds $\partial\Omega_b\cap\partial\Omega_{b'}=\emptyset$ so that the $\sigma$-finiteness of $\mm$ yields that for any $b>a$ except at most a countable number it holds  $\mm(\partial\Omega_b)=0$.
\end{proof}
Notice that on $\CD(K,N)$ spaces, if $\varphi$ is a locally Lipschitz $c$-concave function, then Proposition \ref{prop:varbrenier}, \eqref{eq:cheeger2} and Lemma \ref{le:integrabile} give that $\tilde\tau_{K,N}(\weakgrad\varphi)\in L^1_{\rm loc}(X,\mm)$.

In the following theorem and the discussion thereafter, we say that $(X,\sfd,\mm)$ is infinitesimally strictly convex provided for some $q\in(1,\infty)$ it is $q$-infinitesimally strictly convex. In this case, since locally Lipschitz functions belong to $\s^p_{\rm loc}(X,\sfd,\mm)$ for any $p\in(1,\infty)$, for any $f,g$ locally Lipschitz it holds
\[
D^+f(\nabla g)=D^-f(\nabla g),\qquad\mm-a.e.,
\]
and we will denote this common quantity, as usual, by $Df(\nabla g)$. In particular, on these spaces for $g$ locally Lipschitz and in $D(\bd)$, the set $\bd g$ contains only one measure, which abusing a bit the notation we will indicate as $\bd g$.
\begin{theorem}[Comparison estimates]\label{thm:controllo}
Let $K\in\R$, $N\in(1,\infty)$ and  $(X,\sfd,\mm)$ be a $\CD(K,N)$ space. Assume that $(X,\sfd,\mm)$ is infinitesimally strictly convex.  Let $\varphi:X\to\R$ a locally Lipschitz $c$-concave function. Then 
\begin{equation}
\label{eq:comparison}
\varphi\in D(\bd)\qquad\qquad\textrm{and}\qquad\qquad\bd\varphi\leq N\,\tilde\tau_{K,N}(\weakgrad\varphi)\,\mm.
\end{equation}
\end{theorem}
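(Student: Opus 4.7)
The plan is to verify the hypotheses of Proposition \ref{prop:comparison} with $\Omega=X$, $g=\varphi$ and $\tilde\mu:=N\,\tilde\tau_{K,N}(\weakgrad\varphi)\,\mm$, which is locally finite because $\tilde\tau_{K,N}(\weakgrad\varphi)\in L^1_{loc}(X,\mm)$, as remarked just before the theorem. Infinitesimal strict convexity lets us identify $D^+f(\nabla\varphi)=D^-f(\nabla\varphi)=:Df(\nabla\varphi)$, so the task reduces to proving, for every non-negative $f\in\test X$, that
\[
-\int Df(\nabla\varphi)\,\d\mm\;\leq\;N\int f\,\tilde\tau_{K,N}(\weakgrad\varphi)\,\d\mm.
\]

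Fix such $f$ and apply Lemma \ref{le:cheppalle} to a compact neighbourhood of $\supp(f)$, producing a locally Lipschitz $c$-concave $\tilde\varphi$ agreeing with $\varphi$ on $\supp(f)$ and a bounded open set $\Omega\supset\supp(f)$ with $\mm(\partial\Omega)=0$, $\partial^c\tilde\varphi(x)\neq\emptyset$ for every $x$, and such that every geodesic issued from $x\in\Omega$ towards some $y\in\partial^c\tilde\varphi(x)$ stays in $\Omega$. Since $(X,\sfd)$ is proper by Proposition \ref{prop:basecd}, $\overline\Omega$ is compact. For each $\eps>0$ set
\[
\rho^\eps:=Z_\eps^{-1}(Nf+\eps)^{N/(N-1)}\nchi_{\overline\Omega},\qquad Z_\eps:=\int_{\overline\Omega}(Nf+\eps)^{N/(N-1)}\,\d\mm,
\]
so that $\rho^\eps$ is Lipschitz and bounded below by a positive constant on $\overline\Omega$, and $\mu_0^\eps:=\rho^\eps\mm$ is a probability measure. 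Choose a Borel selection $T(x)\in\partial^c\tilde\varphi(x)$, set $\mu_1^\eps:=T_\sharp\mu_0^\eps$, and let $\ppi^\eps\in\gopt(\mu_0^\eps,\mu_1^\eps)$ be provided by the $CD(K,N)$ condition, so that \eqref{eq:pezzocd} holds. Any such $\ppi^\eps$ is concentrated on geodesics $\gamma$ with $\gamma_1\in\partial^c\tilde\varphi(\gamma_0)$, so using $\mu_0^\eps(\partial\Omega)=0$ and the choice of $\Omega$, $\supp((\e_t)_\sharp\ppi^\eps)\subset\overline\Omega$ for every $t\in[0,1]$. Moreover, the bound \eqref{eq:pezzocd}, which is continuous at $t=0$, combined with the weak lower semicontinuity of $\u_N$, forces $\u_N((\e_t)_\sharp\ppi^\eps)\to\u_N(\mu_0^\eps)$ as $t\downarrow0$.

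I then combine Propositions \ref{prop:boundabove} and \ref{prop:boundbasso} (the latter applies thanks to the properties just collected, with $\tilde\varphi$ in place of $\varphi$). Writing $c_\eps:=Z_\eps^{-(N-1)/N}$ so that $\rp_N(\rho^\eps)=c_\eps(f+\eps/N)$ on $\overline\Omega$, the chain rule \eqref{eq:chainf} applied to the affine map $z\mapsto c_\eps(z+\eps/N)$ yields $D^+(\rp_N(\rho^\eps))(\nabla\tilde\varphi)=c_\eps Df(\nabla\tilde\varphi)$ $\mm$-a.e.\ on $\overline\Omega$, while \eqref{eq:nullset} and \eqref{eq:localgrad2} give $Df(\nabla\tilde\varphi)=Df(\nabla\varphi)\,\nchi_{\supp(f)}$ $\mm$-a.e. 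Proposition \ref{prop:varbrenier} applied to $\tilde\varphi$, combined with Theorem \ref{thm:cheeger} and \eqref{eq:localgrad}, identifies $\sfd(\gamma_0,\gamma_1)$ with $\weakgrad{\tilde\varphi}(\gamma_0)$, and further with $\weakgrad\varphi(\gamma_0)$ on $\supp(f)$, for $\ppi^\eps$-a.e.\ $\gamma$. Rewriting the right-hand side of \eqref{eq:dersup} as an integral against $\mm$ and cancelling the common factor $c_\eps>0$ leaves
\[
-\int Df(\nabla\varphi)\,\d\mm\;\leq\;N\int f\,\tilde\tau_{K,N}(\weakgrad\varphi)\,\d\mm\;+\;\eps\int_{\overline\Omega}\tilde\tau_{K,N}(\weakgrad{\tilde\varphi})\,\d\mm.
\]
The last term is finite because $\tilde\tau_{K,N}(\weakgrad{\tilde\varphi})\in L^1_{loc}(X,\mm)$ and $\overline\Omega$ is compact, so letting $\eps\downarrow0$ yields the claim.

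The main obstacle I expect lies in the careful handling of $\ppi^\eps$: verifying that $\tilde\varphi$ is genuinely a Kantorovich potential for $(\mu_0^\eps,\mu_1^\eps)$, that the plan $\ppi^\eps$ supplied by the $CD(K,N)$ condition is indeed concentrated on geodesics with $\gamma_1\in\partial^c\tilde\varphi(\gamma_0)$ so that Lemma \ref{le:cheppalle} keeps the marginals inside $\overline\Omega$, and securing the convergence $\u_N((\e_t)_\sharp\ppi^\eps)\to\u_N(\mu_0^\eps)$ which is the gateway to Proposition \ref{prop:boundbasso}. A secondary subtlety is the bookkeeping that turns $D^+(\rp_N(\rho^\eps))(\nabla\tilde\varphi)$ into $c_\eps Df(\nabla\varphi)$ after restriction to $\supp(f)$ via the chain rule, positive $1$-homogeneity, \eqref{eq:nullset} and \eqref{eq:localgrad}, together with the transfer of the Brenier identity from $\tilde\varphi$ back to $\varphi$.
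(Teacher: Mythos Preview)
Your proposal is correct and follows essentially the same route as the paper's proof: fix a test function, apply Lemma~\ref{le:cheppalle} to obtain $\tilde\varphi$ and $\Omega$, transport $\rho^\eps\mm$ towards a measurable selection of $\partial^c\tilde\varphi$, combine Propositions~\ref{prop:boundbasso} and~\ref{prop:boundabove} via the metric Brenier theorem, and conclude through Proposition~\ref{prop:comparison}. The only cosmetic differences are that the paper uses a measure-valued selection $\eta_x\in\prob{\partial^c\tilde\varphi(x)}$ rather than a point-valued one, takes $\rho_\eps=c_\eps(f+\eps)^{N/(N-1)}\nchi_{\overline\Omega}$ instead of your $(Nf+\eps)^{N/(N-1)}$ normalization, and organizes the argument by first deriving the general inequality \eqref{eq:bona} for arbitrary admissible $\rho$ before specializing.
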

\begin{proof}
Fix a compact set $B\subset X$ and use Lemma \ref{le:cheppalle} to find a $c$-concave function $\tilde\varphi$ and a bounded open set $\Omega$ satisfying $(i),(ii),(iii),(iv)$ of the statement.

Consider the multivalued map $\overline\Omega\ni x\mapsto T(x)\subset \prob{\overline\Omega}$ defined by: $\mu\in T(x)$ if and only if $\supp(\mu)\subset\partial^c\tilde\varphi(x)$. Notice that $(ii)$ of Lemma \ref{le:cheppalle} ensures that $T(x)\neq\emptyset$ for any $x\in\overline\Omega$ Recall that $\prob{\overline\Omega}$ endowed with the weak topology given by the duality with $C(\overline\Omega)$ is Polish  and observe that since $\partial^c\tilde\varphi$ is closed, we easily get that the graph of $T$ is a closed subset of $\overline\Omega\times \prob{\overline\Omega}$. Hence standard measurable selection arguments (see e.g. Theorem 6.9.3. in \cite{Bogachev07}) ensure that there exists a Borel map $x\mapsto \eta_x$ such that $\eta_x\in T(x)$ for any $x\in\overline\Omega$.

Fix a bounded function $\rho:X\to [0,\infty)$ such that $\int\rho\,\d\mm=1$, $\rho\equiv 0$ on $X\setminus\overline\Omega$ and $\rho\restr{\overline\Omega}$ is Lipschitz and bounded from below by a positive constant. Define $\mu,\nu\in\prob{\overline\Omega}$ by $\mu:=\rho\mm$ and $\nu:=\int\eta_x\,\d\mu(x)$. Since $(X,\sfd,\mm)$ is a $\CD(K,N)$ space we know that there exists $\ppi\in\gopt(\mu,\nu)$ such that \eqref{eq:cd} holds. In particular
\begin{equation}
\label{eq:pesto2}
\u_{N}((\e_t)_\sharp\ppi)\leq-\int\tau^{(1-t)}_{K,N}\big(\sfd(\gamma_0,\gamma_1)\big)\rho^{-\frac1{N}}(\gamma_0)\,\d\ppi(\gamma),\qquad\forall t\in[0,1].
\end{equation}
Proposition \ref{prop:boundabove} grants
\begin{equation}
\label{eq:alto}
\lims_{t\downarrow 0}\frac{\u_{N}((\e_t)_\sharp\ppi)-\u_N(\mu)}t\leq \int\rho^{-\frac1N}(\gamma_0)\tilde\tau_{K,N}\big(\sfd(\gamma_0,\gamma_1)\big)\,\d\ppi(\gamma).
\end{equation}
By construction, $\tilde\varphi$ is a Kantorovich potential from $\mu$ to $\nu$, and   $\ppi\in\gopt(\mu,\nu)$.  Therefore for any $\gamma\in\supp(\ppi)$ it holds $\gamma_1\in\partial^c\tilde\varphi(\gamma_0)$. Using property $(iii)$ of Lemma \ref{le:cheppalle} we get that $\supp((\e_t)_\sharp\ppi)\subset\overline\Omega$ for any $t\in[0,1]$.

Furthermore, using \eqref{eq:pesto2} and the semicontinuity of $\u_N$ on $\prob{\overline\Omega}$ we get that $\u_N((\e_t)_\sharp\ppi)\to\u_N(\mu)$ as $t\downarrow0$, and thus Corollary \ref{cor:bm} gives that $\ppi$ weakly 2-represents $\nabla(-\tilde\varphi)$. 

Also, from $(iv)$ of Lemma \ref{le:cheppalle} we get $\mm(\partial\Omega)=0$.

Therefore all the assumptions of  Proposition \ref{prop:boundbasso} are satisfied (put $\mu$ in place of $\mu_0$ and $\tilde\varphi$ in place of $\varphi$), and  we get
\begin{equation}
\label{eq:basso}
\limi_{t\downarrow0}\frac{\u_N((\e_t)_\sharp\ppi)-\u_N((\e_0)_\sharp\ppi)}t\geq-\int_\Omega D^+(\rp_N(\rho))(\nabla\tilde\varphi)\,\d\mm.
\end{equation}
From inequalities \eqref{eq:alto} and \eqref{eq:basso}, the metric Brenier theorem \ref{prop:varbrenier}, \eqref{eq:cheeger2} and taking into account that $\rp_N(z)=\frac1Nz^{1-\frac1N}$ we deduce
\begin{equation}
\label{eq:bona}
-\frac1N\int_\Omega D^+(\rho^{1-\frac1N})(\nabla\tilde\varphi)\,\d\mm\leq \int \rho^{1-\frac1N}\,\tilde\tau_{K,N}(\weakgrad{\tilde\varphi})\,\d\mm.
\end{equation}
Now let $f:X\to\R$ be a non-negative Lipschitz function such that $\supp(f)\subset B$ and, for $\eps>0$, define $\rho_\eps:X\to[0,\infty)$ by $\rho_\eps:=c_\eps(f+\eps)^{\frac N{N-1}}\nchi_{\overline\Omega}$, where $c_\eps$ is such that $\int \rho_\eps\,\d\mm=1$. Apply \eqref{eq:bona} to $\rho:=\rho_\eps$ to obtain
\[
-\frac1N\int_\Omega D^+(f+\eps)(\nabla\tilde\varphi)\,\d\mm\leq \int_\Omega (f+\eps)\,\tilde\tau_{K,N}(\weakgrad{\tilde\varphi})\,\d\mm.
\]
Since $(X,\sfd,\mm)$ is infinitesimally strictly convex, with a cut-off argument and using $(i)$ of Proposition \ref{prop:srestr}  we have that $(\overline\Omega,\sfd,\mm)$ is infinitesimally strictly convex as well. Let $\eps\downarrow0$  and recall that from $(i)$ of Lemma \ref{le:cheppalle} we have $\tilde\varphi\equiv \varphi$ on $B$ to get
\begin{equation}
\label{eq:fine}
-\int_\Omega Df(\nabla\varphi)\,\d\mm\leq N\int f\,\tilde\tau_{K,N}(\weakgrad{\varphi})\,\d\mm.
\end{equation}
By $(ii)$ of Proposition \ref{prop:srestr} we have $\int_\Omega Df(\nabla\varphi)\,\d\mm=\int_X Df(\nabla\varphi)\,\d\mm$, in the sense that the integrand $Df(\nabla\varphi)$ can be equivalently computed in the space $(\overline\Omega,\sfd,\mm\restr\Omega)$ or in $(X,\sfd,\mm)$ (actually, for $\varphi$ in general it doesn't hold $\supp(\varphi)\subset\Omega$, but thanks to the locality property \eqref{eq:localgrad2} and a cut-off argument, the claim is still true). 

In summary, we proved that for any compact set $B\subset X$ and any $f:X\to[0,\infty)$ Lipschitz, non-negative and with $\supp(f)\subset B$ it holds
\begin{equation}
\label{eq:ottima}
-\int Df(\nabla\varphi)\,\d\mm\leq N\int f\,\tilde\tau_{K,N}(\weakgrad{\varphi})\,\d\mm.
\end{equation}
Since $B$ is arbitrary, we just proved  \eqref{eq:ottima} for any non-negative $f\in\test X$. The conclusion comes from Proposition \ref{prop:comparison}.
\end{proof}

\begin{corollary}[Laplacian of the distance]
Let $K\in\R$, $N\in(1,\infty)$ and  $(X,\sfd,\mm)$ an infinitesimally strictly convex  $\CD(K,N)$ space. For $x_0\in X$ denote by $\sfd_{x_0}:X\to[0,\infty)$ the function $x\mapsto\sfd(x,x_0)$. Then 
\begin{equation}
\label{eq:distq}
\frac{\sfd_{x_0}^2}2\in D(\bd),\qquad\textrm{with }\qquad\bd\frac{\sfd^2_{x_0}}{2}\leq N\,\tilde\tau_{K,N}(\sfd_{x_0})\,\mm\qquad\forall x_0\in X,
\end{equation}
and 
\begin{equation}
\label{eq:dist}
\sfd_{x_0}\in D(\bd,X\setminus\{x_0\})\qquad\textrm{with }\qquad\bd\sfd_{x_0}\restr{X\setminus\{x_0\}}\leq \frac{N\,\tilde\tau_{K,N}(\sfd_{x_0})-1}{\sfd_{x_0}}\,\mm\qquad\forall x_0\in X.
\end{equation}
\end{corollary}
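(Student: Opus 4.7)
The plan is to deduce both statements directly from Theorem \ref{thm:controllo} via the chain rule for the Laplacian (Proposition \ref{prop:chainlapl}), after checking that $\sfd_{x_0}^2/2$ is $c$-concave and computing its weak upper gradient.

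First I would verify that $g:=\sfd_{x_0}^2/2$ is $c$-concave and locally Lipschitz. For $c$-concavity, let $\psi:X\to\R\cup\{-\infty\}$ be defined by $\psi(x_0):=0$ and $\psi(y):=-\infty$ otherwise; a direct computation gives $\psi^c(x)=\tfrac12\sfd^2(x,x_0)=g(x)$, so $g=\psi^c$ is $c$-concave. Local Lipschitz continuity is immediate from $|g(x)-g(y)|\leq \tfrac12\sfd(x,y)(\sfd(x,x_0)+\sfd(y,x_0))$. Since $(X,\sfd)$ is geodesic (Proposition \ref{prop:basecd}), the pointwise computation $|Dg|(x)=\sfd(x,x_0)|D\sfd_{x_0}|(x)=\sfd_{x_0}(x)$ holds everywhere, and Theorem \ref{thm:cheeger} then yields $\weakgrad g=\sfd_{x_0}$ $\mm$-a.e. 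Applying Theorem \ref{thm:controllo} to $\varphi:=g$ gives $g\in D(\bd)$ together with $\bd g\leq N\,\tilde\tau_{K,N}(\sfd_{x_0})\,\mm$, which is \eqref{eq:distq}.

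For \eqref{eq:dist} I would work on the open set $\Omega:=X\setminus\{x_0\}$. By the global-to-local property \eqref{eq:globloc}, $g\in D(\bd,\Omega)$ and the restriction $\bd g\restr\Omega$ inherits the same bound by $N\tilde\tau_{K,N}(\sfd_{x_0})\mm\restr\Omega$. Moreover $g$ is Lipschitz on every $\Omega'\in\Int\Omega$ (such $\Omega'$ being bounded and at positive distance from $x_0$), so the hypotheses of Proposition \ref{prop:chainlapl} are satisfied with $\varphi(z):=\sqrt{2z}$, which is $C^{1,1}_{\mathrm{loc}}$ on $g(\Omega)\subset(0,\infty)$. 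Since $\varphi\circ g=\sfd_{x_0}$, $\varphi'(z)=1/\sqrt{2z}$, $\varphi''(z)=-1/(2z)^{3/2}$, and $\weakgrad g^2=\sfd_{x_0}^2$, the chain rule gives
\begin{equation*}
\bd\sfd_{x_0}\restr\Omega = \frac{1}{\sfd_{x_0}}\,\bd g\restr\Omega-\frac{1}{\sfd_{x_0}}\,\mm\restr\Omega
\leq \frac{N\,\tilde\tau_{K,N}(\sfd_{x_0})-1}{\sfd_{x_0}}\,\mm,
\end{equation*}
as required. Infinitesimal strict convexity and $\sfd_{x_0}\in\s^p_{\mathrm{int}}(\Omega)$ (which follows from $\weakgrad\sfd_{x_0}\leq 1$) ensure via Proposition \ref{prop:lapunique} that $\bd\sfd_{x_0}\restr\Omega$ is single valued, so the inequality in the conclusion is unambiguous.

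There is no serious obstacle here: once $c$-concavity and the identity $\weakgrad g=\sfd_{x_0}$ are established, the rest is an essentially mechanical application of Theorem \ref{thm:controllo} and the chain rule. The only mild subtlety is keeping track of the fact that the chain rule in Proposition \ref{prop:chainlapl} produces a specific element of $\bd(\varphi\circ g)\restr\Omega$, but uniqueness of the Laplacian under infinitesimal strict convexity (Proposition \ref{prop:lapunique}) rules out any ambiguity.
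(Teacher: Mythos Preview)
Your proof is correct and follows essentially the same route as the paper's: apply Theorem \ref{thm:controllo} to the $c$-concave function $\sfd_{x_0}^2/2$ (the paper leaves the verification of $c$-concavity implicit), identify $\weakgrad{(\sfd_{x_0}^2/2)}=\sfd_{x_0}$ via Theorem \ref{thm:cheeger}, and then deduce \eqref{eq:dist} from the chain rule of Proposition \ref{prop:chainlapl} with $\varphi(z)=\sqrt{2z}$. The only difference is that you supply more of the routine details (the explicit $\psi$ for $c$-concavity, the global-to-local step, and the uniqueness remark via Proposition \ref{prop:lapunique}), all of which are sound.
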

\begin{proof}  \eqref{eq:distq} is a particular case of \eqref{eq:comparison}. To get \eqref{eq:dist} notice that since $(X,\sfd)$ is geodesic, it holds $\lip^-(\sfd_{x_0})=1$ on $X\setminus\{x_0\}$, so that by Theorem \ref{thm:cheeger} we get $\weakgrad{\sfd_{x_0}}=1$ $\mm$-a.e. and from the chain rule \eqref{eq:chaineasy} that $\weakgrad{(\frac{\sfd_{x_0}^2}2)}=\sfd_{x_0}$ $\mm$-a.e.. The conclusion comes from the chain rule of Proposition \ref{prop:chainlapl} with $\varphi(z):=\sqrt{2z}$.
\end{proof}
Notice that in proving the Laplacian comparison estimates we showed that
\[
\eqref{eq:comparison}\qquad\Rightarrow\qquad\eqref{eq:distq}
\]
In a smooth situation, also the converse implication holds. It is not so clear if the same holds in the abstract framework, because we didn't prove a property like
\[
g_i\in D(\bd),\quad \bd g_i\leq \mu,\qquad\forall i\in I,\qquad\qquad\Rightarrow\qquad\qquad \inf_ig_i\in D(\bd),\quad\bd(\inf_ig_i)\leq\mu.
\]
\begin{remark}[Laplacian comparison from the $MCP(K,N)$ condition]{\rm
If one is only interested in \eqref{eq:distq} and not in \eqref{eq:comparison}, then the Measure Contraction Property $MCP(K,N)$ (in conjunction with infinitesimal strict convexity) is sufficient to conclude. The proof is exactly the same: just interpolate from an arbitrary measure to $\delta_{x_0}$ using the Markov kernel given by the definition of $MCP(K,N)$. 
}\fr\end{remark}
\begin{remark}[Laplacian comparison from the $\CD^*(K,N)$ condition]{\rm
In \cite{Bacher-Sturm10} Bacher and Sturm introduced a variant of the $\CD(K,N)$ condition, called reduced curvature dimension condition and denoted by $\CD^*(K,N)$, where the distortion coefficients $\tau_{K,N}^{(t)}(\theta)$ are replaced by the coefficients $\sigma^{(t)}_{K,N}(\theta)$ defined by
\[
\sigma^{(t)}_{K,N}(\theta):=\left\{
\begin{array}{ll}
+\infty,&\qquad\textrm{ if }K\theta^2\geq N\pi^2,\\
\frac{\sin(t\theta\sqrt{K/N})}{\sin(\theta\sqrt{K/N})}&\qquad\textrm{ if }0<K\theta^2 <N\pi^2,\\
t&\qquad\textrm{ if }K\theta^2=0,\\
\frac{\sinh(t\theta\sqrt{K/N})}{\sinh(\theta\sqrt{K/N})}&\qquad\textrm{ if }K\theta^2 <0.
\end{array}
\right.
\]
Notice that $\tau^{(t)}_{K,N}(\theta)=t^{\frac1N}(\sigma^{(t)}_{K,N-1}(\theta))^{1-\frac1N}$. The motivation for the introduction of this notion comes from the study of the local to global property. Indeed, while the question on whether the local validity of the $\CD(K,N)$ condition implies its global validity is still open, the same question for the $\CD^*(K,N)$ condition has affirmative answer, at least for non-branching spaces. Also, the $\CD^*(K,N)$ condition is stable w.r.t. mGH convergence and it is locally equivalent to the $\CD(K,N)$ condition (all this has been proved in \cite{Bacher-Sturm10}). The drawback of the $\CD^*(K,N)$ condition is  that the geometric and functional inequalities which are its consequence sometime occur with non-sharp constants (although some positive recent development in this direction can be found in \cite{Cavalletti-Sturm}).

Now, if we introduce the functions $\tilde\sigma_{K,N}:[0,\infty)\to \R$ by
\[
\tilde\sigma_{K,N}(\theta):=
\left\{
\begin{array}{ll}
\theta\sqrt{\frac KN}\,{\rm cotan}\Big(\theta\sqrt{\frac{K}{N}}\Big),&\qquad\textrm{if }K>0,\\
\\
1,&\qquad\textrm{if }K=0,\\
\\
\theta\sqrt{\frac KN}\,{\rm cotanh}\Big(\theta\sqrt{\frac{K}{N}}\Big),&\qquad\textrm{if }K<0,
\end{array}
\right.
\]
and we repeat the arguments of Theorem \ref{thm:controllo}, we get that on infinitesimally strictly convex $\CD^*(K,N)$ spaces, locally Lipschitz $c$-concave functions $\varphi$ are in $D(\bd)$ and it holds
\[
\bd\varphi\leq N\,\tilde\sigma_{K,N}(\weakgrad\varphi)\,\mm.
\]
}\fr\end{remark}

As a first step towards the proof of the Cheeger-Gromoll splitting theorem in this abstract framework, we show that the Laplacian comparison estimates for the distance function imply that the Busemann function associated to a 
geodesic ray on $\CD(0,N)$ spaces is subharmonic. 

Recall that a map $\gamma:[0,\infty)\to X$ is called geodesic ray provided for any $T>0$ its restriction to $[0,T]$ is a minimal geodesic. We will always assume that geodesic rays are parametrized by unit speed. Given a geodesic ray, the Busemann function $b$ associated to it is defined by
\[
b(x):=\lim_{t\to+\infty}b_t(x),\qquad\textrm{ where }\qquad b_t(x):=t-\sfd(x,\gamma_t).
\]
Notice that this is indeed a good definition, because the triangle inequality ensures that for any $x\in X$ the map $t\mapsto b_t(x)$ is non-decreasing (and the inequality $\sfd(x,\gamma_t)\geq\sfd(\gamma_0,\gamma_t)-\sfd(x,\gamma_0)$ yields that $b$ is finite everywhere).

Technically speaking, the proof of the subharmonicity of $b$ is not a direct consequence of the estimate \eqref{eq:dist} and of the stability result of Proposition \ref{prop:laplstab}, because we don't really know if $\bd b_{t_n}$ weakly converges to some limit measure for some $t_n\uparrow\infty$ (we miss a bound from below, which is needed to get compactness). Instead, we will prove an a priori regularity result for the Busemann function which ensures that it is in the domain of the Laplacian. This is the content of the following purely metric lemma. It is stated and proved in geodesic spaces, but a simple approximation argument shows that the same holds on length spaces. 
\begin{lemma}[Regularity of the Busemann function]\label{le:regb}
Let $(X,\sfd)$ be a geodesic space, $\gamma:[0,\infty)\to X$ a geodesic ray and $b$ the Busemann function associated to it. Then $-b$ is  $c$-concave.
\end{lemma}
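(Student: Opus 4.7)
The plan is to establish the equality $b=b^{cc}$ on $X$. Since the inequality $\varphi^{cc}\geq \varphi$ in \eqref{eq:cc} holds for any $\varphi\not\equiv-\infty$, and $b$ is finite-valued, $c$-concavity amounts to proving $b^{cc}\leq b$. Unraveling the definition of $c$-transform, this in turn reduces to the following pointwise statement: for every $x\in X$ and every $\varepsilon>0$, there exists $y\in X$ (depending on $x$ and $\varepsilon$) such that
\[
b(z)-b(x)\leq \frac{\sfd^2(z,y)-\sfd^2(x,y)}{2}+\varepsilon\qquad\forall z\in X,
\]
because taking the infimum in $z$ of $\tfrac12\sfd^2(z,y)-b(z)$ then gives $\tfrac12\sfd^2(x,y)-b^c(y)\leq b(x)+\varepsilon$, hence $b^{cc}(x)\leq b(x)+\varepsilon$.

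To construct such a $y$ I will use the approximating functions $b_t(x)=\sfd(x,\gamma_t)-t$, which decrease monotonically to $b(x)$, together with the fact that $(X,\sfd)$ is geodesic. Fix $x\in X$ and $\varepsilon>0$. Choose $t$ large enough that $\sfd(x,\gamma_t)\geq 1$ and $b_t(x)\leq b(x)+\varepsilon$; this is possible by the monotone convergence $b_t(x)\downarrow b(x)$. Let $\sigma^t:[0,\sfd(x,\gamma_t)]\to X$ be a unit-speed minimizing geodesic from $x$ to $\gamma_t$ and set $y:=\sigma^t(1)$. Then by construction $\sfd(x,y)=1$ and $\sfd(y,\gamma_t)=\sfd(x,\gamma_t)-1$, whence $b_t(y)=b_t(x)-1$. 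Since $b\leq b_t$ everywhere, it follows that $b(y)\leq b_t(x)-1\leq b(x)-1+\varepsilon$.

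The verification that this $y$ has the desired property rests on two ingredients: the 1-Lipschitz property of $b$ (immediate from $b=\inf_t b_t$ together with the 1-Lipschitzness of each distance function $\sfd(\,\cdot\,,\gamma_t)$), and the elementary scalar inequality
\[
d-1\leq \frac{d^2-1}{2}\qquad\forall d\geq 0,
\]
which is nothing but $-\tfrac12(d-1)^2\leq 0$. Chaining the 1-Lipschitz bound $b(z)\leq b(y)+\sfd(z,y)$ with the estimate on $b(y)$ and applying this inequality at $d=\sfd(z,y)$ yields
\[
b(z)-b(x)\leq \varepsilon-1+\sfd(z,y)\leq \varepsilon+\tfrac12\bigl(\sfd^2(z,y)-1\bigr)=\varepsilon+\tfrac12\bigl(\sfd^2(z,y)-\sfd^2(x,y)\bigr),
\]
which is exactly what we wanted.

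The main conceptual subtlety is not any single calculation but the choice of $y$. It is natural to try $y=\gamma_t$ itself, but a direct check shows that this fails for points $z$ that are closer to $\gamma_t$ than $x$ is, because the quadratic penalty on the right then grows like $-t$ while the left side remains bounded below. It is equally tempting to look for an "asymptotic ray" emanating from $x$ and pick $y$ at distance $1$ on it, but such a ray need not exist in a general geodesic space. The trick is to take the best substitute available, namely the point at unit distance from $x$ along a minimizing geodesic to a finite but very distant $\gamma_t$: the defect $b_t(x)-b(x)$ that this introduces is absorbed into the $\varepsilon$, and, crucially, no hypothesis beyond the existence of geodesics (in particular no properness, doubling, or curvature assumption) is needed.
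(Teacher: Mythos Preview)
Your proof is correct and follows essentially the same approach as the paper's: both choose $y$ as the point at unit distance from $x$ along a minimizing geodesic to $\gamma_t$, and both rely on the scalar inequality $d-1\leq \tfrac{d^2-1}{2}$. The only cosmetic difference is that the paper bounds $b^{cc}(x)\leq b_t(x)$ for every $t$ (working with $b_t$ throughout) and then lets $t\to\infty$, thereby avoiding the auxiliary $\varepsilon$, whereas you fix $t$ depending on $\varepsilon$ and use the $1$-Lipschitz property of $b$ directly; the substance is identical.
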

\begin{proof} We know from \eqref{eq:cc} that $(-b)^{cc}\geq -b$, so to conclude we only need to prove the opposite inequality. For any $x\in X$ and $t\geq 0$  let $\gamma^{t,x}:[0,\sfd(x,\gamma_t)]\to X$ be a unit speed geodesic connecting $x$ to $\gamma_t$. Fix $x\in X$ and notice that
\[
(-b)^{cc}(x)=\inf_{y\in X}\sup_{\tilde x\in X}\frac{\sfd^2(x,y)}{2}-\frac{\sfd^2(\tilde x,y)}{2}-b(\tilde x)\leq \sup_{\tilde x\in X}\frac{1}2-\frac{\sfd^2(\tilde x,\gamma^{t,x}_1)}{2}-b_t(\tilde x),\qquad\forall t\geq 1.
\]
Then observe that for any $\tilde x\in X$ and $t\geq 1$ it holds
\[
\begin{split}
\frac{1}2-\frac{\sfd^2(\tilde x,\gamma^{t,x}_1)}{2}-b_t(\tilde x)&=\frac{1}2-\frac{\sfd^2(\tilde x,\gamma^{t,x}_1)}{2}+\sfd(\tilde x,\gamma_t)-t\\
&\leq \frac{1}2-\frac{\sfd^2(\tilde x,\gamma^{t,x}_1)}{2}+\sfd(\tilde x,\gamma^{t,x}_1)+\sfd(\gamma^{t,x}_1,\gamma_t)-t\\
&=-\frac{1}2-\frac{\sfd^2(\tilde x,\gamma^{t,x}_1)}{2}+\sfd(\tilde x,\gamma^{t,x}_1)+\sfd(x,\gamma_t)-t\\
&\leq- b_t(x).
\end{split}
\]
Thus we proved that $(-b)^{cc}(x)\leq -b_t(x)$ for any $t\geq 1$. The conclusion follows by letting $t\to\infty$.
\end{proof}
In the next proposition we are going to assume both that $(X,\sfd,\mm)$ is infinitesimally strictly convex and that $W^{1,2}(\Omega)$ is uniformly convex for any $\Omega\subset X$ open with $\mm(\partial\Omega)=0$: these are two unrelated notions, as the former concerns strict convexity of the norm in the `tangent bundle' while the latter is about uniform convexity of the norm in the `cotangent bundle' (see Appendix \ref{app:cottan} for further details about this wording). Notice that on infinitesimally Hilbertian spaces these assumptions are fulfilled.
\begin{proposition}[Busemann functions are subharmonic in $\CD(0,N)$ spaces]\label{prop:bus}
Let $(X,\sfd,\mm)$ be an infinitesimally strictly convex  $\CD(0,N)$ space such that $W^{1,2}(\overline\Omega,\sfd,\mm)$ is uniformly convex for any $\Omega\subset X$ open such that $\mm(\partial\Omega)=0$. Let $\gamma$ be a geodesic ray and $b$ the Busemann function associated to it. Then 
\[
b\in D(\bd)\qquad\textrm{ and }\qquad\bd b\geq 0.
\]
\end{proposition}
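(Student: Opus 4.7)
The plan is to first show that $b$ lies in the domain of the Laplacian with a provisional bound $\bd b\leq N\mm$, and then to sharpen this bound to $\bd b\leq 0$ by comparing $b$ to its defining approximants $b_t(x):=\sfd(x,\gamma_t)-t$, each of which satisfies a sharp Laplacian comparison whose right-hand side vanishes as $t\to\infty$. By Lemma \ref{le:regb} the Busemann function $b$ is $c$-concave, and as a pointwise decreasing limit of $1$-Lipschitz functions it is itself $1$-Lipschitz; Theorem \ref{thm:controllo} with $K=0$ (so that $\tilde\tau_{0,N}\equiv 1$) therefore yields $b\in D(\bd)$ with $\bd b\leq N\mm$. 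Infinitesimal strict convexity makes $\bd b$ a uniquely determined Radon measure via Proposition \ref{prop:lapunique}. By Proposition \ref{prop:comparison} applied with $\tilde\mu=0$, it is enough to establish
\begin{equation}\label{eq:planstep}
\int Df(\nabla b)\,\d\mm\ \geq\ 0\qquad\text{for every non-negative } f\in\test X.
\end{equation}

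To prove \eqref{eq:planstep}, the strategy is to approximate. Since $b_t=\sfd_{\gamma_t}-t$, the Laplacian comparison \eqref{eq:dist} gives $b_t\in D(\bd,X\setminus\{\gamma_t\})$ with $\bd b_t\restr{X\setminus\{\gamma_t\}}\leq (N-1)\sfd_{\gamma_t}^{-1}\mm$. Fix non-negative $f\in\test X$ with compact support $K$. For $t$ so large that $\gamma_t\notin K$, one has $f\in\test{X\setminus\{\gamma_t\}}$ and $\sfd_{\gamma_t}|_K\to\infty$ uniformly, so
\[
-\int Df(\nabla b_t)\,\d\mm\ =\ \int f\,\d\bd b_t\ \leq\ (N-1)\int_K \frac{f}{\sfd_{\gamma_t}}\,\d\mm\ \xrightarrow[t\to\infty]{}\ 0.
\]
Therefore $\liminf_{t\to\infty}\int Df(\nabla b_t)\,\d\mm\geq 0$, and \eqref{eq:planstep} will follow once we know that
\begin{equation}\label{eq:planpass}
\lim_{t\to\infty}\int Df(\nabla b_t)\,\d\mm\ =\ \int Df(\nabla b)\,\d\mm.
\end{equation}

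The passage \eqref{eq:planpass} is the heart of the argument and is where I expect the main difficulty. By the continuity in the gradient argument recorded in Corollary \ref{cor:dfgg} it suffices to prove the strong Sobolev convergence $\int_E\weakgrad{(b_t-b)}^2\,\d\mm\to 0$ on every bounded set $E\supset K$. Three ingredients are available to this end. First, the monotone convergence $b_t\downarrow b$ of uniformly $1$-Lipschitz functions, combined with Dini's theorem and the properness of $X$ from Proposition \ref{prop:basecd}, yields uniform convergence on compacta, hence $L^2_{loc}$ convergence. Second, uniform convexity of $W^{1,2}$ gives reflexivity; after fixing a Lipschitz cutoff $\chi$ with $\chi\equiv 1$ on $E$, the sequence $\chi b_t$ is bounded in $W^{1,2}$, so by uniqueness of the $L^2$ limit one has $\chi b_t\rightharpoonup\chi b$ weakly in $W^{1,2}$, and the Kadec--Klee property of uniformly convex Banach spaces will upgrade this to strong convergence once convergence of the $W^{1,2}$-norms is verified. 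Third, and crucially for matching those norms, the identity $\weakgrad{b_t}=\weakgrad{b}=1$ holds $\mm$-a.e.: for $b_t$ it follows from $|D^-\sfd_{\gamma_t}|\equiv 1$ (a consequence of $X$ being geodesic) together with Theorem \ref{thm:cheeger}, while for $b$ it follows by constructing at every $x\in X$ a unit-speed geodesic ray $\sigma^x$ with $b(\sigma^x_s)=b(x)-s$---extracted via Arzel\`a--Ascoli (using properness) from unit-speed geodesics joining $x$ to $\gamma_t$, and passing $t\to\infty$---which forces $|D^-b|(x)=1$ and hence, again by Theorem \ref{thm:cheeger}, $\weakgrad b=1$ $\mm$-a.e. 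Combining the third ingredient with a Leibniz expansion of $\weakgrad{(\chi b_t)}^2$ and the uniform convergence of $b_t$ to $b$ on $\supp\chi$ asymptotically matches the $W^{1,2}$-norms of $\chi b_t$ and $\chi b$, triggering the Kadec--Klee upgrade and yielding the required $L^2_{loc}$ convergence of $\weakgrad{(b_t-b)}$.

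With \eqref{eq:planpass} established, combining it with the bound from the preceding paragraph gives \eqref{eq:planstep}, and Proposition \ref{prop:comparison} applied with $g=b$ and $\tilde\mu=0$ concludes $\bd b\leq 0$. The conceptually new input is the rigidity of the Busemann function along asymptotic rays (ingredient (iii) above), which is what makes uniform convexity of $W^{1,2}$ actually bite: without the equality of gradient norms, weak convergence alone would not suffice to pass to the limit in the nonlinear pairing $Df(\nabla b_t)$.
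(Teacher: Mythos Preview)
Your proposal is correct and follows essentially the same route as the paper's proof: both establish $b\in D(\bd)$ via its $c$-concavity (Lemma~\ref{le:regb} and Theorem~\ref{thm:controllo}), prove $\weakgrad{b_t}=\weakgrad b=1$ $\mm$-a.e.\ (the latter via an asymptotic-ray/compactness argument), use uniform convexity of $W^{1,2}$ to upgrade energy convergence of the approximants to strong $W^{1,2}$-convergence on bounded sets, invoke Corollary~\ref{cor:dfgg} to pass to the limit in $\int Df(\nabla b_t)\,\d\mm$, and conclude from the Laplacian comparison \eqref{eq:dist} for $b_t$. The only cosmetic difference is that the paper localizes by restricting to a bounded open set $\Omega$ with $\mm(\partial\Omega)=0$ rather than by multiplying with a cutoff $\chi$; be aware that your ``Leibniz expansion of $\weakgrad{(\chi b_t)}^2$'' is only an inequality in this generality (cf.\ \eqref{eq:leibweak}), so matching the norms via restriction as the paper does is slightly cleaner.
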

\begin{proof} Being the pointwise limit of 1-Lipschitz functions, $b$ is itself 1-Lipschitz. Hence from Theorem \ref{thm:controllo} and Lemma \ref{le:regb} we deduce $b\in D(\bd)$. 

We claim that $\lip^+(b)\equiv1$. Indeed, fix $x\in X$ and, as in the proof of Lemma \ref{le:regb}, let $\gamma^{t,x}:[0,\sfd(x,\gamma_t)]\to X$ be a unit speed geodesic connecting $x$ to $\gamma_t$. Since $X$ is proper, for every $r>0$ and sequence $t_n\uparrow\infty$, the sequence $n\mapsto\gamma^{x,t_n}_{r}$ has a subsequence, not relabeled, which converges to some $x_r\in X$ such that $\sfd(x,x_r)=r$. By construction, it holds $b_{t_n}(\gamma^{x,t_n}_r)=b_{t_n}(x)+r$, and from the fact that $b_t$ is 1-Lipschitz we have
\[
b_{t_n}(x_r)\geq b_{t_n}(\gamma^{x,t_n}_r)-\sfd(\gamma^{x,t_n}_r,x_r)=b_{t_n}(x)+r-\sfd(\gamma^{x,t_n}_r,x_r),
\]
so that passing to the limit we obtain $\frac{b(x_r)-b(x)}r\geq 1$. Letting $r\downarrow 0$ we get $\lip^+(b)(x)\geq 1$, and   our claim is proved. 

Let $f\in\test X$ be non-negative and $\Omega\subset X$ an open bounded set such that $\mm(\partial\Omega)=0$ and $\supp(f)\subset\Omega$. Consider the functions $b_t\restr{\overline\Omega},\ b\restr{\overline\Omega}$ as functions on the space $(\overline\Omega,\sfd,\mm\restr\Omega)$. 
Since $\overline\Omega$ is compact, $b_t\restr{\overline\Omega}$ uniformly converge to $b\restr{\overline\Omega}$ as $t\to\infty$. Also, we know that $\lip^+(b_t)\equiv 1$ on $X\setminus\{\gamma_t\}$, so that by \eqref{eq:cheeger2} we deduce $\weakgrad{b_t}=1$
 $\mm$-a.e.. Since we previously proved that $\lip^+(b)\equiv 1$, we also have $\weakgrad b=1$ $\mm$-a.e., and therefore, keeping Proposition \ref{prop:srestr} in mind, we get $b_t\restr{\overline\Omega}\to b\restr{\overline\Omega}$ in energy in $W^{1,2}(\overline\Omega,\sfd,\mm\restr{\overline\Omega})$ as $t\to\infty$. The uniform convexity of $W^{1,2}(\overline\Omega,\sfd,\mm)$ then easily  imply  that $b_t\restr{\overline\Omega}\to b\restr{\overline\Omega}$ in $W^{1,2}(\overline\Omega,\sfd,\mm)$ as $t\to\infty$. In particular, Corollary \ref{cor:dfgg} gives
\begin{equation}
\label{eq:fra}
\lim_{t\to+\infty}\int Df(\nabla b_t)\,\d\mm=\int Df(\nabla b)\,\d\mm.
\end{equation}
Since $\Omega$ is bounded, there is $\overline t\geq 0$ such that for $t\geq \overline t$ we have $\sfd(\gamma_t,\Omega)>1$. Hence, from the trivial inequality $\sfd(\gamma_t,x)\geq \sfd(\gamma_t,\Omega)$ valid for any $x\in\Omega$ and  \eqref{eq:dist}, for $t\geq\overline t$ we have 
\begin{equation}
\label{eq:perbus}
b_t\in D(\bd,\Omega)\qquad\textrm{ and }\qquad \bd b_t\restr\Omega\geq -\frac{N-1}{\sfd(\gamma_t,\Omega)}\mm.
\end{equation}
Therefore
\[
\begin{split}
\int f\,\d\bd b&=-\int Df(\nabla b)\,\d\mm=\lim_{t\to+\infty}-\int Df(\nabla b_t)\,\d\mm=\lim_{t\to+\infty}\int f\,\d\bd b_t\restr\Omega\\
&\geq \lim_{t\to+\infty}-\frac{N-1}{\sfd(\gamma_t,\Omega)}\int f\,\d\mm=0,
\end{split}
\]
so that from the arbitrariness of $f$ we get the thesis.
\end{proof}
Notice the slight abuse of notation in the name of Proposition \ref{prop:bus}: we just proved that $\bd b\geq 0$, but we didn't prove any sort of maximum principle, so that we don't really know whether a function with non-negative Laplacian stays below, when restricted to some ball $B$ the harmonic function which attains the same value at the boundary of $B$.

We also remark that it is the lack of the strong maximum principle that is preventing to prove that the Busemann function associated to a \emph{line} is harmonic on $\CD(0,N)$ spaces. This subject will need further investigations in the future\footnote{in the context of non-linear potential theory the strong maximum principle for local sub/super-minimizers of the energy has been proved in \cite{Bjorn-Bjorn11}. In the recent paper \cite{Gigli-Mondino12} it has been shown that on doubling spaces supporting a 2-Poincar\'e inequality, local sub/super-mininimizers of the energy can be characterized as those functions having non-negative/non-positive distributional Laplacian, in line with the smooth case, thus providing the converse implication of the one given in Proposition \ref{prop:divano}.}.

\bigskip

We conclude the chapter with some comments about the splitting theorem in this context. The key steps of its proof in smooth Riemannian manifolds are:
\begin{itemize}
\item[i)] To prove that the Busemann function is superharmonic
\item[ii)] To use the strong maximum principle to get that the Busemann function associated to a line is harmonic
\item[iii)] To use the equality case in the Bochner inequality to deduce that the Busemann function has 0 Hessian
\item[iv)] To recall that the gradient flow of a convex function is a contraction, so that the gradient flow of the Busemann function produces a family of isometries
\item[v)] To conclude that the manifold isometrically factorizes as $\{b=0\}\times\R$
\end{itemize}
Out of all these steps, the key one which fails in a Finsler setting is the fifth\footnote{this is not the only part of the proof of the splitting theorem which fails in a Finsler setting. Another problem is that in general the gradient flow of a convex function does \emph{not} contract the distance, as deeply and carefully explained in \cite{Sturm-Ohta10}. Yet, this is not the crucial point. For instance, in the case $(\R^2,\|\cdot\|,\mathcal L^2)$, where $\|\cdot\|$ is a smooth and uniformly convex norm, the gradient flow of the Busemann function $b$ associated to any geodesic line, provides a family of translations. Thus  in particular it provides a one parameter family of isometries. But due to the local structure of the arbitrary norm $\|\cdot\|$ we can't conclude that it holds $\R^2=\{b=0\}\times\R$ as metric spaces.}. The problem is that on the product space  $\{b=0\}\times\R$ one puts the product distance
\begin{equation}
\label{eq:proddist}
\tilde\sfd\big((x,t),(x',t')\big):=\sqrt{\sfd^2(x,x')+|t-t'|^2},
\end{equation}
which is a Riemannian product in nature.  For example, in the case $(\R^2,\|\cdot\|,\mathcal L^2)$ we certainly have that the measure factorizes, i.e. $\mathcal L^2=\mathcal L^1\times\mathcal L^1$, but \eqref{eq:proddist} in general fails (as far as I know, the validity of the splitting theorem in a Finsler context is an open problem\footnote{after the work on this paper was finished, I got aware of a very recent result by Ohta, who showed that a diffeomorphic and measure preserving splitting holds on smooth Finlser geometries, and that the gradient flow of the Busemann function provides a 1-parameter family of isometries on Berwald spaces, see \cite{Ohta12}.} if one allows for product distances different than the one in \eqref{eq:proddist}). 

Proposition \ref{prop:bus} addresses $(i)$ in the abstract framework, but given that Finsler geometries are included in the class of $\CD(K,N)$ spaces, we know that the splitting theorem fails in this setting. The hope then would be to find an appropriate additional condition which enforces a Riemannian-like behavior on small scales. The natural candidate seems to be infinitesimal Hilbertianity, discussed in Section \ref{se:lineare}: this was one of the research perspectives behind the definition of spaces with Riemannian Ricci curvature bounded from below given in \cite{Ambrosio-Gigli-Savare11bis}. But to understand whether the splitting theorem holds under this additional assumption still requires quite a lot of work\footnote{In the recent paper \cite{Gigli13} it has been indeed proved that the splitting holds on infinitesimally Hilbertian $\CD(0,N)$ spaces. The argument does not make explicit use of the Hessian, but is rather based on the fact that the Busemann function is a pointwise minimizer for the Bochner inequality.}.

\appendix
\section{On the duality between cotangent and tangent spaces}\label{app:cottan}
The topic we try to address here is up to what extent one can use the differential calculus developed in Chapter \ref{se:diff} to provide an abstract notion of cotangent and tangent spaces on arbitrary metric measure spaces. For simplicity, we are going to assume that the reference measure $\mm$ is a probability measure. 
\begin{definition}[Cotangent and tangent spaces]\label{def:cottan}
Let $(X,\sfd,\mm)$ be as in \eqref{eq:mms} with $\mm\in\prob X$, and $p,q\in(1,\infty)$ conjugate exponents. 

The cotangent space $\ctang p(X,\sfd,\mm)$ is defined as the abstract completion of $\s^p(X,\sfd,\mm)/\sim$, where we say that $f\sim g$ provided $\|f-g\|_{\s^p}=0$. Given $f\in\s^p(X,\sfd,\mm)$, we will denote by $Df$ the equivalence class of $f$ in $\ctang p(X,\sfd,\mm)$.

The tangent space $\tang q(X,\sfd,\mm)$ is the dual of $\ctang p(X,\sfd,\mm)$.
\end{definition}
\begin{remark}{\rm
If $(X,\sfd,\mm)$ is a Finsler or Riemannian manifold, then the notion of cotangent space we just provided does not really coincide with the family of $p$-integrable cotangent vector fields on the manifold, but only with those arising as differentials of functions, which is clearly a smaller class. Similarly for the tangent space. 

If one wants a notion which produces all cotangent vector fields, he can proceed with a patching argument, following the idea that every smooth cotangent vector field is, locally, close to the differential of a function.  More precisely, one starts considering the set $V$ defined as
\[
V:=\Big\{\{(A_i,f_i)\}_{i\in \N}\ :\ A_i\cap A_j=\emptyset \textrm{ for }i\neq j,\ X=\cup_{i\in\N}A_i,\ f_i\in \s^p(X,\sfd,\mm),\ \forall i\in\N\Big\}.
\]
Then a sum, a multiplication by a scalar and a seminorm on $V$ are defined as
\[
\begin{split}
\big\{(A_i,f_i)\big\}_{i\in\N}+\big\{(\tilde A_j,\tilde f_j)\big\}_{j\in\N}&:=\big\{(A_i\cap \tilde A_j,f_i+\tilde f_j)\big\}_{i,j\in\N},\\
\lambda\cdot \big\{(A_i,f_i)\big\}_{i\in\N}&:=\big\{(A_i,\lambda f_i)\big\}_{i\in\N},\\
\big\|\big\{(A_i,f_i)\big\}_{i\in\N}\big\|_{\s^p}^p&:=\sum_{i\in\N}\int_{A_i}\weakgrad {f_i}^p\,\d\mm.
\end{split}
\]
It is then easy to check that the quotient of $V$ w.r.t. the subspace of elements with 0 norm is a normed vector space. The abstract completion of this space is a Banach space, which in a smooth situation can be identified with the space of $p$-integrable cotangent vector fields. Its dual can therefore be identified with the space of $q$-integrable tangent vector fields. 

A similar construction can be made on the space $({\rm Grad}^q(X,\sfd,\mm),{\rm Dist}_q)$ introduced below.

The biggest drawback of the definition we proposed is that the cotangent and tangent spaces are just vector spaces, and not moduli over $L^\infty$. This in particular prevents discussing the validity of the Leibniz rule $D(fg)=fDg+gDf$, simply because we didn't define the objects $fDg$ and $gDf$. Some results in this direction are given in \cite{Weaver01} via completely different means. We expect the two presentation to be compatible, but it is not clear to us whether this is really the case or not, partly because the setting is not completely the same: Weaver works with a metric space, a notion of null sets and differential of Lipschitz functions while we work on metric measure spaces and differentials of Sobolev functions. 

Anyway, for the purpose of the discussion we want to make here, we will be satisfied with Definition \ref{def:cottan}.
}\fr\end{remark}
For $L\in\tang q(X,\sfd,\mm)$ and $g\in\s^p(X,\sfd,\mm)$, we will denote by $Dg(L)\in\R$ the dualism between the operator $L$ and the equivalence class $Dg$ of $g$ in $\ctang p(X,\sfd,\mm)$. Notice that it holds
\begin{equation}
\label{eq:dualL}
Dg(L)\leq \|Dg\|_{\ctang p}\|L\|_{\tang q}\leq \frac{\|Dg\|^p_{\ctang p}}{p}+\frac{\|L\|^q_{\tang q}}q,
\end{equation}
for any $g\in\s^p(X,\sfd,\mm),\ L\in \tang q(X,\sfd,\mm)$,
hence in analogy with Definition \ref{def:represent}, we may say that $L$ \emph{$q$-represents} $\nabla g$ provided
\begin{equation}
\label{eq:repL}
Dg(L)\geq  \frac{\|Dg\|^p_{\ctang p}}{p}+\frac{\|L\|^q_{\tang q}}q.
\end{equation}
Also, we say that $L\in\tang q(X,\sfd,\mm)$ is \emph{almost represented} by a $q$-test plan $\ppi\in\prob{C([0,1],X)}$ provided $(\e_0)_\sharp\ppi=\mm$, $\|\ppi\|_q<\infty$ and for any $f\in\s^p(X,\sfd,\mm)$ it holds
\begin{equation}
\label{eq:ar}
\limi_{t\downarrow 0}\int\frac{f(\gamma_t)-f(\gamma_0)}{t}\,\d\ppi(\gamma)\leq Df(L)\leq \lims_{t\downarrow 0}\int\frac{f(\gamma_t)-f(\gamma_0)}{t}\,\d\ppi(\gamma).
\end{equation}
Notice that as a consequence of Theorem \ref{thm:horver}, if $(X,\sfd,\mm)$ is $q$-infinitesimally strictly convex, the the $\limi$ and $\lims$ in \eqref{eq:ar} are actually limits and the equality holds.
\begin{proposition}\label{prop:Lp}
Let $(X,\sfd,\mm)$ be as in \eqref{eq:mms} with $\mm\in\prob X$,  $\ppi\in \prob{C([0,1],X)}$ be a $q$-test plan with $(\e_0)_\sharp\ppi=\mm$ and $\|\ppi\|_q<\infty$. Then there exists at least one $L\in\tang q(X,\sfd,\mm)$ which is almost represented by $\ppi$, and for any such $L$ it holds 
\begin{equation}
\label{eq:Lboundppi}
\|L\|_{\tang q }\leq\|\ppi\|_q.
\end{equation}
Also, if $\ppi\in \prob{C([0,1],X)}$ is a $q$-test plan which $q$-represents $\nabla g$ for some $g\in\s^p(X,\sfd,\mm)$, and $L$ is almost represented by $\ppi$, then $L$ $q$-represents $\nabla g$ as well and $\|L\|_{\tang q}=\|\ppi\|_q$.
\end{proposition}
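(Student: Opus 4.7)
The plan is to build $L$ via Hahn--Banach, starting from the sublinear functionals naturally associated to $\ppi$. Define $T^+,T^-\colon\s^p(X,\sfd,\mm)\to\R$ by
\[
T^+(f):=\lims_{t\downarrow 0}\int\frac{f(\gamma_t)-f(\gamma_0)}{t}\,\d\ppi(\gamma),\qquad T^-(f):=\limi_{t\downarrow 0}\int\frac{f(\gamma_t)-f(\gamma_0)}{t}\,\d\ppi(\gamma).
\]
Because we are integrating linear combinations of the increments, $T^+$ is positively $1$-homogeneous and subadditive (hence sublinear), $T^-$ is positively $1$-homogeneous and superadditive, and $T^-(f)=-T^+(-f)$; in particular $T^-\leq T^+$.

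The first key step is to show the continuity estimate
\begin{equation}
\label{eq:Tbound}
|T^+(f)|\leq \|\ppi\|_q\,\|Df\|_{\ctang p},\qquad\forall f\in\s^p(X,\sfd,\mm).
\end{equation}
For this I apply \eqref{eq:altra} to $\lambda f$ for arbitrary $\lambda>0$, getting by 1-homogeneity
\[
T^+(f)=\tfrac{1}{\lambda}T^+(\lambda f)\leq \tfrac{\lambda^{p-1}}{p}\|Df\|_{\ctang p}^{p}+\tfrac{1}{q\lambda}\|\ppi\|_{q}^{q},
\]
and optimization in $\lambda$ (using $q(p-1)=p$) produces exactly $\|\ppi\|_{q}\|Df\|_{\ctang p}$; the analogous bound for $-T^+(f)\leq T^+(-f)$ closes the estimate. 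In particular $T^+(f)=0$ whenever $\|Df\|_{\ctang p}=0$, so $T^+$ (and $T^-$) descends to the quotient $\s^p/\sim$ and then, by uniform continuity, extends uniquely to a sublinear functional on the completion $\ctang p(X,\sfd,\mm)$, still denoted $T^+$, satisfying $|T^+|\leq\|\ppi\|_{q}\|\cdot\|_{\ctang p}$.

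Now I invoke Hahn--Banach on the real Banach space $\ctang p(X,\sfd,\mm)$: since $T^+$ is sublinear, there exists a linear functional $L\colon\ctang p(X,\sfd,\mm)\to\R$ with $L\leq T^+$ everywhere. Using $-L(Df)=L(D(-f))\leq T^+(-f)=-T^-(f)$, this $L$ satisfies $T^-(f)\leq Df(L)\leq T^+(f)$ for every $f\in\s^p(X,\sfd,\mm)$, which is precisely \eqref{eq:ar}; hence $L$ is almost represented by $\ppi$. The norm bound $\|L\|_{\tang q}\leq \|\ppi\|_q$ is immediate from \eqref{eq:Tbound}, which gives \eqref{eq:Lboundppi}.

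For the final statement, assume in addition that $\ppi$ $q$-represents $\nabla g$, i.e.\ \eqref{eq:degiorgi} holds. Then
\[
Dg(L)\geq T^-(g)\geq \tfrac{\|Dg\|_{\ctang p}^p}{p}+\tfrac{\|\ppi\|_q^q}{q}\geq\tfrac{\|Dg\|_{\ctang p}^p}{p}+\tfrac{\|L\|_{\tang q}^q}{q},
\]
using the norm bound just proved; this is exactly \eqref{eq:repL}, so $L$ represents $\nabla g$. Combining this with Young's inequality \eqref{eq:dualL} forces $\|L\|_{\tang q}^q\geq\|\ppi\|_q^q$, hence $\|L\|_{\tang q}=\|\ppi\|_q$.

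The main technical obstacle is the scaling/optimization producing \eqref{eq:Tbound}, which is what allows $T^+$ to descend to the abstract cotangent space and makes Hahn--Banach applicable; everything else (existence of $L$, the norm bound, and the equality case) then follows by essentially formal manipulations.
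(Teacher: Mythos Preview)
Your proof is correct and follows essentially the same route as the paper: define the sublinear functional $T^+$ (the paper's $T$), establish the Lipschitz bound $|T^+(f)|\leq\|\ppi\|_q\|Df\|_{\ctang p}$ so that it descends to $\ctang p$, then apply Hahn--Banach and read off both \eqref{eq:ar} and \eqref{eq:Lboundppi}; the equality case is handled identically. The only cosmetic difference is that the paper obtains the continuity bound in one line (calling $T(h)\leq\|h\|_{\s^p}\|\ppi\|_q$ ``trivial'', implicitly via H\"older in the proof of \eqref{eq:altra}), whereas you recover it by scaling $f\mapsto\lambda f$ in \eqref{eq:altra} and optimizing in $\lambda$---a perfectly valid and slightly more explicit variant.
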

\begin{proof}
Consider the map
\[
\s^p(X,\sfd,\mm)\ni f\qquad\mapsto\qquad T(f):=\lims_{t\downarrow 0}\int\frac{f(\gamma_t)-f(\gamma_0)}{t}\,\d\ppi(\gamma).
\]
It is clearly convex and positively 1-homogeneous. The trivial inequality $T(f)-T(g)\leq T(f-g)\leq \|f-g\|_{\s^p}\|\ppi\|_q$ yields that $T$ passes to the quotient and defines a convex, positively 1-homogeneous map, still denoted by $T$, on $\ctang p(X,\sfd,\mm)$, and this map is also Lipschitz, with Lipschitz constant $\|\ppi\|_q$. By the Hahn-Banach theorem, there exists a linear map $L:\ctang p(X,\sfd,\mm)\to\R$ such that
\[
Df(L)\leq T(f),\qquad\forall f\in\s^p(X,\sfd,\mm).
\]
Since $T$ is Lipschitz, $L$ is bounded, hence $L\in\tang q(X,\sfd,\mm)$. Exchanging $f$ with $-f$ we conclude that $L$ is almost represented by $\ppi$. The fact that any $L$ which is almost represented by $\ppi$ satisfies \eqref{eq:Lboundppi} is obvious.

The second part of the statement is a direct consequence of the definition and of the inequality \eqref{eq:Lboundppi}.
\end{proof}
With the very same arguments used in the proof of Theorem \ref{thm:horver}, we get that for $f,g\in\s^p(X,\sfd,\mm)$ and $L\in\tang q(X,\sfd,\mm)$  which $q$-represents $\nabla g$   it holds
\begin{equation}
\label{eq:carbonara}
\int D^+f(\nabla g)\weakgrad g^{p-2}\,\d\mm\geq Df(L)\geq \int D^-f(\nabla g)\weakgrad g^{p-2}\,\d\mm.
\end{equation}
The compactness properties of $\tang q(X,\sfd,\mm)$ allow to prove that equalities can actually hold.
\begin{theorem}[Horizontal and vertical derivatives - II]\label{thm:horver2}
Let  $(X,\sfd,\mm)$ be as in \eqref{eq:mms} with $\mm\in\prob X$, $p\in(1,\infty)$ and  $f,g\in\s^p(X,\sfd,\mm)$. Then there exist $L^+,L^-\in \tang q(X,\sfd,\mm)$, depending on both $f$ and $g$, $q$-representing $\nabla g$ such that
\begin{align}
\label{eq:horver2}
\int D^+f(\nabla g)\weakgrad g^{p-2}\,\d\mm&= Df(L^+),\\
\label{eq:horver3}
\int D^-f(\nabla g)\weakgrad g^{p-2}\,\d\mm&= Df(L^-).
\end{align}
\end{theorem}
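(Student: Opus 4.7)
The plan is to construct $L^+$ and $L^-$ directly via the Hahn-Banach theorem applied to natural sublinear functionals on $\ctang p(X,\sfd,\mm)$, bypassing any need for weak$^*$ compactness or separability. First I would define $T^+ : \s^p(X,\sfd,\mm) \to \R$ by $T^+(h) := \int D^+h(\nabla g)\weakgrad g^{p-2}\,\d\mm$. Proposition \ref{prop:convconc} shows that $h \mapsto D^+h(\nabla g)$ is positively $1$-homogeneous and pointwise convex $\mm$-a.e., so $T^+$ is positively $1$-homogeneous and sublinear; the $1$-Lipschitz estimate \eqref{eq:1lip} combined with the bound \eqref{eq:1} shows that $T^+$ depends only on the $\s^p$-equivalence class of $h$ (since $\|h_1 - h_2\|_{\s^p} = 0$ forces $\weakgrad{(h_1-h_2)} = 0$ $\mm$-a.e.); and Hölder's inequality $\int \weakgrad h \weakgrad g^{p-1}\,\d\mm \leq \|Dh\|_{\ctang p}\|Dg\|_{\ctang p}^{p-1}$ (using $(p-1)q = p$) gives boundedness. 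Hence $T^+$ extends by continuity to a bounded sublinear functional on the Banach space $\ctang p(X,\sfd,\mm)$.

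Next I would apply Hahn-Banach to extend the linear functional $\lambda Df \mapsto \lambda T^+(Df)$ on $\R \cdot Df$ (which is dominated by $T^+$, since $0 = T^+(0) \leq T^+(Df) + T^+(-Df)$) to a linear $L^+ \in \tang q(X,\sfd,\mm)$ satisfying $L^+(Dh) \leq T^+(Dh)$ for every $h$ and $L^+(Df) = T^+(Df)$. The latter identity is exactly \eqref{eq:horver2}. To show $L^+$ $q$-represents $\nabla g$, test the inequality $L^+(\pm Dg) \leq T^+(\pm Dg)$: using \eqref{eq:5} and \eqref{eq:4} one computes $T^+(Dg) = \int \weakgrad g^p\,\d\mm = \|Dg\|_{\ctang p}^p$ and $T^+(-Dg) = -\int D^-g(\nabla g)\weakgrad g^{p-2}\,\d\mm = -\|Dg\|_{\ctang p}^p$, forcing $Dg(L^+) = \|Dg\|_{\ctang p}^p$. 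The boundedness $\|L^+\|_{\tang q} \leq \|Dg\|_{\ctang p}^{p-1}$ (inherited from the boundedness of $T^+$) combined with $\|Dg\|_{\ctang p}^p = Dg(L^+) \leq \|Dg\|_{\ctang p}\|L^+\|_{\tang q}$ yields $\|L^+\|_{\tang q} = \|Dg\|_{\ctang p}^{p-1}$; this is the equality case in Young's inequality, so $L^+$ $q$-represents $\nabla g$ in the sense of \eqref{eq:repL}.

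The construction of $L^-$ is symmetric: since $h \mapsto D^-h(\nabla g)$ is concave by Proposition \ref{prop:convconc}, the functional $T^-(h) := -\int D^-h(\nabla g)\weakgrad g^{p-2}\,\d\mm$ is sublinear, positively $1$-homogeneous and bounded by the same Hölder argument. Applying Hahn-Banach analogously produces a linear $\tilde L \in \tang q$ with $\tilde L \leq T^-$ and $\tilde L(Df) = T^-(Df)$; setting $L^- := -\tilde L$ yields $Df(L^-) = \int D^-f(\nabla g)\weakgrad g^{p-2}\,\d\mm$, and the same computation as above (with signs reversed) shows $Dg(L^-) = \|Dg\|_{\ctang p}^p$ and $\|L^-\|_{\tang q} = \|Dg\|_{\ctang p}^{p-1}$, i.e., $L^-$ $q$-represents $\nabla g$. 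The argument is essentially algebraic, so the only real obstacle is the book-keeping needed to verify that $T^\pm$ descend unambiguously to the quotient defining $\ctang p$ — precisely the content of \eqref{eq:1lip}. As a sanity check, the $L^\pm$ produced this way attain the extreme values in the chain of inequalities \eqref{eq:carbonara}, as one would expect from the convex duality.
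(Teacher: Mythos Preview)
Your proof is correct and takes a genuinely different route from the paper's. The paper constructs $L^+$ by a limiting procedure: for each $\eps>0$ it invokes Theorem~\ref{thm:extest} (itself built on the gradient-flow machinery of Theorem~\ref{thm:keyex} and Lisini's superposition) to produce a $q$-test plan $\ppi^\eps$ that $q$-represents $\nabla(g+\eps f)$, passes to an operator $L^\eps$ via Proposition~\ref{prop:Lp}, derives $Df(L^\eps)\geq\int\frac{\weakgrad{(g+\eps f)}^p-\weakgrad g^p}{p\eps}\,\d\mm$ by subtracting the defining inequalities, and then extracts $L^+$ as a weak$^*$ limit of $L^{\eps_n}$ using norm-boundedness. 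Your argument instead works purely at the level of the abstract Banach space $\ctang p$: you recognize that $T^+(h)=\int D^+h(\nabla g)\weakgrad g^{p-2}\,\d\mm$ is a bounded sublinear functional by Proposition~\ref{prop:convconc} and \eqref{eq:1}, and a single Hahn-Banach extension produces $L^+$ directly, with the representation property \eqref{eq:repL} read off from $T^+(\pm Dg)=\pm\|Dg\|_{\ctang p}^p$.

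Your approach is considerably more elementary---it bypasses the entire apparatus of plans representing gradients and needs neither Theorem~\ref{thm:extest} nor any compactness. What the paper's route buys in exchange is geometric content: its $L^+$ is explicitly a weak$^*$ limit of operators almost represented by genuine $q$-test plans, tying the abstract dual element back to derivations along curves. This matters for the broader narrative of Appendix~\ref{app:cottan} (e.g.\ Theorem~\ref{thm:figata}), where the point is precisely that tangent vectors are approximable by plans; your proof, while cleaner for the bare statement, does not exhibit that connection.
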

\begin{proof}
We prove \eqref{eq:horver2} only, as the proof of \eqref{eq:horver3} follows along similar lines. Taking \eqref{eq:carbonara} into account, we need only to find some $L^+$ for which $\leq$ holds. For $\eps>0$, use Theorem \ref{thm:extest} to find a $q$-test plan  $\ppi^\eps$ which $q$-represents $\nabla(g+\eps f)$ and such that 
$(\e_0)_\sharp\ppi^\eps=\mm$, and then Proposition \ref{prop:Lp} to find $L^\eps\in\tang q(X,\sfd,\mm)$  almost represented by $\ppi^\eps$. By \eqref{eq:dualL} and \eqref{eq:repL} we have
\begin{equation}
\label{eq:ovvio}
\begin{split}
D(g+\eps f)(L^\eps)&\geq \frac{\|D(g+\eps f)\|^p_{\ctang p}}{p}+\frac{\|L^\eps\|^q_{\tang q}}q,\\
Dg(L^\eps)&\leq\frac{\|Dg\|^p_{\ctang p}}{p}+\frac{\|L^\eps\|^q_{\tang q}}q. 
\end{split}
\end{equation}
Subtracting and dividing by $\eps>0$ we get
\[
Df(L^\eps)\geq\int \frac{\weakgrad{(g+\eps f)}^p-\weakgrad g^p}{\eps p}\,\d\mm.
\]
As $\eps\downarrow 0$, the right hand side converges to $\int D^+f(\nabla g)\weakgrad g^{p-2}\,\d\mm$. Concerning the left one, notice that the bound 
\[
\|L^\eps\|_{\tang q}\leq\|\ppi^\eps\|_q\leq \|D(g+\eps f)\|_{\ctang p}^{\frac pq}\leq \Big(\|Dg\|_{\ctang p}+\eps\|Df\|_{\ctang p}\Big)^{\frac pq}.
\]
ensures that the family $\{L^\eps\}_{\eps\in(0,1)}$ is weakly$^*$ relatively compact in $\tang q(X,\sfd,\mm)$. Hence for some sequence $\eps_n\downarrow 0$ we have $L^{\eps_n}\weakto^* L^+\in \tang q(X,\sfd,\mm)$, and therefore
\[
Df(L^+)\geq \int D^+f(\nabla g)\weakgrad g^{p-2}\,\d\mm.
\]
It remains to prove that $L^+$ $q$-represents $\nabla g$. But this is obvious, as it is sufficient to pass to the limit in the first inequality in \eqref{eq:ovvio} taking into account that $\|L^+\|_{\tang q}\leq\limi_{n\to\infty}\|L^{\eps_n}\|_{\tang q}$.
\end{proof}
Now assume that $(X,\sfd,\mm)$ is $q$-infinitesimally strictly convex. Then for $g\in\s^p(X,\sfd,\mm)$ and $\ppi$ $q$-test plan which $q$-represents $\nabla g$ and $(\e_0)_\sharp\ppi=\mm$ we have
\[
\lim_{t\to 0}\int\frac{f(\gamma_t)-f(\gamma_0)}t\,\d\ppi(\gamma)=\int Df(\nabla g)\weakgrad g^{p-2}\,\d\mm,\qquad\forall f\in\s^p(X,\sfd,\mm).
\]
In other words, the map
\begin{equation}
\label{eq:perdual}
\s^p(X,\sfd,\mm)\ni f\qquad\mapsto\qquad\lim_{t\to 0}\int\frac{f(\gamma_t)-f(\gamma_0)}t\,\d\ppi(\gamma),
\end{equation}
is a linear bounded functional on $\ctang p(X,\sfd,\mm)$, i.e. an element of $\tang q(X,\sfd,\mm)$. We define the set ${\rm Grad}^q(X,\sfd,\mm)$ as:
\[
\begin{split}
{\rm Grad}^q(X,\sfd,\mm):=\Big\{\ppi\in\prob{C([0,1],X)}\ :\ &\ppi\textrm{ is a }q\textrm{ test plan with }(\e_0)_\sharp\ppi=\mm\\
&\textrm{which } q\textrm{ represents }\nabla g\textrm{ for some }g\in\s^p\Big\}.
\end{split}
\]
Then \eqref{eq:perdual} provides a map $\iota:{\rm Grad}^q(X,\sfd,\mm)\to \tang q(X,\sfd,\mm)$. Notice that as a consequence of $Df(\iota(\ppi))\leq \|f\|_{\s^p}\|\ppi\|_q$ and of $Dg(\iota(\ppi))\geq \|g\|_{\s^p}\|\ppi\|_q$ if $\ppi$ $q$-represents $\nabla g$, we get that $\|\ppi\|_q=\|\iota(\ppi)\|_{\tang q}$ for any $\ppi\in{\rm Grad}^q(X,\sfd,\mm)$. It is for this reason that we called $\|\ppi\|_q$ the $q$-``norm'' of $\ppi$.

\begin{remark}{\rm
We remark that in passing from $\ppi$ to $\iota(\ppi)$ there is a loss of information because plans can be localized (i.e. one can consider the action of a plan on any Borel subset $A\subset X$ with positive mass by looking at the plan $\mm(A)^{-1}\ppi\restr{\e_0^{-1}(A)}$), while the same operation a priori cannot be performed on elements of $\tang q(X,\sfd,\mm)$.

Yet, it is possible to check that at least under the assumptions of Theorem \ref{thm:figata} below, an adequate localization of elements on $\tang q(X,\sfd,\mm)$ is possible. We won't discuss this topic here. 
}\fr\end{remark}

Interestingly enough, if $\ctang p(X,\sfd,\mm)$ is reflexive, the range of $\iota$ is dense in $\tang q(X,\sfd,\mm)$ in the sense expressed by the following theorem. Notice that we are going to assume that $\ctang p(X,\sfd,\mm)$ is smooth: by this we will mean that its norm is differentiable everywhere outside 0. This assumption certainly implies that $(X,\sfd,\mm)$ is $q$-infinitesimally strictly convex, as the latter just means that $\s^p/\sim$ is smooth. It is unclear to us whether the converse implication also holds, as we don't know whether any loss of smoothness can happen in the completion process. We also remark that this result goes in the same direction of Theorem 4.3 in \cite{CheegerKleiner09} obtained by Cheeger and Kleiner, where the span of the set of derivations along curves is shown to be dense in the tangent space under doubling and Poincar\'e assumptions.
\begin{theorem}[Duality between cotangent and tangent spaces]\label{thm:figata}
Let $(X,\sfd,\mm)$ be as in \eqref{eq:mms} with $\mm\in\prob X$ and $p,q\in(1,\infty)$ two conjugate exponents. Assume that $\ctang p(X,\sfd,\mm)$ is  smooth and reflexive. Then for every $L\in\tang q(X,\sfd,\mm)$ there exists a sequence $(\ppi_n)\subset{\rm Grad}^q(X,\sfd,\mm)$ such that
\[
\iota(\ppi^n)\weakto L,\qquad\textrm{ and } \qquad \|\ppi^n\|_q\to \|L\|_{\tang q},
\]
as $n\to\infty$. 
In particular, if $\tang q(X,\sfd,\mm)$ is uniformly convex, then $\iota({\rm Grad}^q(X,\sfd,\mm))$ is dense in $\tang q(X,\sfd,\mm)$ w.r.t. the strong topology.
\end{theorem}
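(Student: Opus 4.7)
The plan is to identify the map $\iota$ with the canonical duality map associated to the norm of $\ctang p(X,\sfd,\mm)$, and then approximate by density of differentials of Sobolev functions. Smoothness of $\ctang p(X,\sfd,\mm)$ provides a well-defined, single-valued duality map $J_p:\ctang p(X,\sfd,\mm)\to\tang q(X,\sfd,\mm)$, obtained as the Gateaux differential of $\tfrac1p\|\cdot\|^p_{\ctang p}$, which satisfies $\|J_p(u)\|_{\tang q}=\|u\|^{p-1}_{\ctang p}$ and $J_p(u)(u)=\|u\|^p_{\ctang p}$. Smoothness also implies $q$-infinitesimally strict convexity of $(X,\sfd,\mm)$, so that $\iota$ is well defined on ${\rm Grad}^q(X,\sfd,\mm)$. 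The first key step I would carry out is the identity $\iota(\ppi)=J_p(Dg)$ whenever $\ppi\in{\rm Grad}^q(X,\sfd,\mm)$ $q$-represents $\nabla g$: one side is computed via Theorem \ref{thm:horver}, the other by expanding $\|D(g+\eps f)\|^p_{\ctang p}=\int\weakgrad{(g+\eps f)}^p\,\d\mm$ and differentiating in $\eps$.

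Next, given $L\in\tang q(X,\sfd,\mm)$, I would use reflexivity of $\ctang p(X,\sfd,\mm)$ (via James' theorem) together with smoothness to produce the unique $u\in\ctang p(X,\sfd,\mm)$ with $J_p(u)=L$. The subset $\{Dg:g\in\s^p(X,\sfd,\mm)\}$ is dense in $\ctang p(X,\sfd,\mm)$ by the very definition of the cotangent space as completion, so I can pick $(g_n)\subset\s^p(X,\sfd,\mm)$ with $\|Dg_n-u\|_{\ctang p}\to 0$. Theorem \ref{thm:extest}, applied with $\mu=\mm$ (admissible once $\mmz$ is suitably chosen in the probability setting $\mm\in\prob X$), then produces plans $\ppi^n\in{\rm Grad}^q(X,\sfd,\mm)$ which $q$-represent $\nabla g_n$ with $(\e_0)_\sharp\ppi^n=\mm$, and by the first step $\iota(\ppi^n)=J_p(Dg_n)$.

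Norm convergence is now immediate: $\|\iota(\ppi^n)\|_{\tang q}=\|Dg_n\|^{p-1}_{\ctang p}\to\|u\|^{p-1}_{\ctang p}=\|L\|_{\tang q}$. For weak convergence, the sequence $(\iota(\ppi^n))$ is bounded in $\tang q(X,\sfd,\mm)$, so by reflexivity it has weak cluster points; any such $\phi$ satisfies $\|\phi\|_{\tang q}\leq\|u\|^{p-1}_{\ctang p}$, while
\[
\phi(u)=\lim_n J_p(Dg_n)(Dg_n)+\lim_n J_p(Dg_n)(u-Dg_n)=\|u\|^p_{\ctang p},
\]
the error term vanishing by boundedness of $\|J_p(Dg_n)\|_{\tang q}$ and $\|u-Dg_n\|_{\ctang p}\to 0$. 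Hence $\phi$ is a norming functional for $u$ of the correct norm, which by smoothness of $\ctang p(X,\sfd,\mm)$ must coincide with $J_p(u)=L$. Uniqueness of the cluster point gives $\iota(\ppi^n)\weakto L$. The final assertion under uniform convexity of $\tang q(X,\sfd,\mm)$ is then the Radon-Riesz property: weak convergence combined with convergence of norms implies strong convergence.

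The hardest part, I expect, is the first step: establishing $\iota(\ppi)=J_p(Dg)$, which requires showing that the horizontal derivative defining $\iota$ actually coincides with the vertical derivative giving $J_p$. This is precisely where the full strength of Theorem \ref{thm:horver} and $q$-infinitesimal strict convexity are invoked. A secondary technical point is arranging $(\e_0)_\sharp\ppi^n=\mm$ via Theorem \ref{thm:extest}, since that theorem requires $\mm\leq C\mmz$ for the fixed reference $\mmz$; in the probability setting this is handled by a careful choice of $\mmz$. The remainder of the argument is standard duality theory in smooth reflexive Banach spaces.
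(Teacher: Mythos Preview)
Your proposal is correct and follows essentially the same route as the paper's proof, just repackaged in the language of the duality map $J_p$ rather than the subdifferential of $\mathcal N^*(\omega)=\tfrac1p\|\omega\|^p_{\ctang p}$. The paper picks $\omega\in\partial^-\mathcal N(L)$, so that $L\in\partial^-\mathcal N^*(\omega)$, approximates $\omega$ by $Df_n$, produces $\ppi_n$ via Theorem~\ref{thm:extest}, and then passes the representation inequality $Df_n(\iota(\ppi_n))\geq \tfrac1p\|Df_n\|^p+\tfrac1q\|\iota(\ppi_n)\|^q$ to a weak$^*$ limit, invoking single-valuedness of $\partial^-\mathcal N^*(\omega)$ (i.e.\ smoothness) to identify the limit with $L$. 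Your version front-loads the identity $\iota(\ppi)=J_p(Dg)$, which is exactly the statement that $\iota(\ppi)$ is the unique element of $\partial^-\mathcal N^*(Dg)$; this buys you immediate norm convergence via $\|J_p(Dg_n)\|=\|Dg_n\|^{p-1}$, whereas the paper extracts norm convergence a posteriori from the fact that the inequality passed to the limit cannot be strict. One small wording issue: you refer to ``the unique $u$'' with $J_p(u)=L$, but smoothness alone does not give injectivity of $J_p$ (that would need strict convexity of $\ctang p$); your argument only requires existence of some such $u$, which you correctly obtain from reflexivity.
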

\begin{proof}
Consider the maps $\mathcal N:\tang q(X,\sfd,\mm)\to\R$ and $\mathcal N^*:\ctang p(X,\sfd,\mm)\to\R$ given by
\[
\begin{split}
\mathcal N(L)&:=\frac{\|L\|^q_{\tang q}}q,\qquad\forall L\in\tang q(X,\sfd,\mm),\\
\mathcal N^*(\omega)&:=\frac{\|\omega\|^p_{\ctang p}}p,\qquad\forall\omega\in\ctang p(X,\sfd,\mm).
\end{split}
\]
Clearly, $\mathcal N$ is the Legendre transform of $\mathcal N^*$, and since $\ctang p(X,\sfd,\mm)$ is assumed to be reflexive, also the viceversa holds. Since both functions are locally Lipschitz, their subdifferential is always non-empty, and the assumption that $\ctang p(X,\sfd,\mm)$ is strictly convex means that $\partial^-\mathcal N^*(\omega)$ is single valued for any $\omega\in\ctang p(X,\sfd,\mm)$.

Fix $L\in\tang q(X,\sfd,\mm)$ and choose $\omega\in\partial^-\mathcal N(L)$, so that $L\in\partial^-\mathcal N^*(\omega)$. By definition of $\ctang p(X,\sfd,\mm)$, we can find a sequence  $(f_n)\subset\s^p(X,\sfd,\mm)$  such that $Df_n\to\omega$ in $\ctang p(X,\sfd,\mm)$. For every $n\in\N$, use Theorem \ref{thm:extest} to find  $\ppi_n\in {\rm Grad}^q(X,\sfd,\mm)$ which $q$-represents $\nabla f_n$ so that it holds
\begin{equation}
\label{eq:buffo}
Df_n(\iota(\ppi_n))\geq \frac{\|Df_n\|_{\ctang p}^p}{p}+\frac{\|\iota(\ppi_n)\|^q_{\tang q}}{q}.
\end{equation}
Since $\sup_n\|\iota(\ppi_n)\|_{\tang q}<\infty$, up to pass to a subsequence, not relabeled, we may assume that $\iota(\ppi_n)$ weakly converges to some $\tilde L$ for which it holds $\|\tilde L\|_{\tang q}\leq\limi_{n\to\infty}\|\iota(\ppi_n)\|_{\tang q}$. Passing to the limit in \eqref{eq:buffo} using the strong convergence of $Df_n$ to $\omega$ in $\ctang p(X,\sfd,\mm)$ we obtain
\begin{equation}
\label{eq:buffo2}
\omega(\tilde L)\geq \frac{\|\omega\|_{\ctang p}^p}{p}+\frac{\|\tilde L\|^q_{\tang q}}{q},
\end{equation}
i.e. $\tilde L\in\partial^-\mathcal N^*(\omega)$. Since we know by assumption that $\partial^-\mathcal N^*(\omega)$ is a singleton, it must hold $\tilde L=L$. To conclude we need to show that $\|\iota(\ppi_n)\|_{\tang q}\to \|L\|_{\tang q}$ as $n\to\infty$. This is a consequence of the fact that the inequality in \eqref{eq:buffo2} cannot be strict, thus  it can't be any loss of norm in passing from \eqref{eq:buffo} to \eqref{eq:buffo2}.

For the last part of the statement, just notice that the strict convexity of $\tang q(X,\sfd,\mm)$ implies the smoothness of $\ctang p(X,\sfd,\mm)$, and that its uniform convexity yields that weak convergence plus convergence of norms is equivalent to strong convergence.
\end{proof}
As the proof shows, if $(\s^p/\sim,\|\cdot\|_{\s^p})$ is complete, then \emph{every} element of $\tang q(X,\sfd,\mm)$ can be recovered as derivation along some $\ppi\in{\rm Grad}^q(X,\sfd,\mm)$. 

In any case, the theorem has the following geometric consequence. Consider $\ppi_1,\ppi_2\in{\rm Grad}^q(X,\sfd,\mm)$, put $L:=\iota(\ppi_1)+\iota(\ppi_2)$ and find a sequence $(\tilde\ppi_n)\subset{\rm Grad}^q(X,\sfd,\mm)$ converging to $L$ in the sense of the above theorem. Then the (derivative at time 0 of the) curves in the support of $\tilde\ppi_n$ can be seen as approximate sum of the  (derivative at time 0 of the)  curves in the support of $\ppi_1,\ppi_2$. At least in the sense of their action on differentials. In case of infinitesimally Hilbertian spaces, there is no need of passing through an approximation: if $\ppi_i$ 2-represents $\nabla g_i$, $g_i\in\s^2(X,\sfd,\mm)$, $i=1,2$, then any plan 2-representing $\nabla(g_1+g_2)$ can be seen as the `sum' of $\ppi_1$ and $\ppi_2$ in the sense just described. A deeper geometric meaning to this fact  could be given  in case the open problem mentioned at the end of this appendix has an affirmative answer, see the discussion below.

\bigskip

The set ${\rm Grad}^q(X,\sfd,\mm)$ can be endowed with the following natural semi-distance:
\begin{equation}
\label{eq:distgrad}
{\rm Dist}_q(\ppi^1,\ppi^2):= \lims_{t\downarrow 0}\frac1{t}\sqrt[q]{\int W_q^q\Big((\e_t)_\sharp\ppi^1_x,(\e_t)_\sharp\ppi^2_x\Big)\,\d\mm(x)},
\end{equation}
where $\{\ppi^i_x\}$ is the disintegration of $\ppi^i$ w.r.t. the map $\e_0$, $i=1,2$. Notice that $\|\ppi\|_q={\rm Dist}_q(\ppi,{\mathbf 0})$, where the zero plan ${\mathbf 0}$ is the only plan in ${\rm Grad}^q(X,\sfd,\mm)$ concentrated on constant curves.
\begin{remark}{\rm
To understand why we are saying that this is the natural distance, start noticing that if we were on a Finsler manifold and two tangent vector fields $v^1,v^2$ in $L^q$ were given, then certainly the distance between them would have been $\sqrt[q]{\int \|v^1-v^2\|^q\,\d\mm}$. Now, if our vector fields were represented by smooth families of diffeomorphisms $\Phi^i_t$, $i=1,2$, in the sense that $\frac\d{\d t}\Phi^i_t\restr{t=0}=v^i$, $i=1,2$, then their distance read at the level of the $\Phi^i_t$'s can be written as
\begin{equation}
\label{eq:diffeo}
\lim_{t\downarrow0}\frac1t\sqrt[q]{\int \sfd^q(\Phi^1_t,\Phi^2_t)\,\d\mm}.
\end{equation}
In an abstract metric measure space we don't have any a priori notion of tangent vector field. But as already discussed various times along the paper, a general idea behind our discussion is to try to define tangent vector fields as  `derivative at time 0 of appropriate curves'. In practice, it seems more convenient to work with weighted family of curves, i.e. with probability measures in $C([0,1],X)$, rather than with curves themselves, which lead us to the concept of `plan representing a gradient'. If they are given $\ppi^1,\ppi^2\in{\rm Grad}^q(X,\sfd,\mm)$ with the property that $\ppi^i_x$ is concentrated on a single curve, say $\gamma^{i}_x$, for $\mm$-a.e. $x$ and $i=1,2$, then by analogy with \eqref{eq:diffeo} their distance in the tangent space should be defined as
\[
\lims_{t\downarrow0}\frac1t\sqrt[q]{\int \sfd^q\big((\gamma^{1}_x)_t,(\gamma^{2}_x)_t\big)\,\d\mm},
\]
the $\lim$ being substituted with a $\lims$ because of a lack of an a priori notion of smoothness. However, for a general plan in ${\rm Grad}^q(X,\sfd,\mm)$ we don't know whether its disintegration is made of deltas or not, so that we don't really have a single point $(\gamma_x)_t$ which is the position at time $t$ of the particle which originally was in $x$. What we have is the probability distribution $(\e_t)_\sharp\ppi_x$. Thus we have to substitute the expression $\sfd^q((\gamma^{1}_x)_t,(\gamma^{2}_x)_t)$ with its analogous  $W_q^q\Big((\e_t)_\sharp\ppi^1_x,(\e_t)_\sharp\ppi^1_x\Big)$. This operation leads to formula \eqref{eq:distgrad}.
}\fr\end{remark}
If our interpretation of plans in ${\rm Grad}^q(X,\sfd,\mm)$ as tangent vector fields is correct, we expect the distance between $\iota(\ppi^1)$ and $\iota(\ppi^2)$ on the tangent space to be bounded by ${\rm Dist}^q(\ppi^1,\ppi^2)$, because ${\rm Dist}^q$ is what controls the infinitesimal relative behavior of $\ppi^1$ and $\ppi^2$. In the following proposition we show that this is actually the case if $\ctang p(X,\sfd,\mm)$ is uniformly convex. Recall that given $f:X\to \R$ the function $\overline{|Df|}:X\to [0,\infty]$ is defined as 0 if $x$ is isolated, otherwise as
\[
\overline{|Df|}(x):=\inf_{r>0}\sup_{y_1\neq y_2\in B_r(x)}\frac{|f(y_1)-f(y_2)|}{\sfd(y_1,y_2)}.
\]
\begin{proposition}\label{prop:treno}
Let $(X,\sfd,\mm)$ be as in \eqref{eq:mms}  with $\mm\in\prob X$ and $p,q\in(1,\infty)$ two conjugate exponents. Assume that $\ctang p(X,\sfd,\mm)$ is uniformly convex. Let $\ppi^1,\ppi^2\in{\rm Grad}^q(X,\sfd,\mm)$. Then for every $f\in\s^p(X,\sfd,\mm)$ it holds
\begin{equation}
\label{eq:noncambia}
\lims_{t\downarrow0}\left|\int\frac{f(\gamma_t)-f(\gamma_0)}{t}\,\d\ppi^1(\gamma)-\int\frac{f(\gamma_t)-f(\gamma_0)}{t}\,\d\ppi^2(\gamma)\right|\leq  \|Df\|_{\ctang p}{\rm Dist}_q(\ppi^1,\ppi^2).
\end{equation}
In particular, if ${\rm Dist}_q(\ppi^1,\ppi^2)=0$ and $\ppi^1$ $q$-represents $\nabla g$ for some $g\in\s^p(X,\sfd,\mm)$, then  $\ppi^2$ $q$-represents $\nabla g$ as well. 
\end{proposition}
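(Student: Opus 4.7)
The strategy is to identify the left-hand side of \eqref{eq:noncambia} as $|Df(\iota(\ppi^1)-\iota(\ppi^2))|$ and bound it by coupling the two plans through a $W_q$-optimal transport between their time-$t$ marginals, conditionally on the common starting point. Uniform convexity of $\ctang p(X,\sfd,\mm)$ implies its strict convexity, hence $q$-infinitesimal strict convexity of the space; Theorem \ref{thm:horver} then gives, for every $\ppi^i\in{\rm Grad}^q(X,\sfd,\mm)$ and every $f\in\s^p(X,\sfd,\mm)$,
\[
\lim_{t\downarrow 0}\int\frac{f(\gamma_t)-f(\gamma_0)}{t}\,d\ppi^i(\gamma)=Df(\iota(\ppi^i)),
\]
so that the limiting quantity on the left-hand side equals $|Df(\iota(\ppi^1))-Df(\iota(\ppi^2))|$.

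Since $(\e_0)_\sharp\ppi^1=(\e_0)_\sharp\ppi^2=\mm$, the $f\circ\e_0$ contributions cancel, and disintegrating each $\ppi^i$ with respect to $\e_0$ one obtains
\[
\int\frac{f\circ\e_t-f\circ\e_0}{t}\,d(\ppi^1-\ppi^2)=\frac{1}{t}\int\int\bigl(f(y^1)-f(y^2)\bigr)\,d\ggamma^t_x(y^1,y^2)\,d\mm(x),
\]
where $\ggamma^t_x$ is a measurably-in-$x$ chosen $W_q$-optimal coupling of $(\e_t)_\sharp\ppi^1_x$ and $(\e_t)_\sharp\ppi^2_x$ (such a Borel selection exists since the set of optimal couplings is compact and convex, and depends measurably on the marginals). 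For $f$ Lipschitz, bounded compression of each $\ppi^i$ forces $(\e_t)_\sharp\ppi^i_x$ to concentrate around $x$ as $t\downarrow 0$, so that for small $t$ the plan $\ggamma^t_x$ is supported in $B(x,r)\times B(x,r)$; then the pointwise bound $|f(y^1)-f(y^2)|\leq(\sup_{B(x,r)}\overline{|Df|})\,\sfd(y^1,y^2)$, combined with Jensen's inequality $\int\sfd\,d\ggamma^t_x\leq W_q((\e_t)_\sharp\ppi^1_x,(\e_t)_\sharp\ppi^2_x)$ and Hölder's inequality integrated against $\mm$, yields, after letting $t\downarrow 0$ and then $r\downarrow 0$,
\[
\lims_{t\downarrow 0}|I^1(t)-I^2(t)|\leq\|\overline{|Df|}\|_{L^p(\mm)}\cdot{\rm Dist}_q(\ppi^1,\ppi^2).
\]

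To promote this to general $f\in\s^p$, one approximates $f$ (after truncation) by Lipschitz $f_n$ with $\|\overline{|Df_n|}\|_{L^p(\mm)}\to\|\weakgrad f\|_{L^p(\mm)}=\|Df\|_{\ctang p}$ via Theorem \ref{thm:lipdense}; uniform convexity of $\ctang p$ upgrades the norm convergence $\|Df_n\|_{\ctang p}\to\|Df\|_{\ctang p}$, together with the weak convergence $Df_n\rightharpoonup Df$ (inherited from the $L^p$-convergence $f_n\to f$), to strong $\ctang p$-convergence $Df_n\to Df$, whence $Df_n(\iota(\ppi^1)-\iota(\ppi^2))\to Df(\iota(\ppi^1)-\iota(\ppi^2))$. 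Passing to the limit in the Lipschitz estimate produces the desired inequality. For the second part of the statement, if ${\rm Dist}_q(\ppi^1,\ppi^2)=0$ then $Df(\iota(\ppi^1))=Df(\iota(\ppi^2))$ for every $f\in\s^p$, so $\iota(\ppi^1)=\iota(\ppi^2)$ in $\tang q$; unwinding the definition of $q$-representation and reading Theorem \ref{thm:horver} in reverse then shows that $\ppi^2$ $q$-represents $\nabla g$ whenever $\ppi^1$ does.

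The main obstacle is the Lipschitz estimate: bounding $\limsup_{r\downarrow 0}\|\sup_{B(\cdot,r)}\overline{|Df|}\|_{L^p(\mm)}$ by $\|\overline{|Df|}\|_{L^p(\mm)}$ requires an upper-semicontinuity argument, and one must have a uniform-in-$x$ (in an $L^p$-integrated sense) control on the concentration of $(\e_t)_\sharp\ppi^i_x$ near $x$ as $t\downarrow 0$, in order to interchange the two limits $t\downarrow 0$ and $r\downarrow 0$; the measurable selection of the $W_q$-optimal couplings must also be handled with care to justify the identification of the inner integral with a transportation integral.
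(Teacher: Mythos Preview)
Your overall strategy matches the paper's --- couple the time-$t$ disintegrations by $W_q$-optimal plans $\ggamma_{t,x}$, bound the integrand by a local Lipschitz constant times $\sfd$, apply H\"older, then pass from Lipschitz $f$ to general $f\in\s^p$ by approximation --- but two steps have gaps. First, the pointwise bound $|f(y^1)-f(y^2)|\leq(\sup_{B(x,r)}\overline{|Df|})\,\sfd(y^1,y^2)$ for $y^1,y^2\in B(x,r)$ is \emph{not} valid on a general metric space: $\overline{|Df|}(z)$ is an \emph{infimum} over scales of Lipschitz constants near $z$, so its supremum over a ball need not dominate difference quotients across that ball. The paper uses instead $G_x(r):=\sup_{y_1\neq y_2\in B_r(x)}|f(y_1)-f(y_2)|/\sfd(y_1,y_2)$, which tautologically gives the needed inequality and satisfies $G_x(r)\to\overline{|Df|}(x)$ as $r\downarrow0$. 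Inserting $G_x(\max\{\sfd(y_1,x),\sfd(y_2,x)\})$ --- which depends on $t$ through $y_1,y_2$ --- rather than a frozen scale also dissolves your double-limit worry: one H\"older step isolates the factor $\big(\iint G_x(\ldots)^p\,\d\ggamma_{t,x}\,\d\mm(x)\big)^{1/p}$, and since $\ggamma_{t,x}\to\delta_{(x,x)}$ weakly for $\mm$-a.e.\ $x$ (because $\ppi^i$-a.e.\ curve is continuous; this comes from $\|\ppi^i\|_q<\infty$, not from bounded compression), dominated convergence with bound $\Lip(f)^p$ yields $\|\overline{|Df|}\|_{L^p}$ directly.

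Second, your approximation step is shakier than necessary. The claim that uniform convexity of $\ctang p$ implies $q$-infinitesimal strict convexity is not obvious (the former is a global norm condition, the latter is the $\mm$-a.e.\ differentiability of $\eps\mapsto\weakgrad{(g+\eps f)}(x)$), and the assertion ``$Df_n\rightharpoonup Df$ in $\ctang p$ inherited from $L^p$-convergence of $f_n$'' needs justification since elements of $\tang q$ are abstract functionals with no a priori link to pointwise values. The paper avoids routing through $\iota$ altogether and works with the $\limsup$'s: from \eqref{eq:altra} one gets immediately $\limsup_{t\downarrow0}\big|\int\tfrac{(f-f_n)\circ\e_t-(f-f_n)\circ\e_0}{t}\,\d\ppi^i\big|\leq\|f-f_n\|_{\s^p}\|\ppi^i\|_q$, so one only needs $\|f-f_n\|_{\s^p}\to0$ together with $\|\overline{|Df_n|}\|_{L^p}\to\|Df\|_{\ctang p}$. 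Both come from Theorem \ref{thm:lipdense} and Corollary \ref{cor:lipdense}: uniform convexity of $\ctang p$ combined with that of $L^p$ gives uniform convexity of $W^{1,p}$ (as an $\ell^p$-sum), so energy convergence upgrades to $W^{1,p}$-convergence.
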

\begin{proof}
Let $f:X\to\R$ be a Lipschitz function and for every $x\in X$ define $G_x:[0,\infty)\to\R$ as 
\[
G_x(r):=\left\{
\begin{array}{ll}
\sup\limits_{y_1\neq y_2\in B_r(x)}\dfrac{|f(y_1)-f(y_2)|}{\sfd(y_1,y_2)},&\qquad\textrm{ if }r>0,\ \textrm{ and }B_r(x)\neq\{x\},\\
&\\
\overline{|Df|}(x),&\qquad\textrm{ if }r=0,\ \textrm{ or }B_r(x)=\{x\}.
\end{array}
\right.
\]
By definition, $G_x(\cdot)$ is continuous at 0, and it holds $|f(y_1)-f(y_2)|\leq \sfd(y_1,y_2) G_x(\max\{\sfd(y_1,x),\sfd(y_2,x)\})$ for any $x,y_1,y_2\in X$.

Let $\{\ppi^i_x\}$ be the disintegration of $\ppi^i$ w.r.t. $\e_0$, $i=1,2$ and, for any $t>0$,  let $\ggamma_{t,x}$ be an optimal plan from $(\e_0)_\sharp\ppi^1_x$ to $(\e_0)_\sharp\ppi^2_x$. We have
\[
\begin{split}
&\left|\int f(\gamma_t)-f(\gamma_0)\,\d\ppi^1(\gamma)-\int f(\gamma_t)-f(\gamma_0)\,\d\ppi^2(\gamma)\right|\\
&=\left|\int\left( \int f(\gamma_t)\,\d\ppi^1_x(\gamma)-\int f(\gamma_t)\,\d\ppi^2_x(\gamma)\right)\,\d\mm(x)\right| \\
&=\left|\iint f(y_1)-f(y_2)\,\d\ggamma_{t,x}(y_1,y_2)\,\d\mm(x)\right|\\
&\leq \iint G_x\big(\max\{\sfd(y_1,x),\sfd(y_2,x)\}\big)\sfd(y_1,y_2)\,\d\ggamma_{t,x}(y_1,y_2)\,\d\mm(x)\\
&\leq \sqrt[p]{\iint\Big( G_x\big(\max\{\sfd(y_1,x),\sfd(y_2,x)\}\big)\Big)^p\,\d\ggamma_{t,x}(y_1,y_2)\,\d\mm(x) }\sqrt[q]{\int W_q^q\big((\e_t)_\sharp\ppi^1_x,(\e_t)_\sharp\ppi^2_x\big)\,\d\mm(x)}.
\end{split}
\]
Dividing by $t$, letting $t\downarrow 0$ and taking into account that $\ggamma_{t,x}$ converges to $\delta_x\times\delta_x$ in duality with $C_b(X^2)$ we get
\begin{equation}
\label{eq:conG}
\lims_{t\downarrow0}\left|\int\frac{f(\gamma_t)-f(\gamma_0)}{t}\,\d\ppi^1(\gamma)-\int\frac{f(\gamma_t)-f(\gamma_0)}{t}\,\d\ppi^2(\gamma)\right|\leq  \big\|\overline{|Df|}\big\|_{L^p}\,{\rm Dist}_q(\ppi^1,\ppi^2).
\end{equation}
Pick now $f\in \s^p(X,\sfd,\mm)\cap L^\infty(X,\mm)\subset W^{1,p}(X,\sfd,\mm)$ and use Theorem \ref{thm:lipdense} and Corollary \ref{cor:lipdense} to find a sequence $(f_n)$ of Lipschitz functions such that $\overline{|Df_n|}\to \weakgrad f$ in $L^p(X,\mm)$ and  $\|f-f_n\|_{\s^p}\to 0$ as $n\to\infty$. Then observe that for a general $\ppi\in{\rm Grad}^q(X,\sfd,\mm)$ it holds
\begin{equation}
\label{eq:stanco}
\begin{split}
\lims_{t\downarrow0}&\left|\int\frac{f(\gamma_t)-f(\gamma_0)}{t}\,\d\ppi(\gamma)-\int\frac{f_n(\gamma_t)-f_n(\gamma_0)}{t}\,\d\ppi(\gamma)\right|\\
&=\lims_{t\downarrow0}\left|\int\frac{(f-f_n)(\gamma_t)-(f-f_n)(\gamma_0)}t\,\d\ppi(\gamma)\right|\leq \|f-f_n\|_{\s^p}\|\ppi\|_q,
\end{split}
\end{equation}
so that taking \eqref{eq:conG} into account,  \eqref{eq:noncambia} follows.

For the case of general $f\in\s^p(X,\sfd,\mm)$, just truncate $f$ defining $f_N:=\min\{\max\{f,-N\},N\}\in\s^p(X,\sfd,\mm)\cap L^\infty(X,\mm)$, observe that $\|f-f_N\|_{\s^p}\to 0$ as $N\to\infty$ and conclude as in \eqref{eq:stanco} 
\end{proof}
Notice that for $q$-infinitesimally strictly convex spaces, \eqref{eq:noncambia} reads as
\begin{equation}
\label{eq:elalatra}
\|\iota(\ppi^1)-\iota(\ppi^2)\|_{\tang q}\leq {\rm Dist}_q(\ppi^1,\ppi^2),\qquad\forall \ppi^1,\ppi^2\in{\rm Grad}^q,
\end{equation}
and that if both $\tang q(X,\sfd,\mm)$ and $\ctang p(X,\sfd,\mm)$ are uniformly convex, then Theorem \ref{thm:figata} and Proposition \ref{prop:treno} ensure that the map $\iota$ can be continuously extended to the abstract completion $\overline{{\rm Grad}^q}(X,\sfd,\mm)$ of $({\rm Grad}^q(X,\sfd,\mm),{\rm Dist}_q)$, and this extension provides a 1-Lipschitz map from $\overline{{\rm Grad}^q}$ to $\tang q$. 

It is not clear if this map is a surjection. This is related to the following  open problem: does  equality hold in \eqref{eq:elalatra}?. The answer is expected (at least to me) to be affirmative, because `gradient vector fields should be completely identified by their actions on differential vector fields'. The question is open also on infinitesimally Hilbertian spaces with $p=q=2$. 

Notice that a positive answer would greatly increase the knowledge of the local geometry of the space. For instance, it would tell that for any $g\in\s^p(X,\sfd,\mm)$ and $\mm$-a.e. $x$ there is only one curve, up to equivalence, which realizes the gradient of $g$ at $x$ in the sense that $\gamma_0=x$ and 
\[
\limi_{t\to 0}\frac{g(\gamma_t)-g(\gamma_0)}{t}\geq \frac{\weakgrad g^p(x)}p+\lims_{t\downarrow 0}\frac1{qt}\int_0^t|\dot\gamma_s|^q\,\d s,
\]
where we are saying that two curves $\gamma^1,\gamma^2$ with $\gamma^1_0=\gamma^2_0=x$ are equivalent provided they are tangent at 0, i.e.
\[
\lims_{t\downarrow0}\frac{\sfd(\gamma^1_t,\gamma^2_t)}{t}=0.
\]

\section{Remarks about the definition of the Sobolev classes}\label{app:sob}
The approach we proposed in Section \ref{se:sobcla} to introduce the Sobolev classes is slightly different from the one used in \cite{Ambrosio-Gigli-Savare11}, \cite{Ambrosio-Gigli-Savare-pq}. Here we collect the simple arguments which show that indeed the two approaches are the same.

As in the rest of the paper, $(X,\sfd)$ is a complete and separable metric space, and $\mm$ is a Radon  non-negative measure on $X$. In  \cite{Ambrosio-Gigli-Savare-pq} (generalizing the approach proposed in \cite{Ambrosio-Gigli-Savare11} for the case $p=q=2$) it has been first given the definition of $q$-test plan, as we did in Definition \ref{def:testplan}, then it is introduced the concept of  $p$-negligible set of curves  as:
\begin{definition}[$p$-negligible set of curves]
Let $\Gamma\subset C([0,1],X)$ be a Borel set. We say that $\Gamma$ is $p$-negligible provided 
\[
\ppi(\Gamma)=0,\qquad{\rm{ for\ any }}\ q{\rm{-test\ plan }}\ \ppi,
\]
where $p,q\in(1,\infty)$ are conjugate exponents. A property which holds for any curve, except for those in a $p$-negligible set is said to hold for $p$-almost every curve. 
\end{definition}
Notice that $C([0,1],X)\setminus AC^q([0,1],X)$ is $p$-negligible.

Then the definition of functions Sobolev along $p$-a.e. curve and of $p$-weak upper gradients were proposed as follows:
\begin{definition}[Functions which are Sobolev along $p$-a.e. curve]\label{def:sobae}
Let $f:X\to\R$ be a Borel function. We say that $f$ is Sobolev along $p$-a.e. curve provided for $p$-a.e. $\gamma$ the function $f\circ\gamma$ coincides a.e. in $[0,1]$ and in $\{0,1\}$ with an absolutely continuous map $f_\gamma:[0,1]\to\R$.
\end{definition}
\begin{definition}[$p$-weak upper gradients]\label{def:soborigi}
Let $f$ be a function Sobolev along $p$-a.e. curve. A Borel function $G:X\to[0,\infty]$ is a $p$-weak upper gradient provided
\begin{equation}
\label{eq:sobae}
\big|f(\gamma_1)-f(\gamma_0)\big|\leq \int_0^1 G(\gamma_s)|\dot\gamma_s|\,\d s,\qquad {\rm{for }}\ p{\rm{-a.e.}}\ \gamma.
\end{equation}
\end{definition}
\begin{theorem}
Let $(X,\sfd,\mm)$ be as in \eqref{eq:mms},  $p,q\in[1,\infty]$ two conjugate exponents and $f:X\to\R$, $G:X\to[0,\infty]$ Borel functions, with $G\in L^p(X,\mm)$. Then the following are equivalent.
\begin{itemize}
\item[i)] $f\in \s^p(X,\sfd,\mm)$ and $G$ is a $p$-weak upper gradient of $f$ in the sense of Definition \ref{def:sobcl}.
\item[ii)] $f$ is Sobolev along $p$-a.e. curve and $G$ is a $p$-weak upper gradient of $f$ in the sense of Definitions \ref{def:sobae}, \ref{def:soborigi}.
\end{itemize}
\end{theorem}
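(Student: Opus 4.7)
The implication $(ii)\Rightarrow(i)$ is a direct integration argument. Indeed, the set $\Gamma_0\subset C([0,1],X)$ of curves on which $f$ fails to have an absolutely continuous representative, or on which \eqref{eq:sobae} fails, is $p$-negligible by assumption; hence $\ppi(\Gamma_0)=0$ for every $q$-test plan $\ppi$. For such $\ppi$ the values $f(\gamma_0),f(\gamma_1)$ are then unambiguous for $\ppi$-a.e.\ $\gamma$ (since the absolutely continuous representative agrees with $f\circ\gamma$ on $\{0,1\}$), and integrating \eqref{eq:sobae} against $\ppi$ yields \eqref{eq:defsob}. Since the assumption $G\in L^p(X,\mm)$ and $\ppi$ is a $q$-test plan guarantee via H\"older that the double integral on the right is finite, $G$ is a $p$-weak upper gradient in the sense of Definition \ref{def:sobcl}.

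For the converse $(i)\Rightarrow(ii)$ the strategy is to upgrade \eqref{eq:defsob} to a pointwise estimate along $\ppi$-a.e.\ curve by a twofold localization: first in the curve parameter, using the restriction operators ${\rm restr}_t^s$, and second in the path space, using the invariance of the class of $q$-test plans under the rescaling $\ppi\mapsto \ppi(\Gamma)^{-1}\ppi\restr\Gamma$. Fix $0\leq t<s\leq 1$ and a Borel set $\Gamma\subset C([0,1],X)$ with $\ppi(\Gamma)>0$. Applying \eqref{eq:defsob} to the $q$-test plan $({\rm restr}_t^s)_\sharp\big(\ppi(\Gamma)^{-1}\ppi\restr\Gamma\big)$ and clearing denominators gives
\[
\int_\Gamma |f(\gamma_s)-f(\gamma_t)|\,\d\ppi(\gamma)\leq\int_\Gamma\int_t^s G(\gamma_r)|\dot\gamma_r|\,\d r\,\d\ppi(\gamma).
\]
By arbitrariness of $\Gamma$ we conclude that, for every fixed $0\leq t<s\leq 1$,
\[
|f(\gamma_s)-f(\gamma_t)|\leq\int_t^s G(\gamma_r)|\dot\gamma_r|\,\d r\qquad\textrm{for $\ppi$-a.e.\ $\gamma$},
\]
which is the content of \eqref{eq:defsobpunt} with $G$ in place of $\weakgrad f$.

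Next, the exceptional $\ppi$-null set must be made independent of $(t,s)$. Pick $D\subset [0,1]$ countable, dense, and containing $\{0,1\}$, and let $N_\ppi$ be the union over $(t,s)\in D^2$, $t<s$, of the respective exceptional sets; then $\ppi(N_\ppi)=0$. Observe that $\iint_0^1 G(\gamma_r)|\dot\gamma_r|\,\d r\,\d\ppi(\gamma)<\infty$ by H\"older (using $G\in L^p$, bounded compression, and $\iint|\dot\gamma|^q\,\d r\,\d\ppi<\infty$), so $r\mapsto G(\gamma_r)|\dot\gamma_r|\in L^1([0,1])$ for $\ppi$-a.e.\ $\gamma$. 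For every such $\gamma\notin N_\ppi$ the map $(t,s)\mapsto\int_t^s G(\gamma_r)|\dot\gamma_r|\,\d r$ is continuous, so $t\mapsto f(\gamma_t)$ restricted to $D$ extends uniquely to an absolutely continuous function $f_\gamma:[0,1]\to\R$ satisfying $f_\gamma(0)=f(\gamma_0)$, $f_\gamma(1)=f(\gamma_1)$, and $|f_\gamma(s)-f_\gamma(t)|\leq \int_t^s G(\gamma_r)|\dot\gamma_r|\,\d r$ for every $0\leq t<s\leq 1$. A further application of the localization argument to $({\rm restr}_0^t)_\sharp\ppi$ for a.e.\ $t\in[0,1]$, combined with Fubini, gives that $f_\gamma$ agrees with $f\circ\gamma$ a.e.\ on $[0,1]$ for $\ppi$-a.e.\ $\gamma$. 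Hence the conclusion of $(ii)$ holds on a $\ppi$-full set of curves, for every $q$-test plan $\ppi$; by the very definition of $p$-negligibility, this means that it holds for $p$-a.e.\ curve, proving $(i)\Rightarrow(ii)$.

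\emph{Main obstacle.} The only subtlety is the simultaneous, rather than pointwise-in-$(t,s)$, validity of the estimate along $\ppi$-a.e.\ curve: the naive conclusion produces an exceptional set depending on $(t,s)$, and one must extract a uniform exceptional set via the countable dense subset $D$ and the $L^1$-continuity of $r\mapsto G(\gamma_r)|\dot\gamma_r|$. Once this uniformity and the integrability are in place, absolute continuity of the representative $f_\gamma$ is automatic.
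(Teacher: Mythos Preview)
Your proof is correct. The implication $(ii)\Rightarrow(i)$ is identical to the paper's. For $(i)\Rightarrow(ii)$ you and the paper share the same first step---the localization via ${\rm restr}_t^s$ and $\ppi\restr\Gamma$ giving the pointwise bound for each fixed pair $(t,s)$---but then diverge. The paper applies Fubini to obtain, for $\ppi$-a.e.\ $\gamma$, the estimate for $\mathcal L^2$-a.e.\ $(t,s)$, and then invokes a one-dimensional lemma (Lemma~\ref{le:luigi}) stating that a function satisfying $|f(s)-f(t)|\leq\int_t^s g$ for a.e.\ $(t,s)$ with $g\in L^1$ lies in $W^{1,1}$; absolute continuity of the representative and the a.e.\ agreement with $f\circ\gamma$ come out of that lemma simultaneously. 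The endpoints $\{0,1\}$, however, require a separate argument there, picking a sequence $s_n\downarrow 0$ along which $f_\gamma(s_n)=f(\gamma_{s_n})$.

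Your route---fixing a countable dense $D\ni 0,1$, taking the union of exceptional sets, and extending $f\circ\gamma|_D$ by uniform continuity---is more elementary in that it avoids the Sobolev lemma and gives the endpoint values for free. The price is that you must separately verify $f_\gamma=f\circ\gamma$ a.e.\ on $[0,1]$. Your last sentence about this is terse; to make it precise: for each fixed $t\in[0,1]$ and each $s\in D$ the pointwise bound gives $|f(\gamma_t)-f(\gamma_s)|\leq|\int_s^t G(\gamma_r)|\dot\gamma_r|\,\d r|$ for $\ppi$-a.e.\ $\gamma$; since $D$ is countable, this holds for all $s\in D$ simultaneously, and Fubini in $t$ yields it for a.e.\ $t$ and $\ppi$-a.e.\ $\gamma$. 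Letting $D\ni s\to t$ then forces $f(\gamma_t)=f_\gamma(t)$.
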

\begin{proof}\\*
\noindent $\mathbf{(ii)\Rightarrow(i)}$   Fix a $q$-test plan $\ppi$ and notice that by definition of $p$-negligible set of curves it holds
\[
\big|f(\gamma_1)-f(\gamma_0)\big|\leq \int_0^1 G(\gamma_s)|\dot\gamma_s|\,\d s,\qquad {\rm{for }}\ \ppi{\rm{-a.e.}}\ \gamma.
\]
Integrating w.r.t. $\ppi$ we get
\[
\int\big|f(\gamma_1)-f(\gamma_0)\big|\,\d\ppi(\gamma)\leq \iint_0^1 G(\gamma_s)|\dot\gamma_s|\,\d s\,\d\ppi(\gamma),
\]
which is the thesis.\\

\noindent $\mathbf{(i)\Rightarrow(ii)}$ Fix a $q$-test plan $\ppi$. Arguing as for   \eqref{eq:defsobpunt} we have  that
\begin{equation}
\label{eq:commonroom}
\forall t<s\in[0,1]\quad\textrm{it holds}\qquad |f(\gamma_s)-f(\gamma_t)|\leq \int_t^sG(\gamma_r)|\dot\gamma_r|\,\d r,\qquad\ppi-a.e.\ \gamma,
\end{equation}
hence from Fubini's theorem we get
\begin{equation}
\label{eq:pezzo}
\textrm{ for }\ppi-a.e.\ \gamma \quad\textrm{it holds}\qquad |f(\gamma_s)-f(\gamma_t)|\leq \int_t^sG (\gamma_r)|\dot\gamma_r|\,\d r,\qquad \mathcal L^2-a.e.\ t<s\in[0,1].
\end{equation}
Now notice that the assumptions $G\in L^p(X,\mm)$, $\int E_{q,1}\,\d\ppi<\infty$ and $(\e_t)_\sharp\ppi\leq C\mm$ for any $t\in[0,1]$ and some $C\in\R$ easily yield that the function $\gamma\mapsto \int_0^1G(\gamma_r)|\dot\gamma_r|\,\d r$ is in $L^1(\ppi)$. In particular, for $\ppi$-a.e. $\gamma$ the map $t\mapsto G(\gamma_t)|\dot\gamma_t|$ is in $L^1(0,1)$. Therefore from \eqref{eq:pezzo} and Lemma \ref{le:luigi} below we deduce that for $\ppi$-a.e. $\gamma$ the function $t\mapsto f(\gamma_t)$ coincides a.e. in $[0,1]$ with an absolutely continuous map $f_{\gamma,\sppi}$, whose derivative is bounded - in modulus - by $G(\gamma_t)|\dot\gamma_t|$.

By construction we have
\begin{equation}
\label{eq:perzero}
f_{\gamma,\sppi}(t)=f(\gamma_t),\qquad\ppi\times\mathcal L^1\restr{[0,1]}-a.e.\ (\gamma,t),
\end{equation}
and we need to prove that for $\ppi$-a.e. $\gamma$  it holds  $f_{\gamma,\sppi}(0)=f(\gamma_0)$  and  $f_{\gamma,\sppi}(1)=f(\gamma_1)$. Using \eqref{eq:commonroom} with $t=0$ and \eqref{eq:perzero}  we can find a sequence $s_n\downarrow 0$ such that $f_{\gamma,\sppi}(s_n)=f(\gamma_{s_n})$ for $\ppi$-a.e. $\gamma$ and every $ n\in\N$ and
\[
|f_{\gamma,\sppi}(s_n)-f(\gamma_0)|\leq \int_0^{s_n}G(\gamma_r)|\dot\gamma_r|\,\d r,\qquad\ppi-a.e.\ \gamma,\ \forall n\in\N.
\]
Passing to the limit as $n\to\infty$ we get that for $\ppi$-a.e. $\gamma$ it holds $f_{\gamma,\sppi}(0)=\lim_nf_{\gamma,\sppi}(s_n)=f(\gamma_0)$. A similar argument works for $t=1$.

It remains to prove that the functions $f_{\gamma,\sppi}$ do not depend on $\ppi$, but only on $\gamma$. But this is obvious, because the map $t\mapsto f(\gamma_t)$ has at most one continuous representative.
\end{proof}
The following lemma was proved in \cite{Ambrosio-Gigli-Savare-pq}, the proof is reported here for completeness.
\begin{lemma}\label{le:luigi}
Let $f:(0,1)\to\R$, $q\in (1,\infty)$, $g\in L^q(0,1)$ nonnegative be satisfying
$$
|f(s)-f(t)|\leq\left|\int_s^t g(r)\,\d r\right|\qquad\text{for $\mathcal L^{2}$-a.e. $(s,t)\in (0,1)^2$}.
$$
Then $f\in W^{1,q}(0,1)$ and $|f'|\leq g$ a.e. in $(0,1)$.
\end{lemma}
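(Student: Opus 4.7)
\textbf{Proof plan for Lemma \ref{le:luigi}.}

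The plan is to show that $f$ admits an absolutely continuous representative $\tilde f$ whose pointwise modulus of continuity is controlled by the continuous function $G(s,t):=\int_s^t g(r)\,\d r$ (which is well defined since $g\in L^q(0,1)\subset L^1(0,1)$), and then to read off the bound $|\tilde f'|\le g$ from the definition of the derivative.

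First I would use Fubini's theorem on the $\mathcal L^2$-null set $N\subset (0,1)^2$ off which the hypothesis fails: there exists a full-measure set $A\subset (0,1)$ such that for every $s\in A$ the slice $N_s:=\{t\in(0,1):(s,t)\in N\}$ is $\mathcal L^1$-null. In particular, fixing any $t_0\in A$, the inequality $|f(s)-f(t_0)|\le|\int_{t_0}^s g|$ holds for a.e.\ $s$, so $f$ is essentially bounded.

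The key step is to promote the a.e.\ inequality to a pointwise inequality on the dense set $A$. Given $s,t\in A$, I choose a sequence $t_n\to t$ with $t_n\in A$ and $(s,t_n),(t,t_n)\notin N$ (possible because all three of the exceptional slices have null measure, while any neighbourhood of $t$ has positive measure). From $(t,t_n)\notin N$ and continuity of $G$ one deduces $f(t_n)\to f(t)$, while $(s,t_n)\notin N$ yields $|f(s)-f(t_n)|\le|G(s,t_n)|$; passing to the limit gives
\[
|f(s)-f(t)|\le\left|\int_s^t g(r)\,\d r\right|\qquad\forall s,t\in A.
\]
Absolute continuity of the Lebesgue integral then implies that $f\!\restriction_A$ is uniformly continuous, so it extends by density to a unique continuous function $\tilde f:(0,1)\to\R$ satisfying the same inequality for \emph{all} $s,t\in(0,1)$, i.e.\ $\tilde f$ is absolutely continuous on $(0,1)$.

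To finish I would note that $\tilde f=f$ $\mathcal L^1$-a.e., and that the bound $|\tilde f(s)-\tilde f(t)|\le|\int_s^t g|$ together with Lebesgue's differentiation theorem at points of differentiability of $\tilde f$ and Lebesgue points of $g$ yields $|\tilde f'(s)|\le g(s)$ for a.e.\ $s$. Since $g\in L^q(0,1)$, it follows that $\tilde f\in W^{1,q}(0,1)$, hence so does $f$, with $|f'|\le g$ a.e. The main subtlety is the Fubini/extraction argument in the second paragraph: one must be careful that the sequence $(t_n)$ avoids all three null sets simultaneously, but this is immediate since a countable union of null sets is still null.
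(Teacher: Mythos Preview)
Your argument is correct, but it follows a different route from the paper's proof. You build an absolutely continuous representative $\tilde f$ directly: Fubini gives a full-measure set $A$ on which the pointwise inequality $|f(s)-f(t)|\le|\int_s^t g|$ can be recovered via the diagonal extraction you describe, and then uniform continuity plus Lebesgue differentiation finish the job. The paper instead argues distributionally: it applies Fubini in the variable $h=t-s$ to find a sequence $h_i\downarrow 0$ such that $|f(t+h_i)-f(t)|\le\int_t^{t+h_i}g$ for a.e.\ $t$, and then tests the difference quotients against $\phi\in C_c^1(0,1)$ to obtain $|\int f\phi'|\le\int g|\phi|$; the conclusion $f\in W^{1,q}$ with $|f'|\le g$ follows by $L^q$ duality.

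Your approach is more hands-on and yields the AC representative explicitly, at the cost of the extraction/extension step; the paper's approach is shorter and avoids constructing $\tilde f$, but relies on the distributional characterisation of $W^{1,q}(0,1)$. Both are standard, and your handling of the Fubini/null-set bookkeeping is fine (indeed the requirement $t_n\in A$ is not even needed for the argument to go through, since you only use $(s,t_n)\notin N$ and $(t,t_n)\notin N$).
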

\begin{proof} 
Let $N\subset (0,1)^2$ be the $\mathcal L^{2}$-negligible subset where the above 
inequality fails. By Fubini's theorem, also the set $\{(t,h)\in (0,1)^2:\ (t,t+h)\in N\cap (0,1)^2\}$
is $\mathcal L^{2}$-negligible. In particular, by Fubini's theorem, for a.e. $h$ we have
$(t,t+h)\notin N$ for a.e. $t\in (0,1)$. Let $h_i\downarrow 0$ with this property and use
the identities
$$
\int_0^1f(t)\frac{\phi(t+h)-\phi(t)}{h}\,\d t=-\int_0^1\frac{f(t-h)-f(t)}{-h}\phi(t)\,\d t
$$
with $\phi\in C^1_c(0,1)$ and $h=h_i$ sufficiently small to get
$$
\biggl|\int_0^1f(t)\phi'(t)\,\d t\biggr|\leq\int_0^1g(t)|\phi(t)|\,\d t.
$$
By $L^q$ duality, this gives the $W^{1,q}(0,1)$ regularity and, at the same time, the inequality
$|f'|\leq g$ a.e. in $(0,1)$. 
\end{proof}

\def\cprime{$'$}

\end{document}